\documentclass[12pt]{article}

\usepackage{amssymb}
\usepackage{url}
\usepackage{amsthm}
\usepackage{amsmath}
\usepackage{graphicx}
\usepackage{subcaption}
\newcommand\newsubcap[1]{\phantomcaption%
       \caption*{\figurename~\thefigure(\thesubfigure): #1}}
\usepackage{fullpage}
\usepackage{color}
\usepackage{enumerate}
 \numberwithin{equation}{section}
\usepackage{booktabs}
\allowdisplaybreaks

 \usepackage{mathtools}
 \mathtoolsset{showonlyrefs}
 \usepackage[nocompress]{cite}

\theoremstyle{plain}
\newtheorem{thm}{Theorem}[section]
\newtheorem{cor}[thm]{Corollary}

\newtheorem{lem}[thm]{Lemma}
\newtheorem{prop}[thm]{Proposition}

\theoremstyle{definition}

\newtheorem{ex}[thm]{Example}

\theoremstyle{remark}

\newtheorem{rem}[thm]{Remark}

\newcommand{\N}{\mathbb{N}}
\newcommand{\R}{\mathbb{R}}



\newcommand{\bp}{\begin{proof}[\ensuremath{\mathbf{Proof}}]}
\newcommand{\bs}{\begin{proof}[\ensuremath{\mathbf{Solution}}]}
\newcommand{\ep}{\end{proof}}
\newcommand{\be}{\begin{equation}}
\newcommand{\ee}{\end{equation}}

\begin{document}

\title{The density of Meissner polyhedra}

\author{Ryan Hynd\footnote{Department of Mathematics, University of Pennsylvania. This work was supported in part by an American Mathematical Society Claytor--Gilmer fellowship. }}

\maketitle

\begin{abstract}
We consider Meissner polyhedra in $\R^3$. These are constant width bodies whose boundaries consist of pieces of spheres and spindle tori.  We define these shapes by taking appropriate intersections of congruent balls and show that they are dense within the space of constant width bodies in the Hausdorff topology. This density assertion was essentially established by Sallee. However, we offer a modern viewpoint taking into consideration the recent progress in understanding ball polyhedra and in constructing constant width bodies based on these shapes. 
\end{abstract}


\tableofcontents

\section{Introduction}
A convex body $K$ is a convex and compact subset of $ \R^n$. We'll say that $K$ has {\it constant width} if each pair of parallel supporting planes for $K$ are separated by unit distance. The simplest example is a closed ball of radius $1/2$.  As we shall see below, there are many other constant width shapes.    The purpose of this note is to discuss a family of constant width bodies in $\R^3$ which approximate all other three dimensional constant width shapes.  
\begin{figure}[h]
\centering
 \includegraphics[width=.4\textwidth]{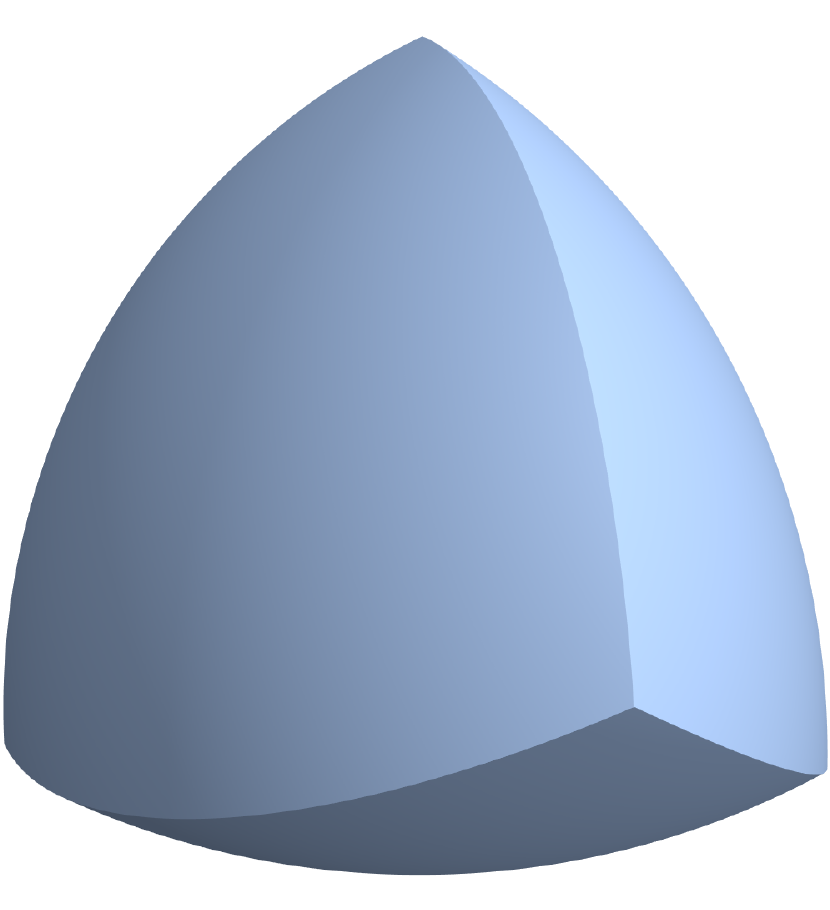}
 \hspace{.2in}
  \includegraphics[width=.4\textwidth]{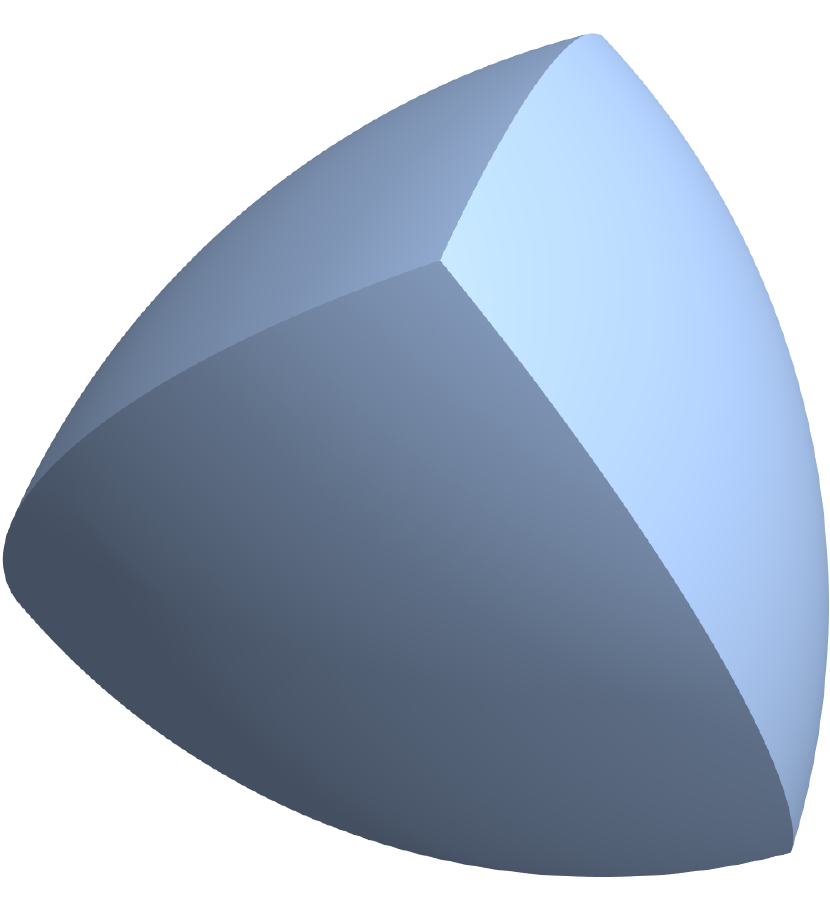}
 \caption{Here are two views of a Reuleaux tetrahedron, which is the intersection of four balls of radius one centered at the vertices of a regular tetrahedron.}\label{ReuleauxTetra}
\end{figure}
\par As it will motivate much of what follows, let us recall a geometric construction of two constant width bodies due to Meissner and Schilling \cite{Meissner}.  Consider the shape 
$$
R=B(a_1)\cap B(a_2)\cap B(a_3)\cap B(a_4),
$$
where $a_1,a_2,a_3,a_4\in \R^3$ and 
$$
|a_i-a_j|=1
$$
for $i\neq j$. Here $|x|$ denotes the Euclidean norm of $x\in \R^3$, and $B(x)\subset \R^3$ denotes a closed ball of radius one centered at $x$. It is evident that $R\subset \R^3$ is a convex body, and it is known as a {\it Reuleaux tetrahedron}.  

\par Similar to a regular tetrahedron in $\R^3$, $R$ has has four vertices, six edges, and  four faces. See Figure \ref{ReuleauxTetra}. The vertices of $R$ are the centers $a_1,a_2,a_3,a_4$. The faces of $R$ are each part of sphere centered at an opposing vertex. It turns out that each of these faces are geodesically convex in their respective spheres.  For example, the face $\partial B(a_1)\cap R$ opposite the vertex $a_1$ is a geodesically convex subset of  $\partial B(a_1)$. Each edge is the intersection of two faces and is a circular arc in both of the spheres that determine the faces. For instance, the edge that joins $a_2$ and $a_3$ is the intersection of the faces $\partial B(a_1)\cap R$ and $\partial B(a_4)\cap R$.

\par We may alter the boundary of $R$ near its edge joining $a_2$ and $a_3$ as follows.   Consider 
$$
\gamma_1\subset \partial B(a_1)\text{ the geodesic joining $a_2$ to $a_3$}
$$
and 
$$
\gamma_4\subset \partial B(a_4)\text{ the geodesic joining $a_2$ to $a_3$}.
$$
As we noted above, $\gamma_1$ and $\gamma_4$ are curves which are included in $\partial R$.  We can then  replace the region of $\partial R$ which contains the edge between $a_2$ and $a_3$ and is bounded by $\gamma_1$ and $\gamma_4$ with a piece of a spindle torus; that is, we can replace this portion of $\partial R$ by the surface obtained by rotating $\gamma_1$ into $\gamma_4$ about the line passing through $a_2$ and $a_3$. The resulting shape bounds a convex body which is a subset of $R$.   See Figure \ref{GeodesicFig}.  
\begin{figure}[h]
\centering
 \includegraphics[width=.49\textwidth]{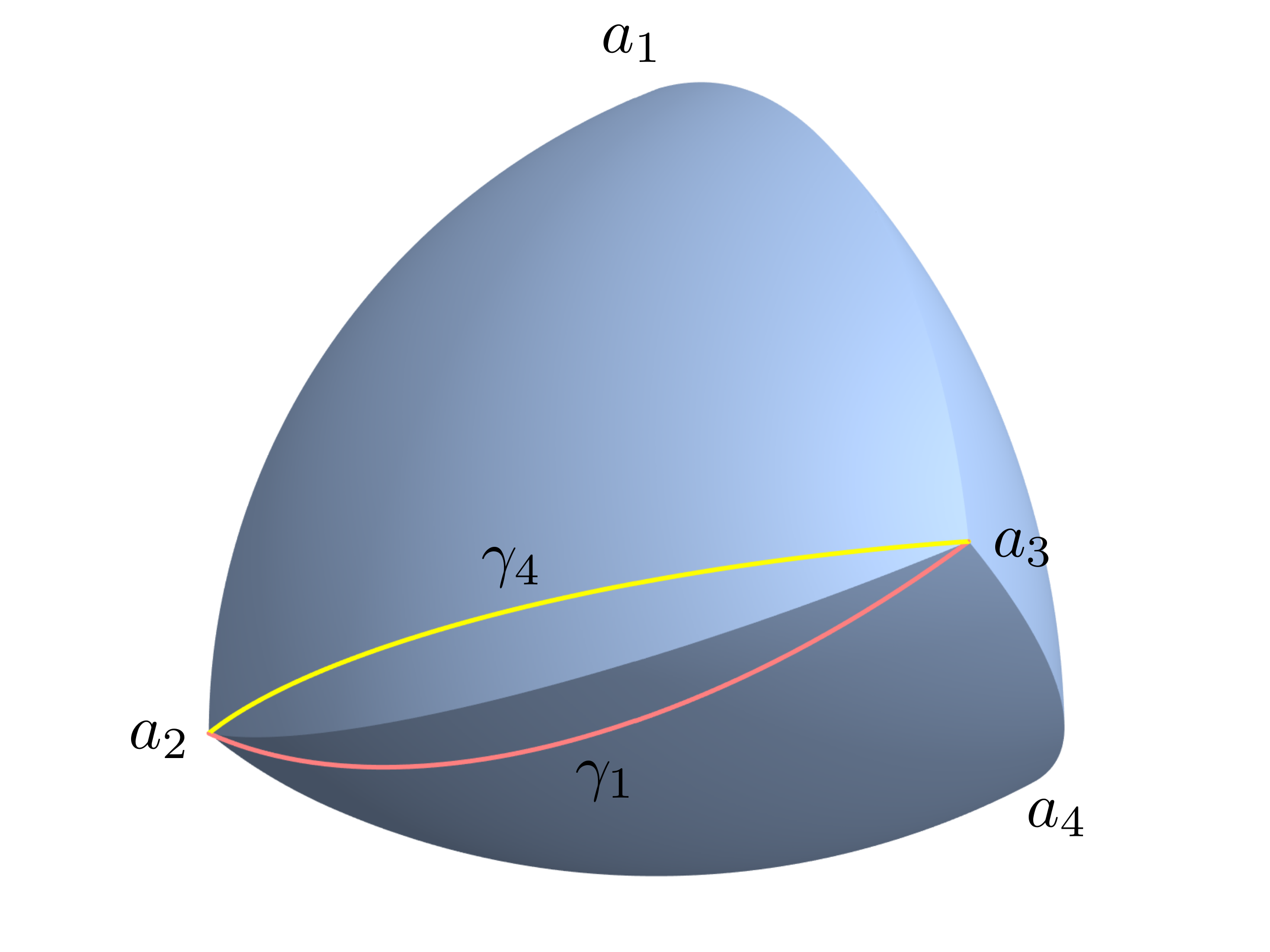}
  \includegraphics[width=.49\textwidth]{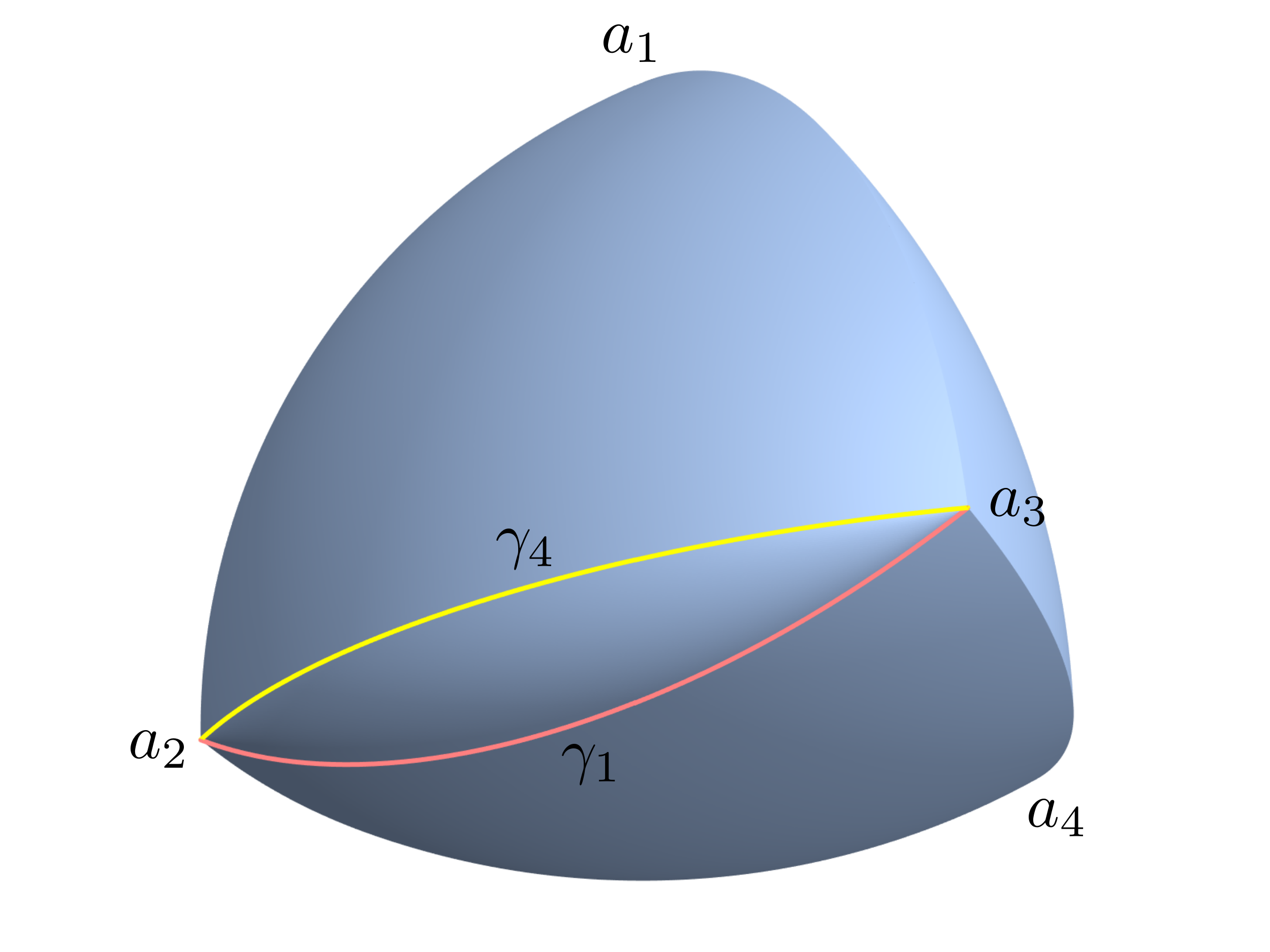}
 \caption{A Reuleaux tetrahedron $R$ on the left with geodesic curves $\gamma_1$ and $\gamma_4$. On the right is the figure obtained by replacing the region of $\partial R$ bounded by $\gamma_1$ and $\gamma_4$ with the shape we get when we rotate $\gamma_1$ into $\gamma_4$ about the line passing through $a_2$ and $a_3$.  This is a basic surgery operation.}\label{GeodesicFig}
\end{figure}

\par If we perform this type of surgery on the region of $\partial R$ near any three edges which either share a common vertex or share a common face, we obtain one of the two Meissner tetrahedra.  See Figures \ref{Meiss1} and \ref{Meiss2}; we also refer the reader to diagrams 106 and 107 in the classic text by Yaglom and Boltyanskii \cite{MR0123962} for images of these constructions.    It turns out that these shapes have constant width.  Moreover, these shapes have been of particular interest for a number of years as they have been conjectured to enclose the least volume among all constant width shapes.   See the article by Kawohl and  Weber \cite{MR2844102} for a recent survey of these shapes.  
\begin{figure}[h]
\centering
 \includegraphics[width=.4\textwidth]{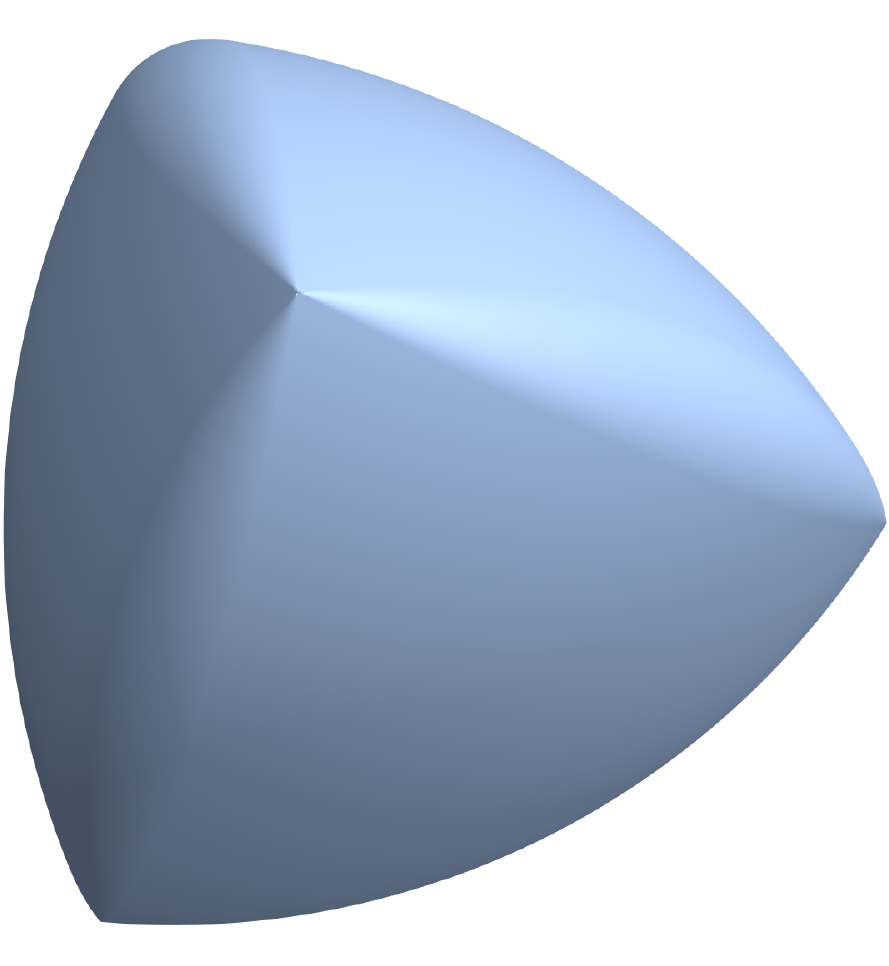}
 \hspace{.2in}
  \includegraphics[width=.4\textwidth]{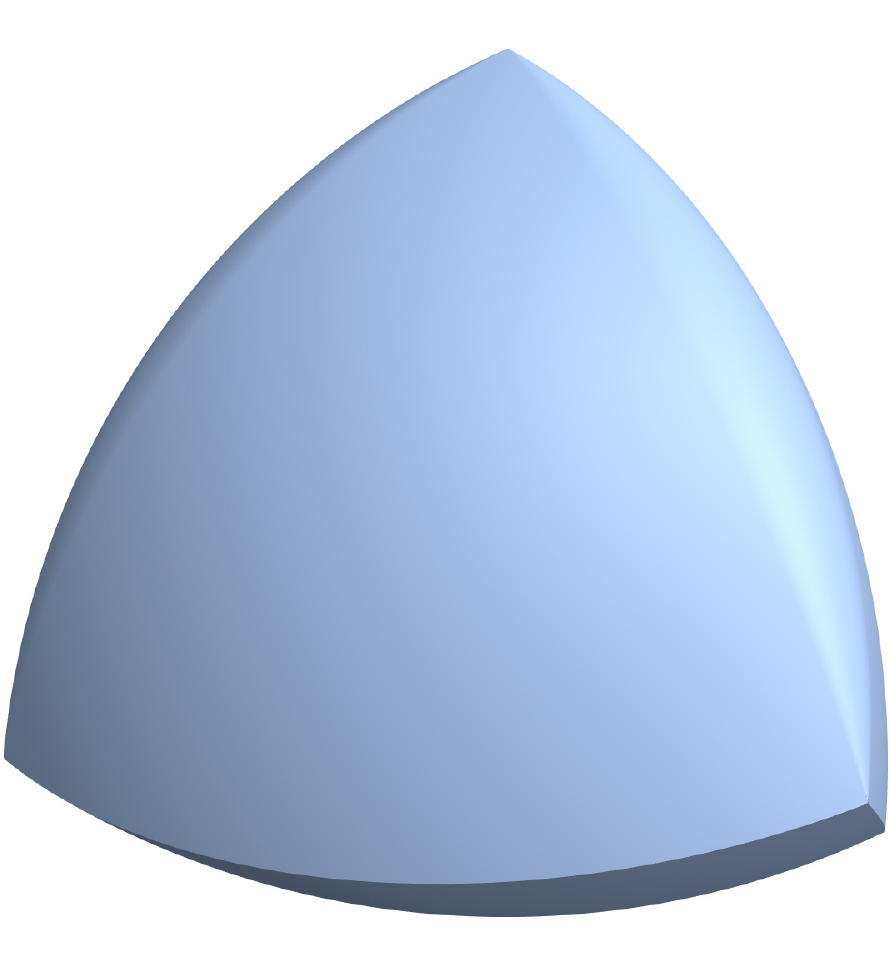}
 \caption{A Meissner tetrahedron in which smoothed edges share a common vertex.}\label{Meiss1}
\end{figure}
\par In what follows, we will discuss a family of constant width shapes in $\R^3$ which are designed analogously to the two Meissner's tetrahedra constructed above. They are known as Meissner polyhedra and are modeled on the class of shapes introduced by Montejano and Rold\'an-Pensado \cite{MR3620844} by the same name. However, we will define these shapes via intersections rather than by performing surgery on the boundary. For example if $e_{ij}$ represents the edge of $R$ joining $a_i$ and $a_j$,
$$
\bigcap\left\{B(x): x\in e_{12}\cup e_{13}\cup e_{14}\right\}
$$
is a Meissner tetrahedron in which the edges $e_{23},  e_{24},e_{34}$ that share the common face $\partial B(a_1)\cap R$ have been smoothed. Likewise 
$$
\bigcap\left\{B(x): x=a_1\;\text{or}\;x\in e_{23}\cup e_{24}\cup e_{34}\right\},
$$
is the Meissner tetrahedron in which the regions near the edges sharing vertex $a_1$ are smoothed.   
\begin{figure}[h]
\centering
 \includegraphics[width=.42\textwidth]{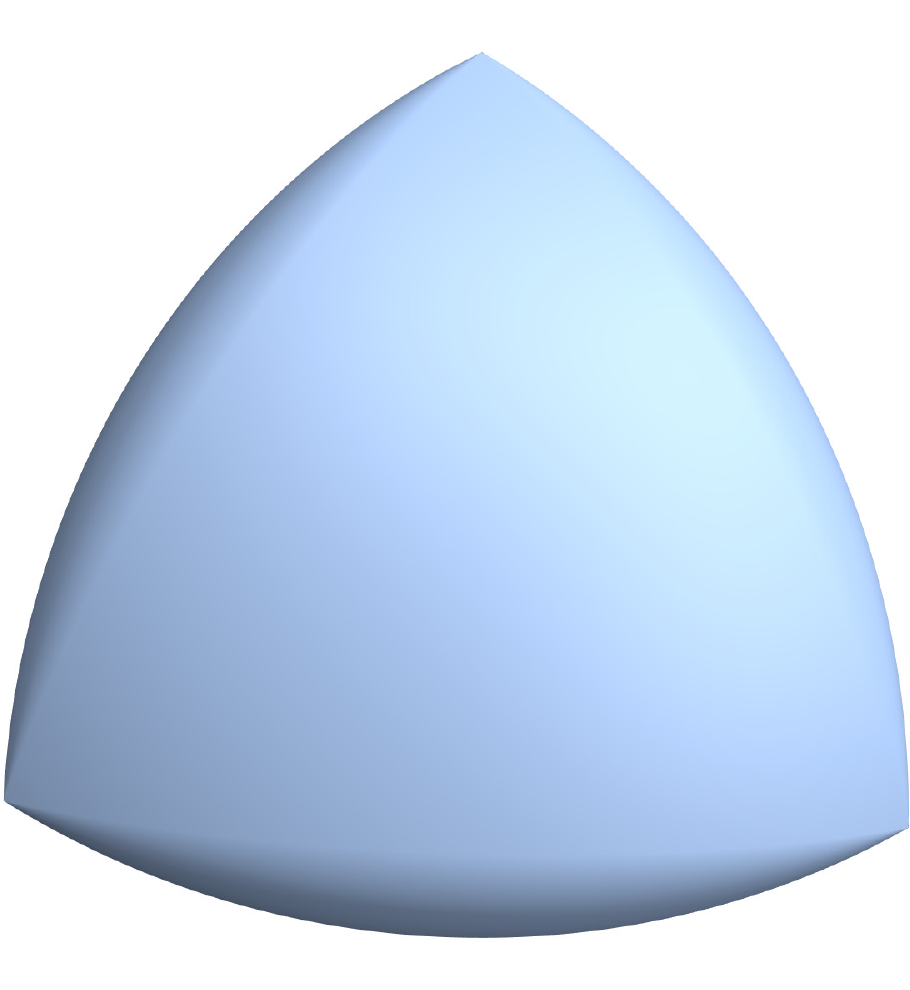}
 \hspace{.2in}
  \includegraphics[width=.4\textwidth]{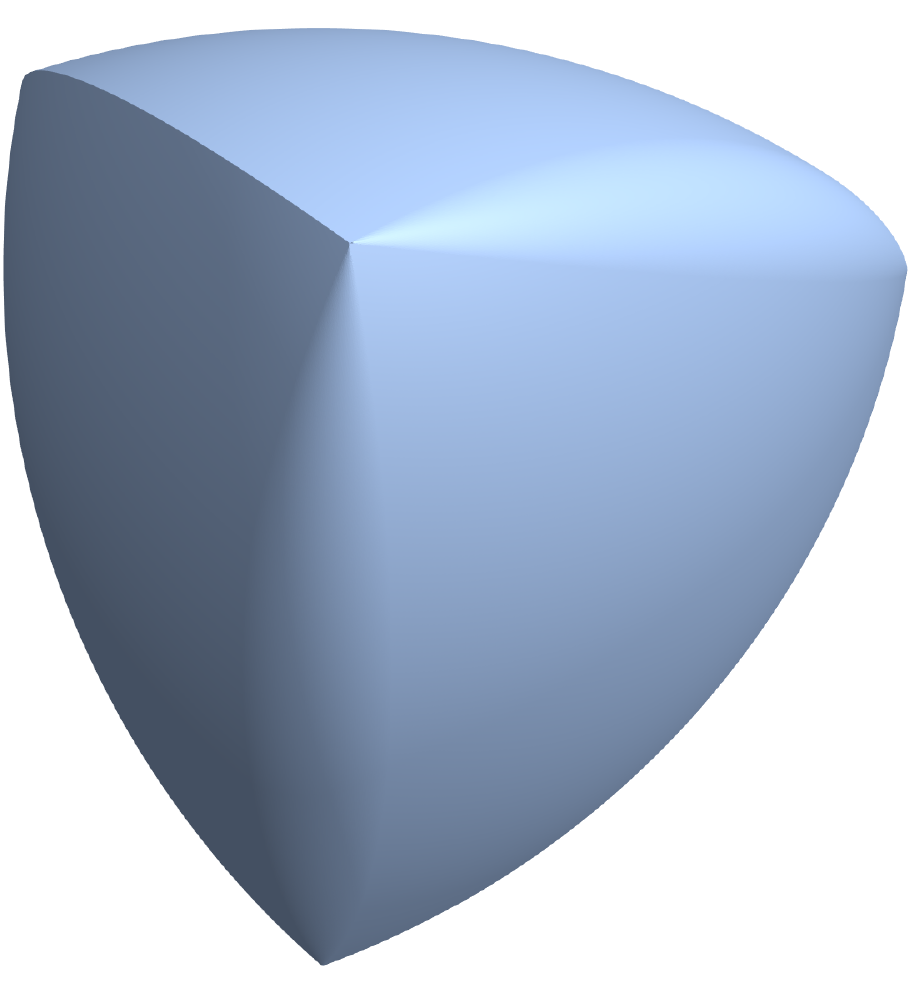}
 \caption{A Meissner tetrahedron in which smoothed edges share a common face.}\label{Meiss2}
\end{figure}

\par The reason we prefer to use intersections is that it allows to show that the family of Meissner polyhedra is dense in a sense specified below. The first result of this kind is due to Sallee \cite{MR296813}, who proved that a certain class of constant width shapes are also dense. It is unclear to us exactly how the family of shapes he considered compares to Meissner polyhedra. 
Nevertheless, our justification is largely based on Sallee's ideas along with recent developments in the understanding of spindle convex shapes.  

\par We recall that the Hausdorff distance between two convex bodies $K_1, K_2\subset \R^3$ is 
$$
d(K_1,K_2):=\inf\left\{r\ge 0: K_1\subset K_2+B_r(0), K_2\subset K_1+B_r(0) \right\}.
$$
Here $K+B_r(0):=\{x+z\in \R^3: x\in K, z\in B_r(0)\}$ is the Minkowski sum of $K$ and $B_r(0)$; $B_r(0)$ is the closed ball of radius $r$ centered at the origin. Moreover, we note that 
$d$ is a complete metric on the space of convex bodies.  The main result of this note is as follows. 
\\\\
\noindent {\bf Density Theorem}.  Assume $K\subset \R^3$ is a constant width body and $\epsilon>0$. There is a Meissner polyhedron $M\subset \R^3$ for which 
$$
d(K,M)\le\epsilon. 
$$

\par This paper is organized as follows.  In the next section, we consider spindle tori and the notion of spindle convexity. These ideas will assist us in describing some basic properties of ball polyhedra and constant width shapes. Then  in section \ref{RPolysect}, we study Reuleaux polyhedra, which is a family of ball polyhedra which includes the Reuleaux tetrahedra. These are the building blocks for Meissner tetrahedra, which are defined in section \ref{MeissSect}.  Finally, we verify the Density Theorem in section \ref{DensitySect}.  In addition, we compute the volume of the Meissner tetrahedra and show how to plot these figures using \texttt{Mathematica} in the appendix. The volume of these shapes was known, however, we include this computation as there does not appear to be a detailed calculation readily available in the literature.

\section{Spindles}\label{SpindleSect}
Throughout this section $n\ge 2$ will be a fixed natural number.   Let $x,y\in \R^n$ with $|x-y|\le 2.$
The {\it spindle} determined by $x$ and $y$ is the intersection of all closed balls of radius one which include both $x$ and $y$. We'll write
$$
\text{Sp}(x,y):=\bigcap_{x,y\in B(z)}B(z)
$$
for this set of points. Again we are using the notation $B(z)$ for the closed ball of radius one centered at $z\in \R^n$. It's clear that $\text{Sp}(x,y)$ is a convex body. We shall see that $\text{Sp}(x,x)=\{x\}$ and also that if $|x-y|=2$, then $\text{Sp}(x,y)=B((x+y)/2)$.
 
 \par For a given $X\subset\R^n$, it will also be convenient to use the notation 
 $$
 B(X):=\bigcap_{x\in X}B(x).
 $$
 For example, since $x,y\in B(z)$ if and only if $z\in B(x)\cap B(y)$, we may write 
 \be
  \text{Sp}(x,y)=B(B(x)\cap B(y) ).
 \ee
 In this section, we will derive some basic properties of spindles. We will also define spindle convex shapes and explain how they are related to constant width shapes.

\subsection{A defining inequality}
Recall that a spindle torus in $\R^3$ is a surface of revolution whose generating curve is a circle which intersects the axis of revolution. The ``inner" portions of such tori are the boundaries of the spindles considered in this note. See Figures \ref{SpindleFigure} and \ref{SpindleFigure2}.    
\begin{prop}
Suppose $0\le a\le 1$. Then
\be\label{aSpFormula}
\textup{Sp}(ae_n,-ae_n)=\left\{x\in \R^n: \left(\sqrt{x_1^2+\dots+x_{n-1}^2}+
\sqrt{1-a^2}\right)^2+x_n^2\le 1\right\}.
\ee
\end{prop}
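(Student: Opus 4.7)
My plan is to use the observation---already noted in the excerpt---that $\textup{Sp}(ae_n,-ae_n) = B(C)$ where $C := B(ae_n) \cap B(-ae_n)$. First I would write $C$ out explicitly: the point $z = (z_1, \dots, z_n) \in \R^n$ lies in $C$ iff both $|z - ae_n|^2 \le 1$ and $|z + ae_n|^2 \le 1$, and since $\max((z_n - a)^2, (z_n + a)^2) = (|z_n| + a)^2$ these combine to
\[
C = \bigl\{z \in \R^n : z_1^2 + \dots + z_{n-1}^2 + (|z_n| + a)^2 \le 1\bigr\}.
\]
Both $C$ and $\textup{Sp}(ae_n,-ae_n) = \bigcap_{z \in C} B(z)$ are then rotationally symmetric about the $x_n$-axis, so it suffices to verify \eqref{aSpFormula} for $x$ of the form $(\rho, 0, \dots, 0, x_n)$ with $\rho := \sqrt{x_1^2 + \dots + x_{n-1}^2} \ge 0$.

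For the inclusion $\textup{Sp}(ae_n,-ae_n) \subseteq \{\cdots\}$, I would simply test against the specific center $z^\ast := -\sqrt{1-a^2}\,e_1$, which lies in $\partial C$; then $x \in B(z^\ast)$ unpacks to precisely $(\rho + \sqrt{1-a^2})^2 + x_n^2 \le 1$. For the reverse inclusion, assume this inequality holds. Then $x_n^2 \le 1 - (\rho + \sqrt{1-a^2})^2 \le a^2$, so $|x_n| \le a$, and by the reflection $x_n \mapsto -x_n$ I may assume $0 \le x_n \le a$. For any $z \in C$, writing $\bar z := (z_1, \dots, z_{n-1})$, the bound $(\rho - z_1)^2 + z_2^2 + \dots + z_{n-1}^2 \le (\rho + |\bar z|)^2$ gives
\[
|x - z|^2 \le (\rho + |\bar z|)^2 + (x_n - z_n)^2.
\]
Maximizing this right-hand side over $z \in C$---and noting that $x_n \ge 0$ forces the worst $z_n$ to be negative, so $z_n = a - u$ and $|\bar z| = \sqrt{1 - u^2}$ for some $u := |z_n| + a \in [a, 1]$---the claim reduces to showing
\[
h(u) := \bigl(\rho + \sqrt{1 - u^2}\bigr)^2 + (x_n + u - a)^2 \le 1 \qquad \text{for all } u \in [a, 1].
\]

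The crux is then the monotonicity of $h$ on $[a, 1]$. Expanding $(\sqrt{1 - u^2})^2 = 1 - u^2$ and $(x_n + u - a)^2$, the $-u^2$ and $+u^2$ cancel, leaving
\[
h'(u) = -\frac{2u\rho}{\sqrt{1 - u^2}} + 2(x_n - a),
\]
which is $\le 0$ because $u, \rho \ge 0$ and $x_n \le a$. Hence $h$ is non-increasing, so $\max_{u \in [a,1]} h(u) = h(a) = (\rho + \sqrt{1-a^2})^2 + x_n^2 \le 1$ by hypothesis, giving $x \in B(z)$ for every $z \in C$ and thus $x \in \textup{Sp}(ae_n, -ae_n)$. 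I expect the main obstacle to be the bookkeeping that reduces the $n$-variable maximization over $C$ to the one-variable function $h$; the clean cancellation producing $h'(u)$ together with the observation that \eqref{aSpFormula} \emph{forces} $|x_n| \le a$---which is exactly what is needed to make $h' \le 0$---are the two features that make this approach succeed.
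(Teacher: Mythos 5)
Your proof is correct, and the forward inclusion is essentially the paper's: both test an arbitrary point of the spindle against a single well-chosen center at distance $\sqrt{1-a^2}$ from the axis (you via the symmetry reduction and the fixed $z^\ast=-\sqrt{1-a^2}\,e_1$, the paper via a $z$ adapted to $x$). The reverse inclusion, however, is organized genuinely differently. The paper first reduces to the case where $x$ lies on the boundary of the right-hand set and $z$ on the boundary of $B(ae_n)\cap B(-ae_n)$ (using convexity and two rounds of orthogonal reductions), then parametrizes both by angles and shows $\frac{d}{d\theta}|x(\theta)-z(\phi)|^2\ge 0$, i.e.\ it differentiates in the \emph{point} parameter and compares to the endpoint $x=ae_n$. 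You instead keep $x$ arbitrary subject to the defining inequality, dominate $|x-z|^2$ by a one-variable function $h(u)$ of the \emph{center} parameter $u=|z_n|+a$ (after the elementary bounds $(\rho-z_1)^2+z_2^2+\cdots+z_{n-1}^2\le(\rho+|\bar z|)^2$ and the sign argument forcing $z_n\le 0$), and show $h'\le 0$ on $[a,1]$ so that $h(a)$, which is exactly the hypothesis, is the worst case. Your route avoids the boundary-reduction step and the trigonometric bookkeeping, at the cost of the worst-case reduction over $z\in C$; the paper's route makes the geometric picture (Figure 4) explicit and directly yields its Remark 2.2 that $\textup{Sp}(ae_n,-ae_n)=B(\partial B(ae_n)\cap\partial B(-ae_n))$, though that remark also follows from your argument since your test center $z^\ast$ lies on that circle. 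The only points worth a word in a final write-up are the trivial ones: $u\le 1$ for $z\in C$, and the non-differentiability of $h$ at $u=1$, handled by continuity.
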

\begin{rem}
We will write $x=(x_1,\dots, x_n)$ for the coordinates of a given $x\in \R^n$ and denote $e_1,\dots, e_n$ for the standard basis vectors in $\R^n$.  
\end{rem} 
\begin{figure}[h]
     \centering
     \begin{subfigure}[t]{0.13\textwidth}
         \centering
         \includegraphics[width=\textwidth]{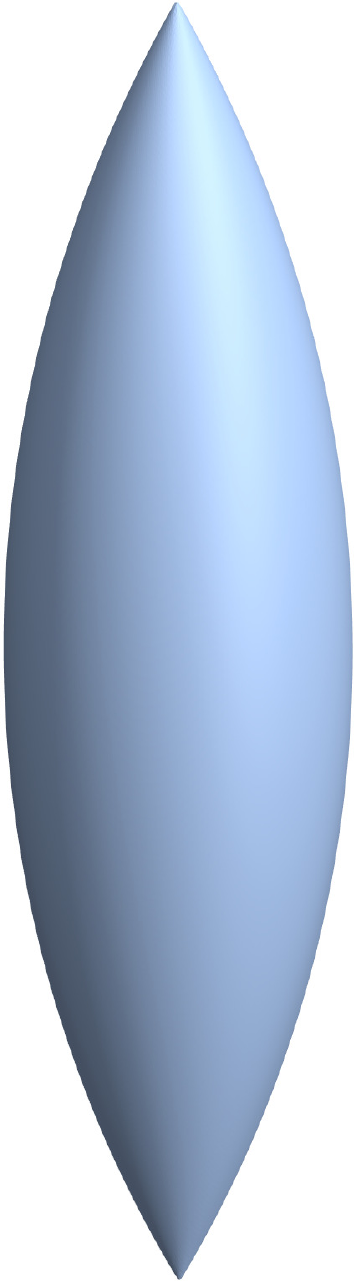}
         \caption{A spindle in $\R^3$.}\label{SpindleFigure}
     \end{subfigure}
    \hspace{.4in}
     \begin{subfigure}[t]{0.6\textwidth}
         \centering
         \includegraphics[width=\textwidth]{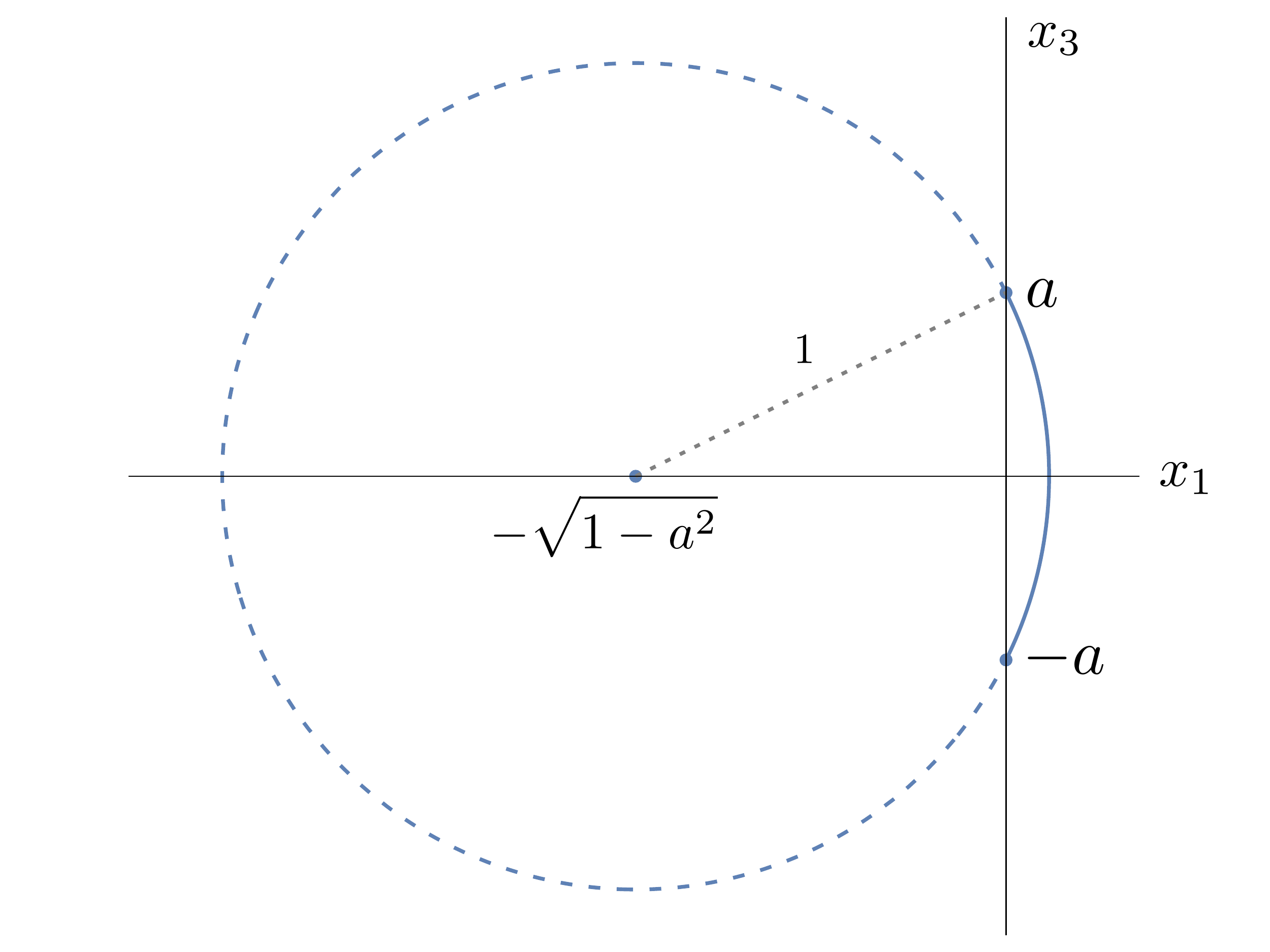}
               \caption{The curve in bold joining $ae_3$ to $-ae_3$ is the arc of a circle in the $x_1x_3$ plane which can be rotated about the $x_3$ axis to obtain $\partial \text{Sp}(ae_3,-ae_3)$.  }\label{SpindleFigure2}
     \end{subfigure}
\end{figure}
\begin{proof}[Proof of $\subset$ in \eqref{aSpFormula}]
Let $x\in \textup{Sp}(ae_n,-ae_n)$. First suppose $x_1^2+\dots+x_{n-1}^2>0$ and choose
$$
z_i=-\sqrt{1-a^2}\frac{x_i}{\sqrt{x_1^2+\dots+x_{n-1}^2}}
$$
for $i=1,\dots, n-1$ and $z_n=0$. As $|z\pm ae_n|=1$, 
\begin{align}
1&\ge |x-z|^2\\
&=\sum^{n-1}_{i=1}(x_i-z_i)^2+(x_n-z_n)^2\\
&=\sum^{n-1}_{i=1}\left(x_i+\sqrt{1-a^2}\frac{x_i}{\sqrt{x_1^2+\dots+x_{n-1}^2}}\right)^2+x_n^2\\
&=\sum^{n-1}_{i=1}\frac{x_i^2}{x_1^2+\dots+x_{n-1}^2}\left(\sqrt{x_1^2+\dots+x_{n-1}^2}+\sqrt{1-a^2}\right)^2+x_n^2\\
&=\left(\sqrt{x_1^2+\dots+x_{n-1}^2}+\sqrt{1-a^2}\right)^2+x_n^2.
\end{align}
Alternatively, if $x_1^2+\dots+x_{n-1}^2=0$, 
we choose any $z$ with $z_n=0$ and $|z|=\sqrt{1-a^2}$ and still find that $x$ belongs to the right hand side of \eqref{aSpFormula}.
\end{proof}
\begin{proof}[Proof of $\supset$ in \eqref{aSpFormula}]
\underline{Step 1}: We are to show that  
\be\label{KeyIneqXXZZ}
|x-z|\le1
\ee
for any $x\in \R^n$ belonging to the right hand side of \eqref{aSpFormula} and $z\in B(ae_n)\cap B(-ae_n)$. 
As the norm is a convex function and both the right hand side of  \eqref{aSpFormula} and $B(ae_n)\cap B(-ae_n)$ are convex, the largest $|x-z|$ can be occurs when $x$ belongs to the boundary of the right hand side of  \eqref{aSpFormula} and $z\in \partial (B(ae_n)\cap B(-ae_n))$.  As a result, it suffices to verify \eqref{KeyIneqXXZZ} when 
\be\label{xBoundaryKeyIneq}
\left(\sqrt{x_1^2+\dots+x_{n-1}^2}+\sqrt{1-a^2}\right)^2+x_n^2=1
\ee
and $z\in \partial (B(ae_n)\cap B(-ae_n))$. We will further reduce the complexity of deriving \eqref{KeyIneqXXZZ} with a series of observations. 

\par \underline{Step 2}: Observe that the right hand side of \eqref{aSpFormula} and $B(ae_n)\cap B(-ae_n)$ are both axially symmetric with respect to the $x_n$-axis and symmetric with respect to reflection about the $x_n=0$ hyperplane. It follows that  we only need to establish \eqref{KeyIneqXXZZ} for $x=x_1e_1+x_ne_n$ with 
\be\label{xSpinplaneConditions}
 \begin{cases}
\left(x_1+\sqrt{1-a^2}\right)^2+x_n^2=1\\
x_1\ge 0\\
x_n\ge 0.
\end{cases}
\ee
Indeed, if $x'$ belongs to the boundary of the right hand side of \eqref{aSpFormula} and $z'\in \partial (B(ae_n)\cap B(-ae_n))$, there is 
an orthogonal transformation $O:\R^n\rightarrow\R^n$ for which: $x=Ox'$ satisfies \eqref{xSpinplaneConditions},  $z=Oz'\in \partial (B(ae_n)\cap B(-ae_n))$, and $|x'-z'|=|x-z|$. Consequently, we will assume that $x$ satisfies these conditions for the remainder of this proof. 
 
 \par \par \underline{Step 3}: Next note that since $z\in \partial (B(ae_n)\cap B(-ae_n))$, then either $z\in \partial B(ae_n)\cap B(-ae_n)$ or $z\in B(ae_n)\cap \partial B(-ae_n)$. If $z\in B(ae_n)\cap\partial B(-ae_n)$, then $z_n\ge 0$. In this case, $w=z-2z_ne_n\in \partial B(ae_n)\cap B(-ae_n)$ and
 \begin{align}
|x-w|^2&=|x-z+2z_ne_n|^2\\
&=|x-z|^2+4z_n^2+2z_n(x_n-z_n)\\
&=|x-z|^2+2z_n^2+2z_nx_n\\
&\ge |x-z|^2.
\end{align}
As a result, we may consider \eqref{KeyIneqXXZZ} for $z\in \partial B(ae_n)\cap B(-ae_n)$ which implies that $z_n\le 0$.

\par Next choose  
$$
w=-\sqrt{z_1^2+\dots +z_{n-1}^2}e_1+z_ne_n. 
$$ 
It is easy to check that $w\in \partial B(ae_n)\cap B(-ae_n)$ and note 
\begin{align}
|x-w|^2&=\left(x_1+\sqrt{z_1^2+\dots +z_{n-1}^2}\right)^2+(x_n-z_n)^2\\
&=\left(\sqrt{x_1^2+\dots +x_{n-1}^2}+\sqrt{z_1^2+\dots +z_{n-1}^2}\right)^2+(x_n-z_n)^2\\
&\ge |x-z|^2.
\end{align}
The inequality follows from the triangle inequality applied to $(x_1,\dots, x_{n-1})$ and $(z_1,\dots, z_{n-1})$.
Therefore, we will derive \eqref{KeyIneqXXZZ} for $z\in \partial B(ae_n)\cap B(-ae_n)$ of the form $z=z_1e_n+z_ne_n$ with 
\be\label{zSpinplaneConditions}
\begin{cases}
z_1^2+(z_n-a)^2=1\\
z_1\le 0\\
z_n\le 0.
\end{cases}
\ee
\begin{figure}[h]
     \centering
         \includegraphics[width=.7\textwidth]{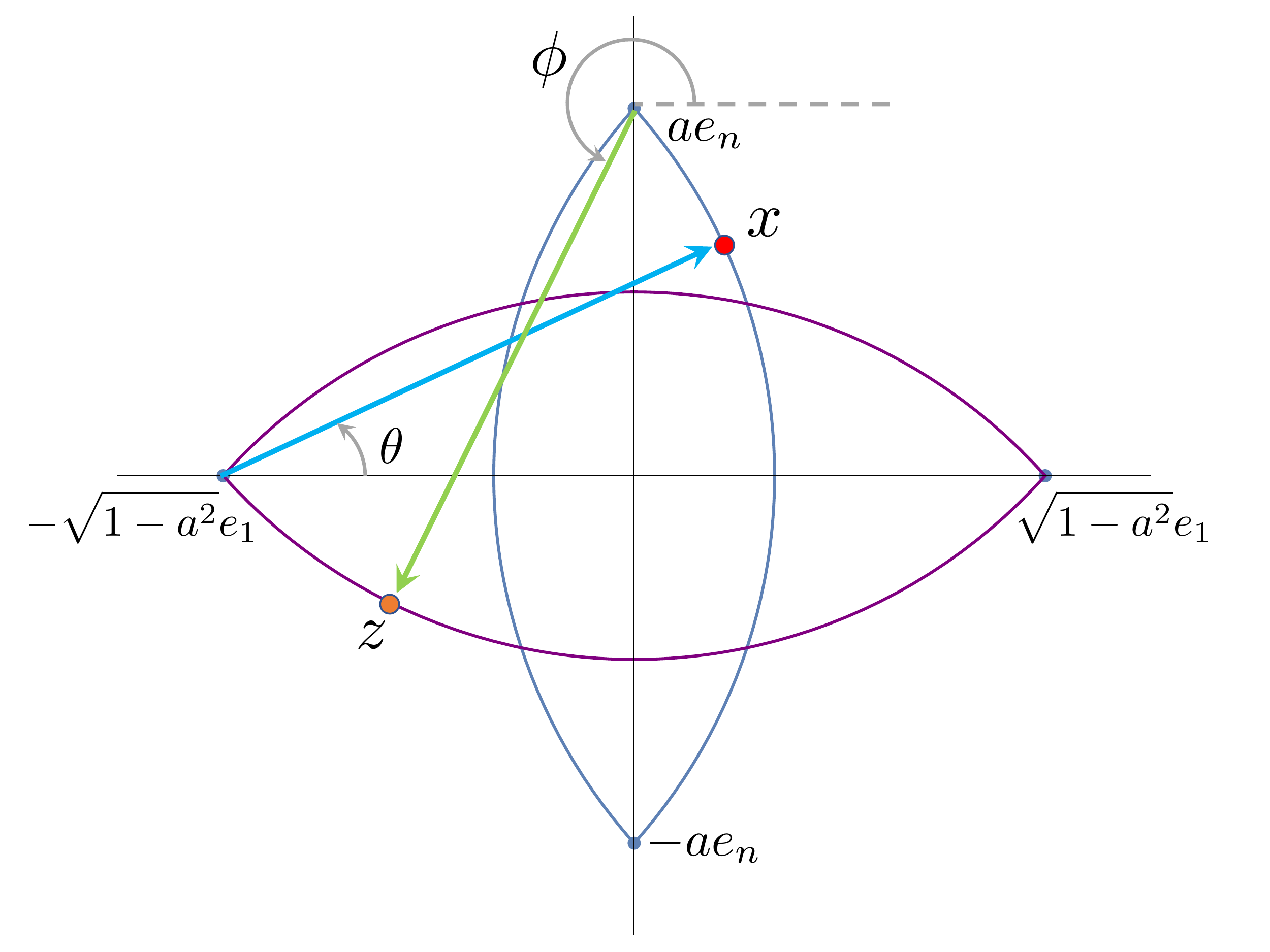}
 \caption{This figure illustrates the representations $x=u(\theta)-\sqrt{1-a^2}e_1$ and $z=u(\phi)+ae_n$ used in step 4 of our proof of Proposition \ref{aSpFormula}.  Here $\theta\in [0,\sin^{-1}(a)]$ and $\phi\in [\pi+\sin^{-1}(a),3\pi/2]$. These observations are crucial in showing $``\supset "$ holds in formula \eqref{aSpFormula}.}\label{SpinDistanceDiag}
\end{figure}
\par \underline{Step 4}:   It is routine to check that \eqref{xSpinplaneConditions} and \eqref{zSpinplaneConditions} are equivalent to 
$$
x=x(\theta):=u(\theta)-\sqrt{1-a^2}e_1\quad \text{and}\quad z=z(\phi):=u(\phi)+ae_n
$$
where $u(t)=\cos(t)e_1+\sin(t)e_n$, for some 
$$
\theta\in [0,\sin^{-1}(a)]\quad \text{and}\quad \phi\in [\pi+\sin^{-1}(a),3\pi/2]. 
$$
See Figure \ref{SpinDistanceDiag}.   If $\theta=\sin^{-1}(a)$, $x(\theta)=ae_n$. The inequality \eqref{KeyIneqXXZZ} holds in this case as $z(\phi)\in \partial B(ae_n)\cap B(-ae_n)$. Otherwise
\begin{align}
\frac{d}{d\theta}\frac{1}{2}|x(\theta)-z(\phi)|^2&=(x(\theta)-z(\phi))\cdot x'(\theta)\\
&=\left(u(\theta)-\sqrt{1-a^2}e_1-z(\phi)\right)\cdot u'(\theta)\\
&=\left(-\sqrt{1-a^2}e_1-z(\phi)\right)\cdot u'(\theta)\\
&=\left(-(\sqrt{1-a^2}e_1+ae_n)-u(\phi)\right)\cdot u'(\theta)\\
&=\left(u(\pi+\sin^{-1}(a))-u(\phi)\right)\cdot u'(\theta)\\
&=\sin(\pi+\sin^{-1}(a)-\theta)-\sin(\phi-\theta)\\
&\ge 0,
\end{align}
as 
$$
\pi\le\pi+\sin^{-1}(a)-\theta\le \phi-\theta\le  3\pi/2
$$
and $\sin$ is decreasing on $[\pi,3\pi/2]$. Therefore, $|x(\theta)-z(\phi)|^2\le |x(\sin^{-1}(a))-z(\phi)|^2=1$.
\end{proof}
\begin{rem}\label{IntersectionOverConvBody}
Our proof actually shows  $\text{Sp}(ae_n,-ae_n)=B(\partial B(ae_n)\cap \partial B(-ae_n))$.
\end{rem}

\par It also turns out that each spindle $\textup{Sp}(x,y)$ is simply related to $\textup{Sp}(ae_n,-ae_n)$ for an appropriate choice of $a$.  
\begin{prop}\label{SpInvarianceProp}
Suppose $x,y\in \R^n$ with $0< |x-y|\le 2$. Further assume $O: \R^n\rightarrow \R^n$ is an orthogonal transformation with
\be
Oe_n=\frac{x-y}{|x-y|}.
\ee
Then 
\be\label{SpxySpaminusu}
\textup{Sp}(x,y)=\frac{x+y}{2}+O\textup{Sp}\left(\frac{|x-y|}{2}e_n,-\frac{|x-y|}{2}e_n\right).
\ee
Moreover, if $|x-y|=2$, then $\textup{Sp}(x,y)=B((x+y)/2)$. 
\end{prop}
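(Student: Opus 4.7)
The plan is to exhibit the rigid motion $T\colon\mathbb{R}^n\to\mathbb{R}^n$ defined by $T(w)=\tfrac{x+y}{2}+Ow$, and show that it carries $\textup{Sp}\!\left(\tfrac{|x-y|}{2}e_n,-\tfrac{|x-y|}{2}e_n\right)$ bijectively onto $\textup{Sp}(x,y)$. Setting $a=|x-y|/2$, a direct computation using $Oe_n=(x-y)/|x-y|$ gives $T(ae_n)=x$ and $T(-ae_n)=y$. Since $O$ is orthogonal, $T$ is an isometry, and therefore $T(B(w))=B(T(w))$ for every $w\in\mathbb{R}^n$; equivalently $T^{-1}(B(z))=B(T^{-1}(z))$.

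Next I would translate the defining condition of the spindle. For any $z\in\mathbb{R}^n$, one has $x,y\in B(z)$ if and only if $T^{-1}(x)=ae_n$ and $T^{-1}(y)=-ae_n$ belong to $T^{-1}(B(z))=B(T^{-1}(z))$. Writing $w=T^{-1}(z)$, this sets up a bijection between the centers $z$ of unit balls containing $\{x,y\}$ and the centers $w$ of unit balls containing $\{ae_n,-ae_n\}$. Because $T$ is a bijection, it commutes with intersections, so
\begin{equation}
\textup{Sp}(x,y)=\bigcap_{x,y\in B(z)}B(z)=\bigcap_{ae_n,-ae_n\in B(w)}T(B(w))=T\!\left(\bigcap_{ae_n,-ae_n\in B(w)}B(w)\right),
\end{equation}
which is precisely the right-hand side of \eqref{SpxySpaminusu}.

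For the final assertion, I would specialize to $|x-y|=2$, i.e.\ $a=1$. By the previous proposition,
\begin{equation}
\textup{Sp}(e_n,-e_n)=\left\{x\in\mathbb{R}^n:\left(\sqrt{x_1^2+\dots+x_{n-1}^2}\right)^2+x_n^2\le 1\right\}=B(0),
\end{equation}
the closed unit ball centered at the origin. Since $O$ is orthogonal, $O(B(0))=B(0)$, and therefore $T(B(0))=\tfrac{x+y}{2}+B(0)=B((x+y)/2)$, as desired.

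There is no real obstacle here: once the isometry $T$ is written down and one verifies $T(ae_n)=x$, $T(-ae_n)=y$, the entire statement is a consequence of the fact that isometries permute unit balls and commute with intersections. The only minor point worth stating carefully is the bijection between the indexing sets of the two intersections, which justifies interchanging $T$ with the intersection.
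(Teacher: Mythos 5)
Your proof is correct and follows essentially the same route as the paper: both arguments rest on the equivariance of the spindle construction under the rigid motion $w\mapsto \frac{x+y}{2}+Ow$ (the paper phrases this as the general identity $\textup{Sp}(Ox+c,Oy+c)=c+O\,\textup{Sp}(x,y)$ proved by a two-sided inclusion, while you phrase it as the isometry permuting unit balls and commuting with the intersection over a bijectively matched index set). The $|x-y|=2$ case is handled identically in both, by reading off $\textup{Sp}(e_n,-e_n)=B(0)$ from the formula of the preceding proposition.
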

\begin{proof}
First we claim that 
\be\label{SpInvarianceProp}
\textup{Sp}(Ox+c,Oy+c)=c+O\textup{Sp}(x,y)
\ee
for any orthogonal transformation $O$ of $\R^n$ and fixed $c\in \R^n$. Let $w\in \textup{Sp}(Ox+c,Oy+c)$ and $z'\in B(x)\cap B(y)$. Set $z=Oz'+c$, and note $|z-(Ox+c)|\le 1$ and $|z-(Oy+c)|\le 1$. It follows that  
$$
1\ge |w-z|= |O^t(w-c)-z'|.
$$
As a result, $O^t(w-c)\in  \textup{Sp}(x,y)$. That is, 
$w\in c+O\textup{Sp}(x,y)$ and 
$$
\textup{Sp}(Ox+c,Oy+c)\subset c+O\textup{Sp}(x,y).
$$
The reverse inclusion holds similarly. 

\par Now assume $O$ is an orthogonal transformation as in the statement of this proposition. Observe that the mapping $$w\mapsto Ow+\frac{x+y}{2}$$ sends $\frac{|x-y|}{2}e_n$ to $x$ and $-\frac{|x-y|}{2}e_n$ to $y$. 
According to \eqref{SpInvarianceProp}, 
\begin{align}
\textup{Sp}(x,y)&=\textup{Sp}\left(O\left(\frac{|x-y|}{2}e_n\right)+\frac{x+y}{2},O\left(-\frac{|x-y|}{2}e_n\right)+\frac{x+y}{2}\right)\\
&=\frac{x+y}{2}+O\textup{Sp}\left(\frac{|x-y|}{2}e_n,-\frac{|x-y|}{2}e_n\right).
\end{align}
Finally, note that \eqref{aSpFormula} gives that $\textup{Sp}(e_n,-e_n)=B(0);$ so if $|x-y|=2$, then 
$$
\textup{Sp}(x,y)=\frac{x+y}{2}+O\textup{Sp}(e_n,-e_n)=\frac{x+y}{2}+B(0)=B((x+y)/2).
$$ 
\end{proof}
\begin{rem}
Formula \eqref{aSpFormula} also implies that $\textup{Sp}(0,0)=\{0\}$. Denoting $I_n$ as the identity mapping of $\R^n$, 
we also have 
$$
\textup{Sp}(x,x)=\textup{Sp}(I_n0+x,I_n0+x)=x+I_n\textup{Sp}(0,0)=\{x\}
$$
by \eqref{SpInvarianceProp}.  
\end{rem} 
\begin{rem} Formula \eqref{SpInvarianceProp} gives us another way to see that $\text{Sp}(ae_n,-ae_n)$ is cylindrically symmetric. Indeed if
$O^tO=I_n$ with $Oe_n=e_n$, then 
$$
O\text{Sp}(ae_n,-ae_n)=\text{Sp}(ae_n,-ae_n).
$$
\end{rem} 
The above proposition implies that $\text{Sp}(x,y)$ is cylindrically symmetric about the line passing through $x$ and $y$. In addition, we 
may write the general form of  \eqref{aSpFormula}.
\begin{cor}\label{GenSpFormulaCor}
Suppose $0<|x-y|\le 2$. Then $w\in \textup{Sp}(x,y)$ if and only if
\begin{align}
&\left|w-\frac{x+y}{2}-\left(\left(w-\frac{x+y}{2}\right)\cdot \frac{x-y}{|x-y|}\right)\frac{x-y}{|x-y|}\right| +\sqrt{1-\left|\frac{x-y}{2}\right|^2}\\
&\hspace{1.5in}\le \sqrt{1-\left(\left(w-\frac{x+y}{2}\right)\cdot \frac{x-y}{|x-y|}\right)^2}.
\end{align}
\end{cor}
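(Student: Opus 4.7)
The plan is to reduce this corollary to the special case \eqref{aSpFormula} via the change of coordinates provided by Proposition \ref{SpInvarianceProp}. Setting $a=|x-y|/2\in(0,1]$, that proposition shows that $w\in\textup{Sp}(x,y)$ if and only if
$$v:=O^t\!\left(w-\frac{x+y}{2}\right)\in \textup{Sp}(ae_n,-ae_n),$$
where $O$ is an orthogonal transformation with $Oe_n=(x-y)/|x-y|$. So the goal reduces to rewriting the defining inequality \eqref{aSpFormula} for $v$ entirely in terms of $w$.

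First, I would identify the two coordinates of $v$ that appear in \eqref{aSpFormula}. Since $O$ is orthogonal,
$$v_n = v\cdot e_n = \left(w-\frac{x+y}{2}\right)\cdot Oe_n = \left(w-\frac{x+y}{2}\right)\cdot\frac{x-y}{|x-y|},$$
and $|v|=|w-(x+y)/2|$. Consequently $\sqrt{v_1^2+\dots+v_{n-1}^2}=\sqrt{|v|^2-v_n^2}$ equals the length of the component of $w-(x+y)/2$ orthogonal to $(x-y)/|x-y|$, namely
$$\left|w-\frac{x+y}{2}-\left(\left(w-\frac{x+y}{2}\right)\cdot\frac{x-y}{|x-y|}\right)\frac{x-y}{|x-y|}\right|.$$

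Next, I would substitute these expressions into \eqref{aSpFormula}. With $a=|x-y|/2$, we have $\sqrt{1-a^2}=\sqrt{1-|(x-y)/2|^2}$, and the defining condition
$$\left(\sqrt{v_1^2+\dots+v_{n-1}^2}+\sqrt{1-a^2}\right)^2+v_n^2\le 1$$
is equivalent to
$$\left(\sqrt{v_1^2+\dots+v_{n-1}^2}+\sqrt{1-a^2}\right)^2\le 1-v_n^2.$$
Since any $v_n$ with $|v_n|>1$ immediately violates \eqref{aSpFormula}, I may assume $|v_n|\le 1$ so that both sides are nonnegative. Taking square roots yields
$$\sqrt{v_1^2+\dots+v_{n-1}^2}+\sqrt{1-a^2}\le\sqrt{1-v_n^2},$$
which, upon substituting the expressions above, is exactly the stated inequality.

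I do not anticipate any real obstacle. The whole argument is essentially a bookkeeping exercise around an orthogonal change of variables; the only mild point of care is confirming that we may legitimately take square roots in the last step, and that the endpoint case $|x-y|=2$ (so $a=1$, $\sqrt{1-a^2}=0$) and the degenerate case where $w-(x+y)/2$ is parallel to $x-y$ (so the orthogonal component vanishes) are both covered directly by the algebra.
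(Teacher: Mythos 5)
Your proposal is correct and follows the same route as the paper: both apply Proposition \ref{SpInvarianceProp} to reduce to the axial case and then observe that the stated inequality is just \eqref{aSpFormula} for $v=O^t\bigl(w-\frac{x+y}{2}\bigr)$ with $a=\frac{1}{2}|x-y|$ rewritten intrinsically in terms of $w$. You simply spell out the bookkeeping (identifying $v_n$ and the orthogonal component, and justifying the square root) that the paper leaves implicit.
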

\begin{proof}
Select an orthogonal transformation $O$ as in the statement of Proposition \ref{SpInvarianceProp}. Then $w\in \text{Sp}(x,y)$ if and only if 
there is $v\in   \text{Sp}\left(\frac{1}{2}|x-y|e_n,-\frac{1}{2}|x-y|e_n\right)$ with $w=(x+y)/2+Ov$.  The asserted inequality is equivalent to $v$ belonging to the right hand side of  \eqref{aSpFormula} for $a=\frac{1}{2}|x-y|$. 
\end{proof}
\begin{figure}[h]
     \centering
     \begin{subfigure}[t]{0.4\textwidth}
         \centering
         \includegraphics[width=\textwidth]{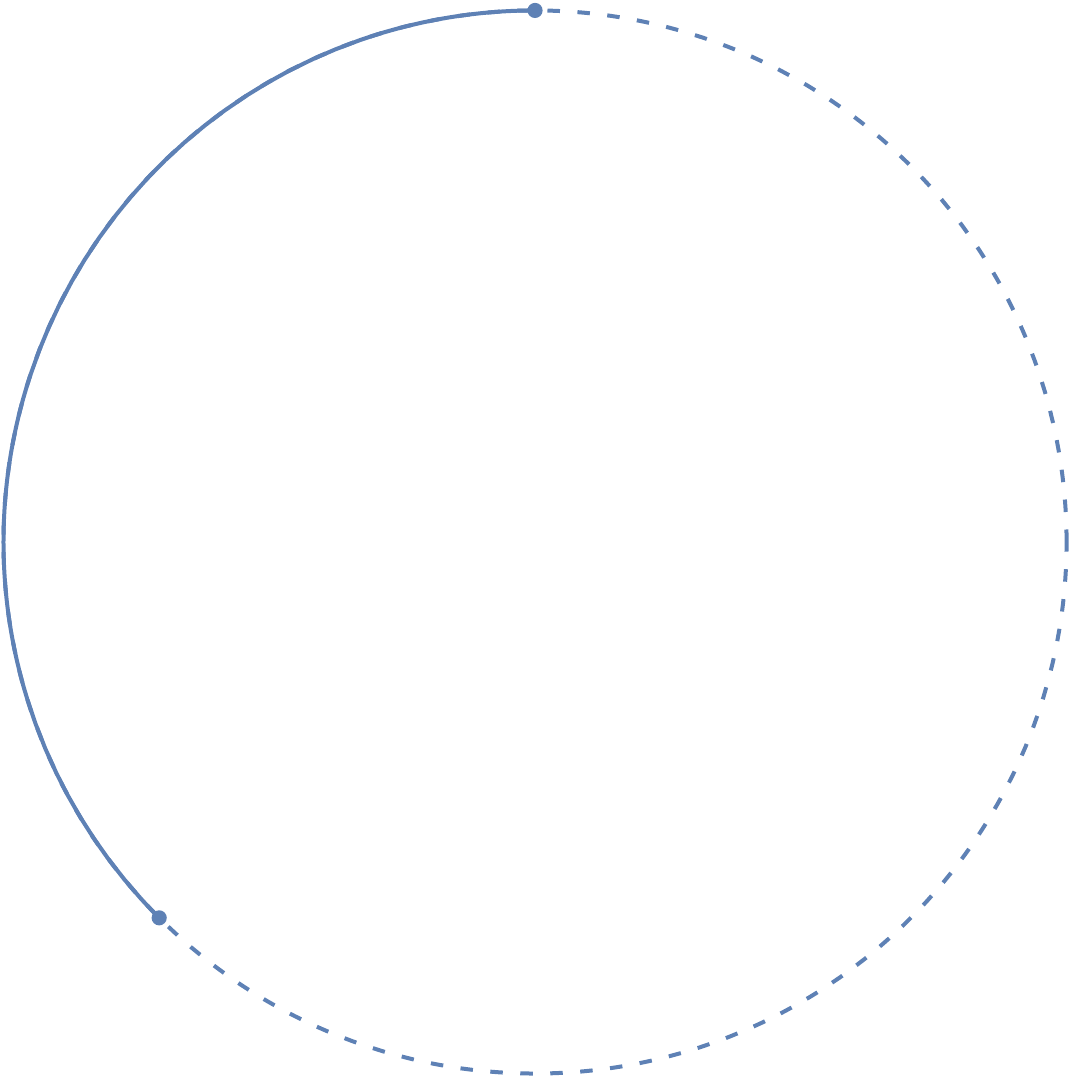}
               \caption{This figure shows an example of a short arc of a circle with the solid circular arc. The circle which includes this short arc is dashed.}
     \end{subfigure}
          \hspace{.1in}
          \begin{subfigure}[t]{0.5\textwidth}
         \centering
         \includegraphics[width=\textwidth]{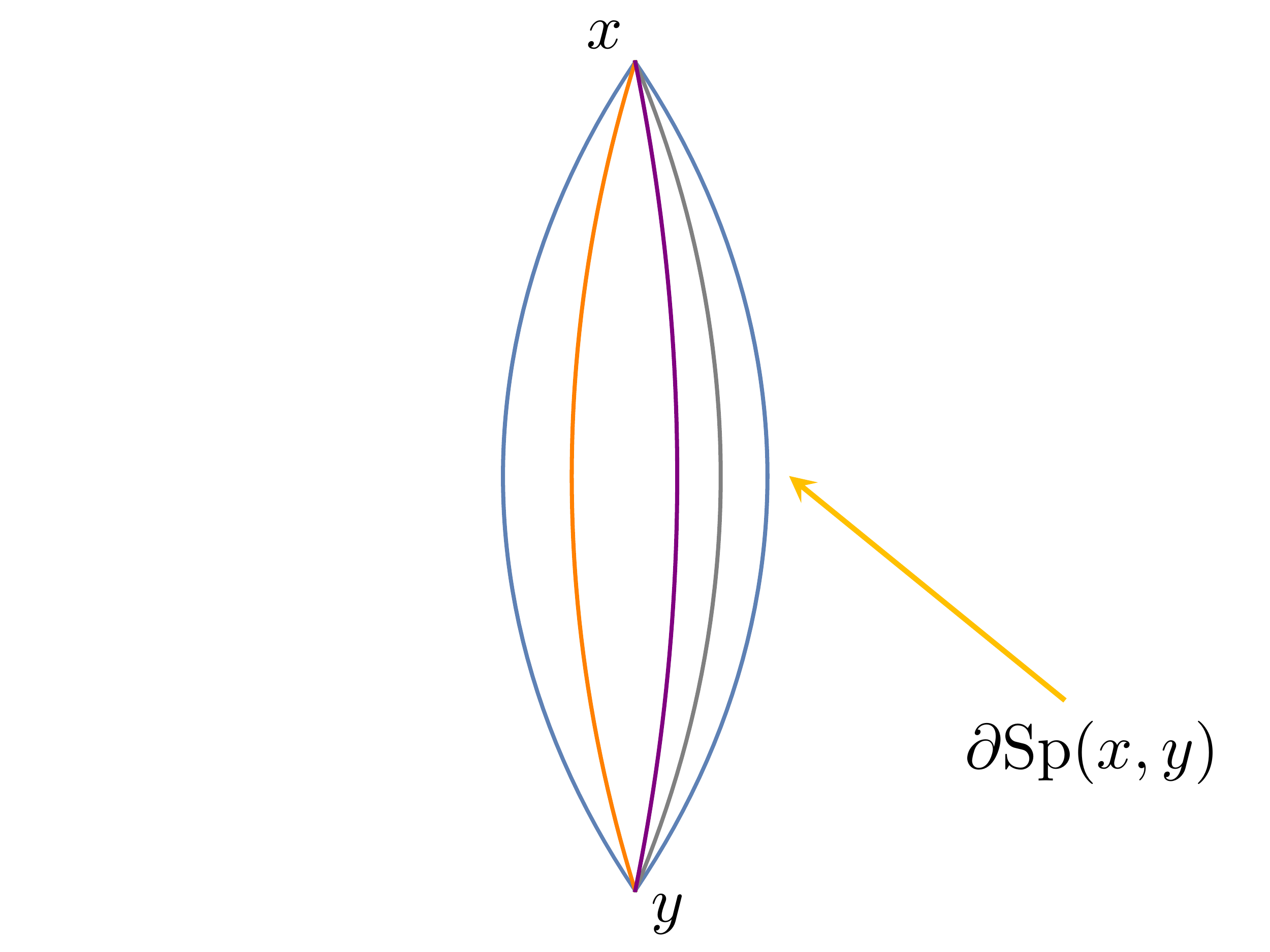}
 \caption{ This diagram displays a profile of a spindle $\text{Sp}(x,y)$ with three short arcs of radius at least one joining $x$ and $y$. Proposition \ref{ShortArcProp} asserts that this spindle is the union of all such short arcs. }
     \end{subfigure}
\end{figure}

\subsection{Short arcs}
 A {\it circle} $C\subset \R^n$ is a circle in a two-dimensional subspace of $\R^n$. Suppose the radius of $C$ is $r$ and $x,y\in C$. A {\it short arc} of $C$ joining $x$ and $y$ is a smaller of the two circular arcs within $C$ that joins these points.  Of course if $|x-y|<2r$ there will be a unique short arc within $C$ that joins these points; otherwise there will be two.  We also consider the line segment between $x$ and $y$ as the short arc of a circle with radius $r=\infty$. 
\begin{prop}\label{ShortArcProp}
Suppose $|x-y|\le 2$. Then $\textup{Sp}(x,y)$ is the union of all short arcs of circles with radius at least one which joins $x$ and $y$. 
\end{prop}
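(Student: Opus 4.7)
The plan is to reduce to the canonical spindle $\text{Sp}(ae_n, -ae_n)$ via Proposition \ref{SpInvarianceProp}, exploit its cylindrical symmetry to descend to a single plane, and then explicitly parametrize short arcs through $\pm ae_n$ by the location of their circle's center.

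First I would set $a = |x-y|/2 \in (0,1]$ (the trivial case $x=y$ aside) and, using the rigid motion supplied by Proposition \ref{SpInvarianceProp} together with the obvious fact that rigid motions send short arcs of radius $r$ to short arcs of radius $r$, reduce to the case $x = ae_n$, $y = -ae_n$. Next, any short arc joining $\pm ae_n$ is contained in a $2$-plane that contains these two points and hence the entire $x_n$-axis; since $\text{Sp}(ae_n,-ae_n)$ and the union of such short arcs are both invariant under the $O(n-1)$-action fixing $e_n$, it suffices to verify the claim in the half-plane $P = \{w : w_2 = \dots = w_{n-1} = 0,\ w_1 \ge 0\}$.

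The main computation is then two-dimensional. For $w = w_1 e_1 + w_n e_n \in P$ with $w_1 > 0$, the unique circle through $\pm ae_n$ and $w$ has its center on the $x_1$-axis; writing this center as $(-c, 0)$, the equidistance conditions yield
\[
c = \frac{a^2 - w_1^2 - w_n^2}{2 w_1}, \qquad r^2 = c^2 + a^2.
\]
The minor (short) arc lies on the side of the chord $\{w_1=0\}$ opposite the center, so $w$ sits on this short arc iff $c \ge 0$, and $r \ge 1$ iff $c \ge \sqrt{1-a^2}$. Rearranging the latter inequality gives $(w_1 + \sqrt{1-a^2})^2 + w_n^2 \le 1$, which is exactly the defining inequality \eqref{aSpFormula} for $\text{Sp}(ae_n, -ae_n)$. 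Points on the $x_n$-axis with $|w_n| \le a$ are covered by the degenerate case of the line segment, viewed as a short arc of ``radius $\infty$.''

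The main obstacle I anticipate is the bookkeeping that distinguishes the minor arc from the major arc: one must verify that, for $w$ in the spindle with $w_1 > 0$, the unique circle through $\pm a e_n$ and $w$ really has its center on the far side of the chord, so that $w$ lies on the minor arc and not the major one. This amounts to checking the sign condition $c \ge 0$, and it follows cleanly from the fact — easily read off from \eqref{aSpFormula} — that any $w \in \text{Sp}(ae_n,-ae_n)$ with $w_1 \ge 0$ already satisfies $w_1^2 + w_n^2 \le a^2$.
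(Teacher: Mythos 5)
Your proposal is correct and follows essentially the same route as the paper's proof: reduce to $\textup{Sp}(ae_n,-ae_n)$, use the cylindrical symmetry to work in the $x_1x_n$-plane with $w_1\ge 0$, and convert the defining inequality \eqref{aSpFormula} into the statement that the circle through $\pm ae_n$ and $w$ has center offset $c=\frac{a^2-w_1^2-w_n^2}{2w_1}=\sqrt{r^2-a^2}\ge\sqrt{1-a^2}$, which is exactly the paper's algebra. Your only added value is making the minor-versus-major arc sign check ($c\ge 0$) explicit, a point the paper passes over quickly.
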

\begin{proof}
Without any loss of generality, we may assume $x=ae_n$ and $y=-ae_n$ for $0\le a\le 1$.   Suppose 
$w\in \text{Sp}(ae_n,-ae_n)$. If $w_1=\dots=w_{n-1}=0$, then $w=w_ne_n$ with $|w_n|\le a$; so $w$ is on the line segment between $ae_n$ to $-ae_n$. Alternatively, suppose $w\in \text{Sp}(ae_n,-ae_n)$ with $w$ not on the line segment between $ae_n$ to $-ae_n$.  There is an orthogonal mapping $O:\R^n\rightarrow \R^n$ which fixes the $e_n$ direction and such that $z=Ow$ satisfies  
\be\label{SpecialZeeCond}
z_2=\dots=z_{n-1}=0\; \text{ and }\; z_1> 0.
\ee
It is enough to show that this point $z$ is on a short arc of a circle with radius at least one and that joins $ae_n$ to $-ae_n$. Indeed, we can apply $O^{-1}$ to this arc to obtain the desired short arc for $w$. 

\par We now suppose $z\in \text{Sp}(ae_n,-ae_n)$ satisfies \eqref{SpecialZeeCond}. Then we have
$$
z_1^2+1-a^2+2z_1\sqrt{1-a^2}+z_n^2= \left(z_1+\sqrt{1-a^2}\right)^2+z_n^2\le 1.
$$
This inequality gives 
$$
\sqrt{1-a^2}\le \frac{a^2-z_1^2-z_n^2}{2z_1}=\sqrt{r^2-a^2}
$$
for some $r\ge 1$.  That is, 
$$
\left(z_1+\sqrt{r^2-a^2}\right)^2+z_n^2=r^2.
$$
Then $z$ belongs to the short arc of a circle of radius $r\ge 1$ which joins $ae_n$ to $-ae_n$.   

\par Conversely, assume that $z$ belongs to a short arc of a circle $C$ of radius $r\ge 1$ which joins $ae_n$ to $-ae_n$.  Without any loss of generality, we may suppose $C$ is a subset of the $x_1x_n$ plane and that $z_1\ge 0$.  Observe
$$
z_1^2+r^2-a^2+2z_1\sqrt{r^2-a^2}+z_n^2=\left(z_1+\sqrt{r^2-a^2}\right)^2+z_n^2=r^2,
$$
which can be expressed as
$$
z_1^2-a^2+2z_1\sqrt{r^2-a^2}+z_n^2=0
$$
Since $z_1\ge 0$ and $r\ge 1$, we have 
$$
z_1^2-a^2+2z_1\sqrt{1-a^2}+z_n^2\le 0.
$$
That is, 
$$
\left(z_1+\sqrt{1-a^2}\right)^2+z_n^2\le 1.
$$
We conclude that $z\in \text{Sp}(ae_n,-ae_n)$.
\end{proof}

\subsection{Spindle convexity}
We'll say that a subset $K\subset \R^n$ with diameter less than or equal to 2 is {\it spindle convex} if $\text{Sp}(x,y)\subset K$ whenever $x,y\in K$. 
Equivalently, $K$ is spindle convex if and only if for each $x,y\in K$ and short arc $\gamma$ of a circle of radius at least one joining $x$ and $y$, then $\gamma\subset K$.  Also note that $K$ is strictly convex since the interior of the line segment between $x$ and $y$ lies in the interior of $\text{Sp}(x,y)$. 
\par  Every closed ball $B$ of radius one is spindle convex. Indeed if $x,y\in B$, then by definition
$$
\text{Sp}(x,y)=\bigcap_{x,y\in B(z)}B(z)\subset B.
$$
It is also easy to check that the intersection of any collection of spindle convex sets is again spindle convex. Therefore, the intersection of any collection of closed balls of radius one is spindle convex. This observation in turn implies each spindle $\text{Sp}(x,y)$ itself is spindle convex as it is the intersection of closed balls of radius one.

\par We will verify the converse to our observation that the intersection of closed balls of radius one is spindle convex. That is, we will show that any spindle convex $K$ is the intersection of closed balls of radius one. To this end, we'll say that $\partial B$ is a {\it supporting sphere} through $x$ if $B$ is a ball of radius one, $x\in \partial B\cap \partial K$, and $K\subset B$. The following proposition is proved in Lemma 3.1 and Corollary 3.4 of \cite{MR2343304}, Theorem 3.1 of \cite{MR2593321}, and Theorem 6.1.5 of \cite{MR3930585}. Nevertheless, we will include a proof for completeness. 

\begin{prop}
Assume $K\subset\R^n$ is convex body with diameter at most two. The following statements are equivalent. \\
(i) $K$ is spindle convex.\\
(ii) For each $x\in \partial K$ and supporting plane $L$ for $K$ at $x$, there is a supporting sphere through $x$ which is tangent to $L$ and lies on the same side of $L$ as $K$ does.\\
(iii) $K$ is the intersection of closed balls of radius one.
\end{prop}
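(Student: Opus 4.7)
I will establish the cycle $(iii) \Rightarrow (i) \Rightarrow (ii) \Rightarrow (iii)$. The implication $(iii) \Rightarrow (i)$ is immediate from the remarks preceding the proposition: every closed unit ball is spindle convex and an arbitrary intersection of spindle convex sets is spindle convex. The implication $(ii) \Rightarrow (iii)$ reduces to a convex-separation argument. One inclusion is trivial; for the other, given $w \notin K$ a supporting hyperplane $L$ of $K$ can be chosen at some $x \in \partial K$ so as to strictly separate $w$ from $K$. Applying $(ii)$ yields a closed unit ball $B \supset K$ lying on $K$'s side of $L$; since $w$ is on the opposite side, $w \notin B$, so $w$ is not in the intersection of all unit balls containing $K$.

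The heart of the matter is $(i) \Rightarrow (ii)$. Fix $x \in \partial K$, a supporting plane $L$ at $x$, and the inward unit normal $\nu$ to $L$. The natural candidate is the sphere $\partial B(x + \nu)$, so the claim reduces to showing $K \subset B(x + \nu)$. I plan to argue by contradiction: suppose some $y \in K$ satisfies $|y - (x + \nu)| > 1$. Expanding the square, this is equivalent to $\hat y \cdot \nu < |y - x|/2$, where $\hat y = (y - x)/|y - x|$. Writing $\alpha = \arcsin(|y - x|/2)$ and letting $\beta$ denote the angle between $\hat y$ and $\nu$, the hypothesis becomes $\cos\beta < \sin\alpha$, equivalently $\alpha + \beta > \pi/2$; in particular $\hat y$ is not parallel to $\nu$.

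By spindle convexity, $\textup{Sp}(x, y) \subset K$, and by Proposition~\ref{ShortArcProp} the spindle contains short arcs of radius one joining $x$ to $y$, one lying in each two-dimensional plane through the chord $xy$ for each of the two possible centers on either side of the chord. An inscribed-angle computation shows that the tangent vector at $x$ to such an arc makes angle exactly $\alpha$ with $\hat y$ and lies in the plane of the arc. Choosing this plane to contain $\hat y$ and $\nu$ and picking the arc whose center lies on the same side of $\hat y$ as $\nu$, the resulting tangent $t$ satisfies $t \cdot \nu = \cos\alpha \cos\beta - \sin\alpha \sin\beta = \cos(\alpha + \beta) < 0$. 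The arc then enters the half-space $\{w : (w - x) \cdot \nu < 0\}$ immediately after $x$, contradicting the fact that $L$ supports $K$ at $x$. The boundary case $|y - x| = 2$ is handled using Proposition~\ref{SpInvarianceProp}: there $\textup{Sp}(x, y) = B((x + y)/2)$ is itself a unit ball contained in $K$, and for it to lie on $K$'s side of $L$ it must be tangent to $L$ at $x$ from inside, forcing $y = x + 2\nu \in B(x + \nu)$ and contradicting $|y - (x + \nu)| > 1$.

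The main obstacle is the tangent-direction analysis in $(i) \Rightarrow (ii)$; the remaining implications are essentially bookkeeping once $B(x + \nu)$ has been identified as the correct candidate ball.
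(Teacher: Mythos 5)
Your proposal is correct and takes essentially the same route as the paper: the same cycle of implications, the same candidate ball $B(x+\nu)$ tangent to the supporting plane on $K$'s side, and for $(i)\Rightarrow(ii)$ the same contradiction via a short arc of radius one from $x$ to a point $y\in K\setminus B(x+\nu)$, chosen in the plane through $x$ spanned by $\nu$ and $y-x$, which escapes the supporting half-space. The only difference is that your tangent-chord-angle computation (and the separate treatment of $|x-y|=2$) makes explicit the escape of the arc, which the paper justifies by appeal to a figure.
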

\begin{proof}
$(i)\Longrightarrow (ii)$ Suppose $x\in \partial K$, $H$ is a half-space such that $\partial H$ is a supporting plane for $\partial K$ at $x$, and $K\subset H$.  Let $B$ be the ball of radius one for which $\partial B$ is tangent to $\partial H$ at $x$ and $B\subset H$.  We claim that $\partial B$ is a supporting sphere for $K$ at $x$. If not, there is $y\in K$ with $y\not\in B$. Consider the two dimensional plane $\Pi$ determined by the line through the center of $B$ and $x$ and the line through the center of $B$ and $y$.  Since $y\not\in B$ and the diameter of $K$ is less than or equal to two, there is a short arc $\gamma\subset \Pi$ of radius one which joins $x$ and $y$ and is not included $H$; see Figure \ref{SpindConvFigFig}. In particular, there is $z\in \gamma$ which does not belong to $K$. However, this would contradict our assumption that $K$ is spindle convex. 
\begin{figure}[h]
\centering
 \includegraphics[width=.6\textwidth]{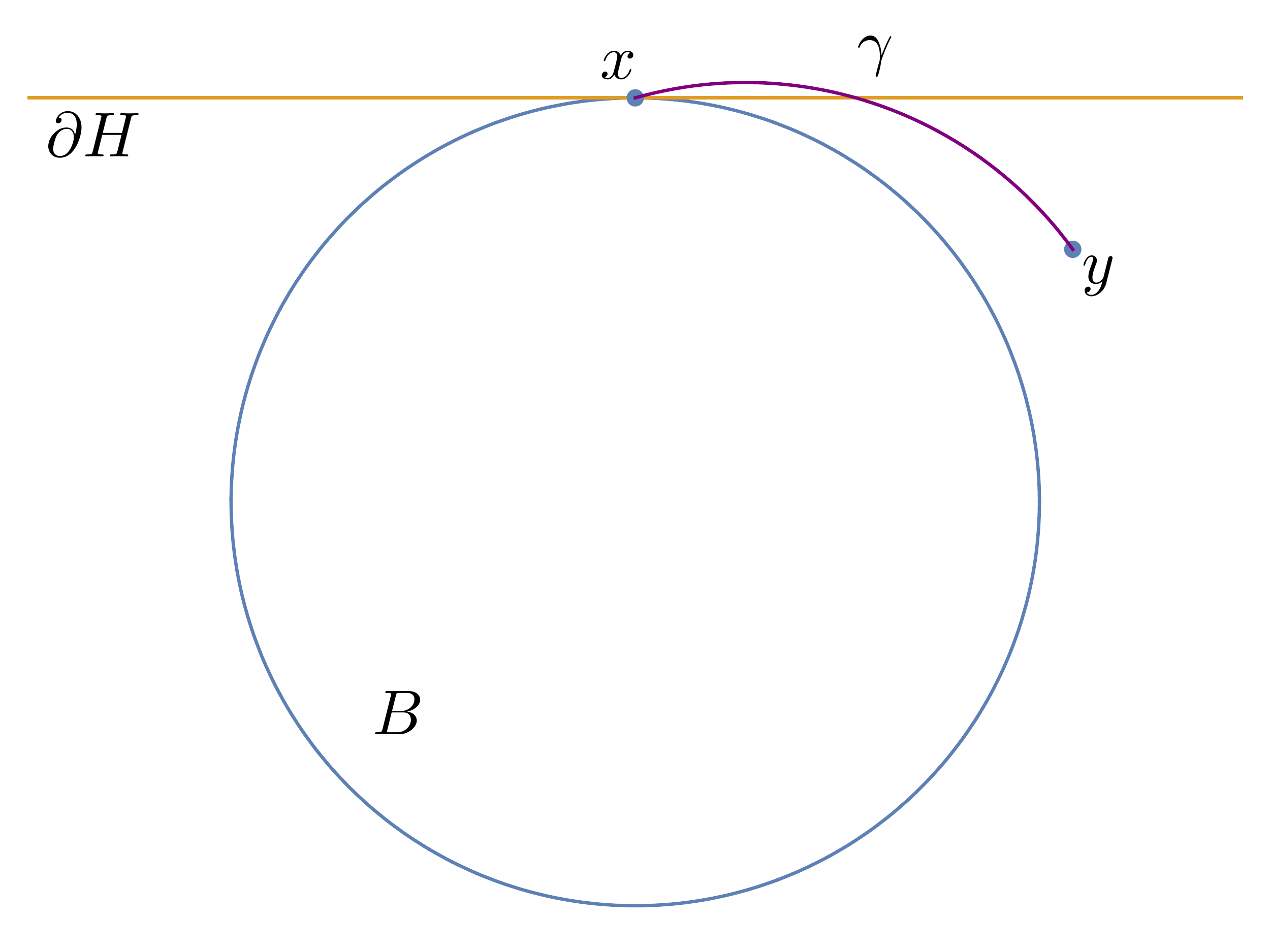}
 \caption{ Here $B$ is a ball of radius one which is included in a half-space $H$ for which $x\in \partial H\cap \partial B$. If $y\not\in B$, $y\in H$, and $|x-y|\le 2$, there is a short arc $\gamma$ of a circle of radius one which joins $x$ and $y$ and is not entirely contained in $H$.}\label{SpindConvFigFig}
\end{figure}
\par $(ii)\Longrightarrow (iii)$ Clearly 
$$
K\subset \bigcap\left\{B: \partial B \text{ is a supporting sphere for $K$ at some $x\in \partial K$}\right\}.
$$
Now suppose $y\not \in K$. Since $K$ is a convex body, there is supporting plane $L$ of $K$ at some $x_0\in \partial K$ which separates $K$ and $y$. By hypothesis, there is a supporting sphere $\partial B_{x_0}$ for $K$ which lies on the same side of $L$ as $K$ does. Thus, $y\not\in B_{x_0}$. In particular, $y$ does not belong to the intersection of balls whose boundaries are supporting spheres for $K$. We conclude 
$$
K^c\subset \left( \bigcap\left\{B: \partial B \text{ is a supporting sphere for $K$ at some $x\in \partial K$}\right\}\right)^c.
$$
\par \par $(iii)\Longrightarrow (i)$ As already noted, the intersection of a collection balls of radius one is necessarily spindle convex. 
\end{proof}

\par Another basic fact about spindle convex shapes, which was discussed in section 5 of \cite{MR2343304} and section 4 of \cite{MR2593321}, is as follows. 
\begin{lem}\label{SphereConvSubsetLem}
Suppose $B$ is a closed ball of radius one and $K\subset \R^n$ is spindle convex. Then $K\cap \partial B$ is geodesically convex in $\partial B$. And if $K\neq B$, $K\cap \partial B$ is a subset of a hemisphere of $\partial B$. 
\end{lem}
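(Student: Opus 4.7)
My plan is to handle the two assertions separately: geodesic convexity follows almost immediately from Proposition \ref{ShortArcProp}, while the hemisphere claim comes from a cone/separation argument after first ruling out antipodal pairs.

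For geodesic convexity, I would observe that a great-circle arc on $\partial B$ is an arc of a circle of radius one, since $\partial B$ itself has radius one. Thus for $x, y \in K \cap \partial B$, the short great-circle arc on $\partial B$ joining $x$ and $y$ qualifies as a short arc of a circle of radius one in the sense of Proposition \ref{ShortArcProp}, so it lies in $\textup{Sp}(x,y)$. Spindle convexity of $K$ then gives $\textup{Sp}(x,y) \subset K$, and hence the arc lies in $K \cap \partial B$. The only fussy case is $|x-y|=2$, where the short arc is not unique; but Proposition \ref{SpInvarianceProp} then gives $\textup{Sp}(x,-x) = B$, so in fact $\partial B \subset K$ and every great-circle arc between $x$ and $-x$ is automatically in $K \cap \partial B$.

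For the hemisphere claim, I would translate so that $B$ is centered at the origin and first show that when $K \neq B$, $K \cap \partial B$ contains no antipodal pair. If $x$ and $-x$ both lay in $K \cap \partial B$, the argument just given would force $B \subset K$; combined with $\textup{diam}(K) \le 2$, this would force $K \subset B$ as well, since any $p \in K$ with $|p| > 1$ would be at distance $|p|+1 > 2$ from the point $-p/|p| \in \partial B \subset K$. Hence $K = B$, contradicting the hypothesis. With antipodal pairs excluded, I would form the closed cone $\widetilde C := \{t x : t \ge 0,\; x \in K \cap \partial B\}$ in $\R^n$. Using geodesic convexity already established, I would check that $\widetilde C$ is convex: a sum $t_1 x_1 + t_2 x_2$ of two nonzero rays in $\widetilde C$ is a positive scalar multiple of a point on the short great-circle arc from $x_1$ to $x_2$, and that arc lies in $K \cap \partial B$. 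Absence of antipodal pairs ensures $\widetilde C$ contains no line through the origin, so it is a closed, pointed, convex cone. By a standard separation argument, $\widetilde C$ lies in a closed half-space $\{x : \langle x, v \rangle \ge 0\}$, and intersecting with $\partial B$ displays $K \cap \partial B$ inside a closed hemisphere.

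The step I expect to be most delicate is the antipodal-pair argument, since it bridges the two parts of the lemma: one has to pin down that $B \subset K$ together with $\textup{diam}(K)\le 2$ really forces $K = B$. The cone-convexity verification is routine but relies essentially on the absence of antipodal vectors, so I would take care to be explicit about why $t_1 x_1 + t_2 x_2$ falls on the short arc joining $x_1$ and $x_2$ rather than on some longer geodesic.
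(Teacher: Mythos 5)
Your proof is correct, and the two halves fare differently against the paper. The geodesic-convexity half is essentially the paper's own argument: a minimizing great-circle arc is a short arc of a circle of radius one, hence lies in $K$ by spindle convexity and therefore in $K\cap\partial B$; your extra care with the antipodal case via $\textup{Sp}(x,-x)=B$ is harmless and fine. For the hemisphere half you take a genuinely different route. The paper uses the characterization of spindle convex bodies as intersections of unit balls: since $K\ne B$, $K$ is contained in some unit ball $B(a)$ whose center $a$ differs from the center of $B$, and then for $x\in K\cap\partial B$ the inequality $|x-a|\le 1$ together with $|x|=1$ gives $2\,a\cdot x\ge |a|^2>0$, so $K\cap\partial B$ lies in the open hemisphere $\{x\in\partial B: a\cdot x>0\}$ --- a two-line computation. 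You instead exclude antipodal pairs (correctly: $\textup{Sp}(x,-x)=B\subset K$ and $\textup{diam}(K)\le 2$ force $K=B$), build the cone over $K\cap\partial B$, verify its convexity from the geodesic convexity already proved (the normalized positive combination of two non-antipodal unit vectors does lie on the minor arc, so this is sound), and finish by separating a closed pointed convex cone from the origin to get a closed half-space, hence a closed hemisphere. What the paper's argument buys is brevity, no appeal to separation theorems, and the slightly stronger conclusion of an open hemisphere; what yours buys is independence from the ball-intersection characterization, using only the definition of spindle convexity plus the first part of the lemma, at the cost of invoking standard convex-cone facts. If you want the open hemisphere as in the paper, upgrade the last step: for a closed convex cone containing no line there is $v\ne 0$ with $x\cdot v>0$ for every nonzero $x$ in the cone.
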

\begin{proof}
Suppose that $x,y\in K\cap \partial B$.  There is a length minimizing geodesic $\gamma\subset \partial B$ which joins $x$ and $y$.  As $\gamma$ is a short arc of a circle of radius one, $\gamma\subset K$.  Therefore, $\gamma\subset K\cap \partial B$. 

\par Let us assume that $B$ is the unit ball and that $K\neq B$. Since $K$ is spindle convex, it is a subset of another ball of radius one centered at  $a\in \R^n$ different from the origin. If $x\in K\cap \partial B$, then $|x-a|\le 1$ and $|x|=1$. Therefore, 
$$
|x|^2-2a\cdot x+|a|^2\le 1=|x|^2. 
$$ 
That is, $|x|=1$ and $x\cdot a>0$. As a result, $x$ belongs to a hemisphere of $\partial B$.  
\end{proof}
\par We can also identify which spindle convex shapes have constant width. The following theorem is usually credited to Eggleston \cite{MR200695}, who in turn gives credit to Jessen \cite{MR3108700}.  In the statement below, we'll use the notion of the normal cone at $z\in K$, which is defined 
$$
N_K(z)=\left\{w\in \R^n: w\cdot (v-z)\le 0 \text{ for all $v\in K$}\right\}.
$$ 
\begin{thm}\label{ConstantWidthCharacterization}
Suppose $K\subset \R^n$ is a convex body.  Then $K$ has constant width if and only if 
$$
K=B(\partial K).
$$
\end{thm}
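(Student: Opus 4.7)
The plan is to prove the two implications separately, with the reverse direction relying crucially on the supporting sphere characterization of spindle convex bodies just established. The forward direction is essentially a direct calculation from the definition of constant width.

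For the direction ``$K$ has constant width $\Rightarrow K=B(\partial K)$'', assume $K$ has constant width $1$. Since diameter then equals width equals $1$, I would immediately get $|y-x|\le 1$ for every $y\in K$ and $x\in\partial K$, so $K\subset B(\partial K)$. For the reverse inclusion I would argue by contrapositive: given $y\notin K$, let $x_0$ be the nearest point of $K$ to $y$, so that $x_0\in\partial K$ and $n:=(y-x_0)/|y-x_0|$ is an outward unit normal at $x_0$. Constant width guarantees a boundary point $z_0\in\partial K$ on the parallel supporting hyperplane $\{z:z\cdot n=x_0\cdot n-1\}$. Expanding
\[
|y-z_0|^2=|x_0-z_0|^2+2|y-x_0|(x_0-z_0)\cdot n+|y-x_0|^2
\]
and using $(x_0-z_0)\cdot n=1$ (which also forces $|x_0-z_0|\ge 1$ by Cauchy--Schwarz) yields $|y-z_0|\ge 1+|y-x_0|>1$, so $y\notin B(z_0)\supseteq B(\partial K)$.

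For the direction ``$K=B(\partial K)\Rightarrow K$ has constant width'', assume $K=B(\partial K)$. Then $K$ is an intersection of unit balls, hence spindle convex. The supporting sphere part of the preceding proposition, applied at any $x\in\partial K$ with outward unit normal $n$, produces the supporting unit ball centered at $x-n$, that is, $K\subset B(x-n)$. Therefore $|z-(x-n)|\le 1$ for every $z\in\partial K$, which gives $x-n\in B(\partial K)=K$. Consequently the width of $K$ in direction $n$ is at least $x\cdot n-(x-n)\cdot n=1$, while $K\subset B(x)$ for any $x\in\partial K$ forces $\mathrm{diam}(K)\le 1$ and hence every width at most $1$. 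Since every unit direction arises as an outward normal at some supporting boundary point, the width equals $1$ in every direction, so $K$ has constant width.

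The delicate step is the reverse direction: without the supporting sphere proposition it is not obvious why the width should attain the value $1$, since the bound $\mathrm{diam}(K)\le 1$ alone only gives width at most $1$. Spindle convexity is what supplies, for each direction $n$, the antipodal boundary point $x-n$ that witnesses width at least $1$ in that direction.
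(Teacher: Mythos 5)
Your proof is correct and takes essentially the same route as the paper: in the forward direction you produce, for each $y\notin K$, a boundary point on the opposite supporting plane whose unit ball excludes $y$ (you verify this by an explicit computation using the nearest-point projection, where the paper argues geometrically that $B(w)$ lies on the same side of the separating plane as $K$), and in the reverse direction you invoke the same supporting-sphere characterization of spindle convex bodies to place the inward point $x-n$ in $B(\partial K)=K$, forcing the width to equal one in every direction. I see no gaps.
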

 \begin{proof}
 First suppose that $K$ has constant width. It is not hard to see that $K$ has diameter one. As $|x-y|\le1$ for any $x\in K$ and $y\in \partial K$, it follows that $K\subset B(\partial K)$.  Next assume $y\not\in  K$. Then there is a supporting plane $L$ of $K$ which separates $K$ and $y$. Let $u$ be the outward unit normal to $L$ and choose $x\in \partial K$ for which $u\in N_K(x)$. As $K$ has constant width, there is also $w\in \partial K$ with $|x-w|=1$ and $-u\in N_K(w)$. Notice that $B(w)$ does not contain $y$ as $B(w)$ is on the same side of $L$ as $K$ is. See Figure \ref{BKequalKpic}. As a result,  $y\not\in B(\partial K)$. We conclude $K^c\subset B(\partial K)^c$. 
  \begin{figure}[h]
\centering
 \includegraphics[width=.7\textwidth]{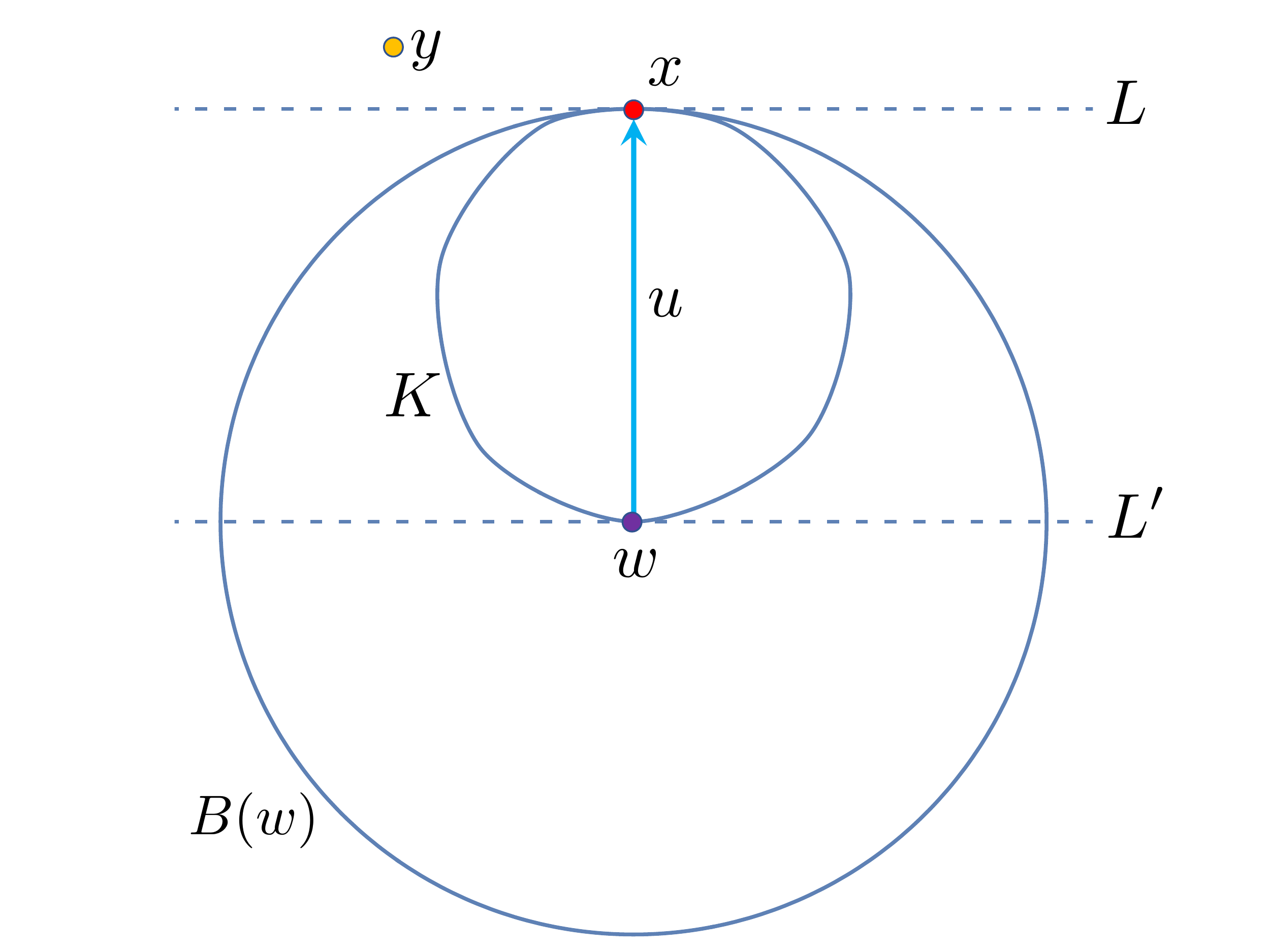}
 \caption{ Here we have a convex body $K$ with $x\in \partial K$. Note that $L$ a supporting plane for $K$ at $x$ with outward unit normal $u$ and $\partial B(w)$ is a supporting sphere for $K$ at $x$ which lies on the same side of $L$ as $K$ does.   Moreover, $L$ separates $y$ and $K$ and $L'$ is a plane parallel to $L$.}\label{BKequalKpic}
\end{figure}

\par Conversely suppose $K=B(\partial K)$. It is not hard to check that $K=B(\partial K)$ implies that $K$ has diameter at most one.  Assume $L,L'$ are parallel supporting planes for $K$ and $x\in L\cap \partial K$.  Note that the distance between $L$ and $L'$ is not more than one as $\text{diam}(K)\le 1$.  Since $K$ is spindle convex, there is a supporting sphere $\partial B(w)$ for $K$ at $x$ such that $B(w)$ lies on the same side of $L$ as $K$ does. Note in particular, that since $\partial K\subset K\subset B(w)$, $w\in B(\partial K)$.  Also notice that if the distance between $L$ and $L'$ is less than one, $w$ could not belong to $K$. We again refer to Figure \ref{BKequalKpic}.  In view of our hypothesis that $B(\partial K)=K$, the distance between $L$ and $L'$ is necessarily equal to one. We conclude that $K$ has constant width. 
\end{proof}
\begin{rem}
If we denote $\textup{ext}(K)$ as the extreme points of a convex body $K\subset \R^n$, then $B(K)=B(\textup{ext}(K)).$ As a result,  $$B(\partial K)= B(K)$$ for any strictly convex $K$. In particular, $K=B(K)$ for each constant width $K$. 
\end{rem}
\begin{rem}
We used a few things in the proof above which bear repeating for a given convex body $K\subset \R^n$. The inclusion $K\subset B(\partial K)$ is equivalent to the diameter of $K$ being at most one. And $K\supset B(\partial K)$ if and only if $K\subset B(y)$ implies $y\in K$. 
\end{rem}
\begin{rem}\label{DenseIsEnough}
In the sequel, we will also use a basic fact that if $X\subset \R^n$ and $X'\subset X$ is dense, then 
 $$
 B(X)=B(X').
 $$
To see this, let $y\in B(X')$, $x\in X$, and choose $x_n\in X'$ converging to $x$ as $n\rightarrow\infty$. Then $|y-x|=\lim_{n\rightarrow\infty}|y-x_n|\le 1$, and $y\in B(X)$.  It follows that  $B(X')\subset B(X)$. It is also clear that $ B(X)\subset B(X')$ as $X'\subset X$.  
 \end{rem}

\section{Reuleaux polyhedra}\label{RPolysect}
Suppose $a_1,a_2,a_3,a_4\in \R^3$ satisfy $|a_i-a_j|=1$ for $i\neq j$. As we noted in the introduction, the corresponding Reuleaux tetrahedron  $R=B(\{a_1,a_2,a_3,a_4\})$ has has four vertices, six circular edges, and four spherical faces. Moreover, the vertices of $R$ are exactly the centers of the spheres which define $R$. In this section, we will define a general class of such objects with these properties which we will call Reuleaux polyhedra.  In particular, we will mostly introduce topics and survey the results from the seminal paper of Kupitz, Martini, and Perles \cite{MR2593321}.  These ideas have also been covered in detail in chapter 6 of the monograph by Martini, Montejano, and Oliveros \cite{MR3930585}.

\subsection{Ball polyhedra}
We will say that $B(X)\subset \R^3$ is a {\it ball polyhedron} when $X\subset \R^3$ is nonempty and finite with 
$$
\text{diam}(X)\le 1.
$$
An elementary observation is as follows. 

\begin{lem}
Suppose $B(X)$ is a ball polyhedron. Then $X\subset B(X)$, $B(X)\subset\R^3$ is spindle convex, and $B(X)$ has nonempty interior. 
\end{lem}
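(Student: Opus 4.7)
The plan is to verify each of the three assertions separately, drawing on the diameter bound $\textup{diam}(X) \le 1$ and the results of Section~\ref{SpindleSect}.

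For $X \subset B(X)$: any $x \in X$ satisfies $|x - y| \le 1$ for every $y \in X$, so $x \in B(y)$ for each $y \in X$, whence $x$ lies in the intersection $B(X) = \bigcap_{y \in X} B(y)$.

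For spindle convexity, note that $B(X)$ is by construction a finite intersection of closed balls of radius one. Since $B(X) \subset B(x_0)$ for any fixed $x_0 \in X$, its diameter is at most $2$ and the definition of spindle convexity applies. The observation made earlier in Section~\ref{SpindleSect}, namely that each closed ball of radius one is spindle convex and that intersections of spindle convex sets are spindle convex, then finishes this step.

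For the nonempty interior, the plan is to produce a point $c \in \R^3$ with $\max_{x \in X}|c - x| < 1$; once such a $c$ is in hand, a sufficiently small open ball about $c$ lies inside each $B(x)$ for $x \in X$, and hence inside $B(X)$. When $X$ is a singleton, this is immediate. When $X = \{x_1, \dots, x_k\}$ with $k \ge 2$, I would take the centroid $c = (1/k)\sum_{i=1}^{k} x_i$ and note that for each $j$,
$$|c - x_j| = \Bigl|\tfrac{1}{k}\sum_{i \ne j}(x_i - x_j)\Bigr| \le \tfrac{k-1}{k} < 1,$$
by the triangle inequality combined with $\textup{diam}(X) \le 1$. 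No step here presents a real obstacle; the lemma's role is to record the basic features of a ball polyhedron that will be needed before proceeding to Reuleaux polyhedra.
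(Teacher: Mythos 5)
Your proof is correct. The first two claims are handled exactly as in the paper: $X\subset B(X)$ from $\mathrm{diam}(X)\le 1$, and spindle convexity because $B(X)$ is an intersection of unit balls (with the diameter bound $\mathrm{diam}(B(X))\le 2$ coming from $B(X)\subset B(x_0)$). Where you diverge is the nonempty-interior step: the paper invokes Jung's theorem to place $X$ inside a ball $B_\delta(y)$ with $\delta=\sqrt{3/8}$, and then the triangle inequality gives $B_{1-\delta}(y)\subset B(X)$, so the interior contains a ball of radius $1-\sqrt{3/8}$ independent of the number of points. You instead take the centroid $c$ of $X=\{x_1,\dots,x_k\}$ and estimate $|c-x_j|\le (k-1)/k<1$, which yields an interior ball of radius $1/k$ about $c$. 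Your route is more elementary (no Jung's theorem needed) and works verbatim in any dimension and even for infinite sets of diameter at most one after replacing the centroid bound by a limiting or averaging argument, while the paper's route buys a quantitative lower bound on the inradius that does not degrade as $\#X$ grows; for the purposes of this lemma either suffices, since only nonemptiness of the interior is used later.
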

\begin{proof}
As $\text{diam}(X)\le 1$, it follows immediately that $X\subset B(X)$. Moreover, it is easy to check that $\text{diam}(B(X))\le 2$. And since $B(X)$ is the intersection of balls of radius one, it is spindle convex.   
\par Jung's theorem implies that $X\subset B_\delta(y)$ for $\delta=\sqrt{3/8}$ and some $y\in \R^3$. If $w\in B_{1-\delta}(y)$ and $x\in X$, then 
$$
|w-x|\le |w-y|+|y-x|\le 1-\delta+\delta=1. 
$$
It follows that $w\in B(X)$. Therefore, $B_{1-\delta}(y)\subset B(X)$.   
\end{proof}

\par It will useful for us to identify when there are no redundancies in the definition of a ball polyhedron.  To this end, we will say that $x\in X$ is {\it essential} provided that 
$$
B(X)\subsetneq B(X\setminus\{x\}).
$$
This means that if we remove $x$ from $X$, then $B(X)$ is no longer equal to  $B(X\setminus\{x\})$. This 
is the case precisely when there exists $y\in \R^3$ for which $X\setminus\{x\}\subset B(y)$ while $x\not\in B(y)$.  We will additionally  say that $X$ is {\it tight} provided that each $x\in X$ is essential.  

\par The following observations regarding essential points were made in section 5 of \cite{MR2593321}. 
\begin{lem}\label{essLem}
Suppose $X\subset \R^3$ is a finite set of points having diameter one. \\
$(i)$ If $x\in X$ is essential, there is $y\in\R^3$ with $|y-x|=1$ and $|y-z|<1$ for each $z\in X\setminus\{x\}$.\\  $(ii)$ If $Y\subset X$ is the collection of essential points of $X$, then $B(Y)=B(X)$. \\  
$(iii)$ If $x\in X$ and there are distinct $y,z\in X$ with $|x-z|=|x-y|=1$, then $x$ is essential. 
\end{lem}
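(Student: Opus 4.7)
For (i), I would unpack essentiality: there exists $w\in B(X\setminus\{x\})\setminus B(x)$, so $|w-z|\le 1$ for every $z\in X\setminus\{x\}$ while $|w-x|>1$. I would then slide $w$ toward $x$ along $y(t)=(1-t)w+tx$. Continuity of $t\mapsto|y(t)-x|$, together with $|y(0)-x|>1$ and $|y(1)-x|=0$, produces $t^*\in(0,1)$ with $|y(t^*)-x|=1$. For any $z\in X\setminus\{x\}$,
\[
|y(t^*)-z|\le(1-t^*)|w-z|+t^*|x-z|\le 1,
\]
and equality throughout the triangle inequality would force the unit vectors $w-z$ and $x-z$ to coincide, i.e.\ $w=x$, contradicting $|w-x|>1$. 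Hence $|y(t^*)-z|<1$ as required.

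For (iii), I would construct a witness along the same lines. Set $v_i=y_i-x$, so $|v_i|=1$, and let $u=(v_1+v_2)/|v_1+v_2|$; the vector $v_1+v_2$ is nonzero (otherwise $|y_1-y_2|=2$, contradicting the diameter bound) and has norm strictly less than $2$ since $v_1\neq v_2$. For $z\in X\setminus\{x\}$, writing $w=z-x$, the diameter bound $|z-y_i|\le 1$ rearranges to $w\cdot v_i\ge|w|^2/2$; averaging gives the strict estimate $u\cdot w\ge|w|^2/|v_1+v_2|>|w|^2/2$ when $w\neq 0$. Consequently $|x+u-z|^2=1-2u\cdot w+|w|^2<1$ while $|x+u-x|=1$, so a small outward perturbation $y=x+(1+\epsilon)u$ satisfies $|y-x|>1$ and, choosing $\epsilon$ uniformly small over the finite set $X$, $|y-z|\le 1$ for each $z\in X\setminus\{x\}$; thus $y$ witnesses that $x$ is essential.

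For (ii), the inclusion $B(X)\subset B(Y)$ is immediate from $Y\subset X$. For the reverse, I would exploit that each unit ball $B(w)$ is spindle convex, so $B(S)$ equals $B$ applied to the smallest spindle convex set containing $S$; hence it suffices to show that $X$ and $Y$ generate the same spindle convex hull. Combining the characterization of non-essentiality as $x\in B(B(X\setminus\{x\}))$ (which identifies the essentials precisely with the spindle-extreme points of this hull) with a Krein--Milman-type theorem for spindle convex sets from \cite{MR2593321} and Chapter~6 of \cite{MR3930585}, one obtains the desired equality of spindle convex hulls. The main obstacle is precisely part (ii): a naive iterative removal of non-essential points is subtle because a point non-essential in $X$ may become essential in a reduced subset, so one must invoke the Krein--Milman-type result rather than rely on a direct induction on $|X\setminus Y|$.
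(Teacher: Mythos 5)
Your arguments for parts (i) and (iii) are correct and self-contained: the sliding argument with the equality case of the triangle inequality proves (i), and the averaged inequality $w\cdot v_i\ge |w|^2/2$ together with $|v_1+v_2|<2$ and a small outward perturbation proves (iii). Note that the paper itself offers no proof of this lemma at all -- it simply cites Section 5 of \cite{MR2593321} -- so for (i) and (iii) you have supplied genuine elementary proofs where the paper defers to the literature.

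Part (ii), however, is a genuine gap, and it is the only part where real work is needed. Your reduction to showing $\textup{conv}_s(Y)=\textup{conv}_s(X)$ (via $B(S)=B(\textup{conv}_s(S))$, using the duality $x$ non-essential $\iff x\in B(B(X\setminus\{x\}))$) is fine, but the step that is supposed to finish the proof does not work as stated. If ``spindle-extreme points'' means ordinary extreme points of the hull, the identification with the essential points of $X$ is false: spindle convex bodies are strictly convex, so \emph{every} boundary point of $\textup{conv}_s(X)$ is extreme, and these points need not belong to $X$ at all; a Krein--Milman-type decomposition of the hull into its extreme points therefore gives no information about which elements of $X$ may be discarded (unlike the linear case, where $\textup{ext}(\textup{conv}(X))\subset X$ drives the argument). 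If instead ``spindle-extreme point of the hull lying in $X$'' is taken to mean ``not in the spindle hull of the remaining points of $X$,'' then that is literally the definition of essential, and the ``Krein--Milman-type theorem'' you invoke is exactly the statement $\textup{conv}_s(Y)=\textup{conv}_s(X)$ you are trying to prove -- the appeal is circular. The argument in the cited source goes through the face structure of the ball polyhedron instead: one shows (essentially your part (i), plus the fact that finitely many circles $\partial B(x)\cap\partial B(v)$ cannot cover a spherical cap) that $x$ is essential exactly when $\partial B(x)\cap B(X)$ is a genuine two-dimensional face of $B(X)$, and then that the balls supporting such faces already cut out $B(X)$. Your closing remark correctly flags that naive one-at-a-time removal is not obviously justified, but the fix you propose does not supply the missing argument.
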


\par Let us assume now that $B(X)$ is a ball polyhedron and $X$ is tight.  We wish to describe the boundary of $B(X)$. As we saw with the Reuleaux tetrahedron above, we will see that the boundary of $B(X)$ consists of finitely many vertices, circular edges and spherical faces.  To this end, we will denote 
$$
\text{val}(x,X)=\# \{y\in X: |x-y|=1\}
$$
as the {\it valence} of a given vertex $x\in B(X)$ which is defined below. 
\\\\
\noindent {\bf Faces}.   Observe that $y$ belongs to the interior of $B(X)$ if and only if $|y-x|< 1$ for all $x\in X$. Therefore, $y$ belongs to the boundary of $B(X)$ if and only if $y\in B(X)$ and $|y-x|=1$ for some $x\in X$.  This implies 
$$
\partial B(X)=\bigcup_{x\in X}\left(B(X)\cap \partial B(x)\right)
$$
We define $B(X)\cap \partial B(x)$ as the {\it face of $B(X)$ opposite $x$}.  By Lemma \ref{SphereConvSubsetLem}, $B(X)\cap \partial B(x)$ is a spherically convex subset of $\partial B(x)$. And by Lemma \ref{essLem}$(i)$, there are as many distinct faces of $B(X)$ as elements of $X$.  We will denote the faces of $B(X)$ as $\text{face}(B(X))$. 
\\\\
\noindent {\bf Vertices}. A point $y\in B(X)$ is a {\it principal vertex} if $\text{val}(y,X)\ge 3$.  Namely, $y\in B(X)$ is a principal vertex provided that $y$ belongs to at least three faces of $B(X)$. We  also define $y\in X$ as a {\it dangling vertex} if $\text{val}(y,X)=2$. That is, $y\in X$ and $y$ belongs to exactly two faces of $B(X)$.  The collection of principal and dangling vertices comprise the collection of vertices of $B(X)$ and will be denoted $\text{vert}(B(X))$.  Lemma \ref{essLem} $(iii)$ implies that each dangling vertex is essential and also that, if a principle vertex belongs to $X$, then it is also essential. 
\\\\
\noindent {\bf Edges}. Let $x,y\in \R^3$ and recall that $z\in \partial B(x)\cap \partial B(y)$ if and only if 
$$
\left|z-\frac{x+y}{2}\right|=\sqrt{1-\left|\frac{x-y}{2}\right|^2}
$$
and 
$$
\left(z-\frac{x+y}{2}\right)\cdot (x-y)=0.
$$
In particular, $\partial B(x)\cap \partial B(y)$ is a circle.  An {\it edge} of $B(X)$ is a connected component of $\partial B(x)\cap \partial B(y)\cap B(X)\setminus\text{vert}(B(X))$ with nonempty interior in $\partial B(x)\cap \partial B(y)$ for some $x,y\in X$. The edges of $B(X)$ will be denoted as $\text{edge}(B(X))$. Note that an edge is shared by exactly two faces of $B(X)$, each relative boundary point of $\partial B(x)\cap \partial B(y)\cap B(X)$ is a principle vertex, and a dangling vertex belongs to the relative interior of $\partial B(x)\cap \partial B(y)\cap B(X)$.  
\\
\par A key fact about a ball polyhedron in $\R^3$ is that its vertices, edges, and faces satisfy an Euler type formula. This was proved in Proposition 6.2 of \cite{MR2593321}; see also Corollary 6.10 of \cite{MR2343304}.
\begin{thm}\label{EulerTheorem}
Suppose $B(X)\subset \R^3$ is a ball polyhedron, $X$ is tight, and $X$ has at least three points. Then 
$$
V-E+F=2,
$$
where $V=\#\textup{vert}(B(X))$, $E=\#\textup{edge}(B(X))$, and $F=\#\textup{face}(B(X))$.

\end{thm}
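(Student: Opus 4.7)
The plan is to interpret the face/edge/vertex data as a genuine CW decomposition of $\partial B(X)$, recognize that $\partial B(X)$ is homeomorphic to $S^{2}$, and then appeal to the classical Euler formula $\chi(S^{2})=2$. This is the same route used for convex polytopes, but one must confirm that the notions of face, edge, and vertex introduced above really do assemble into a CW complex.

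The first step is to observe that $B(X)$ is a convex body with nonempty interior by the preceding lemma, so $\partial B(X)$ is homeomorphic to $S^{2}$ via radial projection from any interior point. Next, I would argue that each face $F_{x}:=B(X)\cap \partial B(x)$ is a closed topological $2$-disk. Indeed, by Lemma \ref{SphereConvSubsetLem}, $F_{x}$ is a geodesically convex subset of $\partial B(x)$ contained in an open hemisphere (using that $X$ has at least three points and is tight, so $B(X)\neq B(x)$). A geodesically convex subset of a hemisphere projects, via the stereographic or gnomonic map, to a compact convex planar set with nonempty interior, hence is homeomorphic to a closed disk. By Lemma \ref{essLem}$(i)$ the map $x\mapsto F_{x}$ is injective, so there are exactly $F=\#\text{face}(B(X))$ two-cells.

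Next I would describe the $1$-skeleton. The relative boundary of $F_{x}$ in $\partial B(x)$ is, by the definition of a face, the union
\[
\bigcup_{y\in X\setminus\{x\}}\bigl(\partial B(x)\cap \partial B(y)\cap B(X)\bigr),
\]
each summand being a (possibly empty) closed arc on the circle $\partial B(x)\cap \partial B(y)$. Removing the finite set $\textup{vert}(B(X))$ from this union breaks it into the edges of $B(X)$, and each edge is an open circular arc whose closure in $\partial B(X)$ adds exactly two vertices to it (principal vertices at the arc's relative boundary in $\partial B(x)\cap\partial B(y)\cap B(X)$, or a principal and a dangling vertex when a dangling vertex lies in the relative interior of such an arc). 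In particular, every edge is shared by exactly two faces $F_{x}, F_{y}$, and the closure of each open $2$-cell $F_{x}^{\circ}$ meets the $1$-skeleton precisely in the union of the edges and vertices lying on $\partial B(x)$. This is exactly the attaching data of a regular CW structure on $\partial B(X)$.

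Having established that $(\textup{vert}(B(X)),\textup{edge}(B(X)),\textup{face}(B(X)))$ is a CW decomposition of a space homeomorphic to $S^{2}$, the conclusion follows at once from $\chi(S^{2})=2$, giving $V-E+F=2$. The main obstacle I anticipate is not the topology of $S^{2}$, which is classical, but the bookkeeping in the middle step: namely confirming that each face is indeed a topological disk (for which one must rule out the degenerate case $B(X)=B(x)$, and this is where the hypotheses that $X$ is tight and $\#X\ge 3$ enter) and that the edges assemble into the boundary circles of these disks with the right incidences, in particular that a dangling vertex genuinely subdivides its circular arc into two edges rather than producing a non-cellular attaching map.
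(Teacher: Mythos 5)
Your overall route -- radial projection of $\partial B(X)$ onto $S^2$, a CW decomposition by the faces, edges and vertices, and $\chi(S^2)=2$ -- is the standard one; in fact the paper does not prove Theorem \ref{EulerTheorem} at all but cites Proposition 6.2 of \cite{MR2593321} and Corollary 6.10 of \cite{MR2343304}, whose proofs run along exactly these lines. The problem is that everything you defer as ``bookkeeping'' is where the actual content, and the use of the hypotheses, lives, and as written your argument has a genuine gap there. The decisive point: you claim each edge is an open arc whose closure adds exactly two vertices, but you give no reason why a connected component of $\partial B(x)\cap\partial B(y)\cap B(X)$ cannot be the \emph{entire} circle with no vertex on it, in which case the ``edge'' is not a $1$-cell and the count fails. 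This is not a hypothetical: for $X=\{x,y\}$ with $|x-y|=1$ the set $X$ is tight, there are no vertices, the full circle $\partial B(x)\cap\partial B(y)$ is a single edge, and $V-E+F=0-1+2=1$. Your proof uses the hypotheses tight and $\#X\ge 3$ only to get $B(X)\neq B(x)$ -- which already holds in this two-point example -- so, taken at face value, your argument would ``prove'' a false statement. The missing step is precisely where $\#X\ge 3$ enters: if the whole circle lay in $B(X)$, then by Lemma \ref{SphereConvSubsetLem} the face opposite $x$ is geodesically convex and contains the circle, hence contains its geodesic convex hull, which is the full cap $\partial B(x)\cap B(y)$ (the circle has angular radius $\arccos(|x-y|/2)<\pi/2$); likewise for $y$. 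Then $\partial\bigl(B(x)\cap B(y)\bigr)\subset B(X)\subset B(x)\cap B(y)$ forces $B(X)=B(x)\cap B(y)$, making every third point of $X$ inessential and contradicting tightness.

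Two further verifications you assert but do not supply are also needed before $\chi(S^2)=2$ can be invoked. First, ``each face is a closed disk'' requires the face to have nonempty relative interior in $\partial B(x)$; geodesic convexity plus containment in a hemisphere only yields a point, an arc, or a disk. Nonempty interior comes from essentialness: the point $y$ of Lemma \ref{essLem}$(i)$ lies in the face and, since the inequalities $|y-z|<1$ for $z\in X\setminus\{x\}$ are strict and $X$ is finite, a relative neighborhood of $y$ in $\partial B(x)$ lies in $B(X)$ -- you invoke Lemma \ref{essLem}$(i)$ only for injectivity of $x\mapsto F_x$. Second, the incidence facts you take from the paper's ``Edges'' paragraph (each relative boundary point of $\partial B(x)\cap\partial B(y)\cap B(X)$ is a principal vertex, each edge lies on exactly two faces, relative interiors of distinct faces are disjoint) are themselves unproved assertions imported from \cite{MR2593321}; they need short arguments (a limit of excluded points forces a third sphere through the endpoint; a circle of radius less than one lies on exactly two unit spheres; two unit spheres with centers at distance in $(0,2)$ meet transversally). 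With these supplied your Euler-characteristic conclusion is correct; without them the proof is incomplete exactly at the place where the statement can fail.
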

\begin{figure}[h]
\centering
 \includegraphics[width=.37\textwidth]{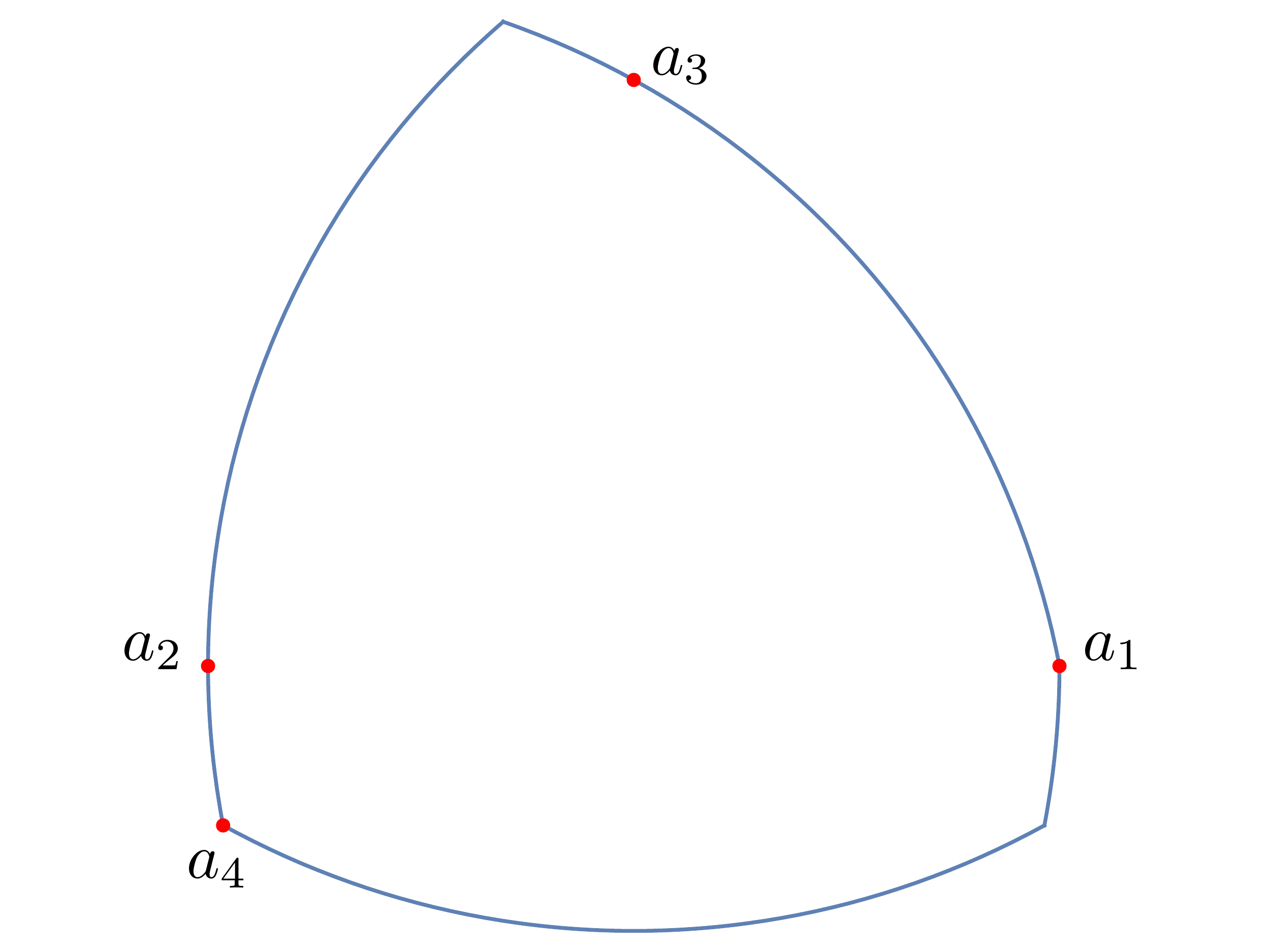}
 \hspace{.6in}
  \includegraphics[width=.37\textwidth]{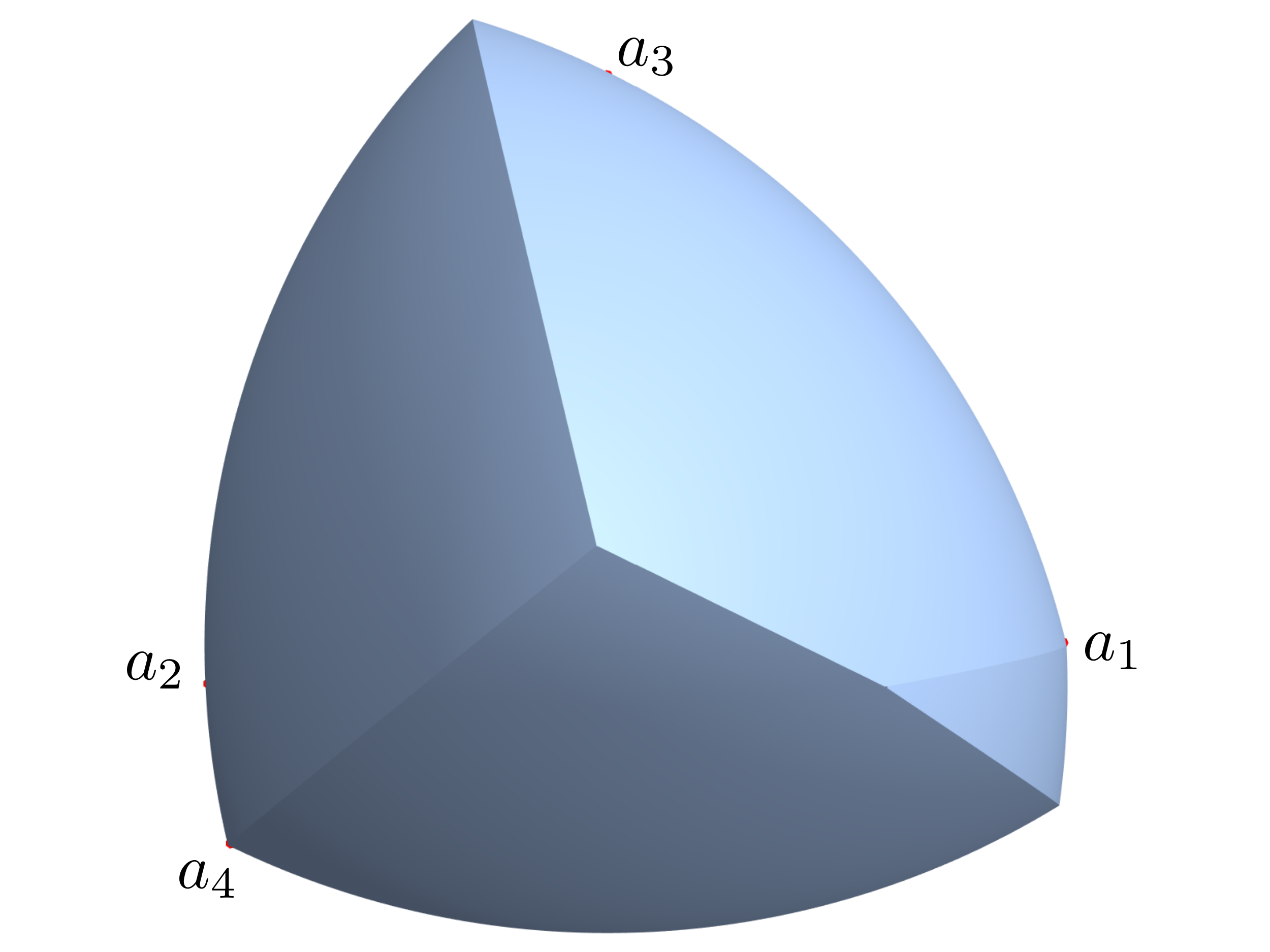}
   \includegraphics[width=.47\textwidth]{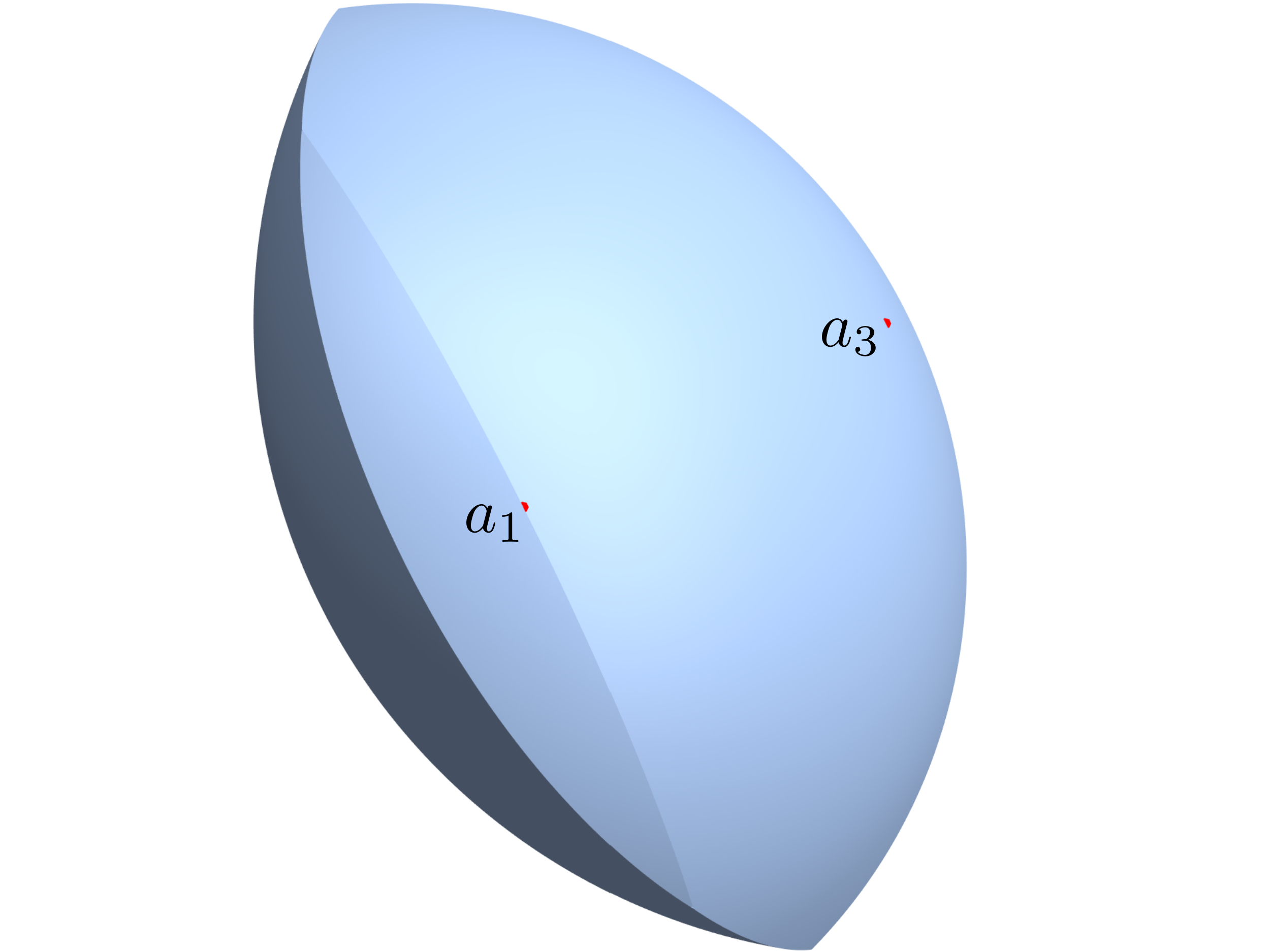} 
      \includegraphics[width=.47\textwidth]{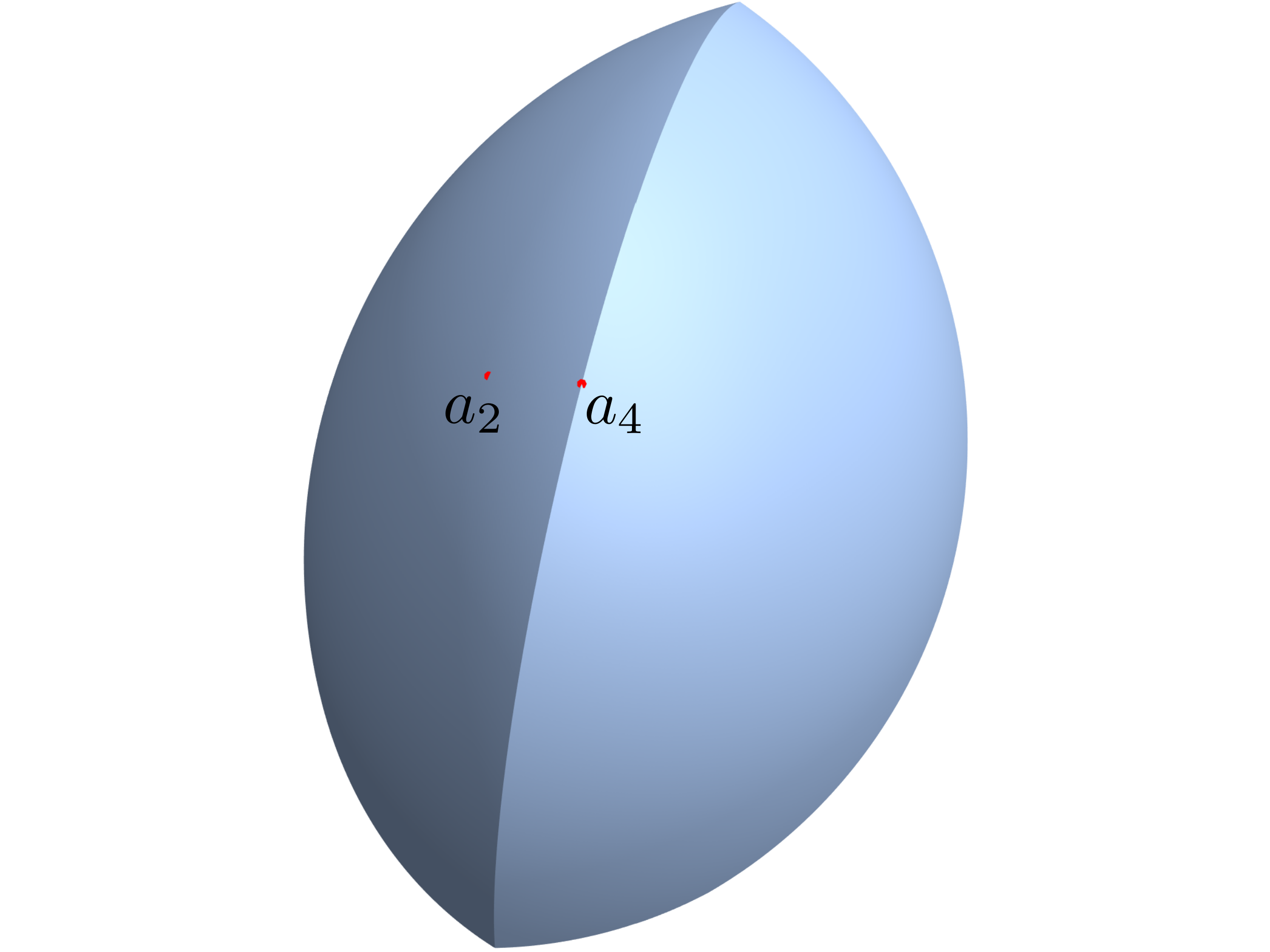} 
       \caption{This is an example of a ball polyhedron $B(\{a_1,a_2,a_3,a_4\})$, where $a_1,a_2,a_3,a_4$ belong to a common plane and $\{a_1,a_2,a_3,a_4\}$ is tight.   The upper left figure is a cross section of $B(\{a_1,a_2,a_3,a_4\})$, $a_1$ and $a_4$ are dangling vertices, and there are four principle vertices which do not belong to $\{a_1,a_2,a_3,a_4\}$.  Consequently, $V=6$, $E=8,$ and $F=4$. }\label{BallPolyEx}
\end{figure}


\subsection{V\'azsonyi problem}
Suppose $X\subset \R^3$ is a finite set with $m$ elements that has diameter equal to one. A natural problem is to determine 
how many pairs of points $x,y\in X$ are there with $|x-y|=1$.  We will say that $X$ is {\it extremal} if it has the maximum possible 
amount of diametric pairs among finite sets of $m$ elements with diameter one.  Let us denote $e(X)$ for the number of diametric pairs of $X$. V\'azsonyi conjectured that 
\be\label{VazsyoniIneq}
e(X)\le 2m-2. 
\ee
This conjecture was verified independently by Gr\"unbaum \cite{MR87115}, Heppes \cite{MR87116}, and Straszewicz \cite{MR0087117}; see Chapter 13 of \cite{MR1354145} for a concise discussion of this result.   This theorem was also extended by Kupitz, Martini, and Perles \cite{MR2593321} as follows.  
\begin{thm}\label{ExtendedGHS}
Suppose $X\subset \R^3$ is a finite set with $m\ge 4$ points and $\textup{diam}(X)=1$. The following are equivalent. \\
$(i)$ $X$ is extremal. \\
$(ii)$ $e(X)=2m-2$.\\
$(iii)$ $X=\textup{vert}(B(X)).$
\end{thm}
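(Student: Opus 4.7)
The plan is to establish the chain of equivalences (i) $\Leftrightarrow$ (ii) $\Leftrightarrow$ (iii). The first equivalence is immediate from the V\'azsonyi inequality \eqref{VazsyoniIneq}: $X$ is extremal precisely when the universal upper bound $e(X) \le 2m - 2$ is attained. For the remaining equivalence, the decisive technical input is the inequality $E \le e(X)$, where $E = \#\text{edge}(B(X))$; this asserts that the map sending each edge of $B(X)$ to the pair $\{x,y\} \subset X$ whose circle $\partial B(x) \cap \partial B(y)$ supports it is an injection into the set of diametric pairs of $X$.

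For (iii) $\Rightarrow$ (ii), assume $X = \text{vert}(B(X))$. Each $x \in X$ satisfies $\text{val}(x, X) \ge 2$, so by Lemma \ref{essLem}(iii), $X$ is tight and hence $F = m$. Since $V = m$ by hypothesis, Euler's formula (Theorem \ref{EulerTheorem}) gives $E = V + F - 2 = 2m - 2$. Combined with $E \le e(X)$, one obtains $e(X) \ge 2m - 2$, and \eqref{VazsyoniIneq} forces equality.

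For (ii) $\Rightarrow$ (iii), assume $e(X) = 2m - 2$. If some $x \in X$ had $\text{val}(x, X) \le 1$, then setting $X' := X \setminus \{x\}$ one would have $e(X) \le e(X') + 1$. The case $\text{diam}(X') < 1$ yields $e(X') = 0$ and hence $e(X) \le 1$, which is absurd since $2m - 2 \ge 6$ for $m \ge 4$. The case $\text{diam}(X') = 1$ yields $e(X') \le 2m - 4$ by V\'azsonyi applied to $X'$, hence $e(X) \le 2m - 3$, again absurd. Thus every $x \in X$ has valence at least two, giving $X \subset \text{vert}(B(X))$, and by Lemma \ref{essLem}(iii), $X$ is tight with $F = m$. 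Were any principal vertex to lie in $\text{vert}(B(X)) \setminus X$, one would have $V \ge m + 1$, and Euler's formula would give $E \ge 2m - 1$, so $E \le e(X)$ would force $e(X) \ge 2m - 1$, contradicting \eqref{VazsyoniIneq}. Hence no such extra vertex exists and $X = \text{vert}(B(X))$.

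The main obstacle is justifying $E \le e(X)$, which has two ingredients: (a) no edge of $B(X)$ lies on a circle $\partial B(x) \cap \partial B(y)$ with $|x - y| < 1$; and (b) each diametric pair supports at most one edge. Item (b) follows from the spindle convexity of the face opposite $x$ (Lemma \ref{SphereConvSubsetLem}): the intersection of a spherically convex region on $\partial B(x)$ with the small circle $C = \partial B(x) \cap \partial B(y)$ is a single arc, since the region is an intersection of spherical caps each meeting $C$ in an arc. Item (a) is more delicate; the approach I would take is to show that an arc of $\partial B(X)$ on such a circle would have endpoints that are principal vertices, and these would contribute additional faces and diametric pairs incompatible with the global Euler count.
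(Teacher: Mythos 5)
First, a point of comparison: the paper does not prove this theorem at all --- it is quoted from Kupitz, Martini, and Perles \cite{MR2593321} (with the inequality \eqref{VazsyoniIneq} credited to Gr\"unbaum, Heppes, and Straszewicz), so you are attempting to reprove a deep cited result rather than to reproduce an argument in the text. Your reduction of (i)$\Leftrightarrow$(ii) is essentially fine, except that extremality means attaining the \emph{maximum} of $e$ over $m$-point sets of diameter one, so you also need a configuration achieving $2m-2$ for every $m\ge 4$ (this is the paper's Example \ref{ReuleauxEx5}); the Vázsonyi inequality alone only bounds the maximum from above. The counting skeleton of (ii)$\Leftrightarrow$(iii) via tightness, $F=m$, and Theorem~\ref{EulerTheorem} is also sound as far as it goes.

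The genuine gap is the ``decisive technical input'' $E\le e(X)$, which you never establish, and which is in fact \emph{false} for general ball polyhedra over diameter-one sets: the paper's own example in Figure~\ref{BallPolyEx} has $m=4$, so at most six pairs of points of $X$ (and, since $a_2,a_3$ are not vertices, at most four diametric pairs), yet $E=8$. Hence both of your ingredients fail there: some edges lie on circles $\partial B(x)\cap\partial B(y)$ with $|x-y|<1$, and some pair of faces shares two edges. This shows that any proof of $E\le e(X)$ must use the hypothesis (ii) or (iii) in an essential way, but your arguments do not. Your argument for (b) is a soft convexity argument (spherically convex face meets the circle $C$ in a single arc) that would apply verbatim to the counterexample; it is flawed because the intersection of a circle with a convex body, or of several circular arcs each possibly longer than a semicircle, need not be connected. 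Your argument for (a) is only a sketch, and moreover appears circular: the ``incompatibility with the global Euler count'' you invoke is exactly the kind of conclusion that becomes available only after the edge--diametric-pair correspondence (or $E\le e(X)$) is already known. Establishing that correspondence under extremality, and that $X=\textup{vert}(B(X))$ forces extremality, is precisely the hard structural content of \cite{MR2593321} (their self-dual face structure and Theorem 8.1, quoted here as Theorem~\ref{edgeThm}); the proposal does not supply a substitute for it, so the equivalences (ii)$\Leftrightarrow$(iii) remain unproven.
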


\begin{ex}\label{ReuleauxEx5} A basic example of an extremal set is $\{a_1,a_2,a_3,a_4\}$, where $a_1,\dots, a_4$ are the four vertices of a regular tetrahedron of side length one.  Note that $|a_i-a_j|=1$ for all $i\neq j$ and that there are six such pairs $\{a_1,a_2\},\{a_1,a_3\},\{a_1,a_4\},\{a_2,a_3\},\{a_2,a_4\},\{a_3,a_4\}$. Since $2\cdot 4-2=6$, $\{a_1,a_2,a_3,a_4\}$ is indeed extremal.  In addition, we can add points to this set in order to construct an extremal set having as many points as we wish. Consider the edge $e$ which joins $a_3$ to $a_4$. For any $a_5\in e$, $|a_5-a_1|=|a_5-a_2|=1$ and $|a_5-a_i|<1$ for $i=3,4$. Thus, $\{a_1,\dots,a_5\}$ has $2+6=8=2\cdot 5-2$ diametric pairs; see Example \ref{AlmostReulEx} below. Moreover, we can add any  number $\{a_5,\dots,a_m\}$ of distinct points to $e$ and deduce that $\{a_1,\dots, a_m\}$ is extremal.
\end{ex}

\par Suppose $X\subset \R^3$ has $m\ge 4$ points and $X$ is extremal. By Theorem \ref{ExtendedGHS}, $X=\text{vert}(B(X))$, so that $B(X)$ has $m$ vertices. We've already seen that $B(X)$ has $m$ faces. In view of the Euler formula discussed in Theorem \ref{EulerTheorem}, $2=V-E+F=2m- E$. That is, $B(X)$ has 
$$
E=2(m-1)
$$
edges.  Another important observation of Kupitz, Martini, and Perles (Theorem 8.1 of \cite{MR2593321}) is that edges of $B(X)$ are naturally grouped in pairs. We will call any two such edges as in the theorem below {\it a dual edge pair}. By the computation of $E$ above, $B(X)$ has $m-1$ dual edge pairs. 
\begin{thm}\label{edgeThm}
Assume a finite $X\subset \R^3$ with at least four points, $\textup{diam}(X)=1$, and $X$ is extremal. Further suppose $e\subset \partial B(x)\cap \partial B(y)\cap B(X)$ is an edge of $B(X)$ with endpoints $x',y'\in X$. There is a unique edge $e'\subset \partial B(x')\cap \partial B(y')\cap B(X)$ of $B(X)$ with endpoints $x,y$.   
\end{thm}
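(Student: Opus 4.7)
The plan is to construct $e'$ as an edge of the face $F_{x'}:=B(X)\cap\partial B(x')$ of $B(X)$. Since $x',y'$ are endpoints of $e\subset\partial B(x)\cap\partial B(y)$, we have $|x'-x|=|x'-y|=|y'-x|=|y'-y|=1$, so $x$ and $y$ both lie on the circle $C':=\partial B(x')\cap\partial B(y')$; this is a genuine circle because $|x'-y'|\le\mathrm{diam}(X)=1<2$ (as an aside, it follows that $x',y'$ lie in the perpendicular bisector plane of $\overline{xy}$, so the lines through $x,y$ and through $x',y'$ are orthogonal). By Lemma~\ref{SphereConvSubsetLem}, $F_{x'}$ is a spherically convex polygon whose vertex set is $\{w\in X:|w-x'|=1\}$, which contains both $x$ and $y$.

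\textbf{Locating $e'$ via planar duality.} My approach is to show that $x$ and $y$ are consecutive vertices along $\partial F_{x'}$ and that the edge of $B(X)$ joining them is supported on $C'$; this edge is then $e'$. Consider the $1$-skeleton $\Gamma$ of $B(X)$, planarly embedded on the topological sphere $\partial B(X)$. By Theorems~\ref{EulerTheorem} and \ref{ExtendedGHS}, $|V(\Gamma)|=m$, $|E(\Gamma)|=2m-2$, $|F(\Gamma)|=m$, and the faces are in natural bijection with $X$ via $z\leftrightarrow F_z$. Because $e$ is shared by $F_x$ and $F_y$ at the vertex $x'$, these two faces are consecutive around $x'$ in the cyclic order of faces. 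I would then apply planar duality in tandem with the bijection $z\leftrightarrow F_z$ to deduce the dual statement at $x$: the faces $F_{x'}$ and $F_{y'}$ are consecutive in the cyclic order of faces around $x$, separated by a unique edge of $\Gamma$ at $x$. This edge is supported on $C'$, and tracing $\partial F_{x'}$ along it from $x$ must lead to $y$, by a symmetric argument applied at the other endpoint $y'$ of $e$. The resulting edge is $e'$.

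\textbf{Uniqueness and the main obstacle.} Uniqueness of $e'$ is immediate: $F_{x'}$ is a convex polygon, so between two specified consecutive vertices there is at most one edge. The principal difficulty is making the planar-duality step rigorous, especially in the presence of dangling vertices: if $y'$ is dangling, then $F_{y'}$ is a lune on $\partial B(y')$ whose only two vertices are $x$ and $y$, and the cyclic order of faces around $y'$ degenerates to just two. In this case I would verify the claim directly by inspecting the two arcs of $\partial F_{y'}$ joining $x$ to $y$: one lies on $\partial B(y')\cap\partial B(x')=C'$ and is the sought $e'$, while the other lies on $\partial B(y')\cap\partial B(w)$ for the remaining common neighbor $w$ of $x,y,y'$ in $X$. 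The case analysis (both endpoints principal; exactly one dangling; both dangling) seems unavoidable, but in each case the strong rigidity enforced by extremality (Theorem~\ref{ExtendedGHS}) locks the combinatorics of $\Gamma$ into the required duality.
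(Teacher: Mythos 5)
There is a genuine gap, and it sits exactly at the step you flag as the ``principal difficulty.'' Note first that the paper itself offers no proof of Theorem \ref{edgeThm}: it is quoted from Theorem 8.1 of Kupitz--Martini--Perles \cite{MR2593321}, so your argument has to stand on its own. Its central step does not. Abstract planar duality applied to the skeleton $\Gamma$ produces a dual graph $\Gamma^*$ whose vertices are the faces of $\Gamma$, and the edge $e$ (shared by $F_x$ and $F_y$, with endpoints $x',y'$) corresponds to a dual edge joining the dual vertices $F_x$ and $F_y$; but to convert this into the statement you need --- that $F_{x'}$ and $F_{y'}$ share an edge of $\Gamma$ itself, lying on $\partial B(x')\cap\partial B(y')$ and with endpoints $x$ and $y$ --- you must know that the vertex--face bijection $z\leftrightarrow F_z$ extends to an incidence-preserving isomorphism between $\Gamma$ and $\Gamma^*$. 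That is precisely the strong self-duality of the face complex of $B(X)$ (Section 9 of \cite{MR2593321}, Chapter 6 of \cite{MR3930585}), a statement at least as strong as Theorem \ref{edgeThm}; in \cite{MR2593321} the self-duality is obtained \emph{using} the dual-edge theorem, not the other way around. The numerical facts $V=F=m$ and $E=2m-2$ coming from Theorems \ref{ExtendedGHS} and \ref{EulerTheorem} do not force self-duality, let alone self-duality realized by this particular bijection, so as written the argument is circular (or assumes exactly what is to be proved).

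Even granting some duality principle, the remaining steps are only sketched, and they carry real content. You do not justify why the edge at $x$ separating $F_{x'}$ from $F_{y'}$, followed along the circle $C'=\partial B(x')\cap\partial B(y')$, must terminate at $y$ rather than at some other vertex of $B(X)$ lying on $C'$ (a priori there could be a third point of $X$ at distance one from both $x'$ and $y'$). In the dangling case you assert, without proof, that one of the two boundary arcs of $F_{y'}$ lies on $\partial B(x')$; all that is clear a priori is that each arc lies on $\partial B(y')\cap\partial B(z)$ for some common neighbor $z$ of $x$ and $y$, and identifying one such $z$ with $x'$ is again essentially the theorem. Finally, your uniqueness remark does not address the only way uniqueness could fail, namely both arcs of $C'$ between $x$ and $y$ lying in $B(X)$ and each forming an edge; convexity of $F_{x'}$ together with Lemma \ref{SphereConvSubsetLem} does not immediately exclude this, since $C'$ is not a great circle and the cap it bounds can be geodesically convex and contained in a hemisphere. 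A complete proof needs a genuinely geometric argument, as in \cite{MR2593321}: one must show that a specific arc of $C'$ joining $x$ to $y$ lies in $B(w)$ for every $w\in X$, and that no vertex of $B(X)$ lies in its relative interior, with extremality of $X$ entering in an essential way.
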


\par  Suppose $X\subset \R^3$ is finite with at least four points and that the diameter of $X$ is equal to one. Following Montejano and Rold\'an-Pensado \cite{MR3620844} and Sallee \cite{MR296813}, we will say that the ball polyhedron $B(X)$ is a {\it Reuleaux polyhedron} provided that $X$ is extremal.   Just as Meissner's tetrahedra are designed from a Reuleaux tetrahedron, various constant width shapes can be constructed starting from 
Reuleaux polyhedra. The Density Theorem asserts that essentially all constant width shapes can be built this way.   

\par However, we note that Montejano and Rold\'an-Pensado \cite{MR3620844} and Sallee \cite{MR296813} restricted their definitions to the case in which $B(X)$ has no dangling vertices (Sallee called these shapes {\it frames} and reserved the term `Reuleaux polyhedra' for another class of constant width shapes).  In this case, $X$ is equal to the principal vertices of $B(X)$ and we will say that $X$ is {\it critical}.    For instance, in Example \ref{ReuleauxEx5} above, $\{a_1,a_2,a_3,a_4,a_5\}$ is extremal and $\{a_1,a_2,a_3,a_4\}$ is critical.    

\subsection{Examples}
Suppose $X\subset \R^3$ is finite and the diameter of $X$ is equal to one. The {\it skeleton} of $X$ is the graph 
whose vertices and edges are $\text{vert}(B(X))$ and $\text{edge}(B(X))$, respectively. 
The skeleton of 
a ball polyhedra is known to be planar graph which is 2--connected  (Theorem 6.1 of \cite{MR2593321}).  When $X$ is extremal, the face complex of $B(X)$ is strongly self-dual as detailed in Section 9 of \cite{MR2593321} and Chapter 6 of \cite{MR3930585}.   We will display skeletons along with our plots of Reuleaux polyhedra in order to help visualize these shapes.  Each figure in this paper was generated with \texttt{Mathematica} as described in the appendix.  Moreover, many of the examples displayed either can  be found or are based on the examples in Chapter 6 and 8 of \cite{MR3930585} and in the paper \cite{MR3620844}. We also recommend the article \cite{MR4156257} for a procedure which has been used to find hundreds of other examples.

\begin{ex} Our first example is a Reuleaux tetrahedron. The vertices $\{a_1,a_2,a_3,a_4\}$ of this shape are also the vertices of a regular tetrahedron with side length one.  Each pair of vertices has an edge between them, so the skeleton is simply the complete graph on four vertices.  In the graph below, we have indicated dual edges with the same color.  
\begin{figure}[h]
\centering
 \includegraphics[width=.36\textwidth]{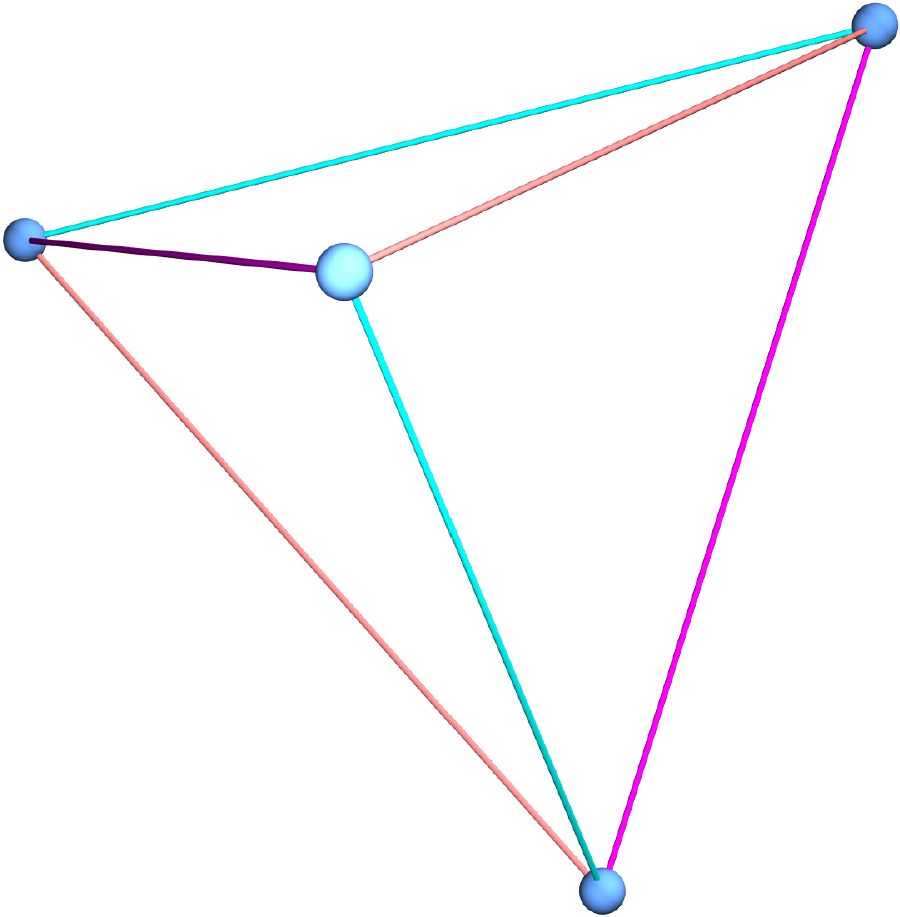}
 \hspace{.3in}
  \includegraphics[width=.4\textwidth]{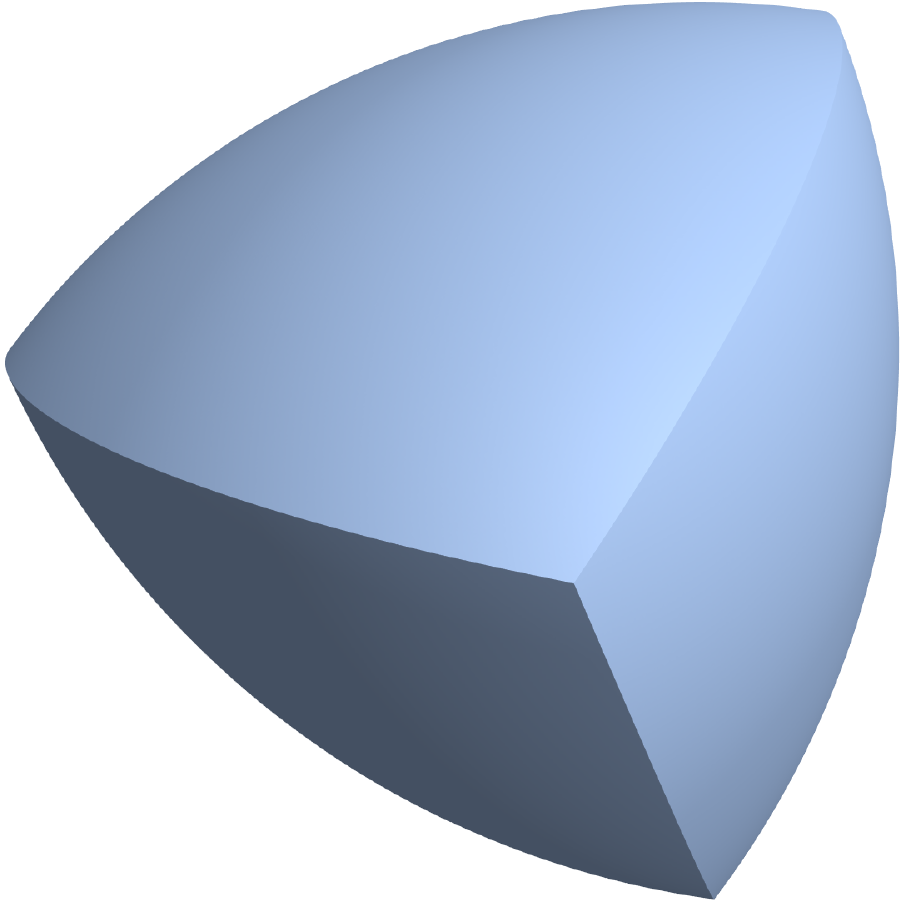}
  
  \vspace{.3in}
  
   \includegraphics[width=.4\textwidth]{ReulTetraUno1} 
    \hspace{.3in}
      \includegraphics[width=.4\textwidth]{ReulTetraDos1} 
\end{figure}
\end{ex}

 \newpage 
\begin{ex}\label{AlmostReulEx}  Building on our previous example, we assume $\{a_1,a_2,a_3,a_4\}$ are the vertices of a Reuleaux tetrahedron. We also choose $a_5$ as the midpoint of the edge joining $a_3$ and $a_4$ and consider $\{a_1,a_2,a_3,a_4, a_5\}$. As noted above, this set of points is extremal but not critical as $a_5$ is a dangling vertex of the associated Reuleaux polyhedron displayed below.  Again we have indicated dual edges with the same color.   
\begin{figure}[h]
\centering
 \includegraphics[width=.36\textwidth]{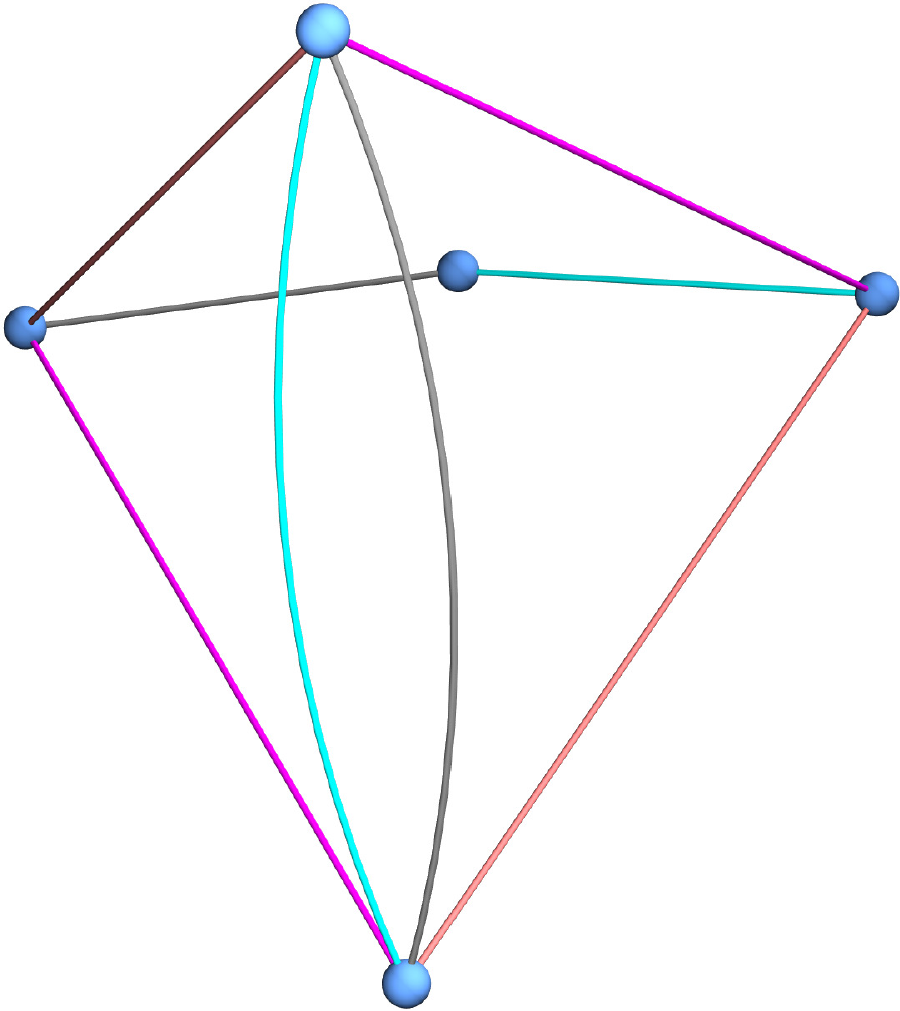}
 \hspace{.3in}
  \includegraphics[width=.4\textwidth]{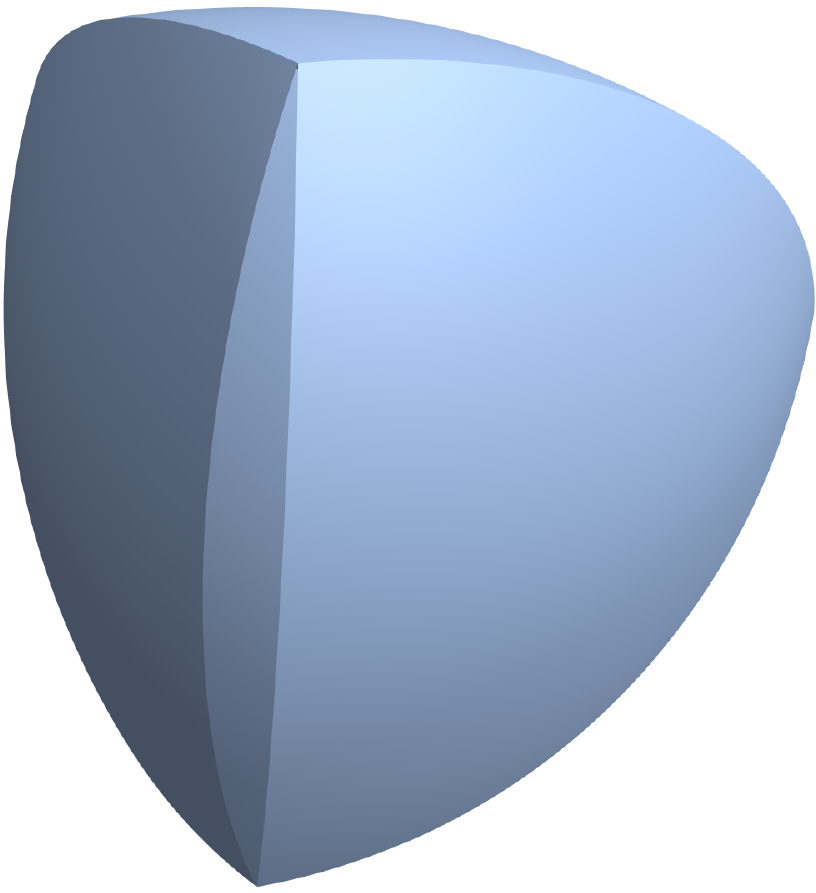}
  
  \vspace{.3in}
  
   \includegraphics[width=.4\textwidth]{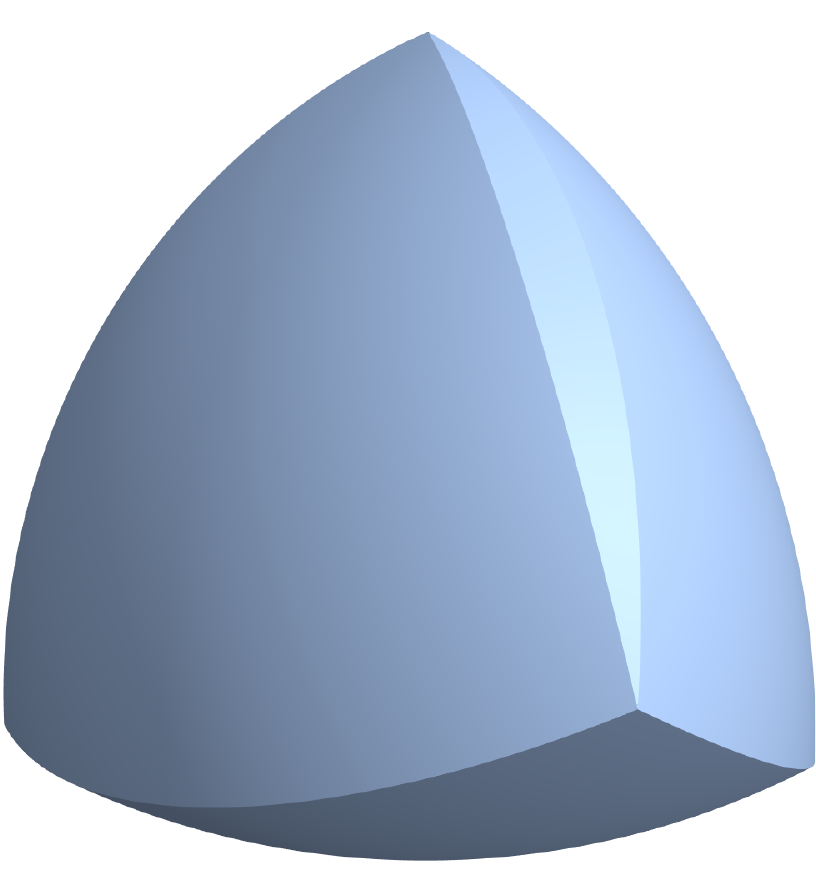} 
    \hspace{.3in}
      \includegraphics[width=.4\textwidth]{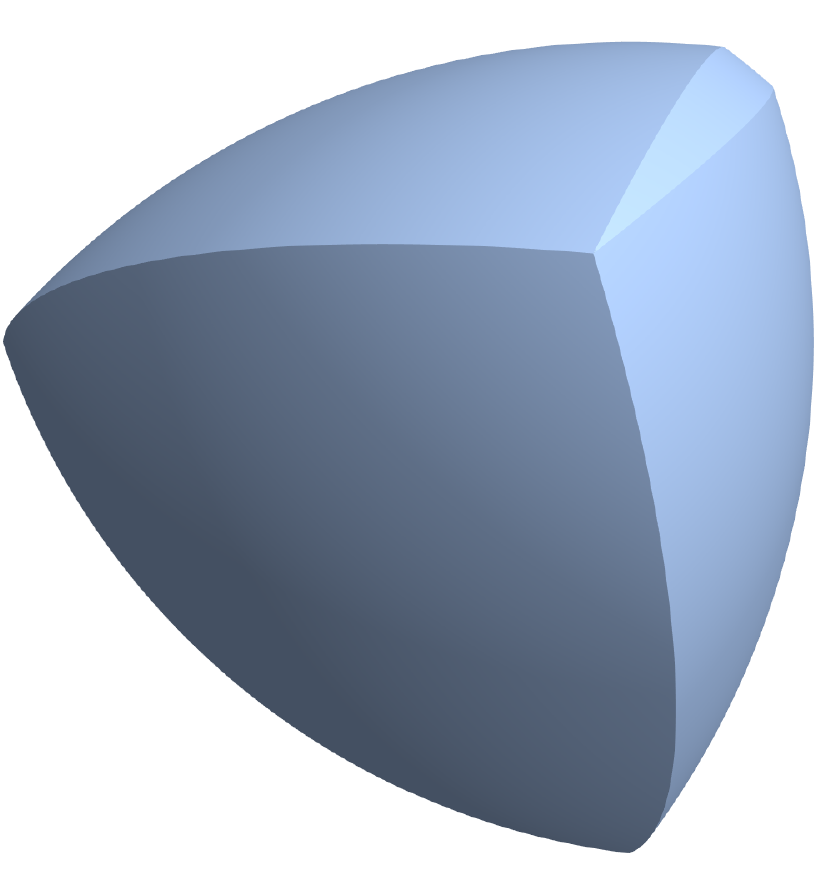} 
\end{figure}
\end{ex}

\newpage 
\begin{ex}\label{nonpolyEx}
Here is an example of an critical set $X=\{a_1,\dots, a_8\}$ discussed in the introduction of \cite{MR2593321}, although it was not explicitly described.   This example is of particular interest as the skeleton of $B(X)$ is 2--connected but not 3--connected.  $X$ also includes the vertices of a regular tetrahedron as a proper subset. 
\begin{figure}[h]
\centering
 \includegraphics[width=.36\textwidth]{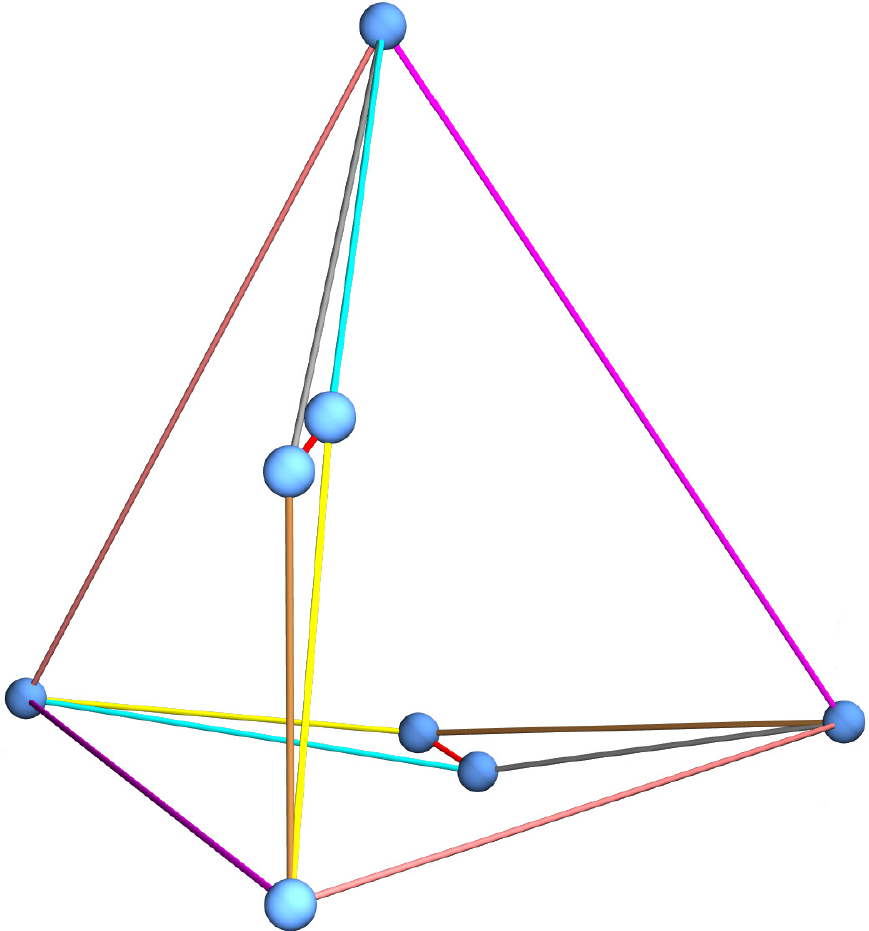}
 \hspace{.3in}
  \includegraphics[width=.4\textwidth]{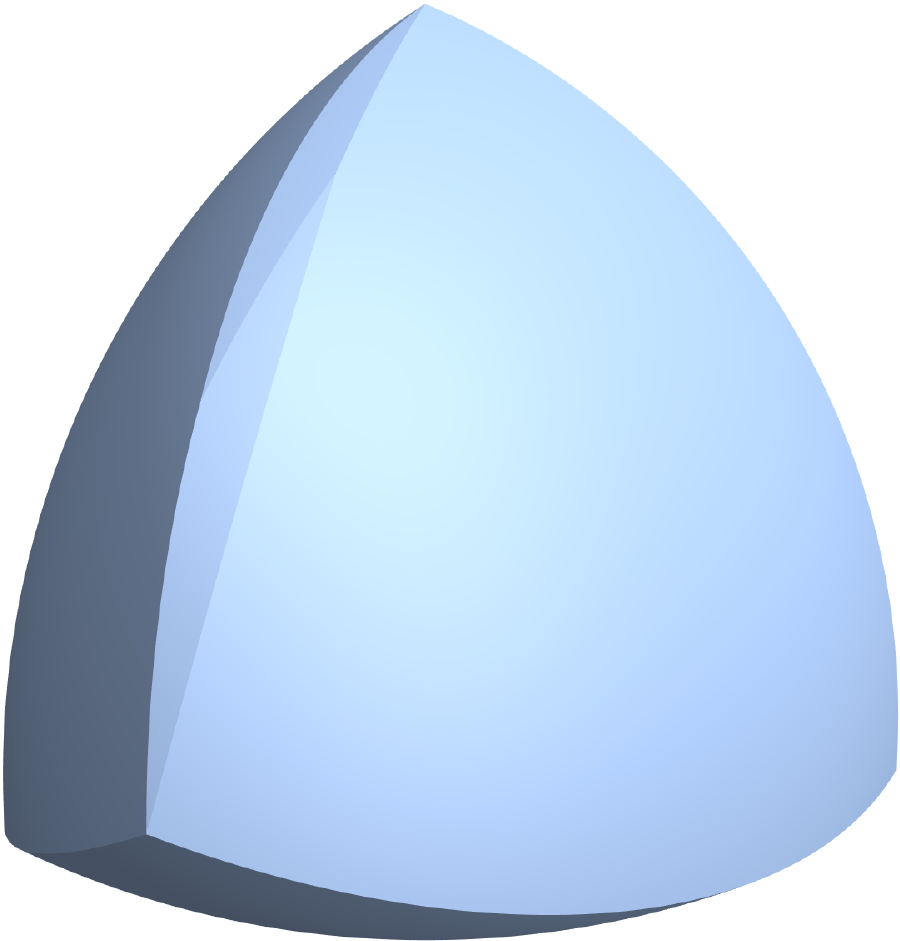}
  
  \vspace{.3in}
  
   \includegraphics[width=.4\textwidth]{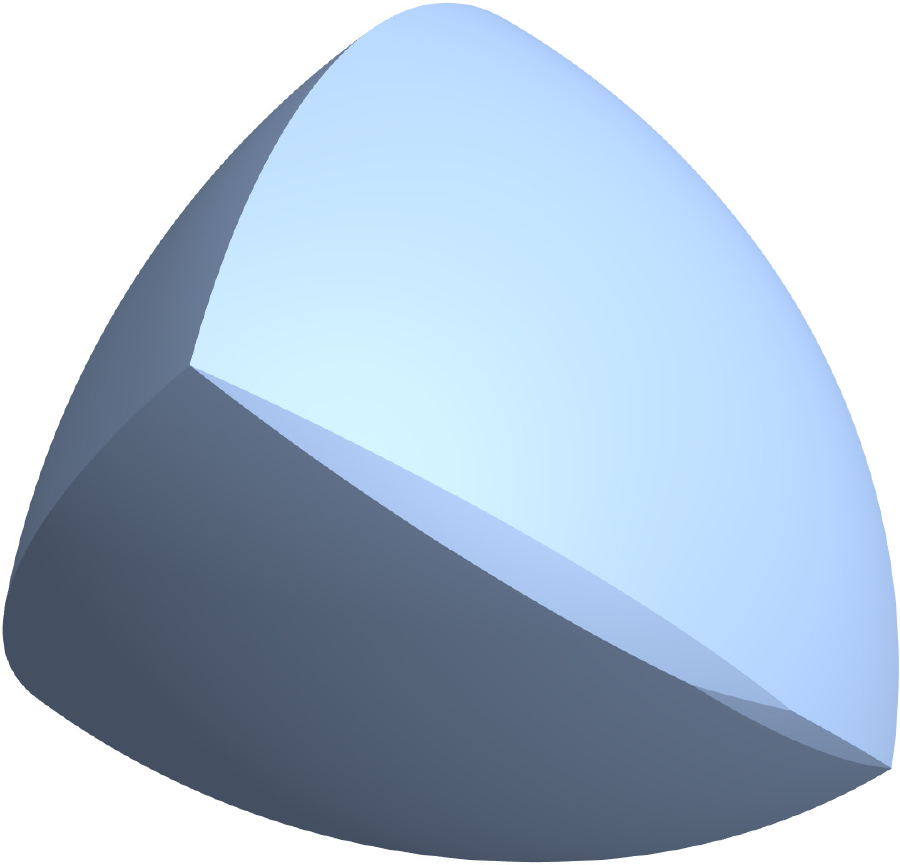} 
    \hspace{.3in}
      \includegraphics[width=.4\textwidth]{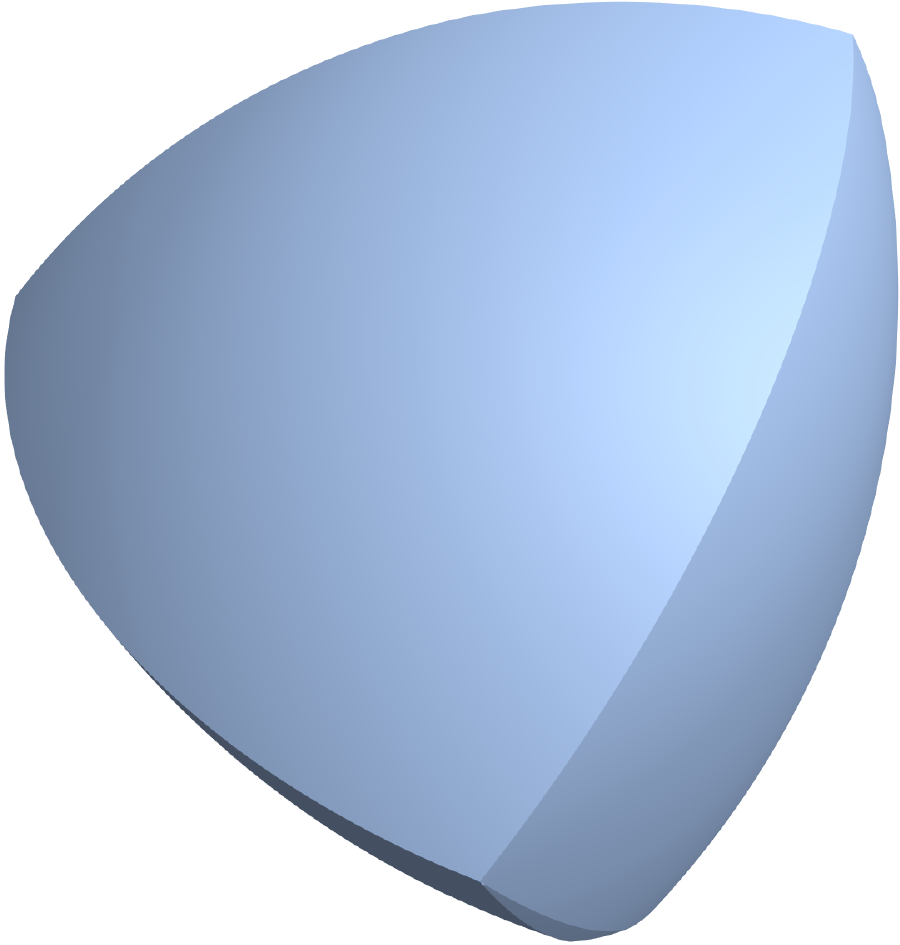} 
\end{figure}
\end{ex}

\newpage 
\begin{ex}\label{PentPyrEx} Suppose $a_1,\dots,a_5\subset \R^3$ belong to a plane and are the vertices of a regular pentagon of diameter one. It is possible to choose $a_6\in \R^3$ such that $|a_6-a_j|=1$ for $j=1,\dots, 5$.  It turns out that $\{a_1,\dots,a_6\}\subset\R^3$ is critical.  Three views of $B(\{a_1,\dots,a_6\})$ are shown below along with a corresponding skeleton.  We also note that it is possible to generalize this construction with other odd-sided regular polygons in order to obtain more Reuleaux polyhedra. This is described in example 1.2 of \cite{MR2593321}. 
\begin{figure}[h]
\centering
 \includegraphics[width=.37\textwidth]{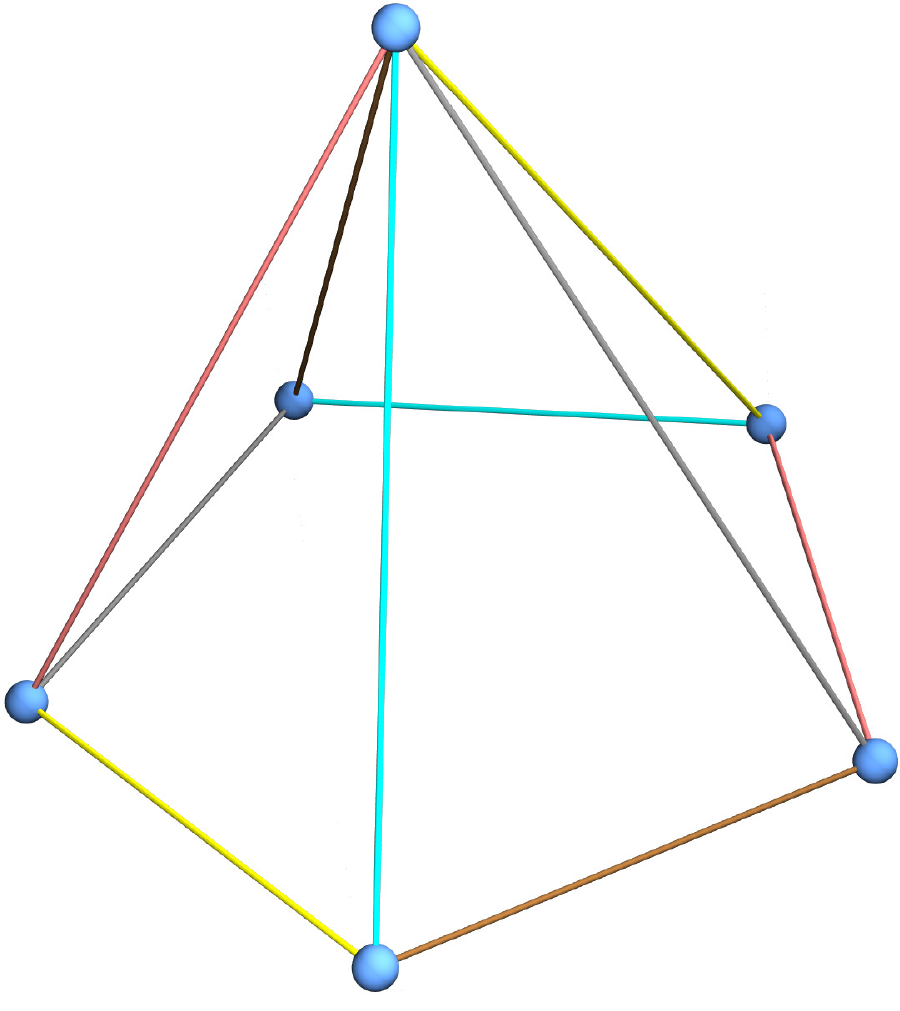}
     \hspace{.3in}
  \includegraphics[width=.4\textwidth]{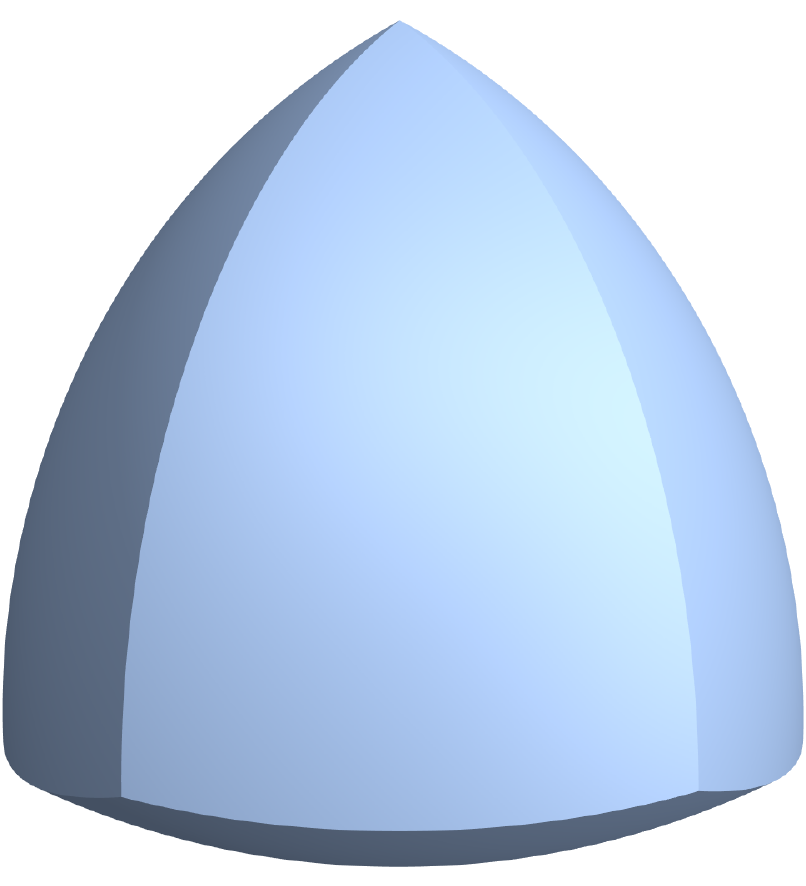}
 
  \vspace{.3in}
  
   \includegraphics[width=.4\textwidth]{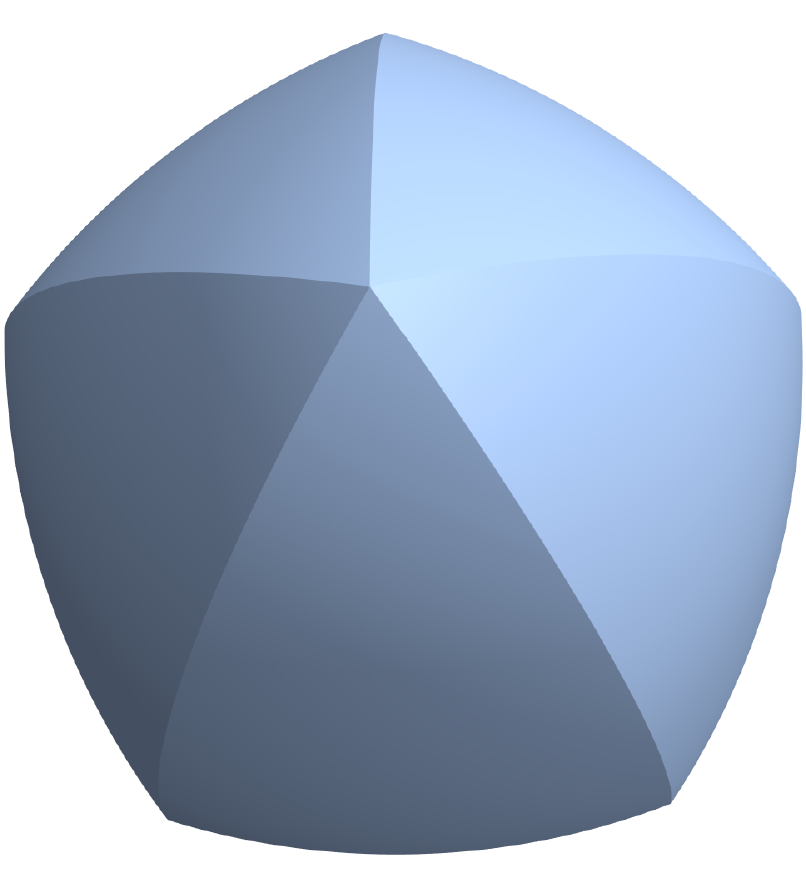}
       \hspace{.3in} 
      \includegraphics[width=.4\textwidth]{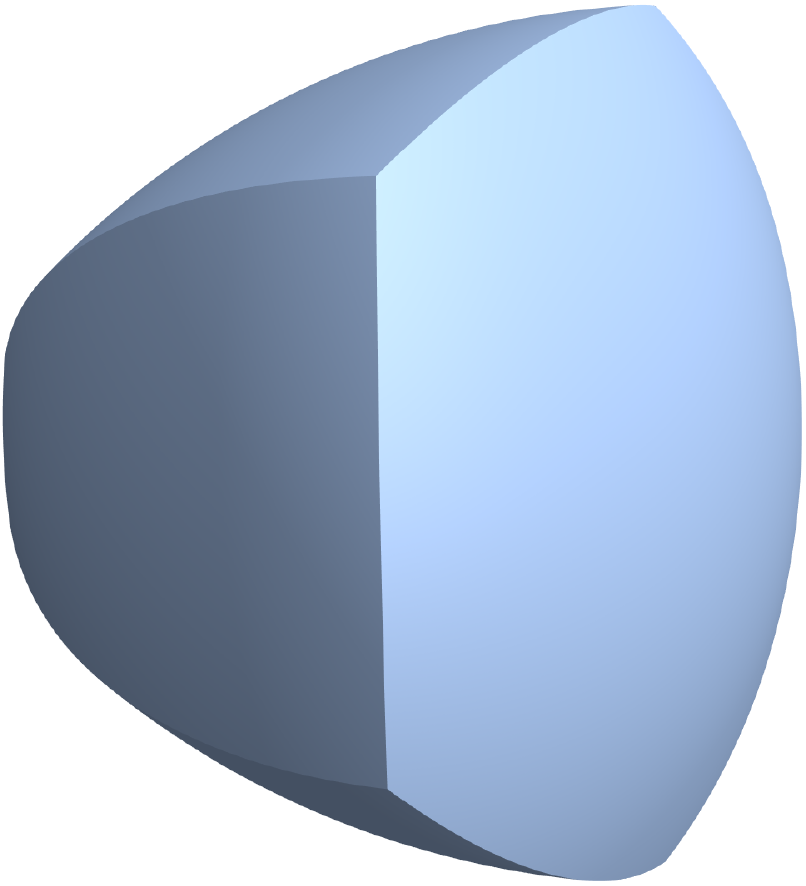} 
\end{figure}
\end{ex}

\newpage 
\begin{ex}\label{PentPyrExTwo} Recall the critical set $\{a_1,\dots,a_6\}\subset \R^3$ from our previous example.  If we choose two points $a_7$ and $a_8$ in two distinct (non dual) edges of $B(\{a_1,\dots,a_6\})$, then $\{a_1,\dots,a_8\}$ is an extremal set.  Its associated skeleton and Reuleaux polyhedron are displayed below. 

\begin{figure}[h]
\centering
 \includegraphics[width=.37\textwidth]{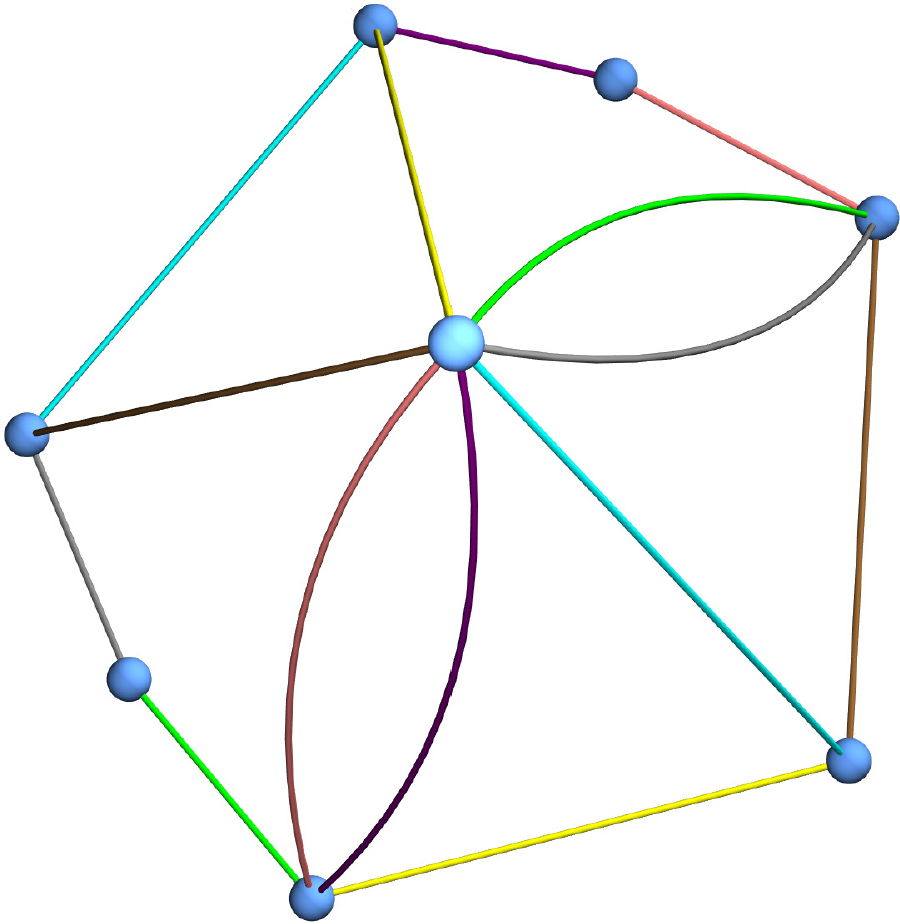}
     \hspace{.3in}
  \includegraphics[width=.4\textwidth]{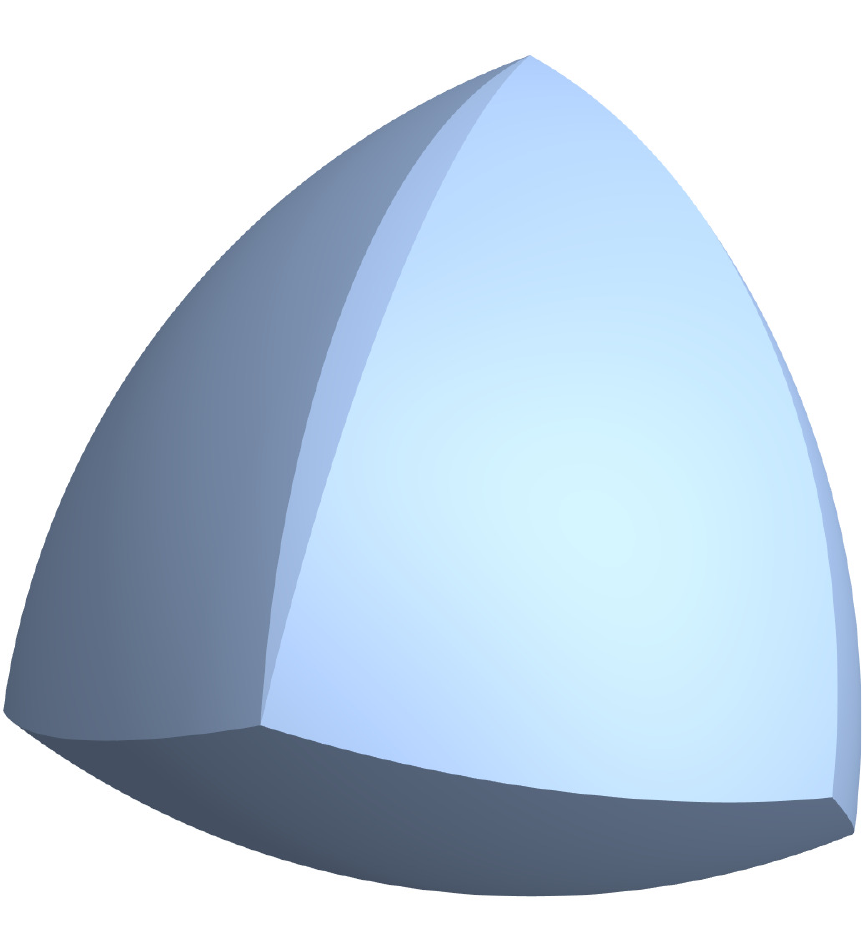}
 
  \vspace{.3in}
  
   \includegraphics[width=.4\textwidth]{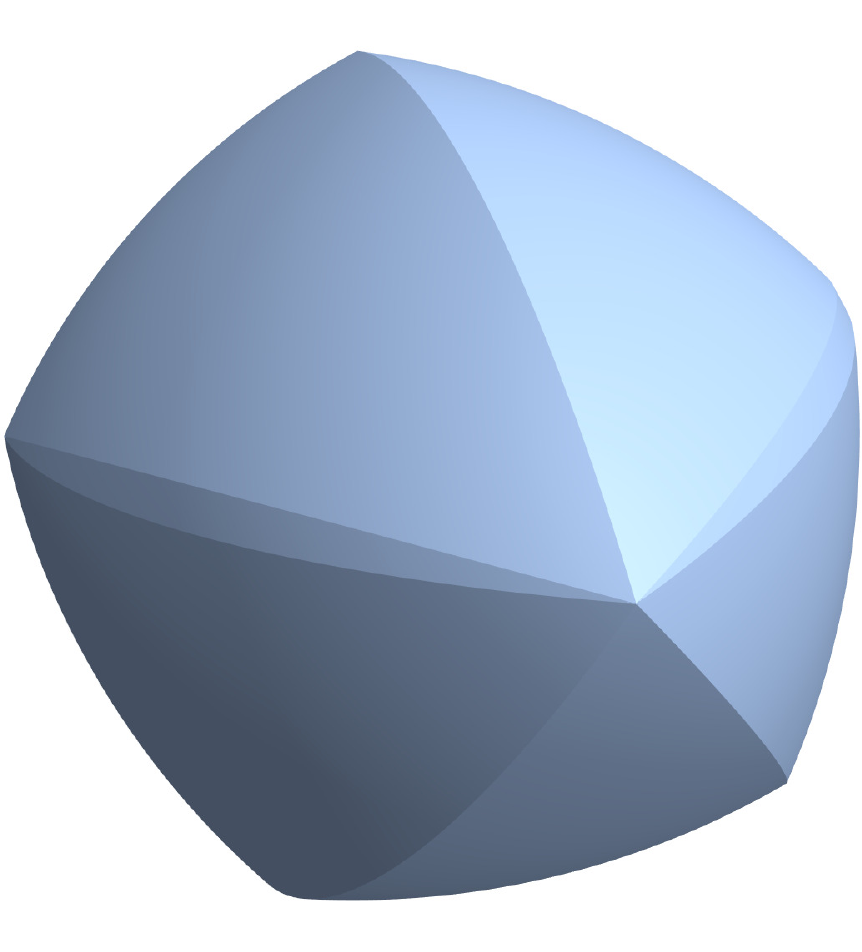}
       \hspace{.3in} 
      \includegraphics[width=.4\textwidth]{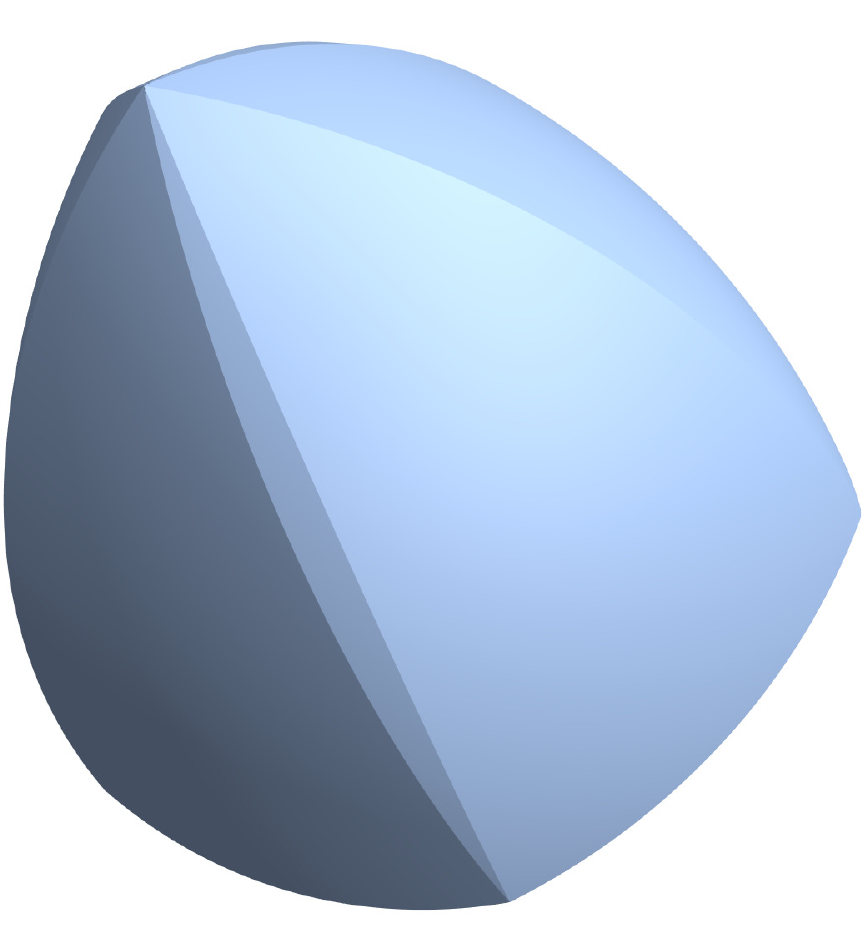} 
\end{figure}
\end{ex}

\newpage 
\begin{ex}\label{ETPexample} We can design a Reuleaux polyhedron by choosing vertices  $\{a_1,\dots, a_7\}$ analogous to those of an elongated triangular pyramid.  Indeed for each $t\in (0,1)$ 
$$
\begin{cases}
a_1=(0,0,\sqrt{1-t^2/3}-\sqrt{1-(t^2+t+1)/3})\\
a_2=(0,1/\sqrt{3},0)\\
a_3=(-1/2,-1/(2\sqrt{3}),0)\\
a_4=(1/2,-1/(2\sqrt{3}),0)\\
a_5=ta_2-(0,0,\sqrt{1-(t^2+t+1)/3})\\
a_6=ta_3-(0,0,\sqrt{1-(t^2+t+1)/3})\\
a_7=ta_4-(0,0,\sqrt{1-(t^2+t+1)/3})
\end{cases}
$$ 
is critical.  The corresponding skeleton and Reuleaux polyhedron $B(\{a_1,\dots, a_7\})$ are displayed below. 

\begin{figure}[h]
\centering
 \includegraphics[width=.33\textwidth]{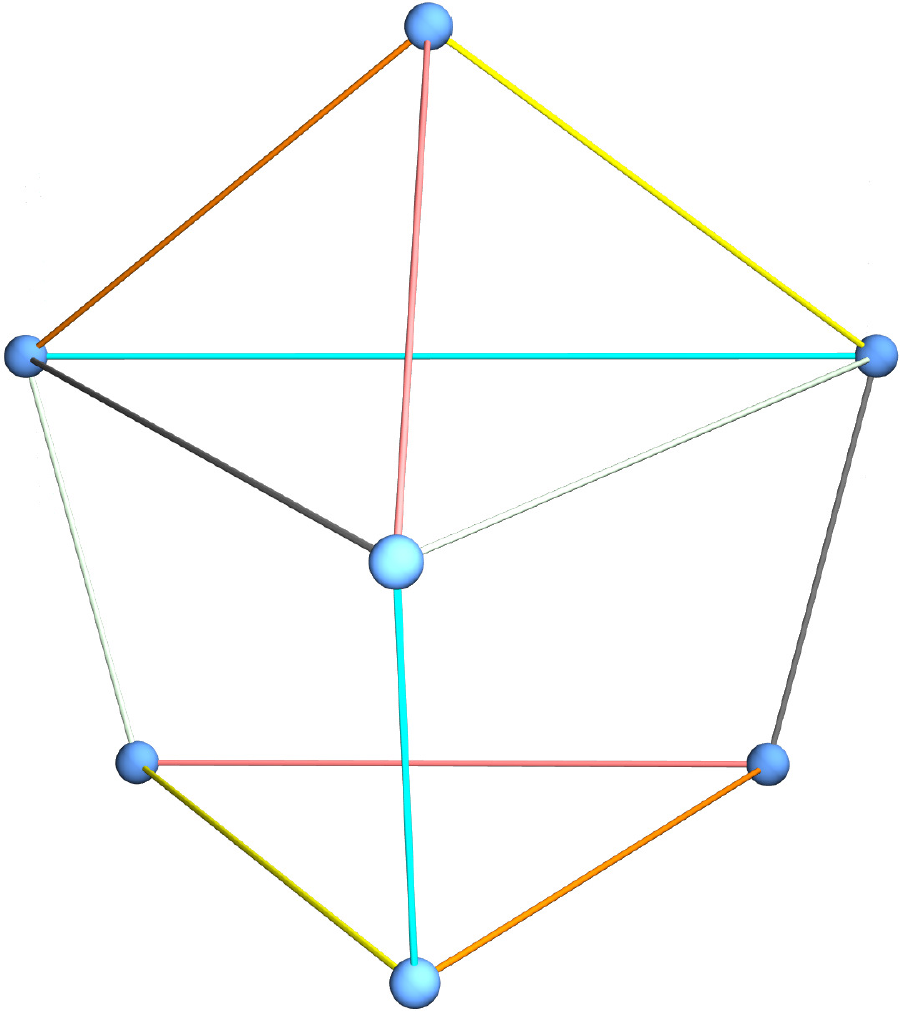}
      \hspace{.3in}
  \includegraphics[width=.4\textwidth]{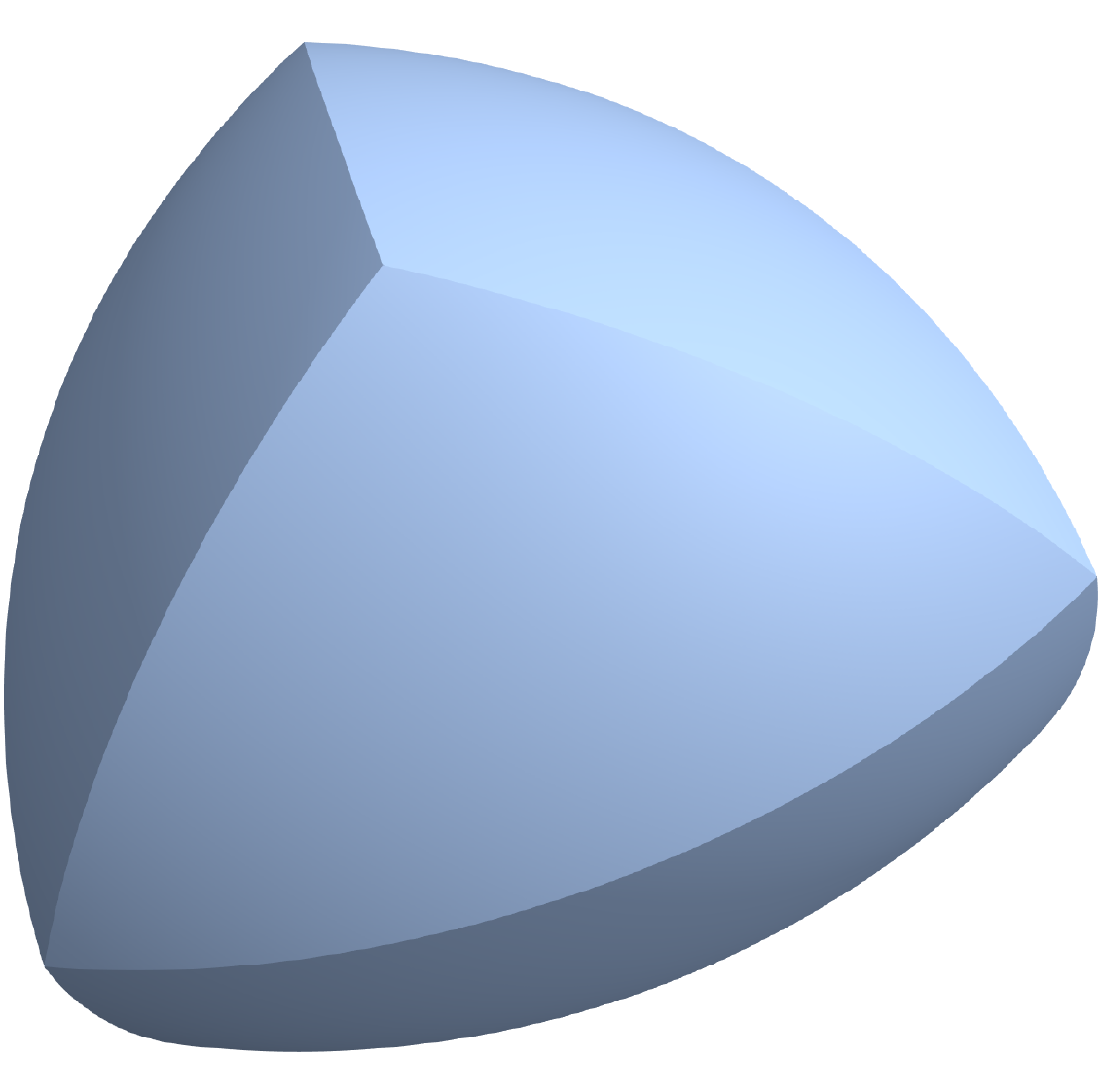}
  
  \vspace{.3in}
  
   \includegraphics[width=.4\textwidth]{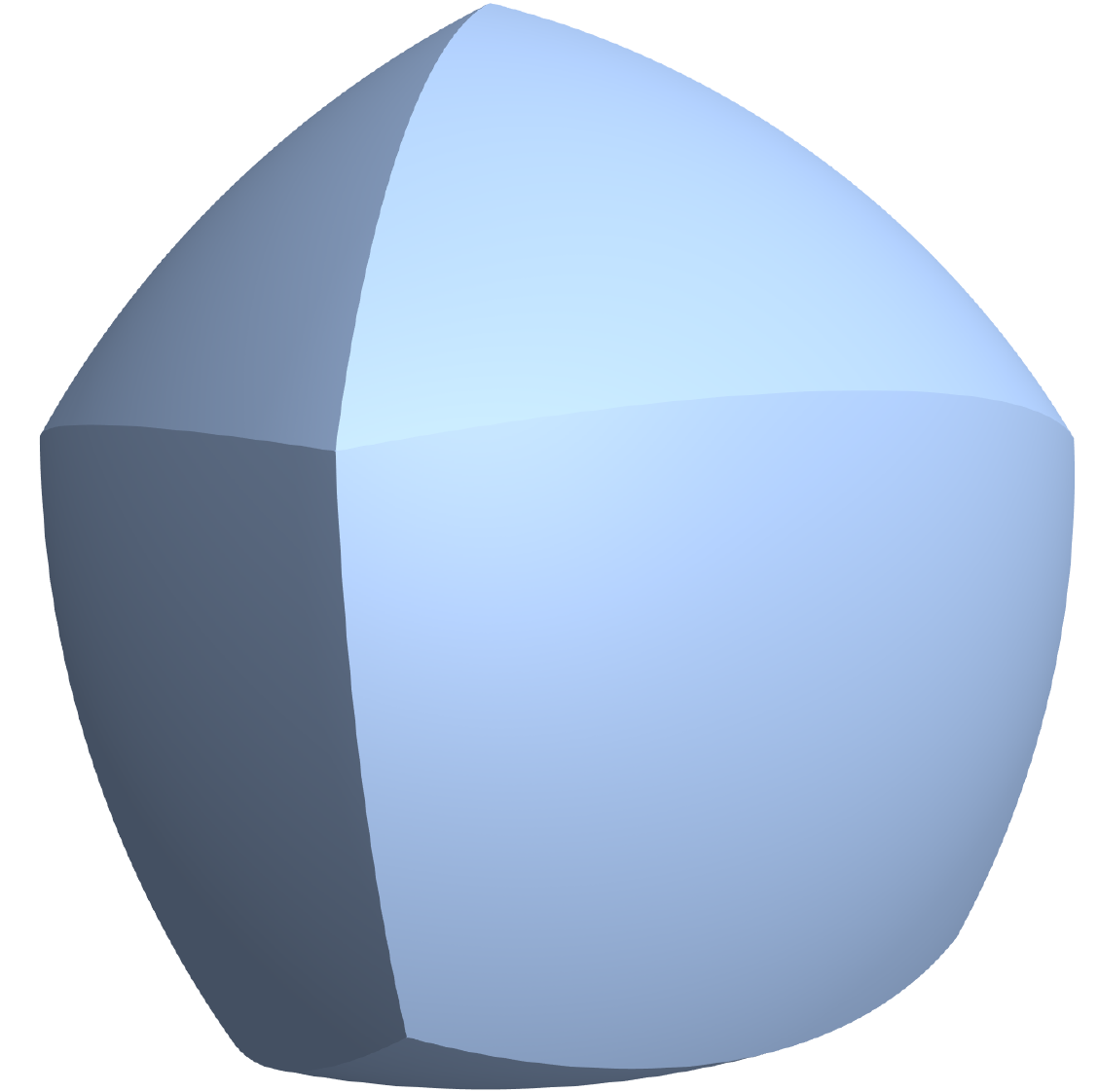}
        \hspace{.3in} 
      \includegraphics[width=.4\textwidth]{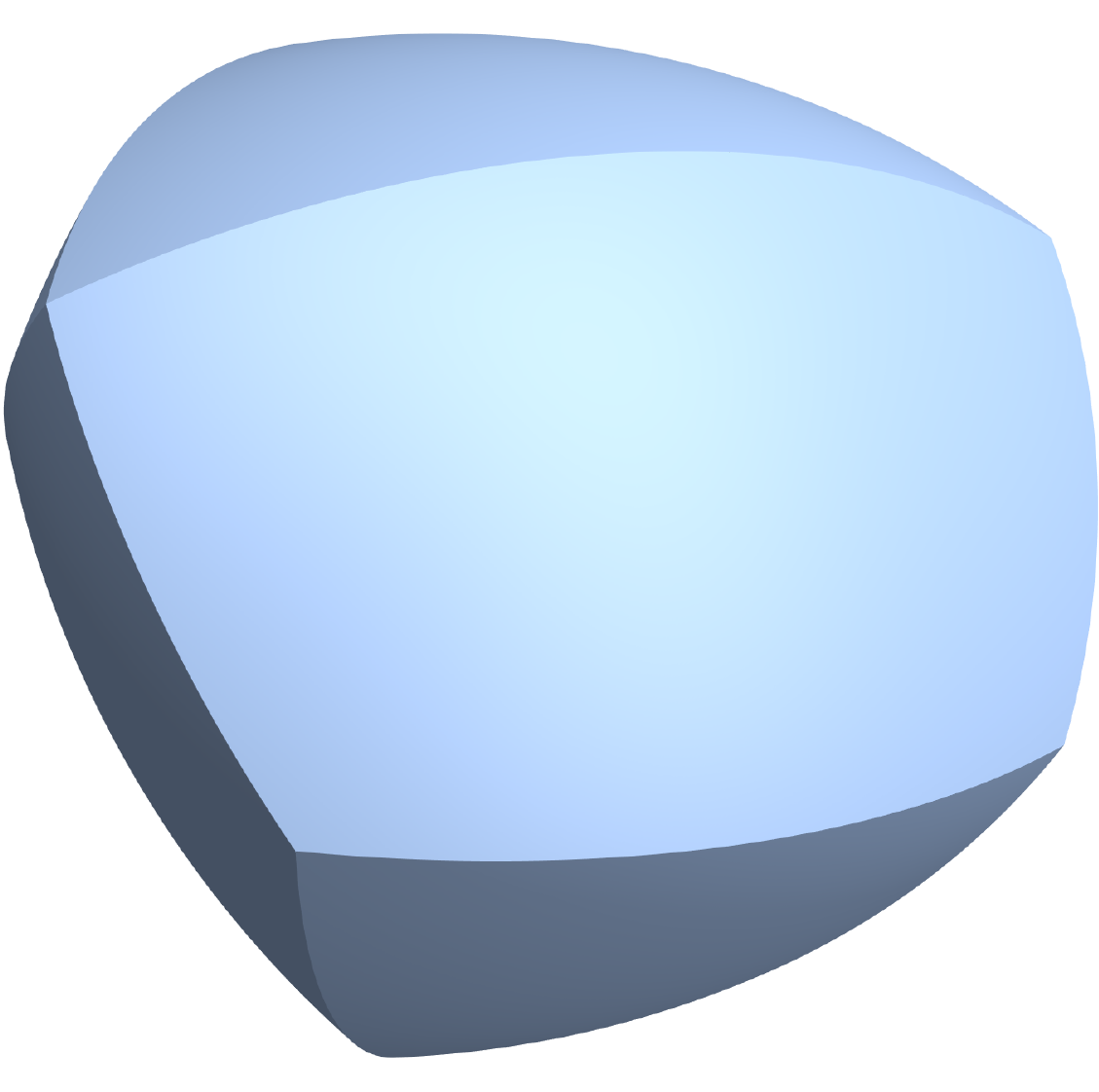} 
\end{figure}
\end{ex}

\newpage 
\begin{ex} The elongated triangular pyramid example above can be extended to a pyramid with any odd number of vertices at least equal to three.  For instance, we can extend a pentagonal pyramid with vertices $\{a_1,\dots, a_6\}$. If $\{a_2,\dots, a_6\}$ form the base of the pyramid, this can be accomplished by choosing five more vertices $\{a_7,\dots, a_{11}\}$ in a plane parallel to the one containing $\{a_2,\dots, a_6\}$ and which is on the opposite side of this plane than $a_1$ is. Moreover,  the apex $a_1$ should be adjusted so that $|a_1-a_j|=1$ for $j=7,\dots, 11$. The Reuleaux polyhedron of the critical set $\{a_1,\dots, a_{11}\}$ along with its skeleton is shown below. 

\begin{figure}[h]
\centering
 \includegraphics[width=.33\textwidth]{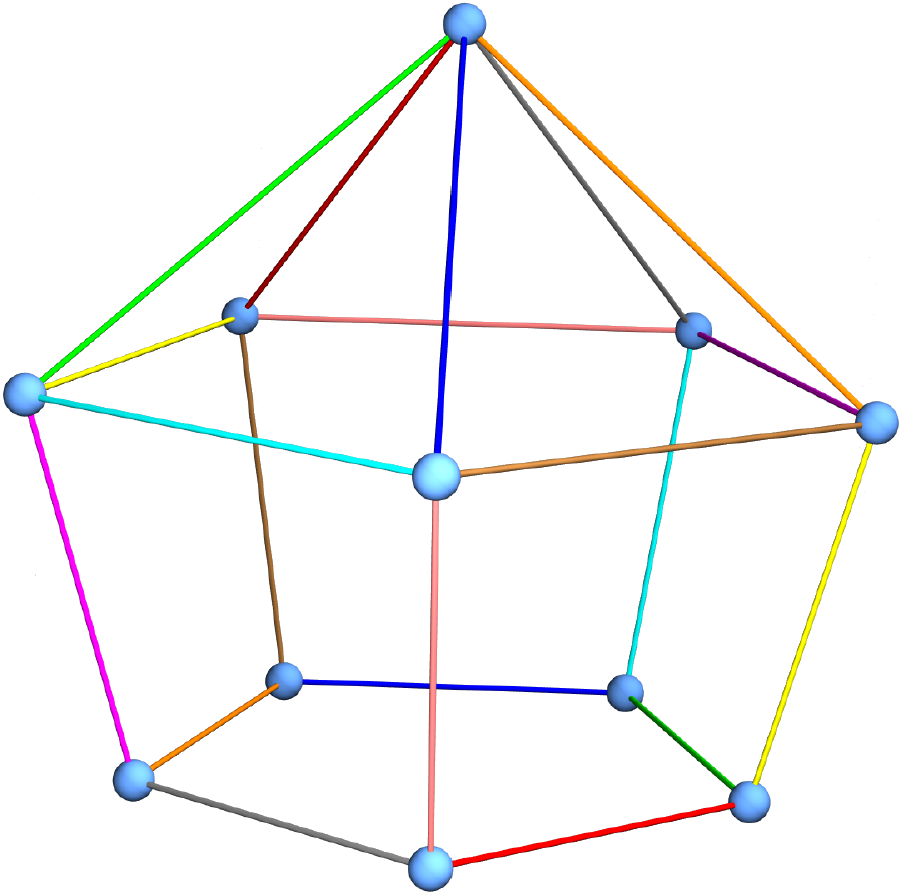}
      \hspace{.3in}
  \includegraphics[width=.4\textwidth]{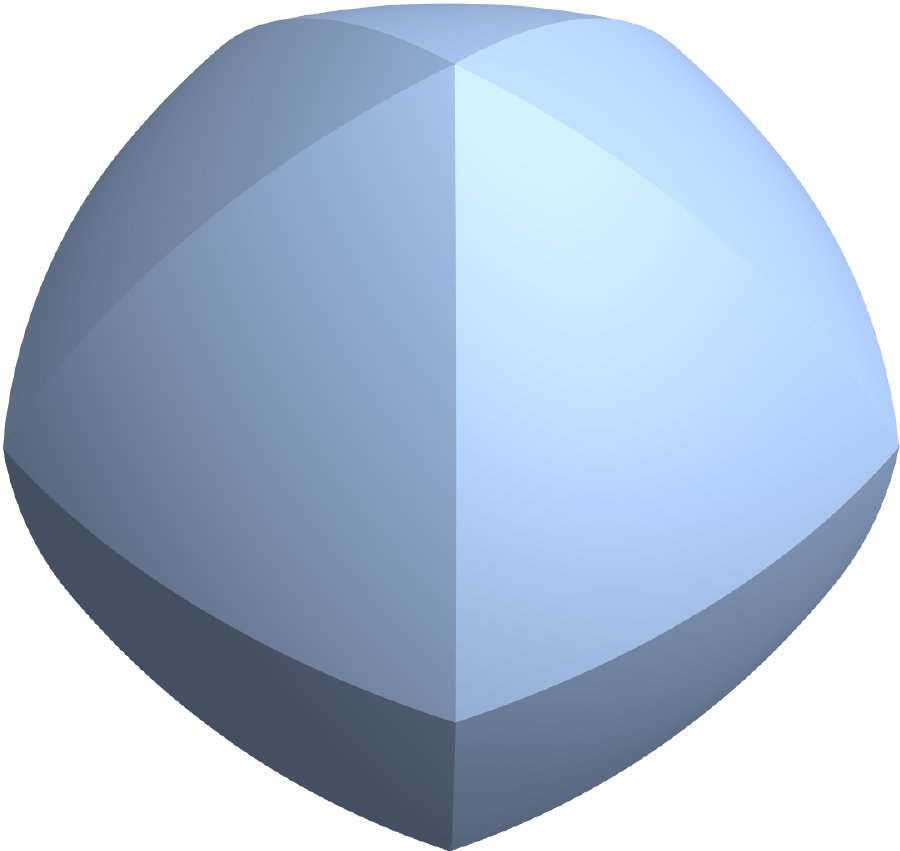}
  
  \vspace{.3in}
  
   \includegraphics[width=.4\textwidth]{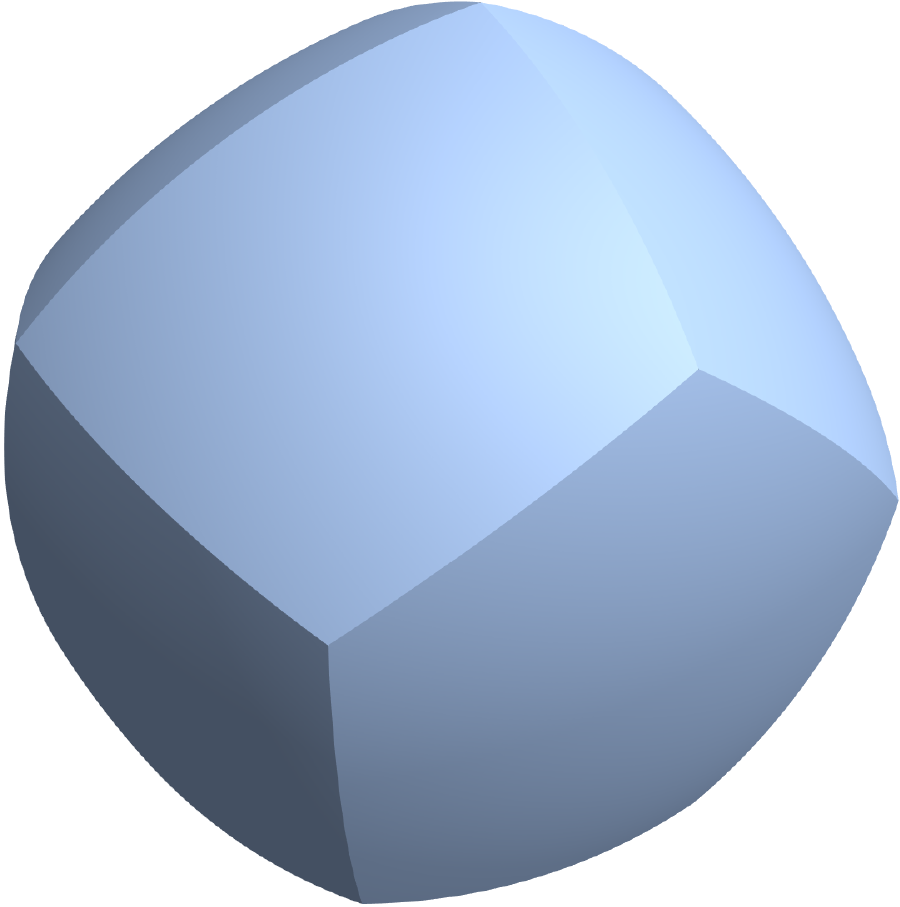}
        \hspace{.3in} 
      \includegraphics[width=.4\textwidth]{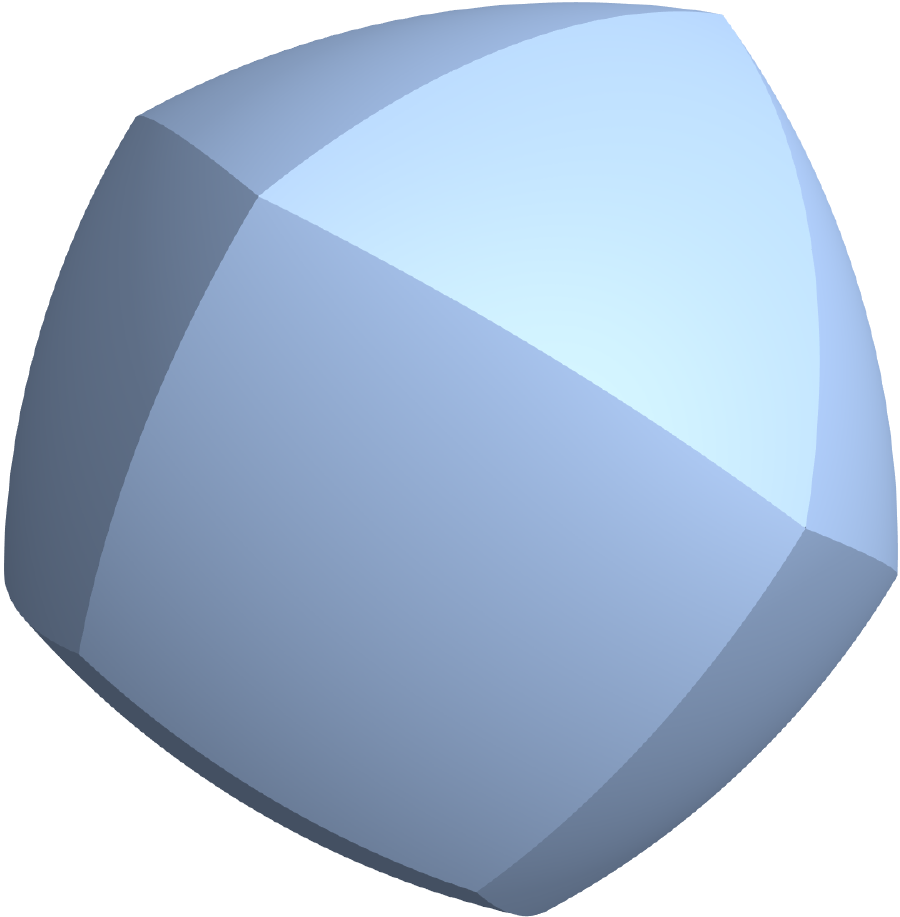} 
\end{figure}
\end{ex}

\newpage 
\begin{ex}\label{DTexample}
A critical set with nine points, which are also the vertices of a diminished  trapezohedron with a square base, is
$$
\begin{cases}
a_1=(0,0,(1-2s^2)^{1/2})\\
a_2=(0,1/2,b)\\
a_3=(1/2,0,b)\\
a_4=(-1/2,0,b)\\
a_5=(0,-1/2,b)
\end{cases}
\quad 
\begin{array}{l}
a_6=(s,s,0)\\
a_7=(-s,s,0)\\
a_8=(-s,-s,0)\\
a_9=(s,-s,0).
  \end{array}
$$
Here $s=1/(2\sqrt{2})$ and $b=(1-s^2-(s+1/2)^2)^{1/2}$.
\begin{figure}[h]
\centering
 \includegraphics[width=.35\textwidth]{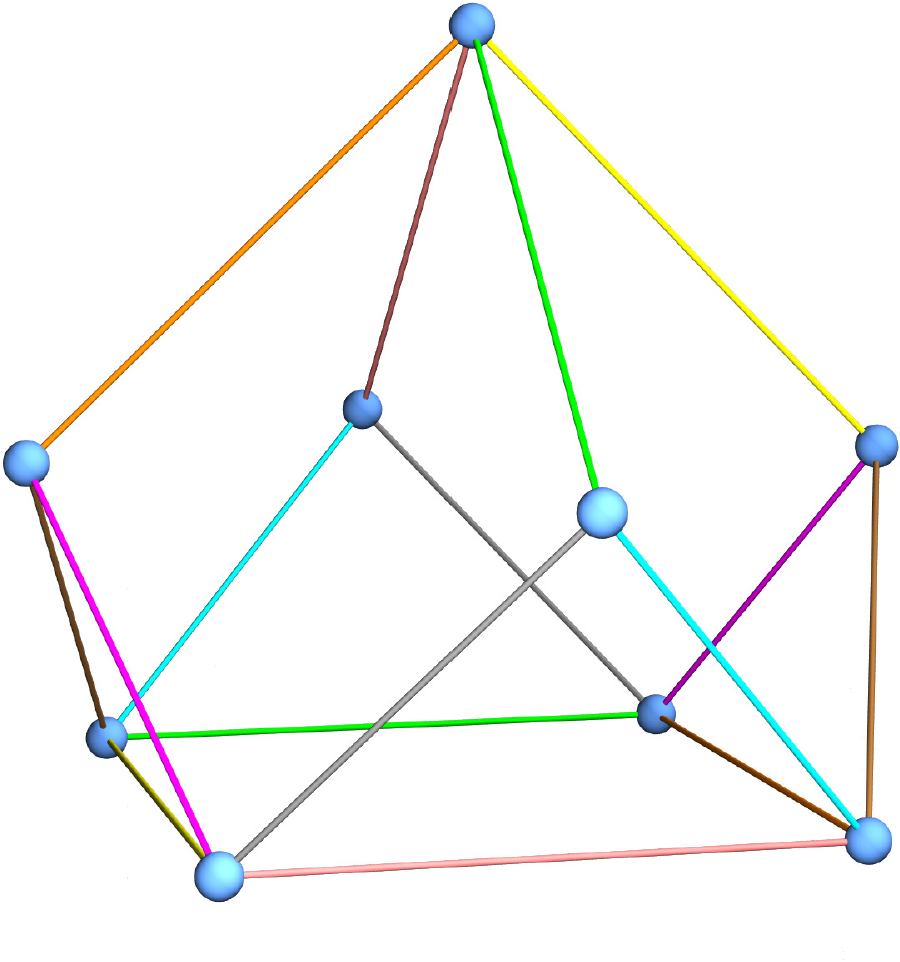}
      \hspace{.3in}
  \includegraphics[width=.4\textwidth]{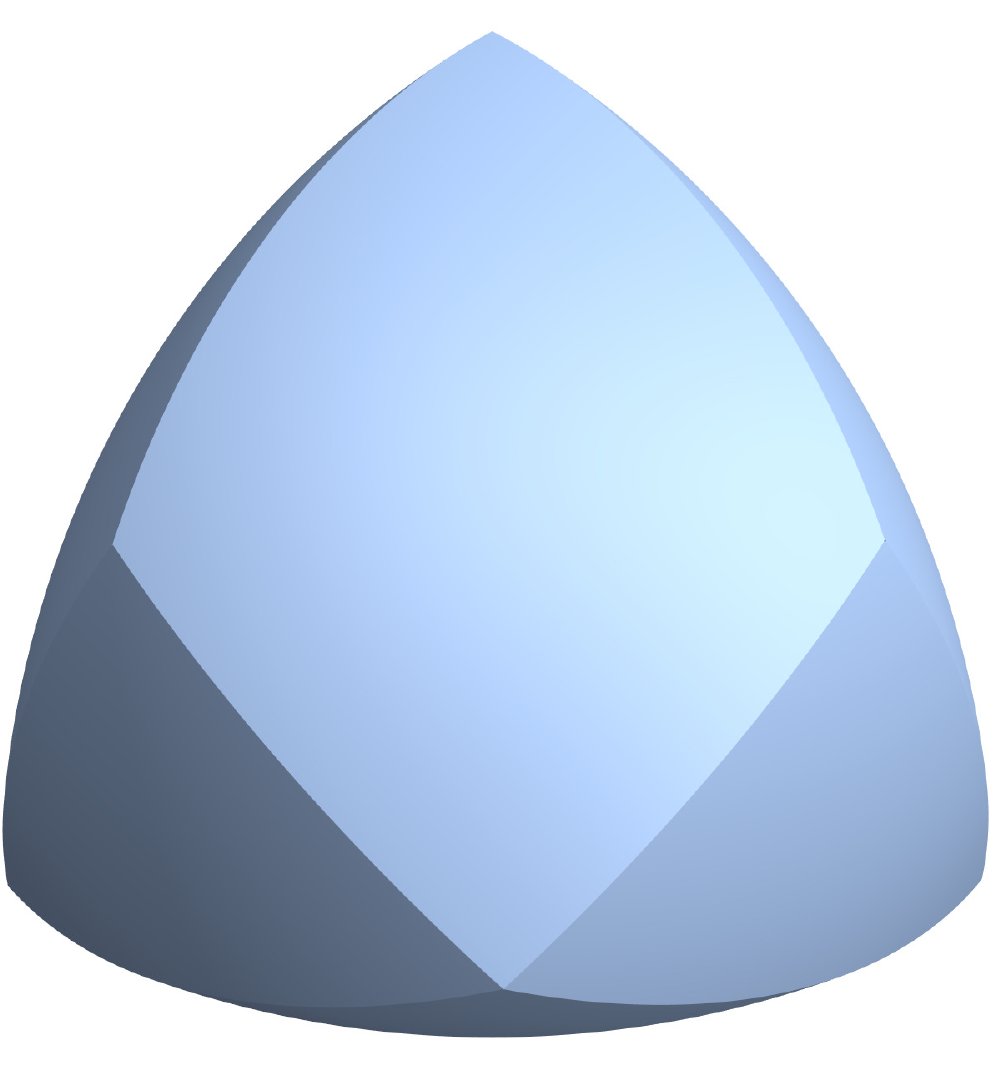}
  
  \vspace{.3in}
  
   \includegraphics[width=.4\textwidth]{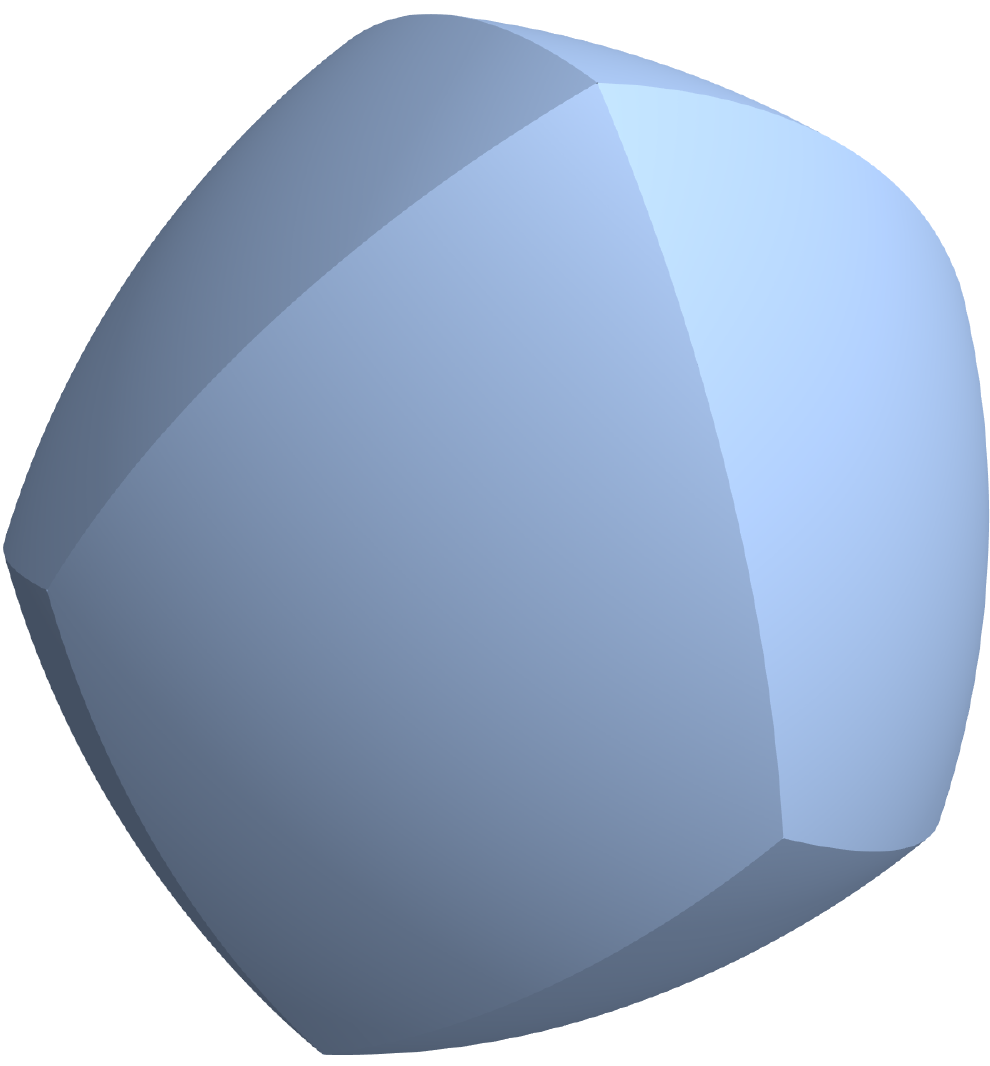} 
        \hspace{.3in}
      \includegraphics[width=.4\textwidth]{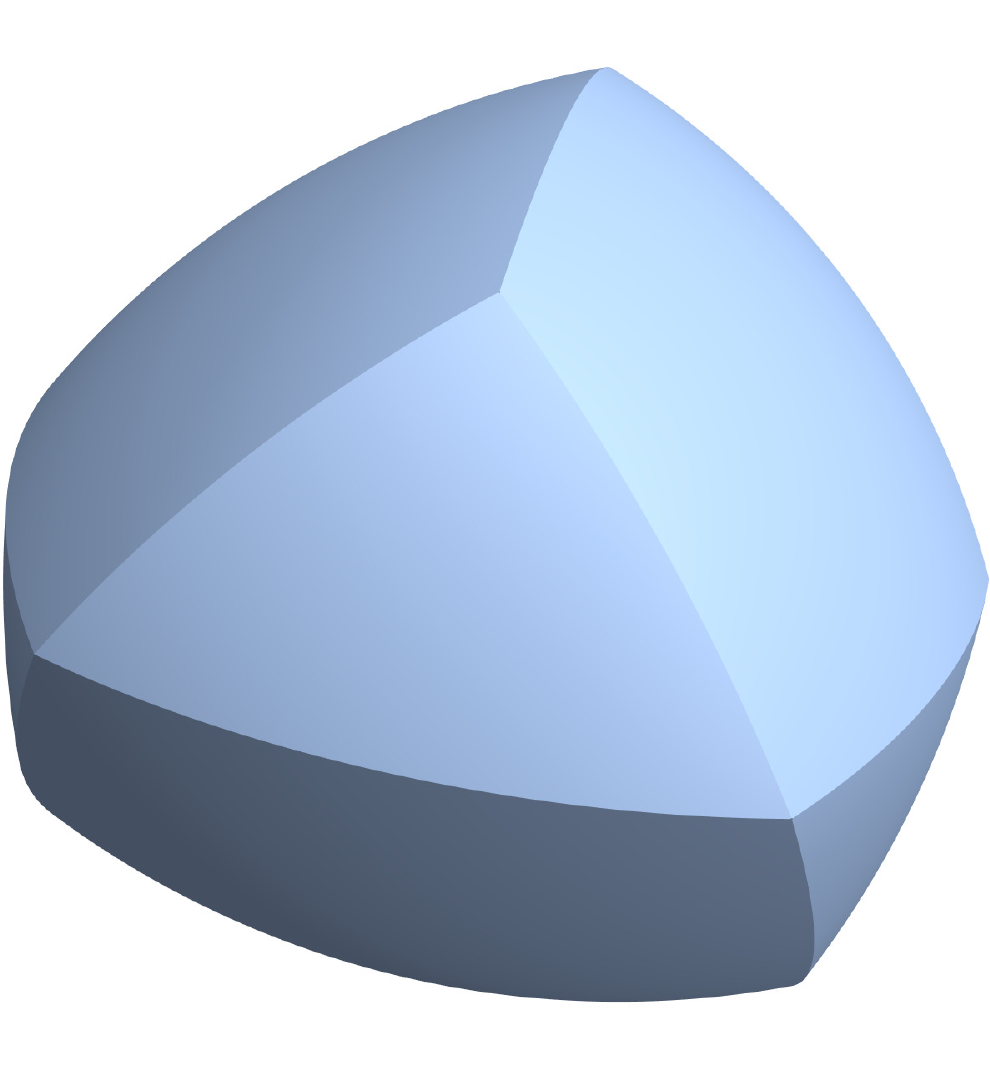} 
\end{figure}
\end{ex}

\begin{ex}\label{DTexample}
More generally, it is possible to design a Reuleaux polyhedron with the same vertices as a diminished trapezohedron whose base 
consists of the vertices of regular polygon with an even number of sides.  The previous example is an instance of such 
a shape with a square base. Arguing similarly as we did for the previous example, we can find an extremal set which is
the vertices of a diminished trapezohedron $\{a_1,\dots, a_{13}\}$ with a hexagonal base. Its corresponding Reuleaux polyhedron and
skeleton are shown below.  

\begin{figure}[h]
\centering
 \includegraphics[width=.35\textwidth]{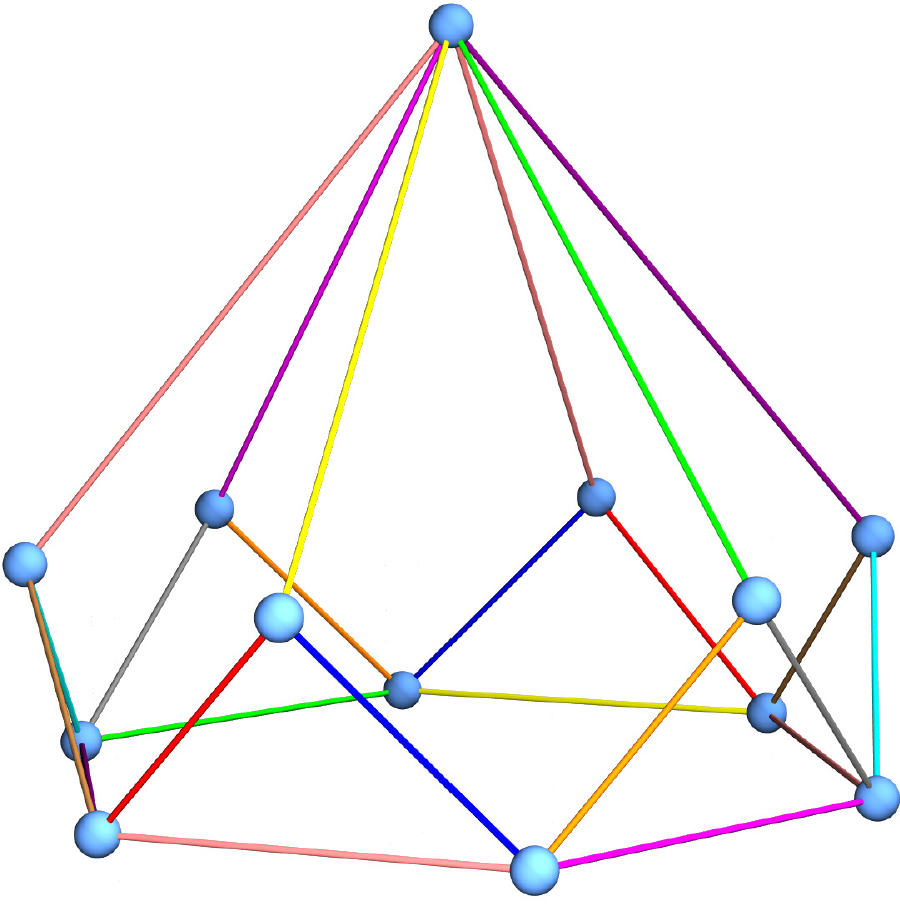}
      \hspace{.3in}
  \includegraphics[width=.4\textwidth]{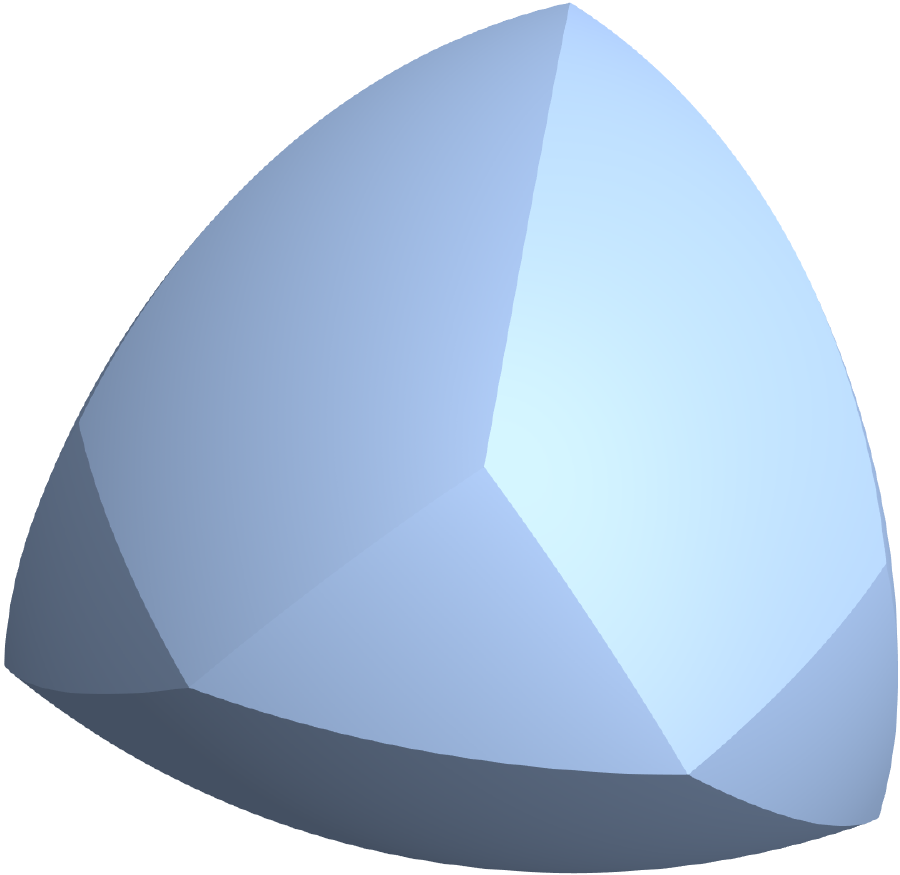}
  
  \vspace{.3in}
  
   \includegraphics[width=.4\textwidth]{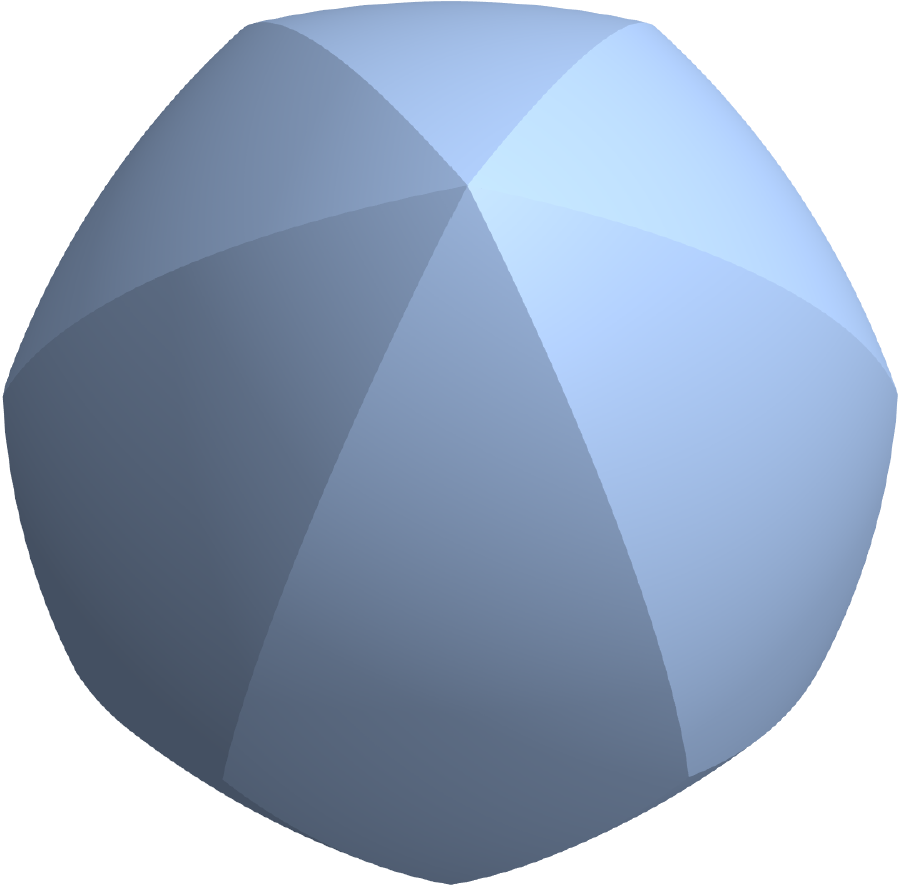} 
        \hspace{.3in}
      \includegraphics[width=.4\textwidth]{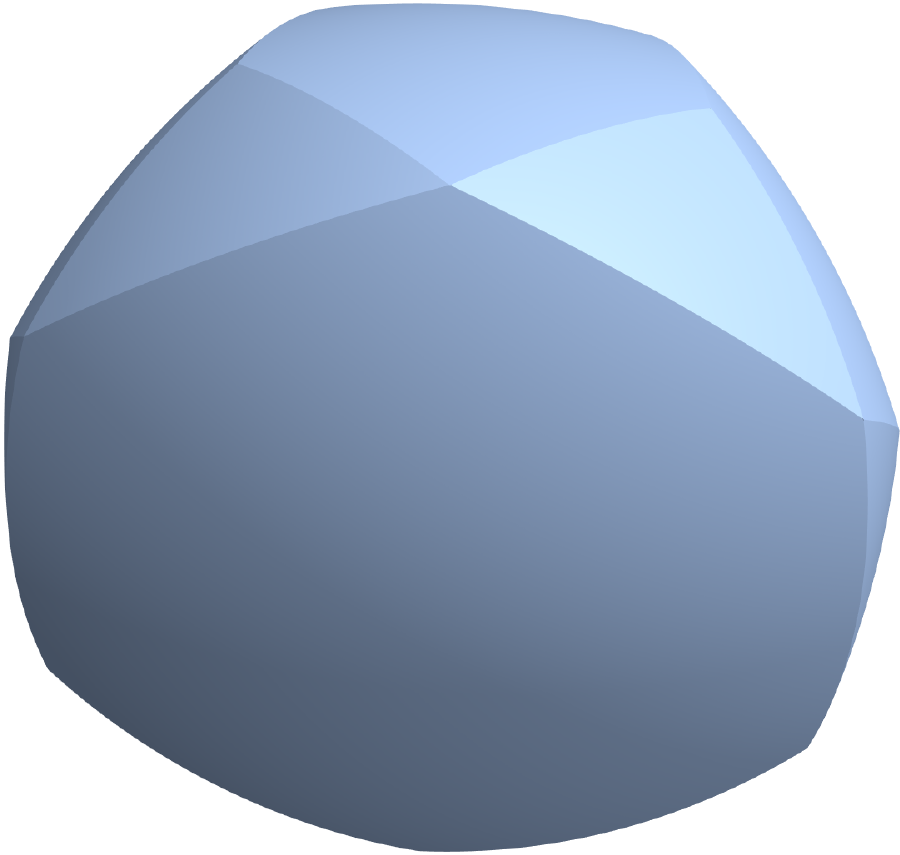} 
\end{figure}
\end{ex}

\newpage 
\begin{ex}\label{BullExample}
The collection of points 
$$
\begin{cases}
a_1=(0,0,0)\\
a_2=(1/\sqrt{2},0)\\
a_3=(\sqrt{7/8},\sqrt{1/8},0)\\
a_4=(\sqrt{1/8},\sqrt{7/8},0)\\
a_5=(0,1/\sqrt{2},0)
\end{cases}
\quad \begin{array}{l}
a_6=(\sqrt{2}/(1+\sqrt{7}),(1/3)\sqrt{1+2\sqrt{7}})\\
a_7=\left(1/(2\sqrt{2}),(4-\sqrt{7})/(2\sqrt{2}),\sqrt{-2+\sqrt{7}}\right)\\
a_8=\left((4-\sqrt{7})/(2\sqrt{2}),1/(2\sqrt{2}),\sqrt{-2+\sqrt{7}}\right)
\end{array}
$$
has diameter one and is critical. This example can be seen as a variant of Example \ref{PentPyrEx} since $\{a_1,\dots, a_5\}$ are the vertices of a pentagon with diameter one which is not regular. 
\begin{figure}[h]
\centering
 \includegraphics[width=.34\textwidth]{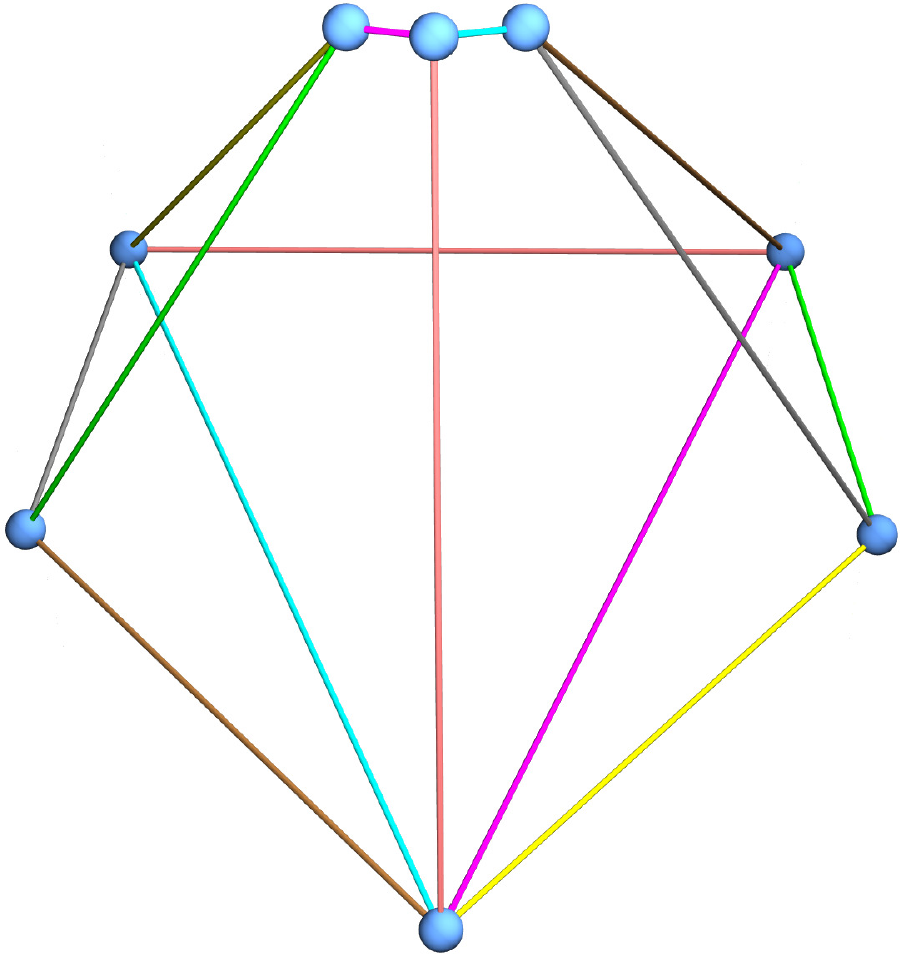}
      \hspace{.3in}
  \includegraphics[width=.4\textwidth]{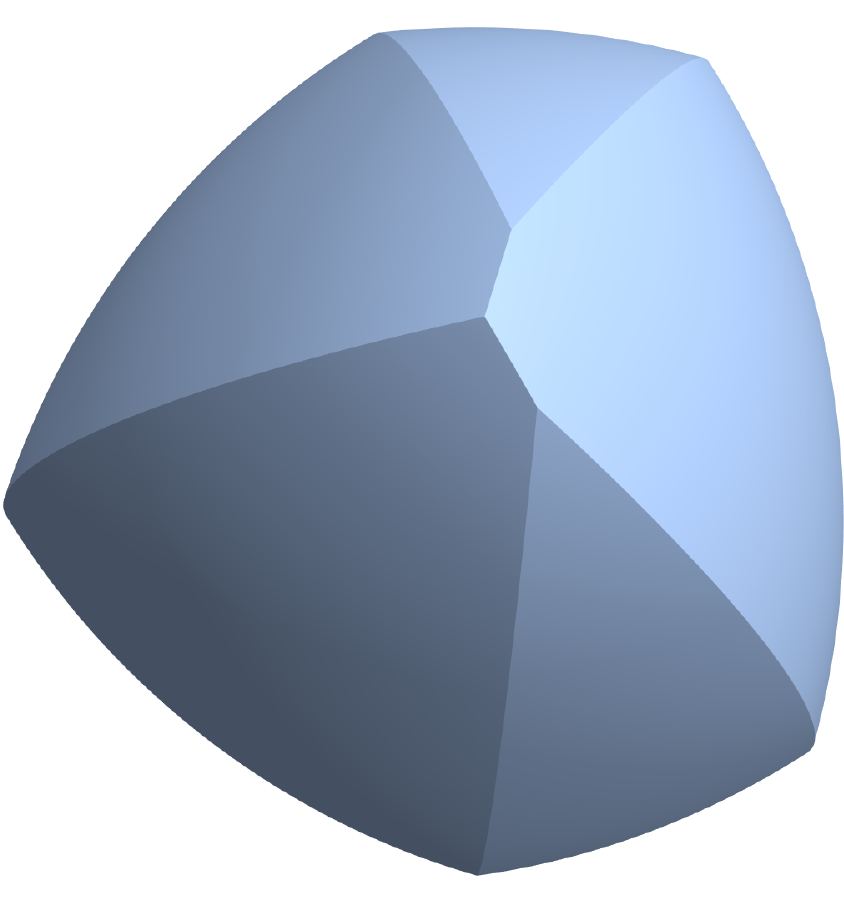}
    \vspace{.2in}
   \includegraphics[width=.4\textwidth]{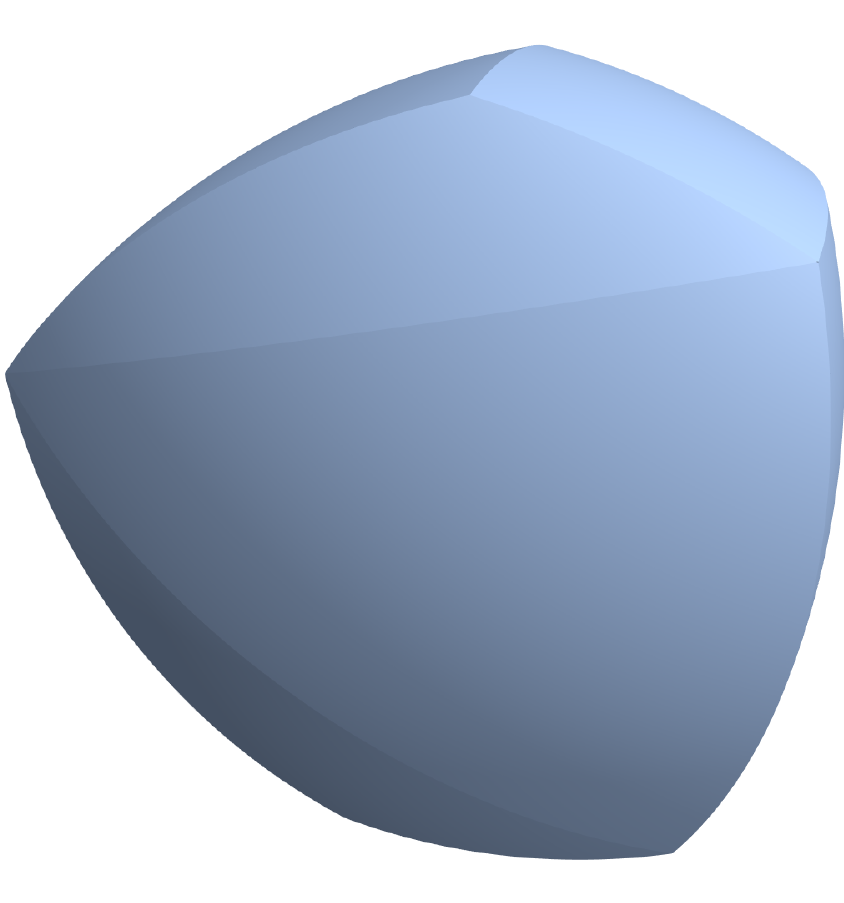} 
        \hspace{.3in}
      \includegraphics[width=.4\textwidth]{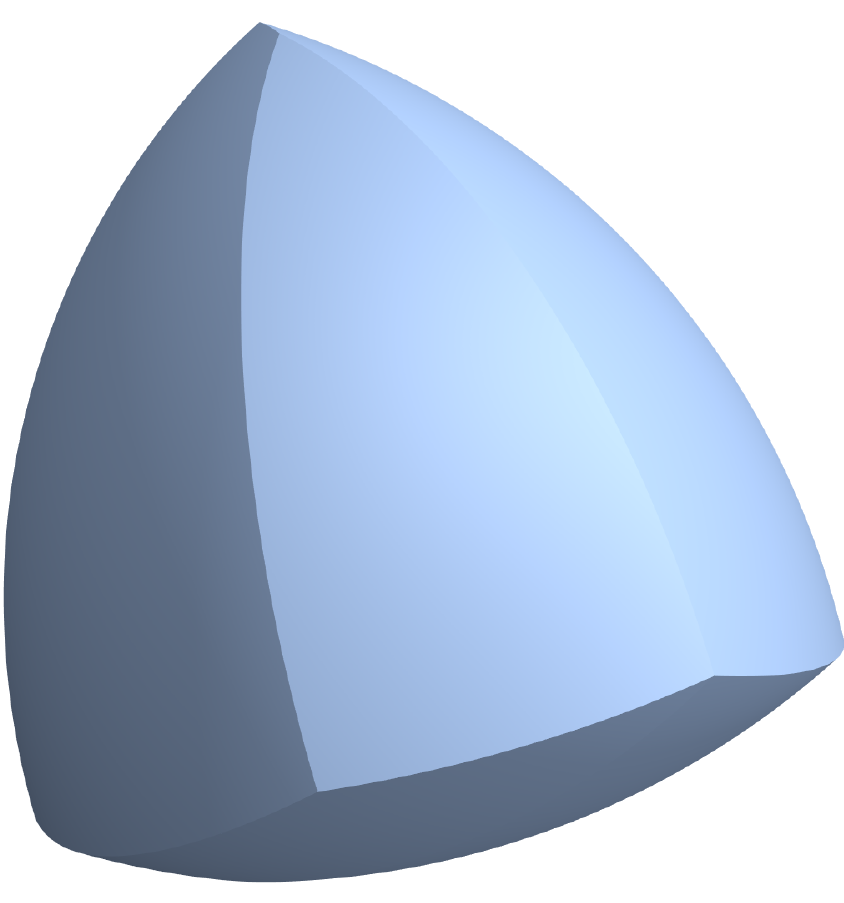} 
\end{figure}
\end{ex}

\newpage

\begin{ex}\label{BullExample}
Montejano and Rold\'an-Pensado (Theorem 5.2 of \cite{MR3620844}) verified that if \\ $\{a_1,\dots, a_n\}\subset\{x\in \R^3: x_3=0\}$ are the vertices of a Reuleaux polygon, and  $\{a_{n+1},\dots, a_m\}$ are the principal vertices of $B( \{a_1,\dots, a_n\})$ in the half-space $\{x\in \R^3: x_3>0\}$, then $\{a_1,\dots, a_m\}$ is extremal.  The previous example is an instance of this result; in that example, $\{a_1,\dots, a_5\}$ are the vertices of a Reuleaux pentagon. 
Another example based on a Reuleaux septagon $\{a_1,\dots, a_7\}$ which has twelve vertices in total is displayed below. 

\begin{figure}[h]
\centering
 \includegraphics[width=.34\textwidth]{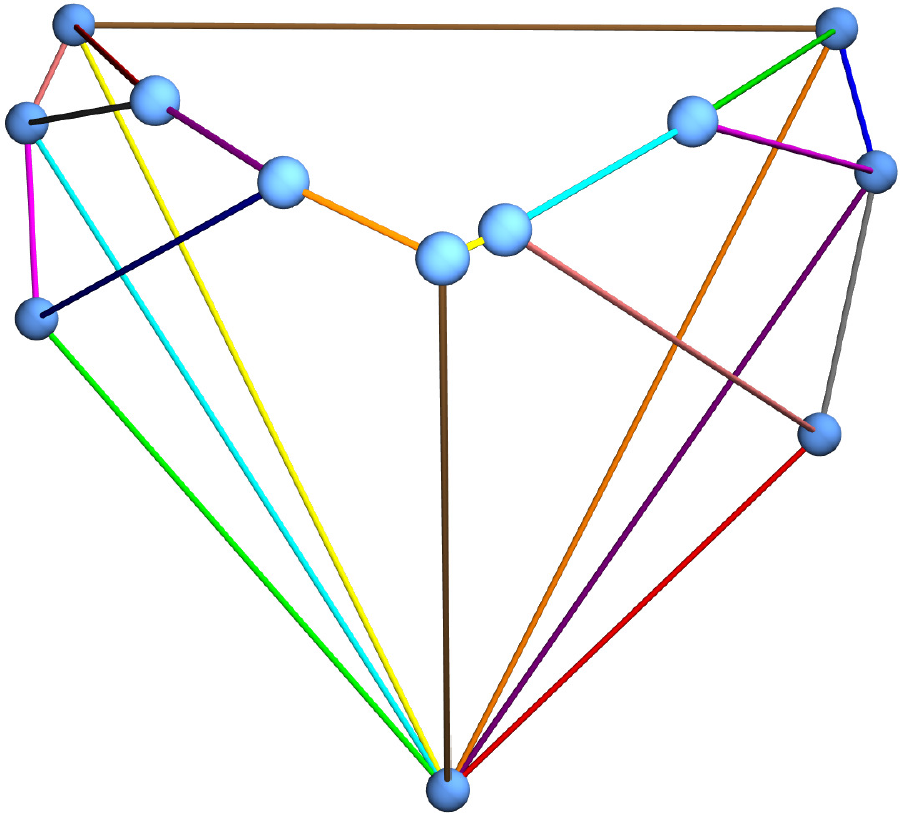}
      \hspace{.3in}
  \includegraphics[width=.4\textwidth]{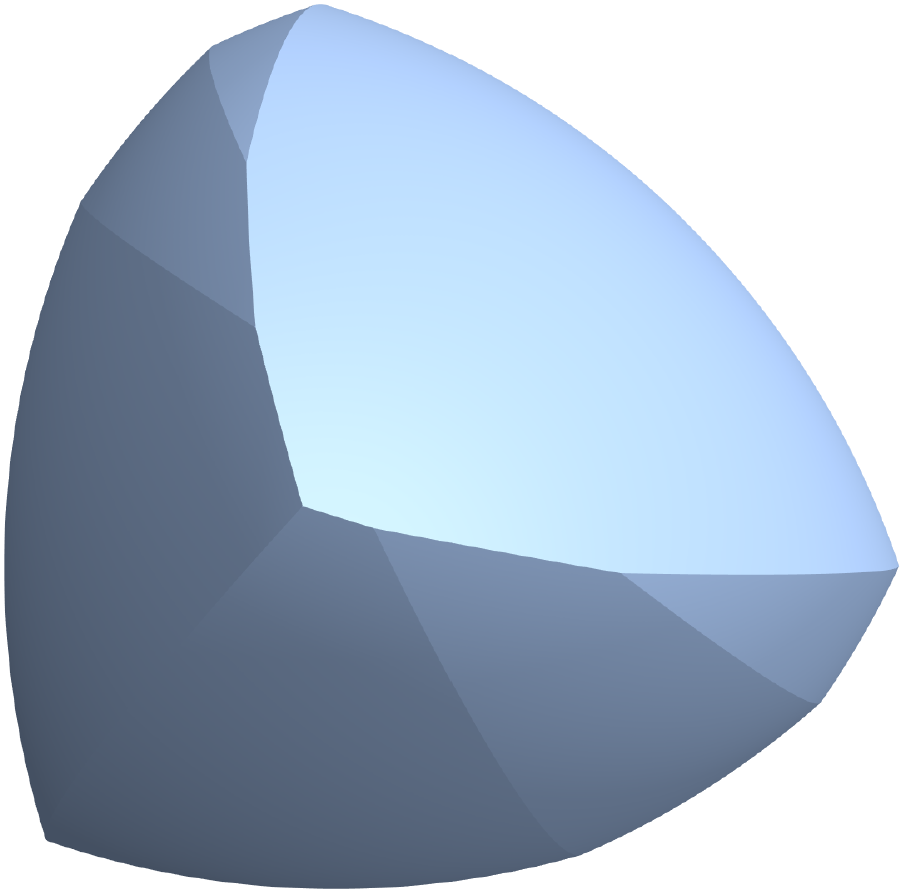}
    \vspace{.2in}
   \includegraphics[width=.4\textwidth]{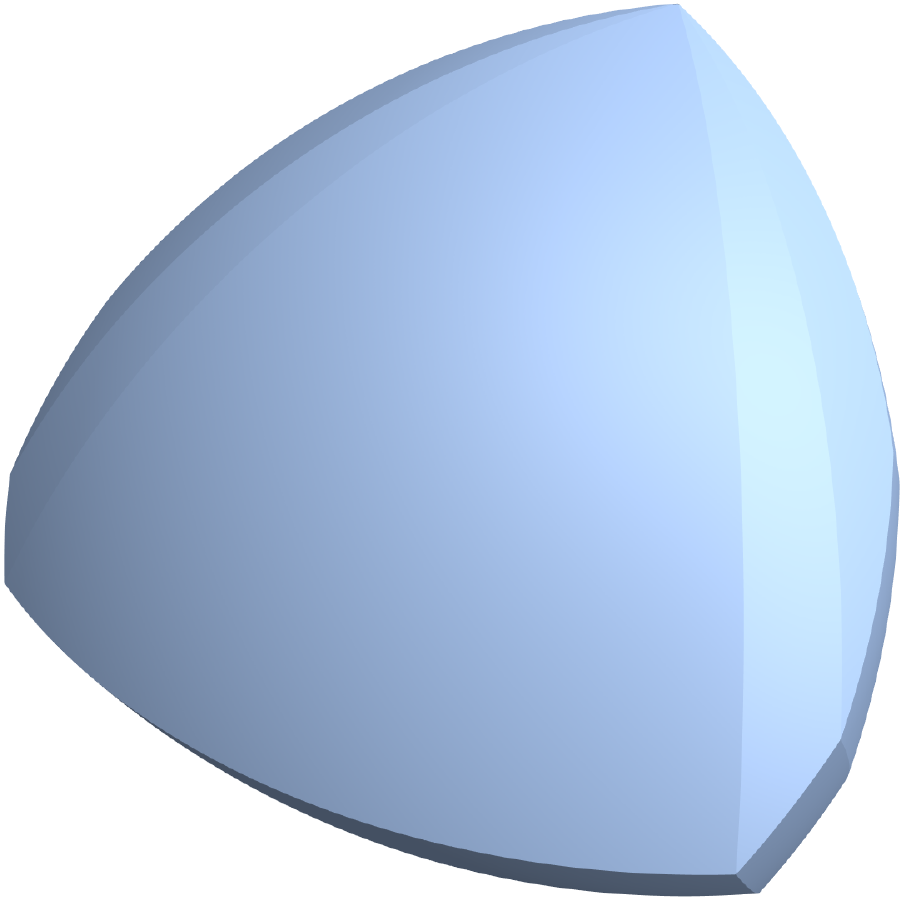} 
        \hspace{.3in}
      \includegraphics[width=.4\textwidth]{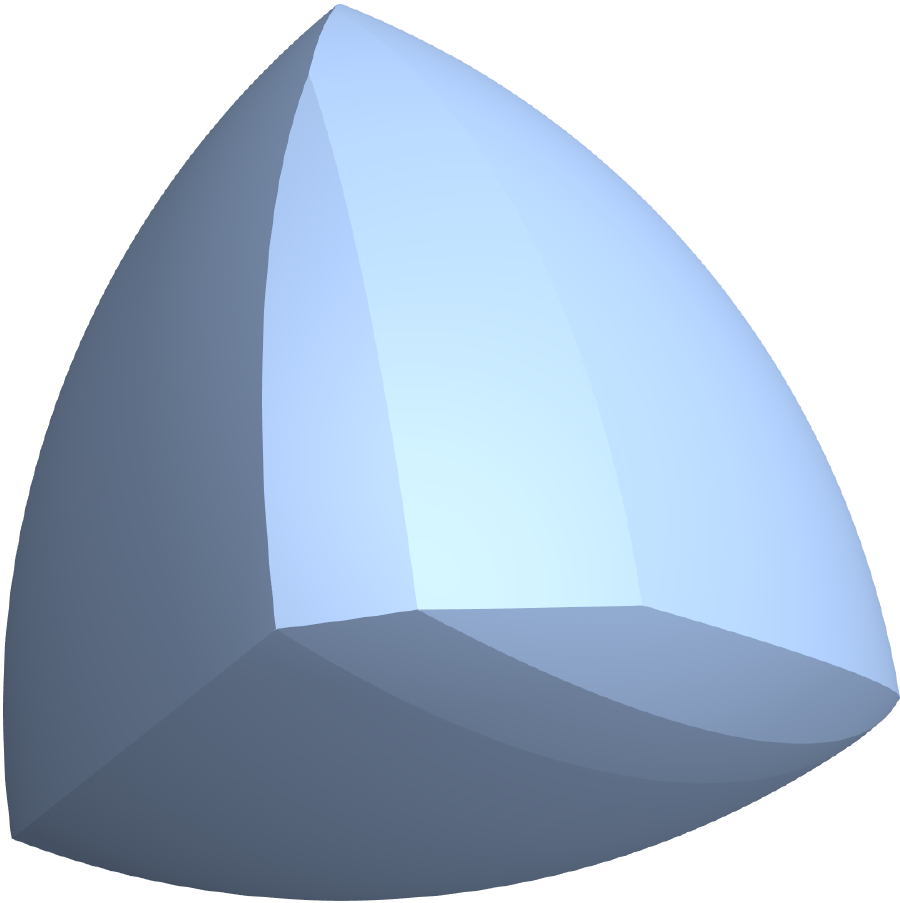} 
\end{figure}
\end{ex}

\newpage

\section{Meissner polyhedra}\label{MeissSect}
Let $X\subset \R^3$ be a finite set of $m\ge 4$ points with diameter one and suppose $X$ is extremal. Then $B(X)$ has $m-1$ dual edge pairs
$$
(e_1,e_1'),\dots, (e_{m-1},e_{m-1}').
$$ 
A convex body of the form
$$
B(X\cup e_1\cup \dots \cup e_{m-1})
$$
is a {\it Meissner polyhedron} based on $X$.   If $X=\{a_1,a_2,a_3,a_4\}$ is a set of vertices of a regular tetrahedron with side length one, we shall see that the associated Meissner polyhedra are the Meissner tetrahedra discussed in the introduction; recall Figures \ref{Meiss1}, \ref{Meiss2}.  We also refer the reader to  Figures \ref{AlmostMeiss}, \ref{MeissPent}, \ref{MeissETP}, \ref{MeissDT}, \ref{MeissBull} below for other examples of Meissner polyhedra.

Since the Meissner tetrahedra are known to have constant width, it is natural to inquire if the same is true for Meissner polyhedra.  It turns out that this is indeed the case.   We will prove the following theorem after establishing some lemmas and arguing that these shapes are essentially the same ones introduced by Montejano and Rold\'an-Pensado \cite{MR3620844}. 
\begin{thm}\label{ConstantWidthThm}
Each Meissner polyhedron has constant width. 
\end{thm}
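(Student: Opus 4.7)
The plan is to apply the Jessen--Eggleston characterization (Theorem \ref{ConstantWidthCharacterization}) and show $M = B(\partial M)$. Write $Y = X \cup e_1 \cup \cdots \cup e_{m-1}$ so that $M = B(Y)$. I would first handle the easier direction $B(\partial M) \subset M$. This follows once we verify $Y \subset \partial M$, because then $B(\partial M) \subset B(Y) = M$. Establishing $Y \subset \partial M$ breaks into two parts: showing $\mathrm{diam}(Y) \leq 1$ (which forces $Y \subset B(Y) = M$), and then noting that each $y \in Y$ lies on $\partial B(a)$ for some $a \in X \subset Y$ (with $M \subset B(a)$), hence $y \in \partial M$. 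For $\mathrm{diam}(Y) \leq 1$: the case with at least one point in $X$ is immediate from $\mathrm{diam}(X) = 1$ and $e_i \subset B(X)$; the remaining case $y_1 \in e_i$, $y_2 \in e_j$ would use the dual-edge structure of Reuleaux polyhedra (Theorem \ref{edgeThm}) together with the geodesic convexity of spherical faces (Lemma \ref{SphereConvSubsetLem}) to bound $|y_1 - y_2| \leq 1$.

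The harder direction is $M \subset B(\partial M)$, which, since the diameter of a convex body is attained on its extreme points, is equivalent to $\mathrm{diam}(M) \leq 1$. My approach is to identify $M$ with the constant-width body constructed from $B(X)$ by the surgery procedure of Montejano and Rold\'an-Pensado \cite{MR3620844}: for each dual pair $(e_i, e_i')$, one removes a neighborhood of $e_i'$ on $\partial B(X)$ and replaces it with the piece of spindle torus obtained by rotating the two adjacent spherical geodesic arcs about the axis through the endpoints of $e_i'$. The central claim is that adding the constraint $\bigcap_{z \in e_i} B(z)$ to $B(X)$ realizes exactly this replacement: away from $e_i'$ the body $B(X)$ is unchanged, while near $e_i'$ the dihedral region is carved away and replaced by the correct spindle-torus piece. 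This identification would draw on the cross-sectional representation of spindles in Corollary \ref{GenSpFormulaCor} and the short-arc description of Proposition \ref{ShortArcProp}. Once the identification is made, the constant-width property, hence $\mathrm{diam}(M) \leq 1$, is inherited from the Montejano--Rold\'an-Pensado theorem.

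The principal obstacle is the identification in the second step. One must verify that the constraint $\bigcap_{z \in e_i} B(z)$ affects only a neighborhood of the dual edge $e_i'$ on $\partial B(X)$ (and nowhere else), that it carves out exactly the expected spindle-torus piece there, and that the constraints coming from distinct dual pairs $(e_i, e_i')$ and $(e_j, e_j')$ do not interfere with one another. This requires a simultaneous analysis of all $m - 1$ new constraints, leveraging the dual-edge pairing of Theorem \ref{edgeThm} together with the spindle geometry developed in Section \ref{SpindleSect}.
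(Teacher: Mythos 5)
Your first direction (showing $B(\partial M)\subset M$ via $\mathrm{diam}(Y)\le 1$ and $Y\subset\partial M$) is essentially the paper's, but note that the tools you cite for the edge--edge case are not the ones that do the work. The mechanism is Lemma \ref{WedgeLemma}: if $y_1\in e_i$, $y_2\in B(X)$ and $|y_1-y_2|>1$, then $y_2$ lies in $\textup{int}\,W_{e_i'}$ and is not a vertex, so the only edge it can lie on is the dual edge $e_i'$ --- which is exactly the edge excluded from $Y$. Geodesic convexity of the faces (Lemma \ref{SphereConvSubsetLem}) and the dual-edge pairing (Theorem \ref{edgeThm}) by themselves do not yield the bound; indeed points of $e_i$ and of its dual $e_i'$ can be at distance greater than one, which is the whole reason the surgery is needed.

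The genuine gap is in your hard direction. First, the identification of $M=B(X\cup e_1\cup\dots\cup e_{m-1})$ with the surgery construction --- that intersecting with $B(e_i)$ alters $B(X)$ only inside the wedge $W_{e_i'}$, that it produces exactly $\textup{Sp}(b_i',c_i')$ there, and that the $m-1$ constraints do not interfere --- is not a peripheral verification but the technical core of the argument; it is precisely the content of Lemma \ref{OneReducedEdgeLem} and Proposition \ref{structureofMprop}, and your proposal leaves it as an acknowledged obstacle rather than proving it. Second, and more decisively, even granting that identification you cannot simply inherit constant width from Montejano and Rold\'an-Pensado \cite{MR3620844}: their theorem concerns critical sets whose Reuleaux polyhedron has a $3$--connected skeleton, whereas the Meissner polyhedra of this paper are based on arbitrary extremal sets --- Example \ref{AlmostReulEx} has a dangling vertex and Example \ref{nonpolyEx} has a skeleton that is only $2$--connected --- and this extra generality is exactly what the Density Theorem uses. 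For this reason the paper proves $\textup{diam}(M)=1$ directly (Lemma \ref{DiameterMone}): given a diametric pair with one endpoint on a spindle-torus piece of $\partial M$, the normal-cone lemmas (Lemmas \ref{normalConeLemma} and \ref{WedgeLocationLemma}) show the unit outward normal there points back to a point of the corresponding edge at distance exactly one, and the spherical and edge cases are handled separately; constant width then follows from $M=B(M)$ and Theorem \ref{ConstantWidthCharacterization}. To close your second step you would need either such a self-contained diameter argument or a proof that the surgery theorem extends to all extremal sets, neither of which is supplied.
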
 
\begin{figure}[h]
\centering
 \includegraphics[width=.42\textwidth]{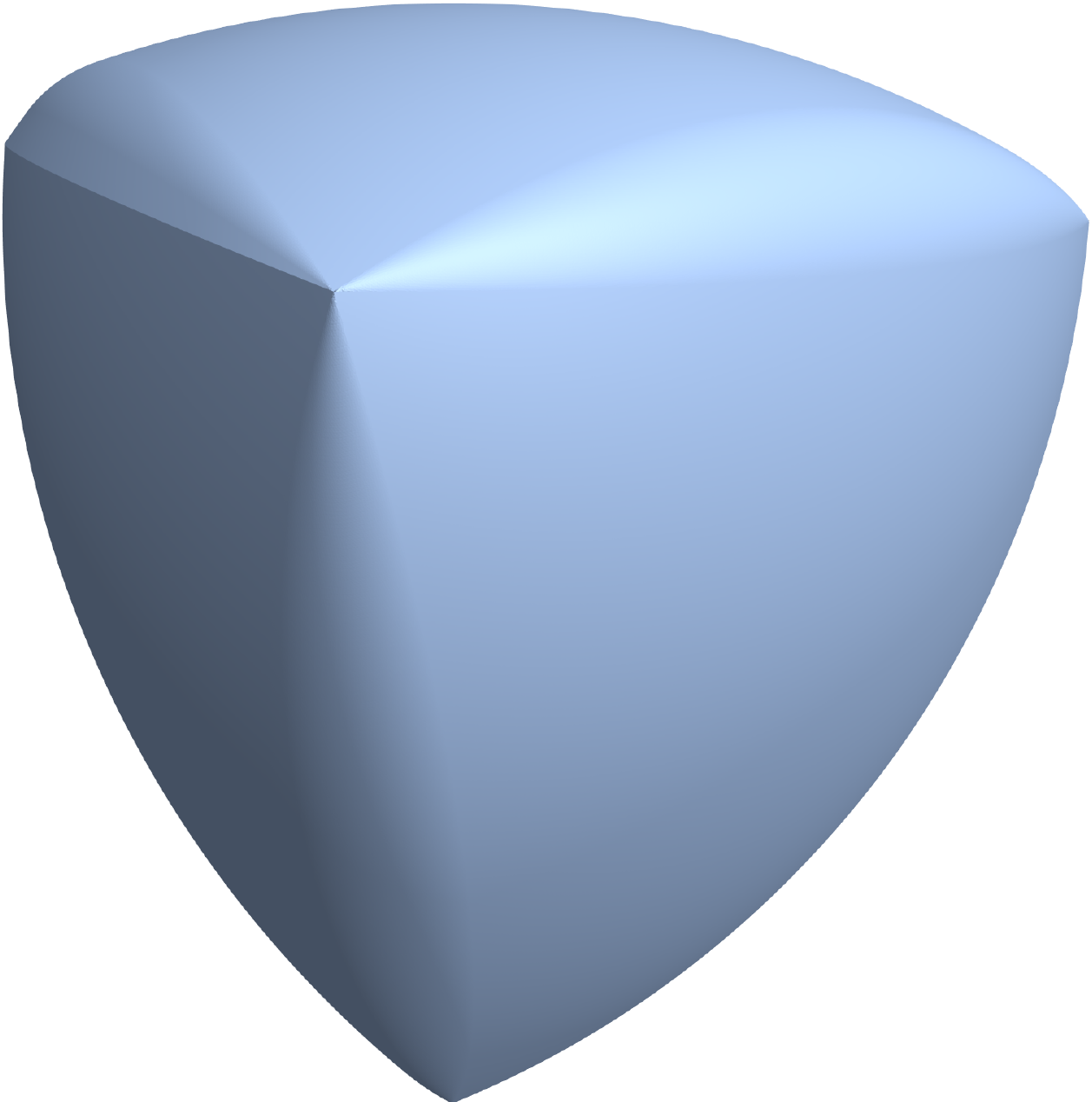}
 \hspace{.2in}
  \includegraphics[width=.42\textwidth]{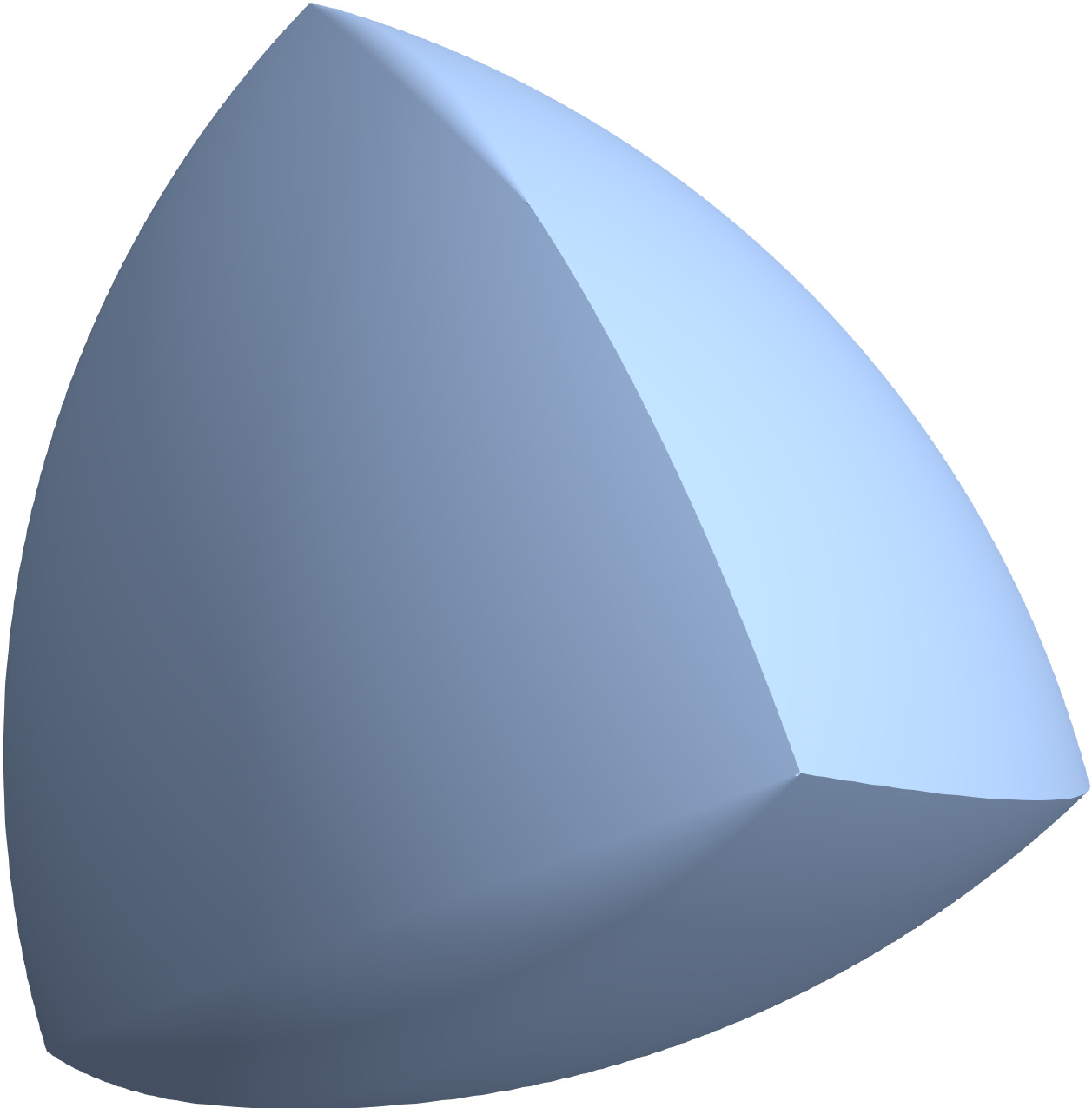}
 \caption{A Meissner polyhedron based on the extremal set $\{a_1,\dots,a_5\}$ from Example \ref{AlmostReulEx}.}\label{AlmostMeiss}
\end{figure}

\subsection{Wedges}
In this subsection, we will fix a subset $X\subset \R^3$ that has diameter one, has $m\ge 4$ points, and is extremal. 
Suppose $(e,e')$ is a dual edge pair for the Reuleaux polyhedron $B(X)$, and denote the endpoints of $e$ as $b,c$ and the endpoints of $e'$ as $b',c'$.  Theorem \ref{edgeThm} implies 
$$
|b-b'|=|b-c'|=|c-b'|=|c-c'|=1.
$$
Since $b',c'$ are distinct, non-antipodal points on the surface of the sphere of radius one centered at $b$, $\{b,b',c'\}$ is affinely independent. Likewise, $\{c,b',c'\}$ is affinely independent. It follows that there are unique half-spaces 
$H,L\subset \R^3$ for which $b,b',c'\in \partial H$ while $c\not \in H$ and $c,b',c'\in \partial L$ while $b\not \in L.$ 
We define 
$$
W_{e'}=H\cap L\cap B(X). 
$$
as the {\it wedge} associated with the edge $e'$ relative to $B(X)$. See Figure \ref{WedgeFig} for an illustration.  We will also use the notation 
$$
\text{int}W_{e'}=(H\cap L)^0\cap B(X)
$$
for the interior of $H\cap L$ relative to $B(X)$. 
\begin{figure}[h]
\centering
 \includegraphics[width=.49\textwidth]{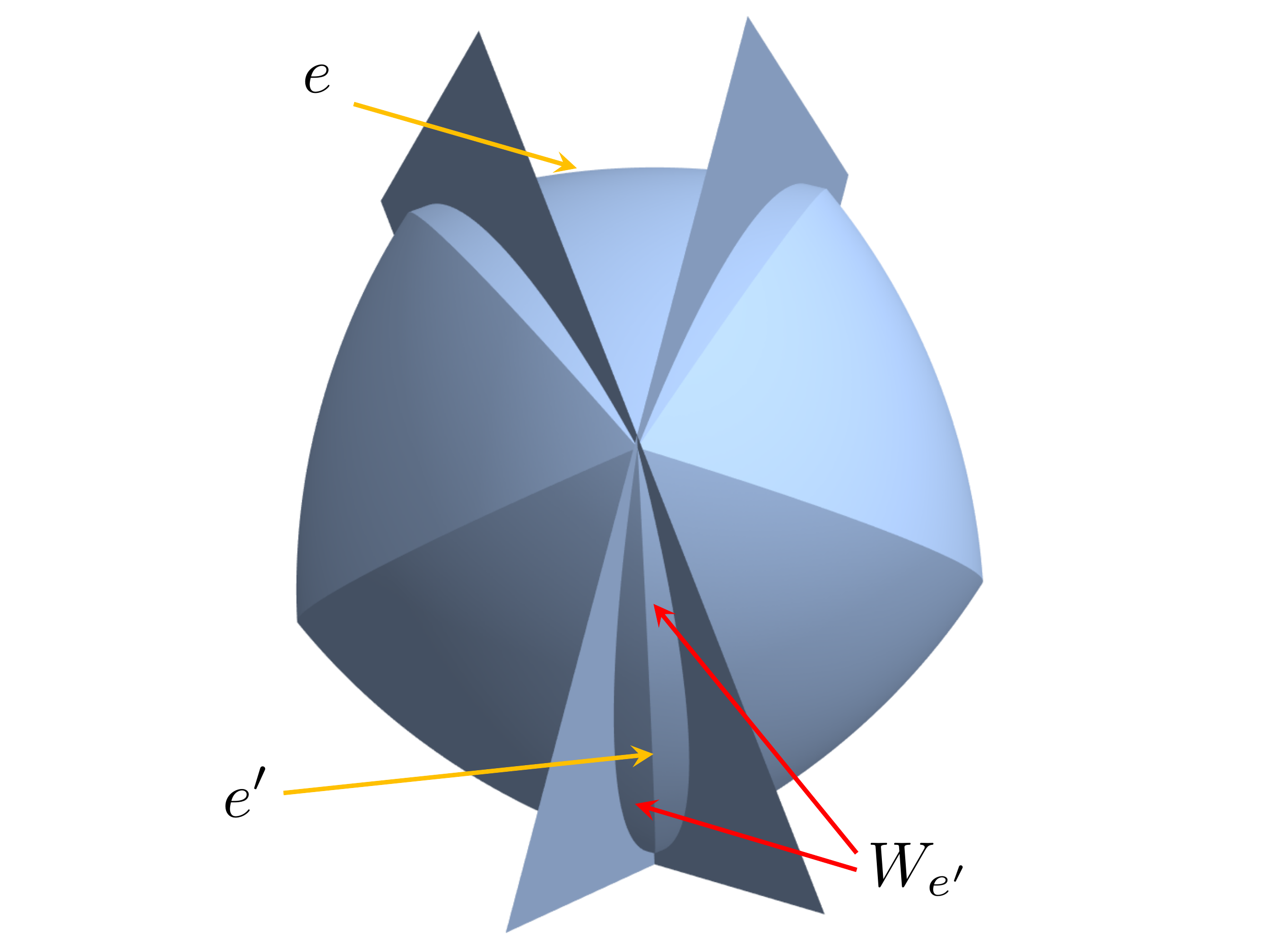}
  \includegraphics[width=.49\textwidth]{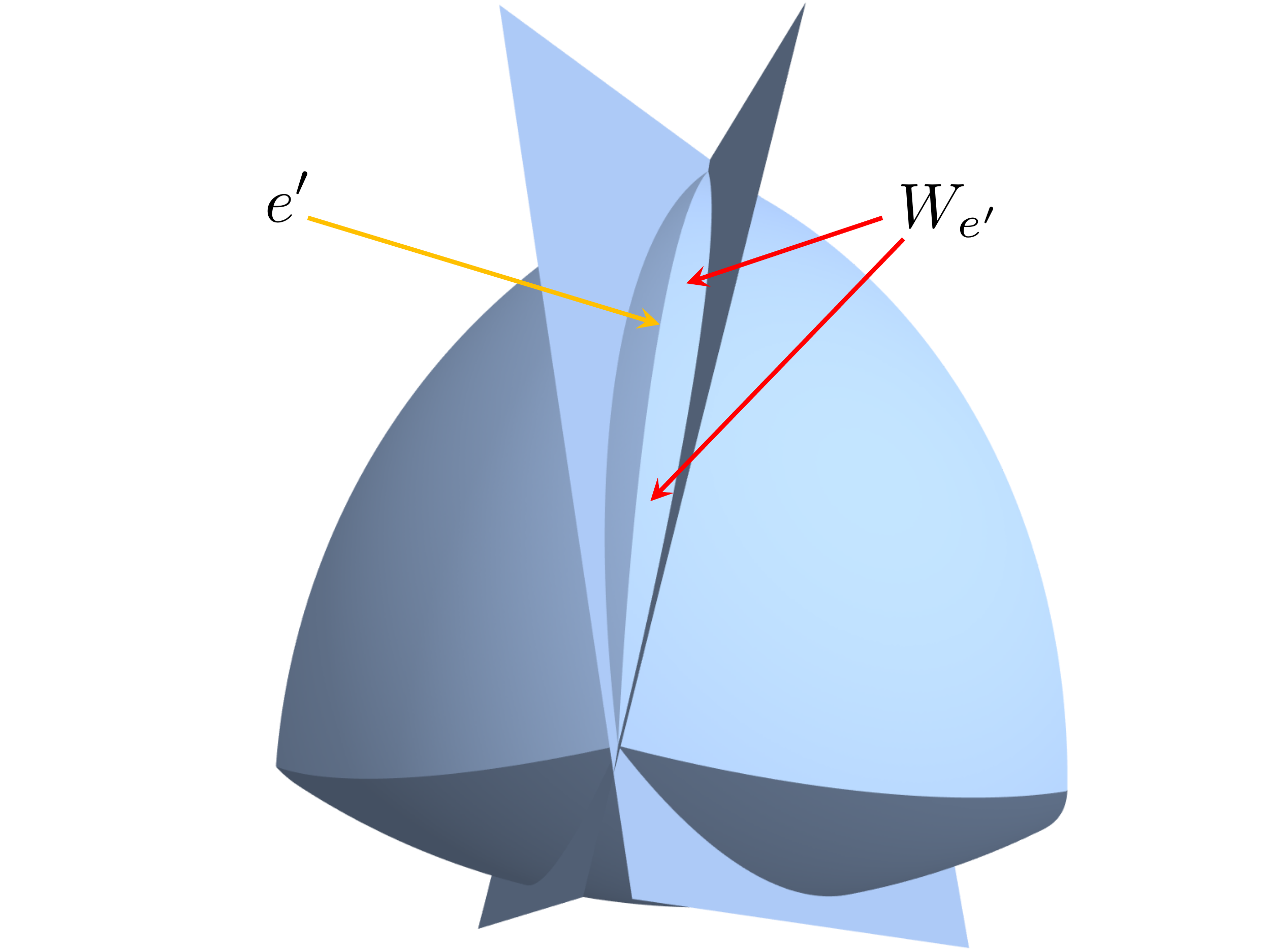}
 \caption{Two views of a Reuleaux polyhedron  $B(X)$ with dual edges $e,e'$ indicated along with the wedge $W_{e'}$. Note in particular, that $W_{e'}$ is a portion of $B(X)$ which is wedged between the two planes.}\label{WedgeFig}
\end{figure}
\par The subsequent lemma implies that if $y$ belongs to an edge $e$ of a Reuleaux polyhedron $B(X)$ and $z\in B(X)$ is at a distance from $y$ larger than one, then $z$ belongs to the interior of the wedge associated with the dual edge $e'$ relative to $B(X)$.  We also note that a more general statement is Proposition 6.1 of \cite{MR296813}. 
\begin{lem}\label{WedgeLemma}
Suppose $(e,e')$ is a dual edge pair of $B(X)$.  If $z\in B(X)$ and $z\not\in B(e)$, then $z\in \textup{int}W_{e'}$. 
Furthermore, $z$ is not a vertex of $B(X)$. 
\end{lem}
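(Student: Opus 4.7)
The plan is to adapt coordinates to the dual pair $(e,e')$ and reduce wedge membership to a one-variable analysis of a sinusoidal distance function. After applying a rigid motion I may take $b'=ae_3$ and $c'=-ae_3$ with $2a=|b'-c'|\in(0,1]$. The circle $\partial B(b')\cap\partial B(c')$ containing $e$ then lies in $\{x_3=0\}$, centered at the origin with radius $\sqrt{1-a^2}$, so the short arc $e$ is parameterized by $y(\theta)=(\sqrt{1-a^2}\cos\theta,\sqrt{1-a^2}\sin\theta,0)$ for $\theta$ in some interval $[\alpha,\beta]$, with the strict bound $\beta-\alpha<\pi$ following from $|b-c|\le 1<2\sqrt{1-a^2}$ (using $a\le 1/2$). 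A direct sign check identifies $\partial H$ as the plane through the $x_3$-axis at cylindrical angle $\alpha$ and $\partial L$ as the plane at angle $\beta$, and shows that $H^0\cap L^0$ consists of exactly the points whose cylindrical angle lies strictly in $(\alpha+\pi,\beta+\pi)$ modulo $2\pi$.

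Let $\phi$ and $r_z=\sqrt{z_1^2+z_2^2}$ denote the cylindrical angle and radius of $z$. Expansion gives
\[
|z-y(\theta)|^2 = |z|^2+(1-a^2)-2\sqrt{1-a^2}\,r_z\cos(\theta-\phi).
\]
If $r_z=0$, the right-hand side is constant in $\theta$ and equals $|z-b|^2\le 1$ (since $b\in X$ and $z\in B(X)$), contradicting $z\notin B(e)$; so $r_z>0$ and $\theta\mapsto|z-y(\theta)|^2$ is a genuine sinusoid with maximum at $\theta=\phi+\pi$. Because $b,c\in X$, the endpoint values $|z-b|^2,|z-c|^2$ are both at most $1$, while $z\notin B(e)$ yields some $\theta_0\in(\alpha,\beta)$ (interior, since the endpoints satisfy the bound) with $|z-y(\theta_0)|^2>1$. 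Since $\beta-\alpha<\pi$, the sinusoid admits at most one critical point in $[\alpha,\beta]$, and the only way its maximum can exceed $1$ while the endpoint values do not is for that critical point to be a maximum lying strictly inside $(\alpha,\beta)$. This forces $\phi+\pi\in(\alpha,\beta)$ strictly, i.e.\ $z\in H^0\cap L^0$; combined with $z\in B(X)$ this gives $z\in\mathrm{int}\,W_{e'}$.

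The final assertion that $z$ is not a vertex is a short, coordinate-free observation. Every $y\in e$ lies in $B(X)$, so $|y-x|\le 1$ for each $x\in X$; equivalently, every $x\in X$ satisfies $x\in B(e)$. Contrapositively, $z\notin B(e)$ implies $z\notin X$, and since $X$ is extremal with $|X|\ge 4$, Theorem \ref{ExtendedGHS} gives $X=\textup{vert}(B(X))$, so $z$ is not a vertex. The only subtle step in the main argument is confirming the \emph{strict} inclusion $\phi+\pi\in(\alpha,\beta)$: if $\phi+\pi$ coincided with an endpoint, the sinusoid's maximum on $[\alpha,\beta]$ would be attained there and be at most $1$, contradicting the existence of $\theta_0$ with $|z-y(\theta_0)|^2>1$.
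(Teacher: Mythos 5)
Your proof is correct, and it shares the paper's overall setup---normalize so that $b'=ae_3$, $c'=-ae_3$ with $a\in(0,1/2]$, so that $e$ lies on the circle of radius $\sqrt{1-a^2}$ in $\{x_3=0\}$, and then exploit the three inequalities $|z-b|\le 1$, $|z-c|\le 1$, $|z-y(\theta_0)|>1$---but you execute the two conclusions differently. For wedge membership, the paper manipulates half-angle trigonometric inequalities (its \eqref{zangle1}--\eqref{zangle3}) to land on $z_2<0$, $z_1<0$, and $z_2/z_1<\tan\phi$; you instead note that $\theta\mapsto|z-y(\theta)|^2$ is a sinusoid $C-2\sqrt{1-a^2}\,r_z\cos(\theta-\phi_z)$, so an interior value exceeding $1$ on an interval of length less than $\pi$ whose endpoint values are at most $1$ forces the unique critical point in the interval to be the maximum $\theta=\phi_z+\pi$, i.e.\ the cylindrical angle of $z$ is strictly antipodal to the arc, which is exactly the open wedge. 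This is less computational and delivers the strictness automatically. For the vertex claim, the paper argues in coordinates that the only vertices in $W_{e'}$ are $\pm ae_3$ and that $(z_1,z_2)\neq(0,0)$; you instead observe that $e\subset B(X)$ gives $X\subset B(e)$ by symmetry of the relation $|x-y|\le 1$, so $z\notin B(e)$ forces $z\notin X=\textup{vert}(B(X))$ by Theorem \ref{ExtendedGHS}. That argument is coordinate-free and in fact makes Corollary \ref{LittleEdgeLemma} immediate, whereas the paper deduces that corollary from this lemma. One step to tighten: your inference of $\beta-\alpha<\pi$ from $|b-c|\le 1<2\sqrt{1-a^2}$ presumes $e$ is the minor arc between $b$ and $c$; to exclude the major arc, note it would contain the point of the circle antipodal to $b$, at distance $2\sqrt{1-a^2}>1$ from $b\in X$, contradicting $e\subset B(X)\subset B(b)$. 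Since the paper's normalization \eqref{specialcoordinatefourtuple}--\eqref{ThetaPhi} glosses the same point, this is a one-line patch rather than a genuine gap.
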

\begin{proof}
1. Let us assume the endpoints of $e$ are $b,c$ and the endpoints of $e'$ are $b',c'$. We will use $w=(w_1,w_2,w_3)$ as coordinates for $\R^3$. By changing variables if necessary, we may suppose that $b'$ and $c'$ lie on the $w_3$ coordinate axis and $b$ and $c$ lie in the $w_1w_2$ plane. That is, 
\be\label{specialcoordinatefourtuple}
\begin{cases}
b=\sqrt{1-a^2}e_1\\
c=\sqrt{1-a^2}(\cos(\phi),\sin(\phi),0)\\
b'=ae_3\\
c'=-ae_3
\end{cases}
\ee
for some $a\in (0,1/2]$ and $\phi$ with
\be\label{ThetaPhi}
0< \phi\le\cos^{-1}\left(1-\frac{1}{2(1-a^2)}\right).
\ee
See Figure \ref{FourPointsFig}.  It is also easy to verify that $\phi<\pi/2$ and that in these coordinates the subspaces $H$ and $L$ in the definition of $W_e'$ are given by 
\be\label{HLSpecificCoordinates}
\begin{cases}
H=\{w\in \R^3: w_2\le 0\}\\
L=\{w\in \R^3: -w_1\sin(\phi)+w_2\cos(\phi)\ge 0\}.
\end{cases}
\ee
\begin{figure}[h]
\centering
 \includegraphics[width=.49\textwidth]{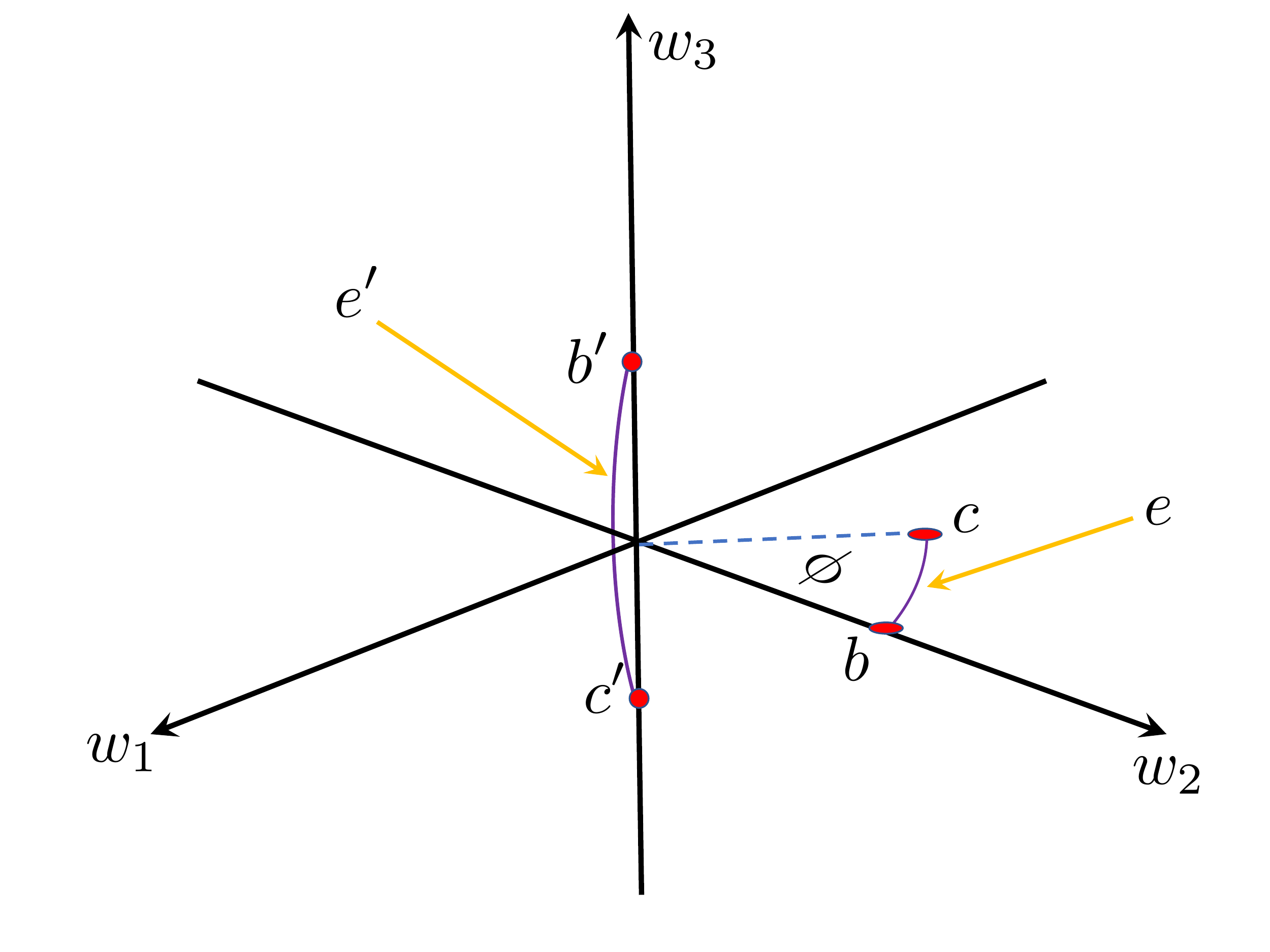}
  \includegraphics[width=.49\textwidth]{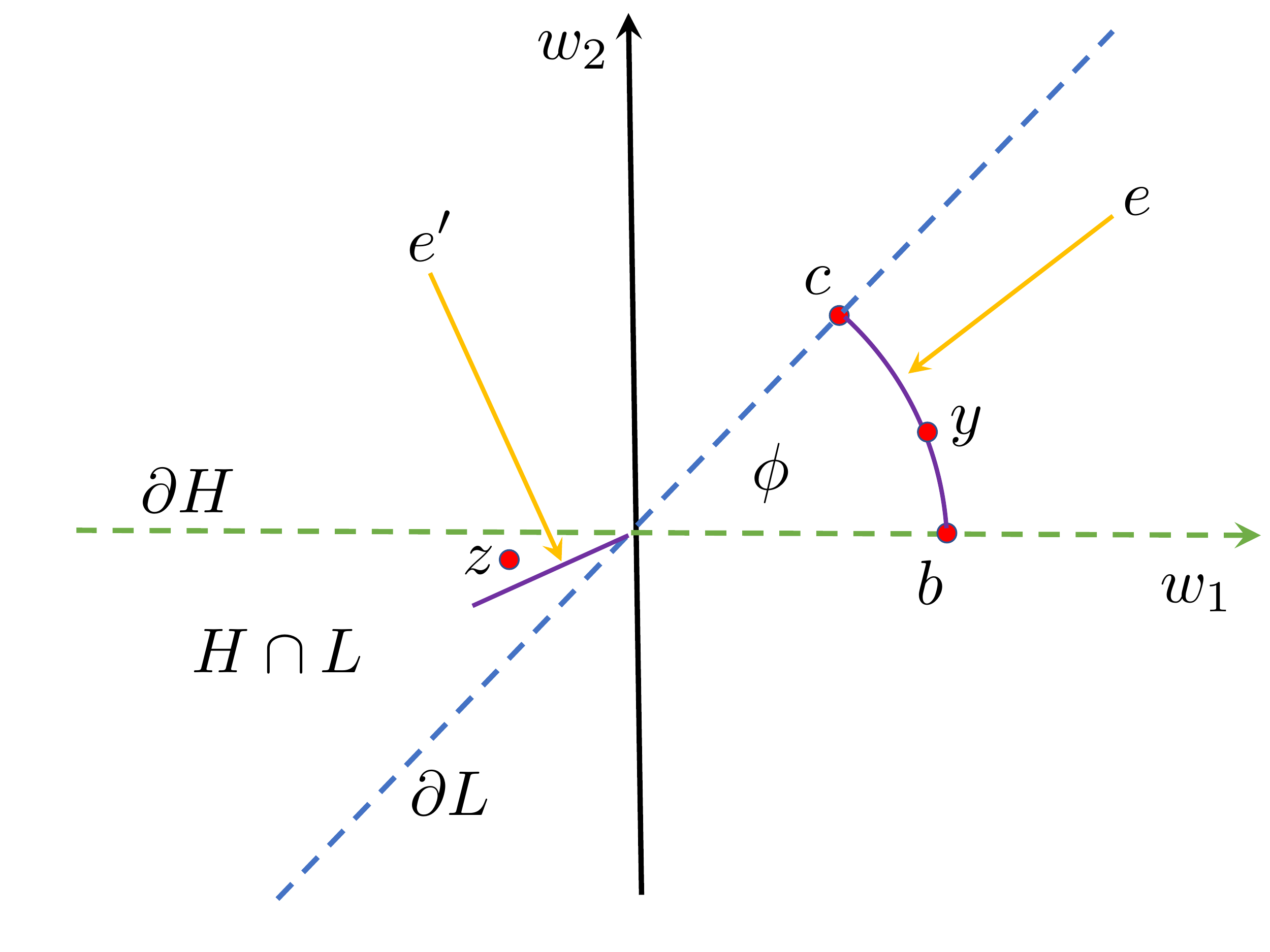}
 \caption{These figures give some perspective on our choice of coordinates \eqref{specialcoordinatefourtuple}. On the left, we have edge $e$ lying in the $w_1w_2$ plane with endpoints $b$ and $c$; we also have edge $e'$ with endpoints $b'$ and $c'$ belonging to the $w_3$ axis. The figure on the right shows the same diagram along with $y\in e$ and $z\in B(X)$ with $|z-y|>1$ from the point of view of the positive $w_3$ axis. Lemma \ref{WedgeLemma} asserts that $z$ is an interior point of $H\cap L$ as displayed.}\label{FourPointsFig}
\end{figure}

\par 2. By hypothesis, there is $y$ on the edge between $b$ and $c$ for which $|z-y|>1$. Then $y$ is given by 
\be\label{yForm}
y=\sqrt{1-a^2}(\cos(\theta),\sin(\theta),0)
\ee
for some $\theta\in (0,\phi)$.  The inequalities $|z-y|>1\ge |z-b|$ also give
$$
|z|^2+|y|^2-2y\cdot z>  |z|^2+|b|^2-2b\cdot z.
$$
And since $|y|=|b|$, we find $z\cdot (b-y)>0$. This inequality is equivalent to 
\be\label{zangle1}
-z_1\sin(\theta/2)+z_2\cos(\theta/2)<0.
\ee
Similarly, the inequalities $|z-y|>1\ge |z-c|$ give 
\be\label{zangle2}
-z_1\sin((\phi+\theta)/2)+z_2\cos((\phi+\theta)/2)>0.
\ee
Moreover, 
{\small\begin{align}
0&<-z_1\sin((\phi+\theta)/2)+z_2\cos((\phi+\theta)/2)\\
&=-z_1\left[\sin(\phi/2)\cos(\theta/2)+\sin(\theta/2)\cos(\phi/2)\right] +z_2\left[\cos(\phi/2)\cos(\theta/2)-\sin(\theta/2)\sin(\phi/2)\right]\\
&=\left[-z_1\sin(\theta/2)+z_2\cos(\theta/2)\right]\cos(\phi/2)-\left[z_1\cos(\theta/2)+z_2\sin(\theta/2)\right]\sin(\phi/2)\\
&<-\left[z_1\cos(\theta/2)+z_2\sin(\theta/2)\right]\sin(\phi/2).
\end{align}}
It follows that 
\be\label{zangle3}
z_1\cos(\theta/2)+z_2\sin(\theta/2)<0.
\ee

\par 3.  Multiplying \eqref{zangle3} by $\sin(\theta/2)$ and  \eqref{zangle1} by $\cos(\theta/2)$ preserves the inequalities as $0<\theta<\phi<\pi/2$. Adding the resulting inequalities yields $z_2<0$, so $z\in H^0$.  We also find from \eqref{zangle2} that 
$$
0<-z_1\sin((\phi+\theta)/2)+z_2\cos((\phi+\theta)/2)<-z_1\sin((\phi+\theta)/2).
$$
It follows that $z_1<0$.  Moreover, the leftmost inequality above gives
$$
\frac{z_2}{z_1}<\tan((\phi+\theta)/2)<\tan(\phi).
$$
That is, $z\in L^0$.  

\par 4. The only vertices which belong to $W_e'$ are $ ae_3$ and $-ae_3$. As $(z_1,z_2)\neq (0,0)$, $z$ is not a vertex of $B(X)$.  
\end{proof}
\begin{cor}\label{LittleEdgeLemma}
If $e$ is an edge of $B(X)$, then $X\subset B(e)$. 
\end{cor}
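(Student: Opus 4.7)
The plan is to argue by contradiction using Lemma \ref{WedgeLemma} together with the characterization of extremal sets in Theorem \ref{ExtendedGHS}.

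First I would fix an edge $e$ of $B(X)$, where as in the setup of the subsection $X$ is extremal with $m\ge 4$ points. Suppose for contradiction that some $x\in X$ fails to lie in $B(e)$. Since $X$ has diameter one, we have $X\subset B(X)$, so $x\in B(X)$. By assumption $x\notin B(e)$, which is precisely the hypothesis of Lemma \ref{WedgeLemma} applied to $z=x$. That lemma concludes that $x\in\textup{int}W_{e'}$ and, crucially, that $x$ is not a vertex of $B(X)$.

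On the other hand, Theorem \ref{ExtendedGHS} asserts that for an extremal set $X$ with at least four points we have $X=\textup{vert}(B(X))$. In particular every point of $X$ is a vertex of $B(X)$. This directly contradicts the conclusion of the previous paragraph that $x$ is not a vertex. Hence no such $x$ exists, and $X\subset B(e)$.

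There is no real obstacle here; the whole content of the corollary is packaged into the two earlier results. The only thing to be careful about is to verify that the hypotheses of Lemma \ref{WedgeLemma} are met: $x$ must lie in $B(X)$ (which follows from $\textup{diam}(X)\le 1$) and $x$ must fail to lie in $B(e)$ (which is our contradiction assumption). Once these are in place, combining the lemma's conclusion with Theorem \ref{ExtendedGHS} closes the argument in one line.
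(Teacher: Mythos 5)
Your proof is correct and is essentially the paper's own argument: the paper also deduces $x\in B(X)$ from $\textup{diam}(X)=1$, uses extremality (Theorem \ref{ExtendedGHS}) to conclude $x\in\textup{vert}(B(X))$, and then invokes Lemma \ref{WedgeLemma} — you simply phrase the final step as a contradiction where the paper applies the contrapositive directly.
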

\begin{proof}
Suppose $x\in X$. Since $X$ has diameter one, $x\in B(X)$; and as $X$ is extremal, $x$ is a vertex of $B(X)$.  According to the previous lemma, $x\in B(e)$.
\end{proof}
\par Next we claim that if we intersect $B(X)$ with all balls centered on the edge $e$, then the only portion of $B(X)$ which is affected lies in $W_{e'}$.   Furthermore, we will argue below that this intersection is a part of the spindle $\textup{Sp}(b',c')$, where $b',c'$ are the endpoints of $e'$.

\begin{lem}\label{OneReducedEdgeLem}
Suppose $e$ and $e'$ are dual edges of $B(X)$ and the endpoints of $e'$ are  $b',c'\in X$. Then  \be\label{FirstWedgeIdentity}
B(X)\cap B(e)\cap W_{e'}=\textup{Sp}(b',c')\cap W_{e'},
\ee
and
\be\label{SecondWedgeIdentity}
B(X)\cap B(e)\cap \left(W_{e'}\right)^c=B(X)\cap \left(W_{e'}\right)^c.
\ee
\end{lem}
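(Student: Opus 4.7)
The plan is to dispatch the two identities separately, using only Lemma \ref{WedgeLemma} and the coordinate setup from its proof. Identity \eqref{SecondWedgeIdentity} is essentially free: the inclusion $\subset$ is automatic, and for $\supset$, if some $w \in B(X) \cap (W_{e'})^c$ failed to lie in $B(e)$, then Lemma \ref{WedgeLemma} would force $w \in \textup{int}\,W_{e'} \subset W_{e'}$, contradicting $w \in (W_{e'})^c$.

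For identity \eqref{FirstWedgeIdentity}, the $\supset$ inclusion reduces to two easy observations: $W_{e'} \subset B(X)$ holds by definition of the wedge, and $\textup{Sp}(b',c') \subset B(e)$ because for each $x \in e$ one has $|x-b'| = |x-c'| = 1$, which makes $B(x)$ one of the unit balls whose intersection defines $\textup{Sp}(b',c')$; hence $\textup{Sp}(b',c') \subset B(x)$ for every $x \in e$.

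The real work lies in the $\subset$ direction of \eqref{FirstWedgeIdentity}. I would adopt the coordinates from the proof of Lemma \ref{WedgeLemma}, so that $b' = ae_3$, $c' = -ae_3$, and $e$ is the arc $\{y_t := \sqrt{1-a^2}(\cos t,\sin t,0) : t \in [0,\phi]\}$ sitting inside the full circle $\partial B(b')\cap\partial B(c') = \{y_t : t \in [0,2\pi)\}$. For $w = (w_1,w_2,w_3)$ with $(w_1,w_2) = R(\cos\alpha,\sin\alpha)$, a direct expansion gives $|w-y_t|^2 = R^2 + w_3^2 + (1-a^2) - 2\sqrt{1-a^2}\,R\cos(t-\alpha)$, which is maximized over $t \in [0,2\pi)$ at $t = \alpha - \pi \pmod{2\pi}$. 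The wedge conditions \eqref{HLSpecificCoordinates} translate, for $R > 0$, into $\sin\alpha \le 0$ and $\sin(\alpha-\phi) \ge 0$; combined with $0 < \phi < \pi/2$ this pins $\alpha$ to the interval $[\pi,\pi+\phi]$, so the maximizer $\alpha - \pi$ lies in $[0,\phi]$ and hence corresponds to a point of $e$. The bound $w \in B(e)$ therefore upgrades to $|w-y_t| \le 1$ for every $t$, and Remark \ref{IntersectionOverConvBody} identifies this as membership in $\textup{Sp}(b',c')$. The degenerate case $R = 0$ is immediate since $t \mapsto |w-y_t|^2$ is then constant in $t$. The main obstacle is precisely this trigonometric check that, for $w$ in the wedge, the farthest point of the circle $\partial B(b') \cap \partial B(c')$ from $w$ sits on the short arc $e$ itself---once that is in hand, the bound on the defining arc automatically propagates to the whole circle cutting out the spindle.
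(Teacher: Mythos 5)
Your proposal is correct and follows essentially the same route as the paper: identity \eqref{SecondWedgeIdentity} via Lemma \ref{WedgeLemma}, the easy inclusion of \eqref{FirstWedgeIdentity} from $\textup{Sp}(b',c')\subset B(e)$, and the hard inclusion by working in the coordinates \eqref{specialcoordinatefourtuple} and showing that the point of the circle $\partial B(b')\cap\partial B(c')$ antipodal to $w$'s angular position (the paper's $-\sqrt{1-a^2}\,(w_1,w_2,0)/\sqrt{w_1^2+w_2^2}$, your $y_{\alpha-\pi}$) lies on the closed edge because of the wedge inequalities, whence $w\in B(e)$ forces membership in the spindle. The only cosmetic differences are that you conclude via the maximization over the circle together with Remark \ref{IntersectionOverConvBody} instead of plugging into \eqref{aSpFormula}, and you should note (as the paper does via Remark \ref{DenseIsEnough}) that $B(e)=B(e\cup\{b,c\})$ since the maximizer may be an endpoint.
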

\begin{proof}
We will use the notation $P=B(X)$ and $P'=P\cap B(e)$.  Without loss of generality, we may also assume $b,c,b',c'$ satisfy \eqref{specialcoordinatefourtuple} for some $a\in (0,1/2]$ and 
 $\phi$ which fulfills \eqref{ThetaPhi}; here $b,c$ are the endpoints of $e$. In these coordinates, 
\be\label{specificWedgeFormula}
 W:=W_{e'}=\left\{w\in P: w_2\le 0, -w_1\sin(\phi)+w_2\cos(\phi)\ge 0 \right\}
\ee
and we can also express the edge $e$ as
\be\label{specificEdgeFormula}
e=\left\{\sqrt{1-a^2}(\cos(t),\sin(t),0)\in \R^3: t\in (0,\phi)\right\}.
\ee
In addition, it will be convenient to write 
$$
\text{Sp}:=\text{Sp}(ae_3,-ae_3).
$$
\par  Recall that by Remark \ref{IntersectionOverConvBody}, $\text{Sp}=B(\partial B(ae_3)\cap \partial B(-ae_3))$.  Since $e\subset \partial B(ae_3)\cap \partial B(-ae_3)$, $\text{Sp}\subset B(e)$. Moreover, as $ae_3,-ae_3\in P$ and $P$ is spindle convex, $\text{Sp}\subset P$. It then follows that 
$$
\textup{Sp}\cap W\subset P\cap B(e)\cap W= P'\cap W.
$$
Now suppose $z\in P'\cap W$. If $z_1=z_{2}=0$, then $z$ must be on the line segment between $ae_3$ and $-ae_3$ since $z\in P$ and $ae_3,-ae_3\in \partial P$. As a result,  $z\in \text{Sp}$. Alternatively, if $z_1^2+z_2^2\neq 0$, then   
$$
-\frac{(z_1,z_2,0)}{\sqrt{z_1^2+z_{2}^2}}\sqrt{1-a^2}\in e\cup\{b,c\}
$$
as $z\in W$; here we recall the formulae \eqref{specificWedgeFormula} for $W$ and \eqref{specificEdgeFormula} for $e$.  Moreover,  since $z\in B(e)=B(e\cup\{b,c\})$ by Remark \ref{DenseIsEnough},
$$
1\ge \left|z+\frac{(z_1,z_2,0)}{\sqrt{z_1^2+z_{2}^2}}\sqrt{1-a^2}\right|^2=\left(\sqrt{z_1^2+z_{2}^2}+\sqrt{1-a^2}\right)^2+z_3^2.
$$
Again we conclude $z\in \text{Sp}$. It follows that  $z\in  \text{Sp}\cap W$ and $P'\cap W\subset \textup{Sp}\cap W$. 
Therefore, \eqref{FirstWedgeIdentity} holds.  

\par We now pursue \eqref{SecondWedgeIdentity}.  As $P'\subset P$, 
$$
P'\cap W^c\subset P\cap W^c.
$$ 
It order to establish the reverse inclusion, we must show 
\be\label{LastWedgeInc}
P\cap W^c\subset B(e).
\ee
Suppose $z\in P\cap W^c$. If $z\not\in  B(e)$, 
Lemma \ref{WedgeLemma} implies $z\in W$. This would contradict our assumption that $z\in W^c$, so it must be that $z\in  B(e)$. 
In particular, we conclude \eqref{LastWedgeInc}. 
\end{proof}
\begin{figure}[h]
\centering
 \includegraphics[width=.42\textwidth]{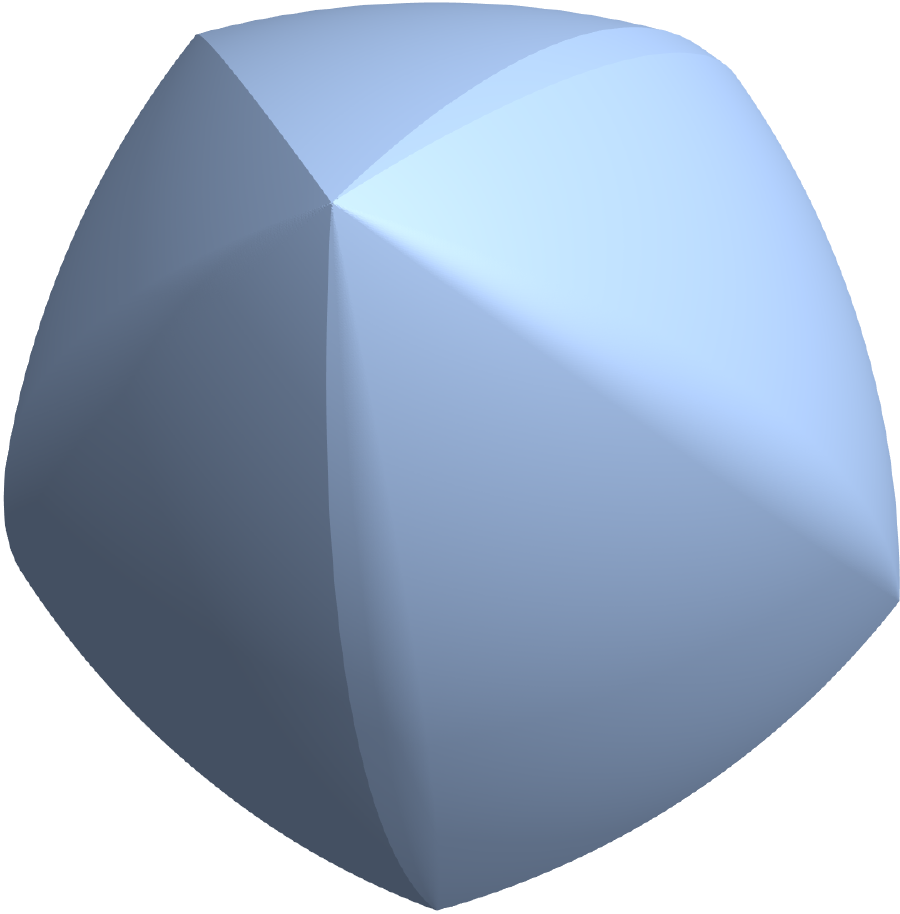}
 \hspace{.2in}
  \includegraphics[width=.42\textwidth]{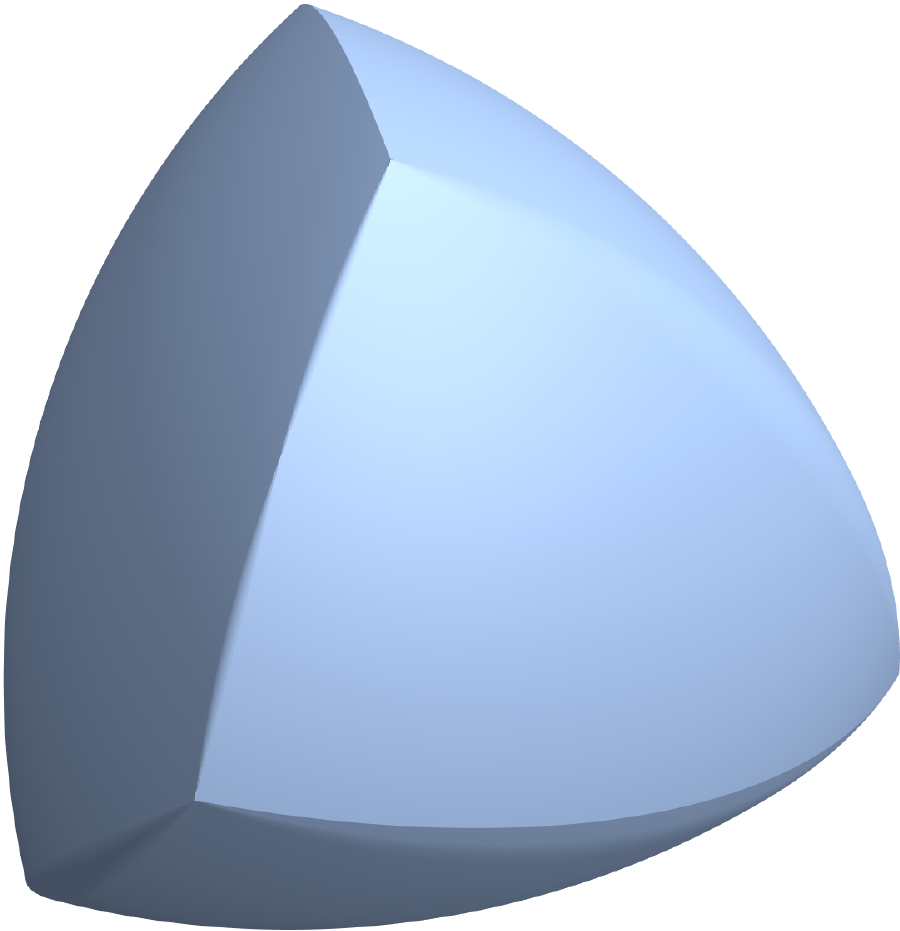}
 \caption{A Meissner polyhedron based on the extremal set $\{a_1,\dots,a_8\}$ from Example \ref{PentPyrExTwo}.}\label{MeissPent}
\end{figure}

\par The following claim asserts that the boundary of a Meissner polyhedron $M$ based on $X$ is realized by performing surgery on $\partial B(X)$ near one edge $e'$ in each dual edge pair $(e,e')$ of $B(X)$.   Here surgery means cutting out the intersection of the wedge associated with the edge $e'$ relative to $B(X)$ and $\partial B(X)$ and replacing this portion of $\partial B(X)$ with the appropriate portion of a spindle torus.  In particular, our Meissner polyhedra include the class of polyhedra defined by Montejano and Rold\'an-Pensado \cite{MR3620844} via this surgery procedure.   Indeed, the extremal sets $X$ considered by Montejano and Rold\'an-Pensado are critical and they generate a Reuleaux polyhedron $B(X)$ whose skeleton is 3--connected.  We saw in Example \ref{nonpolyEx} above that not every $B(X)$ with $X$ extremal has a 3--connected skeleton. 
\begin{prop}\label{structureofMprop}
Suppose $(e_1,e_1'),\dots, (e_{m-1},e_{m-1}')$ are the dual edge pairs of $B(X)$ and $b_j',c_j'$ are the endpoints of $e_j'$ for $j=1,\dots, m=1$.   Then the Meissner polyhedron
$$
M=B(X\cup e_1\cup \dots \cup e_{m-1})
$$
satisfies 
\be\label{structureofM}
\begin{cases}
 M\cap W_{e_j'}= \textup{Sp}(b_j',c_j')\cap W_{e_j'}\quad (j=1,\dots, m-1)
\\\\
 M\cap\left(\displaystyle\bigcup^{m-1}_{j=1}W_{e_j'}\right)^c=  \displaystyle B(X) \cap \left(\bigcup^{m-1}_{j=1}W_{e_j'}\right)^c
\end{cases}
\ee
and 
\be\label{structureofpartialM}
\begin{cases}
 \partial M\cap W_{e_j'}= \partial \textup{Sp}(b_j',c_j')\cap W_{e_j'}\quad (j=1,\dots, m-1)
\\\\
\partial M\cap\left(\displaystyle\bigcup^{m-1}_{j=1}W_{e_j'}\right)^c= \displaystyle \partial B(X) \cap \left(\bigcup^{m-1}_{j=1}W_{e_j'}\right)^c.
\end{cases}
\ee
\end{prop}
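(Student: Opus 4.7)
My plan is to first rewrite $M = B(X) \cap B(e_1) \cap \cdots \cap B(e_{m-1})$ via the elementary identity $B(A \cup B) = B(A) \cap B(B)$. The cornerstone observation is that $\textup{Sp}(b_j',c_j') \subset M$ for each $j$: indeed $b_j', c_j' \in X$, and $X \subset B(X) \cap \bigcap_k B(e_k)$ (the second containment by Corollary \ref{LittleEdgeLemma}); since each of $B(X)$ and $B(e_k)$ is spindle convex as an intersection of closed unit balls, spindle convexity gives $\textup{Sp}(b_j',c_j') \subset B(X) \cap \bigcap_k B(e_k) = M$.

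For \eqref{structureofM}, the first identity follows by combining this containment with Lemma \ref{OneReducedEdgeLem}: since $M \subset B(X) \cap B(e_j)$, we get $M \cap W_{e_j'} \subset B(X) \cap B(e_j) \cap W_{e_j'} = \textup{Sp}(b_j',c_j') \cap W_{e_j'}$, and the reverse inclusion is just $\textup{Sp}(b_j',c_j') \cap W_{e_j'} \subset M \cap W_{e_j'}$. For the second identity, one direction is immediate from $M \subset B(X)$; for the other, if $w \in B(X)$ lies outside every wedge $W_{e_k'}$, then the contrapositive of Lemma \ref{WedgeLemma} forces $w \in B(e_k)$ for each $k$, so $w \in M$. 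Notably this argument does not require the wedges $W_{e_k'}$ to be pairwise disjoint.

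For \eqref{structureofpartialM}, the second identity is easy: $U := (\bigcup_k W_{e_k'})^c$ is open and $M \cap U = B(X) \cap U$, so the topological boundaries in $\mathbb{R}^3$ coincide on $U$ since being a boundary point is a local condition. For the first identity, $(\supset)$ is clean: if $w \in \partial M \cap W_{e_j'}$, then $w \in M \cap W_{e_j'} = \textup{Sp}(b_j',c_j') \cap W_{e_j'}$, and if $w$ were in $\textup{int}(\textup{Sp}(b_j',c_j'))$, a ball around $w$ would lie in $\textup{Sp}(b_j',c_j') \subset M$, contradicting $w \in \partial M$. For $(\subset)$, given $w \in \partial \textup{Sp}(b_j',c_j') \cap W_{e_j'}$, membership $w \in M$ follows from $\textup{Sp}(b_j',c_j') \subset M$, and I must exhibit points of $M^c$ in every neighborhood of $w$. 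When $w \in \textup{int}(W_{e_j'})$ this is immediate: a small ball sits in $W_{e_j'}$ where $M$ agrees with $\textup{Sp}(b_j',c_j')$, so nearby points outside $\textup{Sp}(b_j',c_j')$ lie outside $M$.

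The main obstacle is the subcase where $w$ lies on $\partial W_{e_j'}$. Here the decisive geometric fact is that each of the two bounding planes $\partial H, \partial L$ of the wedge passes through both endpoints $b_j', c_j'$ of $e_j'$; hence each contains the axis of the spindle $\textup{Sp}(b_j',c_j')$ and is therefore a plane of symmetry for it. By rotational symmetry of the spindle about this axis, the tangent plane to $\partial \textup{Sp}(b_j',c_j')$ at any point of $\partial \textup{Sp}(b_j',c_j') \cap \partial H$ other than the tips $b_j', c_j'$ contains the direction normal to $\partial H$, so it is transverse to $\partial H$; consequently, arbitrarily close to $w$ there are points of $\partial H$ outside $\textup{Sp}(b_j',c_j')$. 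Such points remain in $W_{e_j'}$ (as $W_{e_j'}$ is closed and $w$ lies in the relative interior of $\partial H \cap L \cap B(X)$ within $\partial H$), hence in $W_{e_j'} \cap \textup{Sp}(b_j',c_j')^c = W_{e_j'} \cap M^c$; the same reasoning applies to $\partial L$. The residual case $w \in \{b_j', c_j'\}$ is trivial, since these are vertices of $B(X) \supset M$ and every neighborhood of such a vertex meets $B(X)^c \subset M^c$, placing them in $\partial M$.
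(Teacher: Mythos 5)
Your handling of \eqref{structureofM} and of the second identity in \eqref{structureofpartialM} is correct and essentially the paper's own route: the containment $\textup{Sp}(b_j',c_j')\subset M$ via Corollary \ref{LittleEdgeLemma} and spindle convexity, Lemma \ref{OneReducedEdgeLem} inside the wedge, Lemma \ref{WedgeLemma} off the wedges, and locality of the topological boundary on the open set $\left(\bigcup_k W_{e_k'}\right)^c$. The same goes for the inclusion $\partial M\cap W_{e_j'}\subset \partial \textup{Sp}(b_j',c_j')\cap W_{e_j'}$.

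The genuine gap is in the final step, the inclusion $\partial \textup{Sp}(b_j',c_j')\cap W_{e_j'}\subset\partial M$, in the subcase $w\in\partial H$ (or $\partial L$). Your claim that the nearby points of $\partial H$ lying outside $\textup{Sp}(b_j',c_j')$ ``remain in $W_{e_j'}$'' is false, as is the parenthetical relative-interior assertion used to justify it. In the coordinates \eqref{specialcoordinatefourtuple}, the wedge-side curve $\partial \textup{Sp}(b_j',c_j')\cap\partial H$ is precisely the radius-one arc centered at the endpoint $b_j$ of $e_j$ lying in $\partial H$; it is the spherical geodesic in the face $\partial B(b_j)\cap B(X)$ joining $b_j'$ to $c_j'$, so every non-tip point $w$ of it lies on $\partial B(X)$, and the points of $\partial H$ just outside the spindle near $w$ are at distance greater than one from $b_j$, hence outside $B(b_j)\supset B(X)\supset W_{e_j'}$. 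Consequently the step ``hence in $W_{e_j'}\cap \textup{Sp}(b_j',c_j')^c=W_{e_j'}\cap M^c$'' does not go through as written. (The conclusion survives by a different argument: those points are outside $B(X)\supset M$, so they still witness $w\in\partial M$.) A secondary lapse: your dichotomy tacitly assumes that any $w\in\partial\textup{Sp}(b_j',c_j')\cap W_{e_j'}$ not interior to $W_{e_j'}$ lies on $\partial H\cup\partial L$ or is a tip, whereas $\partial W_{e_j'}$ also contains points of $\partial B(X)$ inside the open slab $(H\cap L)^0$; ruling these out on the spindle boundary requires an argument (for instance via Lemma \ref{WedgeLocationLemma} and the normal cone of the spindle, which forces any $x'\in X$ at distance one from such a point to lie on the open edge $e_j$, impossible). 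The paper avoids both issues by observing that $M$ and $\textup{Sp}(b_j',c_j')$, both being subsets of $B(X)$, agree on all of $H\cap L$, reducing the claim to $\partial M\cap H\cap L=\partial\textup{Sp}(b_j',c_j')\cap H\cap L$; the open part $(H\cap L)^0$ is then pure locality, and points of $\partial(H\cap L)$ are reached as limits of boundary points in $(H\cap L)^0$ along curves on the spindle surface.
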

\begin{proof}
1. By Lemma \ref{OneReducedEdgeLem}, 
$$
B(X)\cap B(e_1)\cap  W_{e_1'}=\text{Sp}(b_1',c_1')\cap W_{e_1'}.
$$
As $b_1',c_1'$ are vertices of $B(X)$, $b_1',c_1'\in B(e_2)$ by Corollary \ref{LittleEdgeLemma}.  
Therefore, $\text{Sp}(b_1',c_1')\subset B(e_2)$.  It follows that 
$$
B(X)\cap B(e_1)\cap B(e_2)\cap  W_{e_1'}=\text{Sp}(b_1',c_1')\cap W_{e_1'}.
$$
Arguing this way for $j=3,\dots, m-1$, we find 
$$
B(X)\cap B(e_1)\cap \dots \cap B(e_{m-1})  \cap W_{e_1'}=\text{Sp}(b_1',c_1')\cap W_{e_1'}.
$$
That is, $M  \cap W_{e_1'}=\text{Sp}(b_1',c_1')\cap W_{e_1'}.$ Likewise, we conclude $M  \cap W_{e_i'}=\text{Sp}(b_i',c_i')\cap W_{e_i'}$ for $i=2,\dots, m-1$. 

\par 2. In view of \eqref{SecondWedgeIdentity},
$$
B(X)\cap B(e_j)\cap  W_{e_j'}^c=B(X)\cap W_{e_j'}^c.
$$
for $j=1,\dots, m-1$.  
This implies
\begin{align}
B(X)\cap \left(\displaystyle\bigcup^{m-1}_{j=1}W_{e_j'}\right)^c
&=\bigcap^{m-1}_{j=1}B(X)\cap W_{e_j'}^c\\
&=\bigcap^{m-1}_{j=1}B(X)\cap B(e_j)\cap W_{e_j'}^c\\
&=B(X)\cap B(e_1)\cap \dots \cap B(e_{m-1}) \cap \bigcap^{m-1}_{j=1}W_{e_j'}^c\\
&=M\cap \left(\displaystyle\bigcup^{m-1}_{j=1}W_{e_j'}\right)^c.
\end{align}
We conclude \eqref{structureofM}.

\par 3. Set $Q:=\bigcup^{m-1}_{j=1}W_{e_j'}$. Suppose $x\in \partial M \cap Q^c$. Then $x\in B(X)$ by \eqref{structureofM}.  Suppose $B_\delta(x)\subset B(X)$ for some $\delta>0$. Choosing $\delta$ smaller if necessary, $B_\delta(x)\subset Q^c$ since $Q$ is closed. In this case,  $B_\delta(x)\subset B(X)\cap Q^c= M\cap Q^c\subset M$. Then $x$ would not belong to $\partial M$ as we assumed.  As a result, $x\in \partial B(X)$ and 
$\partial M\cap Q^c\subset \partial B(X) \cap Q^c$. Likewise, we conclude $\partial B(X) \cap Q^c\subset \partial M\cap Q^c$. Therefore, $\partial M\cap Q^c=\partial B(X) \cap Q^c$.

\par 4. We may express $W_{e_1'}=B(X)\cap H\cap L$, where $H$ and $L$ are the two half spaces such that $b_1,b_1',c_1'\in \partial H$ while $c_1\not \in H$ and $c_1,b_1',c_1'\in \partial L$ while $b_1\not \in L.$ Here $b_1,c_1$ are the endpoints of $e_1$.  Since $M$ and $\textup{Sp}(b_1',c_1')$ are subsets of $B(X)$,
$$
\partial M\cap W_{e_1'}= \partial \textup{Sp}(b_1',c_1')\cap W_{e_1'}
$$ 
is equivalent to 
 \be\label{FourthWedgeIdentity}
\partial M\cap H\cap L=\partial\textup{Sp}(b_1',c_1') \cap H\cap L.
\ee
Also note that the same argument we used to prove $\partial M\cap Q^c=\partial B(X) \cap Q^c$ gives 
\be\label{FifthWedgeIdentity}
\partial M\cap (H\cap L)^0= \partial\textup{Sp}(b_1',c_1') \cap (H\cap L)^0.
\ee
In order to establish \eqref{FourthWedgeIdentity}, we are then left to verify
\be\label{SixWedgeIdentity}
\partial M\cap \partial (H\cap L)= \partial\textup{Sp}(b_1',c_1')\cap \partial (H\cap L).
\ee
\begin{figure}[h]
\centering
 \includegraphics[width=.7\textwidth]{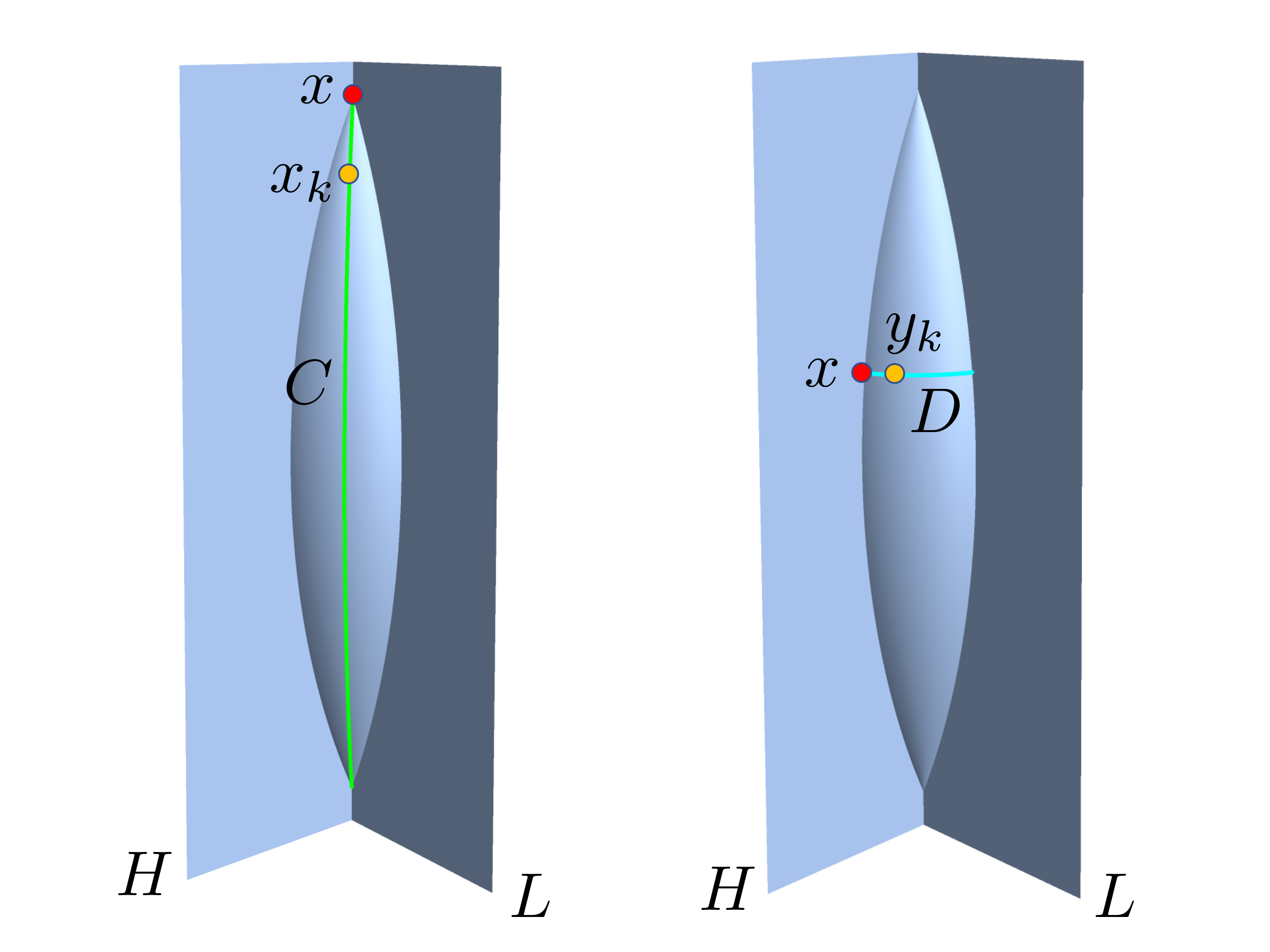}
 \caption{Here we have two images of the intersection $\textup{Sp}(b_1',c_1')\cap \partial (H\cap L)$. This graphic helps explain that if  $x\in  \partial\textup{Sp}(b_1',c_1')\cap \partial (H\cap L)$, there is a sequence in $\partial\textup{Sp}(b_1',c_1')\cap \partial (H\cap L)^0$ which converges to $x$. This is a critical step in our of proof of Proposition \ref{structureofMprop}.  }\label{MeissConstWidthFigFig}
\end{figure}
\par To this end, we suppose $x\in  \partial\textup{Sp}(b_1',c_1')\cap \partial (H\cap L)$. If $x=b'_1$ or $c_1'$, we choose a short arc $C\subset \partial\textup{Sp}(b_1',c_1')$ which joins $b_1'$ and $c_1'$ and for which $C\setminus\{b_1',c_1'\}\subset (H\cap L)^0$. It is not hard to see there is a sequence $x_k\in \partial\textup{Sp}(b_1',c_1')\cap (H\cap L)^0\cap C$ which converges to $x$ as $k\rightarrow\infty$; see Figure \ref{MeissConstWidthFigFig} for example. In view of \eqref{FifthWedgeIdentity}, $x_k\in \partial M$ for all $k\in \N$.  Thus, $x\in \partial M$.  Alternatively, if $x$ belongs to a smooth point on $\partial\textup{Sp}(b_1',c_1')$, we consider the circular arc $D$ in $\partial \textup{Sp}(b_1',c_1')\cap (H\cap L)$ which includes $x$ and belongs to the plane orthogonal to $b_1'-c_1'$. It is evident that there are $y_k\in \partial \textup{Sp}(b_1',c_1')\cap (H\cap L)^0\cap D$ which converges to $x$. By \eqref{FifthWedgeIdentity},  $y_k\in  \partial M$ for all $k\in \N$. Thus, $x\in \partial M$.  We conclude that ``$\supset$"  holds in \eqref{SixWedgeIdentity} in all cases. 

\par Next we suppose $x\in \partial M\cap \partial (H\cap L)$. Then $x\in \textup{Sp}(b_1',c_1')$ by \eqref{structureofM}. If $x$ is an interior point of $\textup{Sp}(b_1',c_1')$, then $B_\delta(x)\subset \textup{Sp}(b_1',c_1')$ for some $\delta>0$. By Corollary \ref{LittleEdgeLemma}, $b_1',c_1'\in M$. As $M$ is spindle convex, we would additionally have $ B_\delta(x)\subset\textup{Sp}(b_1',c_1')\subset M$. Since we assumed $x\in \partial M$, it must be that $x\in \partial \textup{Sp}(b_1',c_1')$. Therefore, ``$\subset$"  holds in \eqref{SixWedgeIdentity}. As a result,  \eqref{SixWedgeIdentity} holds and we conclude \eqref{structureofpartialM}. 
\end{proof}

\begin{figure}[h]
\centering
 \includegraphics[width=.42\textwidth]{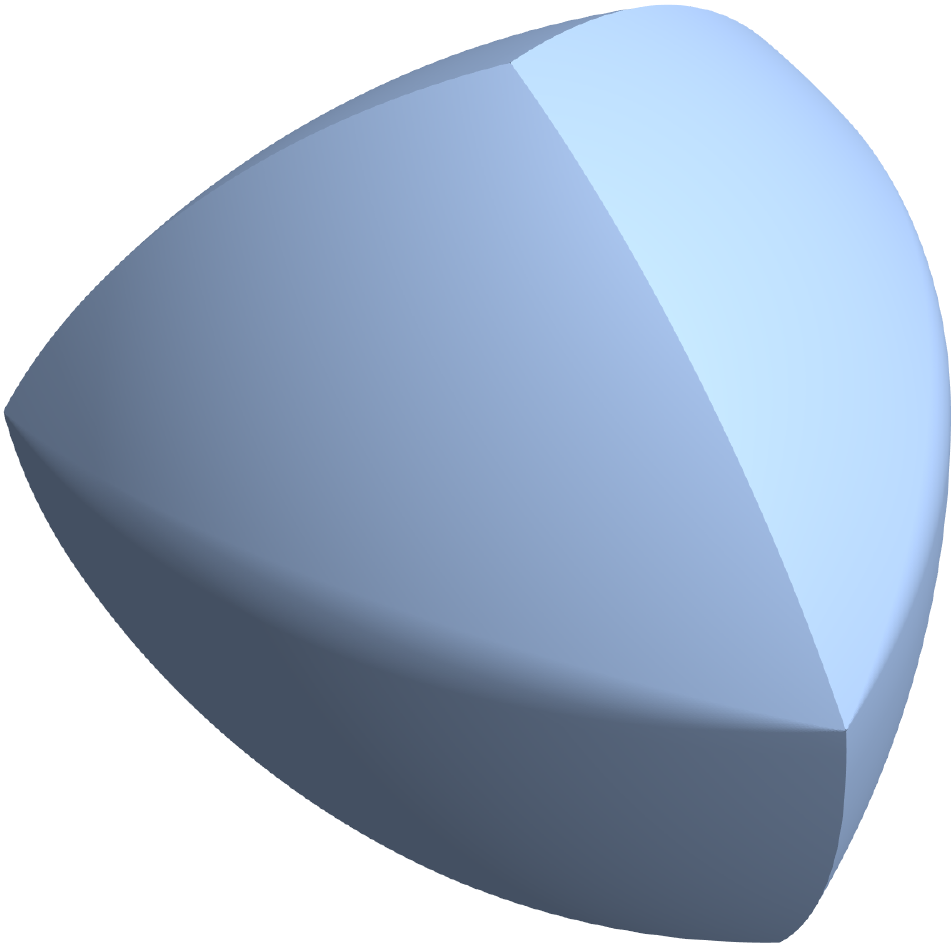}
 \hspace{.2in}
  \includegraphics[width=.42\textwidth]{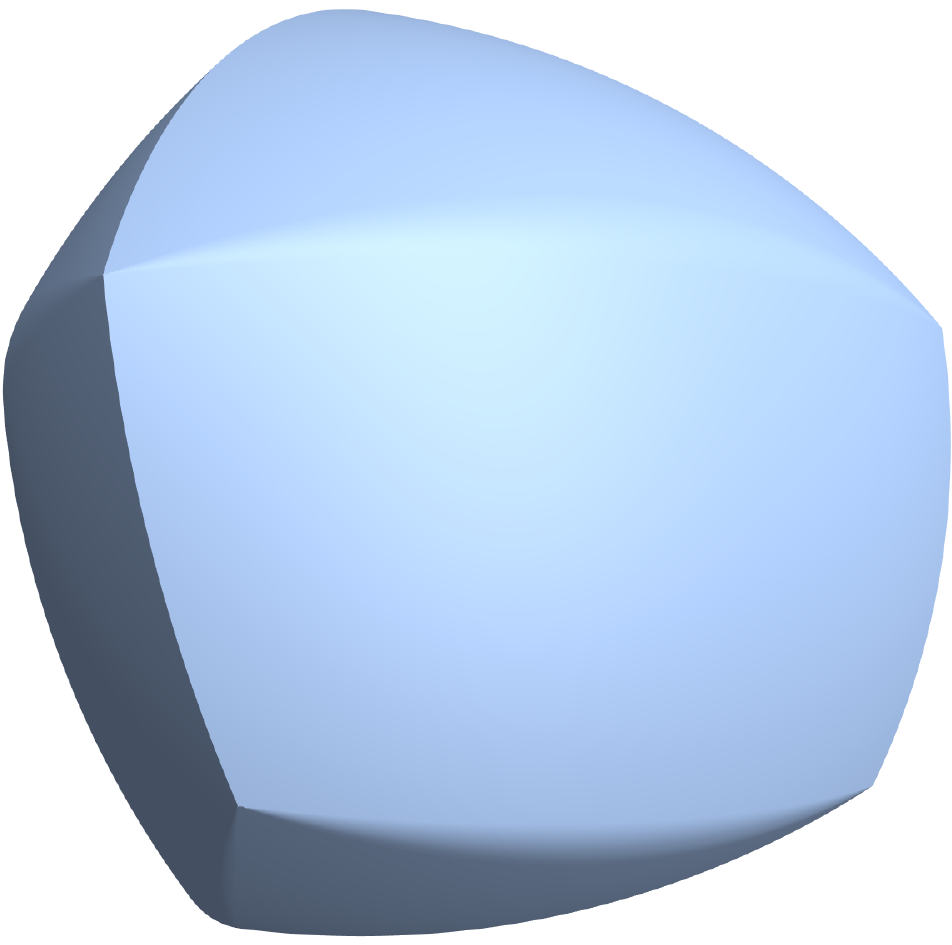}
 \caption{A Meissner polyhedron based on the critical set $\{a_1,\dots,a_7\}$ from Example \ref{ETPexample}.}\label{MeissETP}
\end{figure}

\subsection{Constant width property} 
We now aim to proof Theorem \ref{ConstantWidthThm}. Our first step is to recall a basic fact about convex bodies.

\begin{lem}\label{normalConeLemma}
Suppose $x,y\in K$ with $\textup{diam}(K)=|x-y|.$
Then  $x-y\in N_K(x)$ and $y-x\in N_K(y)$.
\end{lem}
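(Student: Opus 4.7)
The plan is to unpack the definition of the normal cone and use the defining property of the diameter, then apply a simple expansion of $|y-v|^2$. Specifically, to show $x-y \in N_K(x)$, I must verify that $(x-y)\cdot(v-x) \le 0$ for every $v \in K$.

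Fix $v \in K$. Since $|x-y|$ equals $\textup{diam}(K)$ and both $y,v \in K$, we have $|y-v| \le |x-y|$, hence $|y-v|^2 \le |x-y|^2$. I would then write $y-v = (y-x)+(x-v)$ and expand:
\begin{equation}
|y-v|^2 = |y-x|^2 + 2(y-x)\cdot(x-v) + |x-v|^2.
\end{equation}
Substituting into the inequality $|y-v|^2 \le |y-x|^2$ and cancelling gives
\begin{equation}
2(y-x)\cdot(x-v) + |x-v|^2 \le 0,
\end{equation}
which rearranges to $(x-y)\cdot(v-x) \le \tfrac{1}{2}|x-v|^2 \cdot (-1) \le 0$. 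That is exactly the condition that $x-y \in N_K(x)$. The assertion $y-x \in N_K(y)$ follows immediately by interchanging the roles of $x$ and $y$, since the hypothesis $\textup{diam}(K)=|x-y|=|y-x|$ is symmetric in the two points.

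There is no real obstacle here; the argument is a one-line consequence of the Pythagorean expansion and the definition of diameter. The only thing worth being careful about is keeping the signs straight when translating between $(y-x)\cdot(x-v) \le 0$ and the normal cone condition $(x-y)\cdot(v-x) \le 0$, which differ by two sign flips and therefore agree.
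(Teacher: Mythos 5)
Your proof is correct and follows essentially the same route as the paper: both arguments exploit the maximality of $|x-y|$ among distances in $K$ and expand a squared norm to isolate the inner product defining the normal cone. The only difference is cosmetic—the paper perturbs along the segment $y+t(w-y)$ and lets $t$ vary, whereas you compare with the point $v$ directly (the $t=1$ case), which if anything is slightly cleaner since it does not even invoke convexity of $K$.
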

\begin{proof}
Let $w\in K$ and $t\in [0,1]$. Then $y+t(w-y)\in K$ and 
$$
 |x-y|^2\ge  |x-(y+t(w-y)|^2=  |x-y|^2+t^2|w-y|^2-2t(x-y)\cdot (w-y).
$$
This implies $(y-x)\cdot (w-y)\le0$ for all $w\in K$. We conclude $y-x\in N_K(y)$. Likewise, we find $x-y\in N_K(x)$.  
\end{proof}
The following assertion involves the outward unit normal to spindle tori.  Note that the smooth part of the spindle $ \partial\textup{Sp}(ae_3,-ae_3)$ is given by $x\in \R^3$ which satisfies $|x_3|<a$ and 
\be\label{SmoothpartSPeq}
\left(\sqrt{x_1^2+ x_2^2}+\sqrt{1-a^2}\right)^2+x_3^2=1.
\ee
Moreover, the outward unit normal $u$ at such a point $x$ is 
\be\label{spindleNormalForm}
u=\left(\sqrt{x_1^2+x_{2}^2}+\sqrt{1-a^2}\right)\frac{(x_1,x_{2},0)}{\sqrt{x_1^2+x_{2}^2}}+x_3e_3.
\ee
\begin{lem}\label{WedgeLocationLemma}
Suppose $e$ and $e'$ are dual edges of $B(X)$, $b,c$ are the endpoints of $e$, and $b',c'$ are the endpoints of $e'$. Further assume 
$$
x\in \left(\partial \textup{Sp}(b',c')\setminus\{b',c'\}\right)\cap W_{e'}
$$
and $u\in N_{\textup{Sp}(b',c')}(x)$ with $|u|=1$. Then $x-u\in e\cup \{b,c\}$. 
\end{lem}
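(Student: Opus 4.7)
The plan is to reduce to the coordinates used in the proof of Lemma \ref{WedgeLemma}, exploit the explicit formula for the outward unit normal to a spindle torus given in \eqref{spindleNormalForm}, and then read off the claim from the wedge inequalities \eqref{HLSpecificCoordinates}. Without loss of generality, I will assume that $b,c,b',c'$ are placed as in \eqref{specialcoordinatefourtuple} for some $a\in(0,1/2]$ and $\phi$ satisfying \eqref{ThetaPhi}, so that $\textup{Sp}(b',c')=\textup{Sp}(ae_3,-ae_3)$, the edge $e$ is given by \eqref{specificEdgeFormula}, and the wedge $W_{e'}$ is cut out by \eqref{specificWedgeFormula}.

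First I would check that $u$ is uniquely determined as the outward unit normal to the spindle at $x$. By the description of $\partial\textup{Sp}(ae_3,-ae_3)$ recorded just before Lemma \ref{WedgeLocationLemma}, the only non-smooth points of this boundary are $\pm a e_3 = b',c'$, and these are excluded by hypothesis. At a smooth boundary point of a convex body the normal cone is a ray, so the condition $|u|=1$ pins $u$ down to the expression in \eqref{spindleNormalForm}. Setting $r=\sqrt{x_1^2+x_2^2}>0$ (again since $x \notin \{b',c'\}$ forces $r > 0$), a direct computation using \eqref{spindleNormalForm} gives
\begin{equation}
x-u = -\frac{\sqrt{1-a^2}}{r}(x_1,x_2,0),
\end{equation}
so $(x-u)_3=0$ and $|x-u|=\sqrt{1-a^2}$. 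Thus $x-u$ lies in the $w_1w_2$-plane on the circle of radius $\sqrt{1-a^2}$ about the origin; the same circle that contains $e$, $b$, and $c$.

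It remains to show that $x-u$ lies on the correct arc. Writing $(x_1,x_2)=r(\cos\theta,\sin\theta)$ for some $\theta$, the condition $x\in W_{e'}\subset H\cap L$ combined with \eqref{HLSpecificCoordinates} translates to
\begin{equation}
\sin\theta\le 0 \quad\text{and}\quad \sin(\theta-\phi)\ge 0,
\end{equation}
and since $\phi\in(0,\pi/2)$ this pins $\theta$ down to the interval $[\pi,\pi+\phi]$. Consequently $-(x_1,x_2)/r = (\cos(\theta-\pi),\sin(\theta-\pi),0)$ with $\theta-\pi\in[0,\phi]$, hence
\begin{equation}
x-u = \sqrt{1-a^2}\bigl(\cos(\theta-\pi),\sin(\theta-\pi),0\bigr).
\end{equation}
Comparing with \eqref{specialcoordinatefourtuple} and \eqref{specificEdgeFormula}, the endpoints $\theta-\pi=0$ and $\theta-\pi=\phi$ give $b$ and $c$ respectively, while any intermediate value lands on $e$. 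This yields $x-u\in e\cup\{b,c\}$.

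The main obstacle I expect is just keeping track of the coordinate conventions from Lemma \ref{WedgeLemma}; once those are in hand the calculation is essentially forced by the explicit formulas \eqref{spindleNormalForm} and \eqref{HLSpecificCoordinates}, and no further geometric input is needed.
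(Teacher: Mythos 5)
Your proposal is correct and follows essentially the same route as the paper: reduce to the coordinates \eqref{specialcoordinatefourtuple}, note that $x$ is a smooth boundary point so the unit normal is given by \eqref{spindleNormalForm}, compute $x-u=-\sqrt{1-a^2}\,(x_1,x_2,0)/\sqrt{x_1^2+x_2^2}$, and use the wedge inequalities to restrict the resulting angle to $[0,\phi]$. Your angular bookkeeping ($\theta\in[\pi,\pi+\phi]$) just spells out the step the paper states more tersely, so there is nothing to add.
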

\begin{proof}
It suffices to establish this claim under the assumption that $b,c,b',c'$ satisfy \eqref{specialcoordinatefourtuple} for some $a\in (0,1/2]$ and $\phi$ which satisfies \eqref{ThetaPhi}. We also recall that in these coordinates, 
$$
e=\left\{\sqrt{1-a^2}(\cos(t),\sin(t),0): t\in (0,\phi)\right\}
$$
and
$$
W_{e'}=\left\{w\in B(X): w_2\le 0, -w_1\sin(\phi)+w_2\cos(\phi)\ge 0 \right\}.
$$
In particular,
\be\label{xWedgeEprimeConstraints}
x_2\le 0, -x_1\sin(\phi)+x_2\cos(\phi)\ge 0
\ee
 as $x\in W_{e'}$.

\par Observe that since $x\in \partial \textup{Sp}(ae_3,-ae_3)\setminus\{ae_3,-ae_3\}$, $x$ belongs to the smooth part of $\textup{Sp}(ae_3,-ae_3)$. In particular, 
$u$ is the unit normal to $\textup{Sp}(ae_3,-ae_3)$ at $x$. In view of \eqref{spindleNormalForm}, 
$$
x-u=-\sqrt{1-a^2}\frac{(x_1,x_{2},0)}{\sqrt{x_1^2+x_{2}^2}}.
$$
Therefore, 
$$
x-u=\sqrt{1-a^2}(\cos(s),\sin(s),0)
$$
for some $s\in [0,2\pi)$.  By \eqref{xWedgeEprimeConstraints}, it must be that $s\in [0,\phi]$. Consequently, $x-u\in e\cup\{b,c\}$.
\end{proof}

\par The subsequent proposition is key to showing Meissner polyhedra have constant width. 
\begin{lem}\label{DiameterMone}
Suppose $M$ is a Meissner polyhedron. Then $\textup{diam}(M)= 1$.
\end{lem}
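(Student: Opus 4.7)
I would prove this in two steps. For the lower bound $\textup{diam}(M)\ge 1$, I would first check that $X\subset M$: each $v\in X$ satisfies $|v-w|\le 1$ for every $w\in X$ by $\textup{diam}(X)=1$, and $|v-p|\le 1$ for every $p\in e_j$ by Corollary \ref{LittleEdgeLemma}, so $v\in B(X\cup \bigcup_j e_j)=M$. Taking two vertices of $X$ at distance one then gives $\textup{diam}(M)\ge 1$.

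For the upper bound $\textup{diam}(M)\le 1$, I would argue by contradiction: suppose $x,y\in M$ realize $d:=\textup{diam}(M)>1$. By Lemma \ref{normalConeLemma} the unit vector $u:=(x-y)/d$ lies in $N_M(x)$ and $-u$ lies in $N_M(y)$, so $x,y\in\partial M$. Proposition \ref{structureofMprop} tells me that $\partial M$ consists of sphere pieces $\partial B(v)\cap M$ with $v\in X$ and spindle pieces $\partial\textup{Sp}(b_j',c_j')\cap W_{e_j'}$.

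The key step is to identify a \emph{witness} point $p:=x-u$ in $X\cup \bigcup_j e_j$. If $x$ is a smooth point of a sphere piece at $v\in X$, then $u=x-v$ and $p=v\in X$. If $x$ is a smooth point of a spindle piece, then Lemma \ref{WedgeLocationLemma} directly asserts $p\in e_j\cup\{b_j,c_j\}$. In either case $p\in X\cup \bigcup_j e_j$, so $M\subset B(p)$ and thus $|y-p|\le 1$. Computing from $y=x-du$ and $p=x-u$ gives $|y-p|=|d-1|$, which forces $d\le 2$. Running the same argument at $y$ produces a second witness $q:=y+u\in X\cup \bigcup_j e_j$ with $|x-q|=|d-1|$, and the four points $x,p,q,y$ are collinear along the line through $x$ in direction $u$ with $|p-q|=|d-2|$. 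The constraints $p,q\in M$, $M\subset B(p)\cap B(q)$, and the structural requirement of Proposition \ref{structureofMprop} that $p$ is a vertex in $X$ (sphere case) or a point on the specific short arc $e_j$ (spindle case), and similarly for $q$, force $p=y$ and $q=x$, which is only possible when $d=1$.

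The main obstacle I anticipate is the treatment of non-smooth points of $\partial M$, where the normal cone $N_M(x)$ is a proper cone (for instance where sphere and spindle pieces meet); there one must choose $u\in N_M(x)$ as the outward normal to a supporting sphere through $x$ (using spindle convexity), and then reverify that the witness $p=x-u$ produced by Lemma \ref{WedgeLocationLemma} still lies in $X\cup \bigcup_j e_j$. Once this case analysis is carried out and the collinear configuration $x,p,q,y$ is exploited, the contradiction $d>1$ is ruled out.
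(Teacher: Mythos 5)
Your lower bound and your treatment of the two smooth cases do mirror the paper's argument (witness point $p=x-u$ supplied by Lemma \ref{WedgeLocationLemma} on spindle pieces, $p=v\in X$ on spherical pieces, combined with Lemma \ref{normalConeLemma}), but as written the proof does not close. From $p,q\in X\cup e_1\cup\dots\cup e_{m-1}$ you get $M\subset B(p)\cap B(q)$ and hence $|y-p|=|x-q|=d-1\le 1$, which only says $d\le 2$; the assertion that the ``structural requirement'' of Proposition \ref{structureofMprop} forces $p=y$ and $q=x$ is never argued, and the two distance bounds alone are perfectly consistent with any $1<d\le 2$. The missing ingredient is strict convexity. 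Since $p$ lies in $X$ or on the closure of an edge $e_j$, it lies on $\partial B(X)$; it also lies on the segment $[x,y]\subset M$, and $M\subset B(X)$ gives $\mathrm{int}\,M\subset \mathrm{int}\,B(X)$, so $p\in\partial M$. But if $d>1$ then $p=x-u$ lies in the \emph{open} segment between $x,y\in M$, and $M$, being spindle convex, is strictly convex, so that open segment lies in the interior of $M$ --- a contradiction. This is precisely how the paper finishes its smooth cases (writing $x-y=tu$ and forcing $t=1$ by strict convexity, so $y=z$ and $|x-y|=1$); without some such step your argument is incomplete.

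The second gap is the non-smooth locus, which you flag but whose proposed repair does not work. If $x\in X$ or $x$ lies on a surviving edge $e_j$, the normal cone $N_M(x)$ is a genuine cone, and the direction $u=(x-y)/d$ is forced on you by Lemma \ref{normalConeLemma}; it is in general not the outward normal of a supporting sphere centered at a point of the generating set, and Lemma \ref{WedgeLocationLemma} says nothing about such $x$ (it only concerns points of the spindle pieces), so there is no way to ``reverify'' that $x-u$ lies in $X\cup e_1\cup\dots\cup e_{m-1}$. Fortunately no witness is needed there: in those cases $x$ itself belongs to the generating set (the endpoints of the $e_j$ being vertices in $X$), so $M=B(X\cup e_1\cup\dots\cup e_{m-1})\subset B(x)$ and $|x-y|\le 1$ immediately; the paper handles the vertex case exactly this way and the edge case via Lemma \ref{WedgeLemma} together with Proposition \ref{structureofMprop}. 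The remaining junction curves where spindle and sphere pieces meet are smooth points of the relevant spindle surface, so they are covered by your witness argument. With the strict-convexity step supplied and the vertex/edge cases treated as above, your argument becomes a correct proof along essentially the paper's lines.
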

\begin{proof}
Let $X\subset \R^3$ be the extremal set with diameter one for which $M$ is based on and recall that $X\subset M$ by Corollary \ref{LittleEdgeLemma}. 
There are two vertices $x,y\in X$ with $|x-y|=1$.  As a result, $\textup{diam}(M)\ge 1$.  

\par Choose a pair $x,y\in \partial M$ with $|x-y|=\text{diam}(M)$.   First suppose $x$ is a vertex of $B(X)$; that is $x\in X$.  Since $y\in B(X)$, $|x-y|\le 1$. Next suppose $x$ belongs to an edge $e$ with dual edge $e'$. If $|x-y|>1$, then $y\in W_{e'}$ by Lemma \ref{WedgeLemma}. In view of Proposition \ref{structureofMprop}, $y\in W_{e'}\cap M=W_{e'}\cap \text{Sp}(b',c')\subset \text{Sp}(b',c')$; here $b',c'$ are the endpoints of the edge $e'$.  As $\text{Sp}(b',c')=B(C)$ for a circle $C$ which includes the edge $e$, it must be that $y\in B(e)$. Therefore, we actually must have $|x-y|\le 1$ since $x\in e$.

\par Alternatively, suppose $x$ is an interior point of a portion of a spindle $\partial\text{Sp}(b',c')$. Then $x$ is a smooth point of $\partial\text{Sp}(b',c')$ with outward unit normal $u$. By Lemma \ref{WedgeLocationLemma}, $z=x-u\in e\cup\{b,c\}$. In particular, $z\in \partial M$. Since this normal is unique and $x-y\in N_M(x)$,  $x-y=t u$ for some $t>0$ by Lemma \ref{normalConeLemma}. 
In particular,  
$$
x+t(z-x)=y.
$$
As $y\in \partial M$ and $M$ is strictly convex, it must be that $t\ge1$. If $t>1$, the line segment $\{ x+s(z-x): 0\le s\le 1\}$ from one boundary point $x$ to another $z$ extends nontrivially to $y\in M$.  This  would also contradict the fact that $M$ is strictly convex. Therefore, $t=1$, $y=z$ and $|x-y|=|u|=1$. 

\par Finally suppose that $x$ is an interior point of spherical region of $\partial M$ with corresponding center $c\in X\subset \partial M$. Then $M$ has a unique normal $x-c$ which again is necessarily parallel to $x-y$. Arguing as above, we find $y=c$ and $|x-y|=1$. So in all cases, $\text{diam}(M)=|x-y|\le 1$.
\end{proof}

We now have the necessary ingredients to show that Meissner polyhedra have constant width. 
\begin{proof}[Proof of Theorem \ref{ConstantWidthThm}]
Suppose that $X$ is an extremal set of diameter one with $m\ge 4$ points and that its Reuleaux polyhedron $B(X)$ has dual edge pairs $(e_1,e_1'),\dots, (e_{m-1},e_{m-1}')$.  Let $M=B(Y)$ be the Meissner polyhedron  with $Y=X\cup e_1\cup\dots\cup e_{m-1}.$  We first claim that 
\be\label{DiameterBaseSetClaim}
\textup{diam}(Y)\le 1.
\ee
To verify this claim, suppose $x, y\in Y$.  If $x,y\in X$, then $|x-y|\le 1$ as $X$ has diameter one. Suppose $x\in e_1$ and $z\in B(X)$ with $|x-z|>1$. Lemma \ref{WedgeLemma} implies $z\in W_{e_1'}$ and that $z$ is not a vertex.  It follows that the only edge that $z$ can belong to is $e_1'$.  Therefore, if $y\in X\cup e_1\cup\dots\cup e_{m-1}$, then $|x-y|\le 1$.   We conclude \eqref{DiameterBaseSetClaim}.

\par Inequality \eqref{DiameterBaseSetClaim} implies $Y\subset B(Y)$, which in turn gives
$$
M=B(Y)\supset B(B(Y))=B(M).
$$
According to Lemma \ref{DiameterMone}, we also have $\text{diam}(M)=1$. As a result, $M\subset B(M)$, so $M=B(M)$. It now follows from Theorem \ref{ConstantWidthCharacterization} that $M$ has constant width. 
\end{proof}
We conclude this discussion by verifying that Meissner and Shilling's construction of the Meissner tetrahedra described in the introduction does indeed yield two shapes of constant width. 
\begin{cor}
Meissner tetrahedra have contant width. 
\end{cor}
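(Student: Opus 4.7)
The plan is to reduce the claim to Theorem \ref{ConstantWidthThm} by recognizing each of Meissner and Schilling's tetrahedra as a Meissner polyhedron based on the vertex set $X = \{a_1, a_2, a_3, a_4\}$ of a regular tetrahedron of unit side length. Since $X$ is extremal with four points, the Reuleaux tetrahedron $R = B(X)$ has three dual edge pairs. Writing $e_{ij}$ for the edge between $a_i$ and $a_j$, the fact that $e_{ij}$ lies on $\partial B(a_k)\cap \partial B(a_l)$ (for $\{i,j,k,l\}=\{1,2,3,4\}$) together with Theorem \ref{edgeThm} identifies the dual of $e_{ij}$ as $e_{kl}$. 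Hence the dual pairs are $(e_{12},e_{34})$, $(e_{13},e_{24})$, $(e_{14},e_{23})$.

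Next I would match the surgical recipe to the intersection formulation. The three edges incident to a common vertex $a_i$ have as their duals the three edges of the face opposite $a_i$, and conversely; in either case the three ``smoothed'' edges $e_1',e_2',e_3'$ form exactly one representative of each dual pair, as do the three complementary edges $e_1,e_2,e_3$. The candidate intersection-formulation Meissner polyhedron is
$$
M = B(X \cup e_1 \cup e_2 \cup e_3),
$$
and Proposition \ref{structureofMprop} says its boundary agrees with $\partial R$ outside $\bigcup_j W_{e_j'}$ and equals $\partial\textup{Sp}(b_j',c_j')\cap W_{e_j'}$ inside each wedge $W_{e_j'}$, where $b_j',c_j'$ are the endpoints of the smoothed edge $e_j'$.

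The core identification I would then carry out is that the spindle cap $\partial\textup{Sp}(b_j',c_j')\cap W_{e_j'}$ is precisely the surface produced by the surgery -- the revolution of one boundary geodesic $\gamma$ lying on a face of $R$ adjacent to $e_j'$ into the other such geodesic $\gamma'$ about the line through $b_j',c_j'$. Each of $\gamma,\gamma'$ is a short arc of a unit-radius great circle joining $b_j'$ to $c_j'$, and rotating such arcs about the axis $b_j'c_j'$ produces only short arcs of unit-radius circles between $b_j'$ and $c_j'$; by the short-arc characterization of spindles (Proposition \ref{ShortArcProp}) together with the cylindrical symmetry of Proposition \ref{SpInvarianceProp}, these swept arcs fill out exactly $\partial\textup{Sp}(b_j',c_j')\cap W_{e_j'}$. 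I expect the main point of care to be confirming that the region of $\partial R$ excised by the surgery coincides with $\partial R \cap W_{e_j'}$; this amounts to checking that the two half-spaces defining $W_{e_j'}$ meet $\partial R$ along $\gamma$ and $\gamma'$ respectively, which follows from the symmetry of $R$ and the description of $W_{e_j'}$ as the wedge bounded by the two planes through $e_j'$ and a vertex of the dual edge. With this identification in place, Theorem \ref{ConstantWidthThm} immediately yields that each Meissner tetrahedron has constant width.
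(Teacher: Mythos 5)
Your proposal is correct and follows essentially the same route as the paper: identify the three dual edge pairs $(e_{12},e_{34})$, $(e_{13},e_{24})$, $(e_{14},e_{23})$, apply Theorem \ref{ConstantWidthThm} to $M=B(X\cup e_1\cup e_2\cup e_3)$, and use Proposition \ref{structureofMprop} to recognize $\partial M$ as the result of Meissner--Schilling surgery on $\partial R$. The only difference is that you spell out the identification of the spindle cap with the rotated-geodesic surface a bit more explicitly (via Propositions \ref{ShortArcProp} and \ref{SpInvarianceProp}), a step the paper states more briefly.
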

\begin{proof}
Let $R=B(\{a_1,a_2,a_3,a_4\})$ be a Reuleaux tetrahedron and denote $e_{ij}\subset R$ as the edge which joins vertex $a_i$ to $a_j$.  It is easy to check that $R$ has three dual edge pairs 
$$
(e_{12}, e_{34}),\; (e_{13}, e_{24}), \; \text{and}\;  (e_{14}, e_{23}).
$$
By Theorem \ref{ConstantWidthThm},  
\be\label{SpecificMeissnerTetra}
M=B\left(\{a_1,a_2,a_3,a_4\}\cup e_{12}\cup e_{13}\cup e_{14}\right)
\ee
has constant width.  It suffices to show that $M$ is a Meissner tetrahedron. 

\par In view of \eqref{structureofpartialM}, 
$$
\begin{cases}
 \partial M\cap W_{e_{23}}= \partial \textup{Sp}(a_2,a_3)\cap W_{e_{23}}\\
  \partial M\cap W_{e_{24}}= \partial \textup{Sp}(a_2,a_4)\cap W_{e_{24}}\\
 \partial M\cap W_{e_{34}}= \partial \textup{Sp}(a_3,a_4)\cap W_{e_{34}}
\end{cases}
$$
and 
$$
\partial M\cap\left(\displaystyle W_{e_{23}}\cup W_{e_{24}}\cup W_{e_{34}}\right)^c
= \displaystyle \partial R \cap \left(\displaystyle W_{e_{23}}\cup W_{e_{24}}\cup W_{e_{34}}\right)^c.
$$
Therefore, $\partial M$ is obtained by replacing the portion of $\partial R$ in the wedges 
$W_{e_{23}},W_{e_{24}},W_{e_{34}}$ by pieces of the respective spindle tori $\partial\textup{Sp}(a_2,a_3),\partial\textup{Sp}(a_2,a_4),\partial\textup{Sp}(a_3,a_4)$. This is equivalent to performing surgery as
introduced by Meissner and Schilling \cite{Meissner}. Consequently, $M$ is a Meissner tetrahedron.
\end{proof}
\begin{rem}\label{MeissnerTetraFormula}
The Meissner tetrahedron $M$ \eqref{SpecificMeissnerTetra} is one in which the three smooth edges share the face opposite vertex $a_1$.  Since each $a_i$ is an endpoint of the edges $e_{12}, e_{13}$ or $e_{14}$, 
\be
M=B(e_{12}\cup e_{13}\cup e_{14}).
\ee
\par  Let us also consider the Meissner tetrahedron in which the three smoothed edges share vertex $a_1$. This constant width shape is given by
\be
M=B(\{a_1,a_2,a_3,a_4\}\cup e_{23}\cup e_{24}\cup e_{34}).
\ee
Since $a_2,a_3,a_4$ are endpoints of the edges $e_{23}, e_{24},$ and $e_{34}$, $M$ can be expressed as 
\be\label{SpecificMeissnerTetraTwo}
M=B(\{a_1\}\cup e_{23}\cup e_{24}\cup e_{34}).
\ee
These intersection formulae for Meissner tetrahedra were the ones we mentioned in the introduction of this article. 
\end{rem}

\begin{figure}[h]
\centering
 \includegraphics[width=.42\textwidth]{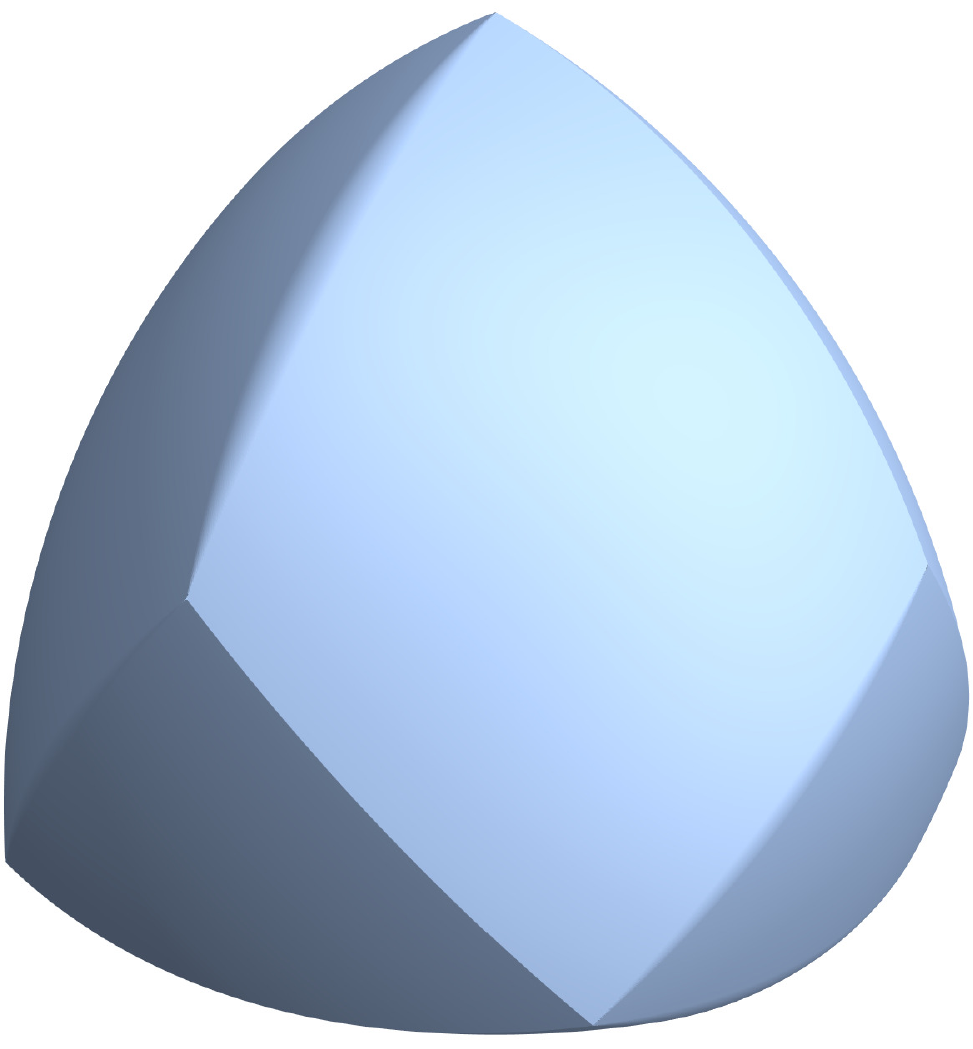}
 \hspace{.2in}
  \includegraphics[width=.42\textwidth]{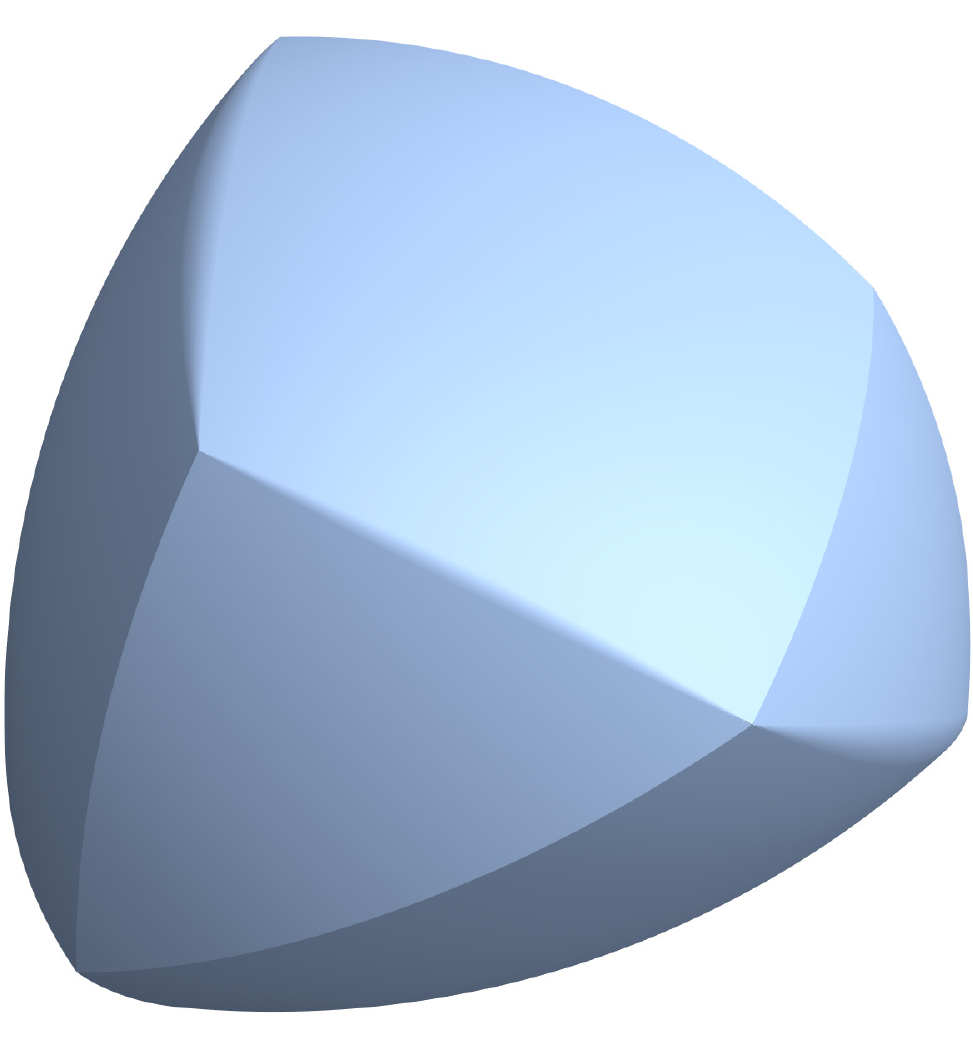}
 \caption{A Meissner polyhedron based on the critical set $\{a_1,\dots,a_9\}$ from Example \ref{DTexample}.}\label{MeissDT}
\end{figure}

\section{Density Theorem }\label{DensitySect}
We will show that Meissner polyhedra are dense within the class of constant width bodies in $\R^3$. 
The proof we give follows Sallee's work \cite{MR296813} in some key aspects. Given a constant width $K$, we will first show that we can find some ball polyhedron $B(X)$ close to $K$ in the Hausdorff metric. Next we will argue that we can choose the centers $X$ so that $X$ is extremal. Then we will explain how to design a Meissner polyhedron based on $X$ and argue this Meissner polyhedra is at least as close to $K$ as $B(X)$ is.

\subsection{First approximation}
First, we will discuss a variant of Theorem \ref{ConstantWidthCharacterization}.
\begin{lem}\label{firstapproxLemma}
Assume $K\subset \R^3$ is a constant width body and $\{x_n\}_{n\in \N}\subset \partial K$ is dense. Then 
\be\label{SecondIntersectionFormula}
K=\bigcap^\infty_{n=1}B(x_n),
\ee
and $\bigcap^N_{n=1}B(x_n)\rightarrow K$ as $N\rightarrow\infty$ in the Hausdorff topology.
\end{lem}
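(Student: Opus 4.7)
The plan has two components, corresponding to the two assertions.

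First I would establish the identity \eqref{SecondIntersectionFormula} by combining two results already in hand. Since $K$ has constant width, Theorem \ref{ConstantWidthCharacterization} gives $K = B(\partial K)$. Because $\{x_n\}_{n \in \N}$ is a dense subset of $\partial K$, Remark \ref{DenseIsEnough} then yields
\[
K \;=\; B(\partial K) \;=\; B(\{x_n : n \in \N\}) \;=\; \bigcap_{n=1}^{\infty} B(x_n).
\]
This part is immediate once the bookkeeping about dense subsets and the characterization of constant width bodies is in place.

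For the Hausdorff convergence, set $K_N := \bigcap_{n=1}^{N} B(x_n)$. The sequence $\{K_N\}$ is a decreasing family of convex bodies with $K \subset K_N$ for every $N$, so one direction of the Hausdorff comparison is automatic: $K \subset K_N + B_r(0)$ for every $r \ge 0$. The content is therefore to prove that for every $\epsilon > 0$ there exists $N_0$ so that $K_N \subset K + B_\epsilon(0)$ whenever $N \ge N_0$.

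I would argue this by contradiction and compactness. Suppose some $\epsilon > 0$ fails; then there is a subsequence (relabeled) $N \to \infty$ and points $y_N \in K_N$ with $\mathrm{dist}(y_N, K) \ge \epsilon$. All the $y_N$ lie inside the bounded set $K_1 = B(x_1)$, so passing to a further subsequence we may assume $y_N \to y$. For each fixed $M$, the inclusion $K_N \subset K_M$ holds once $N \ge M$, and $K_M$ is closed, so $y \in K_M$. Letting $M \to \infty$ and invoking \eqref{SecondIntersectionFormula}, we obtain $y \in \bigcap_{M} K_M = K$. On the other hand the function $z \mapsto \mathrm{dist}(z,K)$ is continuous, so $\mathrm{dist}(y,K) \ge \epsilon > 0$, contradicting $y \in K$. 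Hence the desired $N_0$ exists and $d(K_N, K) \to 0$.

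I do not expect a genuine obstacle here: the substantive input is Theorem \ref{ConstantWidthCharacterization} together with Remark \ref{DenseIsEnough}, and the convergence is a standard monotonicity-plus-compactness argument for nested compact convex sets. The only point that warrants care is making sure the $y_N$ remain in a common bounded set so that a convergent subsequence exists, which is automatic since $K_N \subset K_1$.
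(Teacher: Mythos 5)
Your proof is correct. The identity \eqref{SecondIntersectionFormula} is handled exactly as in the paper: Theorem \ref{ConstantWidthCharacterization} gives $K=B(\partial K)$ and Remark \ref{DenseIsEnough} lets you pass to the dense subset $\{x_n\}$. For the Hausdorff convergence, however, you take a genuinely different route. The paper applies Blaschke's selection theorem to the nested bodies $K_N$ to extract a subsequence converging to some convex body $K^*\supset K$, then shows $K^*\subset K$ by approximating a point $y\in K^*$ by points $x_j\in K_{N_j}$ and using the nesting together with $K=\bigcap_N K_N$; full convergence then follows because every subsequential limit must equal $K$. You instead argue directly by contradiction at the level of points: if $K_N\not\subset K+B_\epsilon(0)$ along a subsequence, Bolzano--Weierstrass applied to witnesses $y_N\in K_N\subset B(x_1)$ produces a limit $y$ lying in every closed $K_M$, hence in $K$ by \eqref{SecondIntersectionFormula}, contradicting $\mathrm{dist}(y,K)\ge\epsilon$. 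Your version is more elementary and self-contained (no compactness theorem for convex bodies is needed, and you get convergence of the whole sequence without the subsequence-uniqueness remark), while the paper's use of Blaschke selection is the standard convex-geometry machinery and would also apply in situations where the monotonicity you exploit is absent; for this nested family both arguments are complete and equally rigorous.
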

\begin{proof}
Equality \eqref{SecondIntersectionFormula} follows from Theorem \ref{ConstantWidthCharacterization} and Remark \ref{DenseIsEnough}. Now set $K_N:=\cap^\infty_{n=1}B(x_n)$ for $N\in \N$,  and observe
\be\label{DenseBoundaryFormula}
K=\bigcap^\infty_{N=1} K_N.
\ee
Also notice that
\be\label{KNKNplusOne}
K\subset  K_{N+1}\subset K_N\subset K_1.
\ee
Blaschke's selection theorem  (Theorem 2.5.2 of \cite{MR3930585}) implies that there is a convex body $K^*\subset \R^3$ and a subsequence $(K_{N_j})_{j\in \infty}$ such that $N_j\rightarrow\infty$, $N_j<N_{j+1}$, and $K_{N_j}\rightarrow K^*$  in the Hausdorff topology.  We note that $K\subset K^*$. In order to conclude this proof, it suffices to show that $K=K^*$ since $K$ does not depend on the subsequence. 

\par Fix $y\in K^*$. For each $\delta>0$, there is $j_\delta$ for which
$$
K^*\subset K_{N_j}+ B_\delta(0)
$$
for each $j\ge j_\delta$.  Choose  $x_j\in K_{N_j}$ with 
\be\label{whyxndeltaineq}
|y-x_j|\le \delta
\ee
for $j\ge j_\delta$. We may select a subsequence of $(x_{j})_{j\in \N}$ which converges to some $x_\delta$. In view of \eqref{KNKNplusOne}, $x_\delta\in K_N$ for all $N\in \N$. Thus, $x_\delta\in K$ by \eqref{DenseBoundaryFormula}.  In particular, we find $|y-x_\delta|\le \delta$ upon sending $j\rightarrow\infty$ in \eqref{whyxndeltaineq} along an appropriate subsequence. It follows that $y=\lim_{\delta\rightarrow 0}x_\delta\in K$. 
\end{proof}
Since the diameter of a constant width body is equal to one, we may assume that $|x_1-x_2|=1$. It then follows that $\{x_n\}_{1\le n\le N}\subset \partial K$ has diameter one for each $N\ge 2$.  As a result, we have found a first approximation to $K$ in terms of a ball polyhedron. 
\begin{cor}\label{firstApproxBX}
Assume $K\subset \R^3$ is a constant width body and $\epsilon>0$. There is a finite set $X\subset \partial K$ with diameter equal to one such that
$$
d(K, B(X))\le\epsilon. 
$$
\end{cor}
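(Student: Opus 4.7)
The plan is to apply Lemma \ref{firstapproxLemma} together with the simple observation recorded in the paragraph immediately preceding the corollary. The idea is that once we have a dense sequence in $\partial K$, the truncated intersections give ball polyhedra converging to $K$, and we only need to ensure the truncation also has diameter one so that it qualifies as a valid base set for a ball polyhedron.

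First, I would choose a dense sequence $\{x_n\}_{n\in\N}\subset \partial K$ and arrange, as permitted by the remark following Lemma \ref{firstapproxLemma}, that $|x_1-x_2|=1$. Because $K$ has constant width one, its diameter is one, so such a pair of boundary points exists; after placing them as $x_1,x_2$ I enumerate any countable dense subset of $\partial K$ afterwards to obtain the full dense sequence. Then for every $N\ge 2$ the set $\{x_1,\dots,x_N\}\subset \partial K$ has diameter one, being a subset of $\partial K$ containing the pair $x_1,x_2$ with $|x_1-x_2|=1$.

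Next I would invoke Lemma \ref{firstapproxLemma}, which asserts that $\bigcap_{n=1}^N B(x_n)\to K$ in the Hausdorff metric. Given $\epsilon>0$, choose $N$ large enough that
\[
d\!\left(K,\bigcap_{n=1}^N B(x_n)\right)\le \epsilon,
\]
and set $X=\{x_1,\dots,x_N\}$. Then $X\subset \partial K$ is finite, $\textup{diam}(X)=1$, and by construction $B(X)=\bigcap_{n=1}^N B(x_n)$, so $d(K,B(X))\le\epsilon$.

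The proof is essentially a one-line deduction from Lemma \ref{firstapproxLemma}, so there is no serious obstacle; the only mild subtlety is simply ensuring that the diameter constraint $\textup{diam}(X)=1$ is built into the dense sequence from the outset by including a fixed diametric pair at the beginning. Everything else is bookkeeping.
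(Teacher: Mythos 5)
Your proposal is correct and matches the paper's argument exactly: the paper also deduces the corollary from Lemma \ref{firstapproxLemma} by noting that one may arrange $|x_1-x_2|=1$ in the dense sequence, so each truncation $\{x_1,\dots,x_N\}\subset\partial K$ has diameter one, and then taking $N$ large enough that $d\bigl(K,\bigcap_{n=1}^N B(x_n)\bigr)\le\epsilon$.
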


\begin{figure}[h]
\centering
 \includegraphics[width=.42\textwidth]{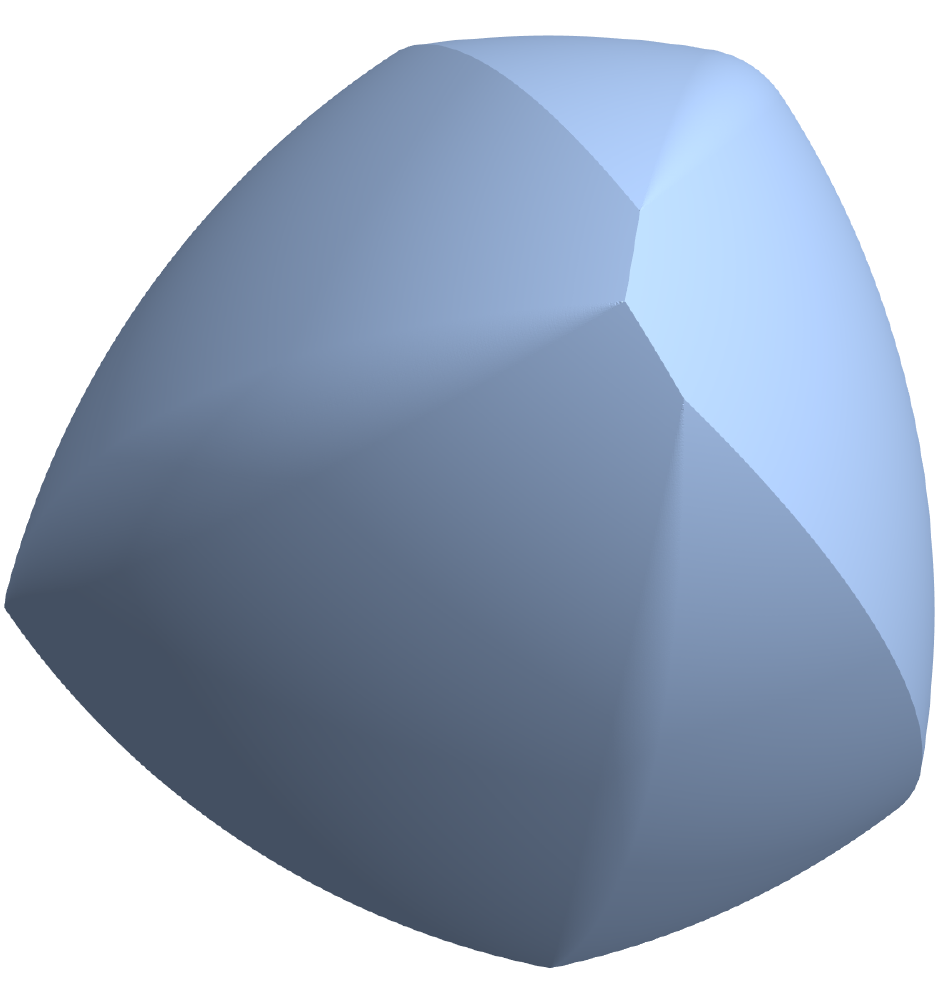}
 \hspace{.2in}
  \includegraphics[width=.42\textwidth]{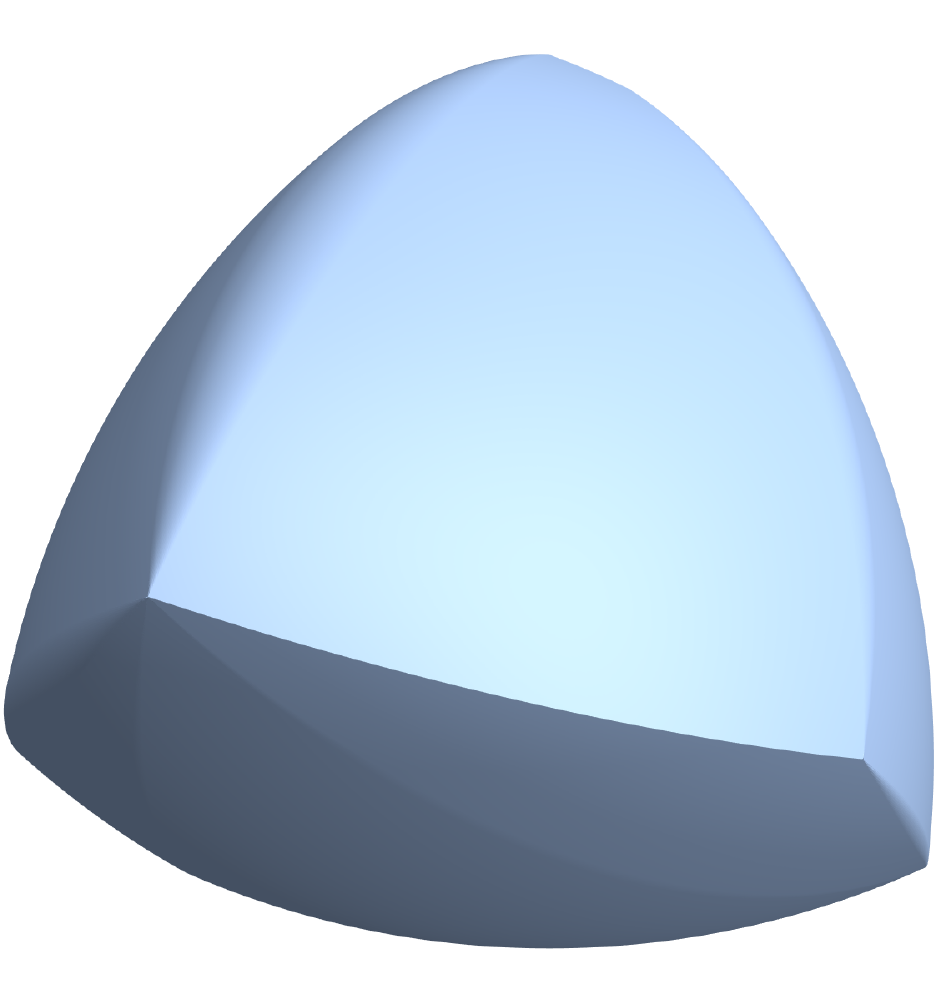}
 \caption{A Meissner polyhedron based on the critical set $\{a_1,\dots,a_8\}$ from Example \ref{BullExample}.}\label{MeissBull}
\end{figure}

\subsection{Two lemmas}
Given a constant width $K\subset \R^3$, we can find an approximating ball polyhedron $B(X)$ to within any tolerance we choose. We will be able to find a Reuleaux polyhedron and then a Meissner polyhedron that are just as good approximations by intersecting $B(X)$ with an appropriate family of balls. To this end, we will need the following two lemmas. The first is due to Sallee (Lemma 2.6 \cite{MR296813}). 
\begin{lem}
Suppose $K,K'\subset \R^3$ are convex bodies with $x\in K'$ and $\textup{diam}(K)\le 1$. Then 
\be\label{distanceInequalitySallee}
d(K,B(x)\cap K')\le d(K,K').
\ee
\end{lem}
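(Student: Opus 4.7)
The plan is to unfold the Hausdorff distance inequality into its two defining inclusions, writing $r := d(K, K')$. The inclusion $B(x)\cap K' \subset K + B_r(0)$ is immediate, since $B(x)\cap K' \subset K' \subset K + B_r(0)$ by the definition of $r$.

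For the other direction, $K \subset (B(x)\cap K') + B_r(0)$, I fix $y \in K$ and try to produce $z^* \in B(x)\cap K'$ with $|y - z^*| \le r$. First I choose any $z \in K'$ with $|y - z| \le r$, which exists since $y \in K \subset K' + B_r(0)$. If $z \in B(x)$, take $z^* := z$ and move on. Otherwise $\rho := |z - x| > 1$, and using that $x, z \in K'$ together with the convexity of $K'$, I project $z$ toward $x$ along the segment $[x, z]$ to the point
\[
z^* := x + \frac{z - x}{\rho}
\]
on $\partial B(x)$; by construction this lies in $K' \cap \partial B(x) \subset B(x)\cap K'$.

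The main computation, and the heart of the argument, is to verify $|y - z^*| \le |y - z|$ so that the bound transfers. With $u := (z - x)/\rho$, expanding both squared distances about $x$ yields
\[
|y - z|^2 - |y - z^*|^2 = (\rho - 1)\bigl[(\rho + 1) - 2(y - x)\cdot u\bigr].
\]
Since $\rho > 1$ the first factor is positive, so I need the bracket to be non-negative. Bounding $(y - x) \cdot u \le |y - x|$ by Cauchy--Schwarz and using $(\rho + 1)/2 \ge 1$, it suffices to establish $|y - x| \le 1$. Here the hypothesis $\textup{diam}(K) \le 1$ enters decisively, together with the fact that $x \in K' \subset K + B_r(0)$ places $x$ within distance $r$ of some $w \in K$, so that $|y - x| \le |y - w| + |w - x| \le 1 + r$; tightening this to the effective bound $|y-x|\le 1$ needed for the bracket is the main technical point.

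I expect this last estimate to be the principal obstacle: the combined hypotheses give only $|y - x| \le 1 + r$ in full generality, so one must either absorb the extra $r$ into the projection computation or exploit the particular position of $x$ relative to $K$ (for instance, that in the intended application $x$ lies in $K$, forcing $|y-x|\le\textup{diam}(K)\le 1$ directly). Once the bracket is shown to be non-negative, one concludes $|y - z^*| \le |y - z| \le r$, which gives the desired inclusion and hence the lemma.
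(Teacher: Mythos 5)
Your reduction and the algebra behind $|y-z|^2-|y-z^*|^2=(\rho-1)\bigl[(\rho+1)-2(y-x)\cdot u\bigr]$ are correct, and you have put your finger on the right difficulty, but the proposal does not prove the lemma, and the route you chose cannot be completed. If $z$ is the nearest point of $K'$ to $y$ (essentially the only point the hypothesis hands you when $\textup{dist}(y,K')$ is close to $r$), the obtuse-angle property of the metric projection onto the convex set $K'$ gives $(y-z)\cdot(x-z)\le 0$, hence $(y-x)\cdot u\ge\rho$, so your bracket satisfies $(\rho+1)-2(y-x)\cdot u\le 1-\rho<0$ and in fact $|y-z^*|>|y-z|$: moving from $z$ toward $x$ along a segment contained in $K'$ can only increase the distance to $y$. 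So the monotonicity $|y-z^*|\le|y-z|$ through which you want to transfer the bound fails precisely in the case that matters; any repair must compare $|y-z^*|$ (or some other point of $B(x)\cap K'$) with $r$ directly, not with $|y-z|$. The fallback you suggest, that in the intended application $x\in K$, is also unavailable: in this paper $x$ is a vertex of, or a point on an edge of, a ball polyhedron approximating $K$, and such points generally lie outside $K$; all one knows is $\textup{dist}(x,K)\le r$, whence only $|y-x|\le 1+r$, exactly as you observed.

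For comparison, the paper argues by contradiction: if the inequality failed there would be $y\in K$ with $y\notin (B(x)\cap K')+B_\delta(0)$, $\delta=d(K,K')$, while $x\in K'\subset K+B_\delta(0)$ and $\textup{diam}(K)\le 1$ give $|y-x|\le 1+\delta$, i.e.\ $y\in B(x)+B_\delta(0)$, and these two statements are played against each other. Note that this is exactly the estimate you derived, and the tension you identified does not disappear in that formulation: $(B(x)\cap K')+B_\delta(0)$ is in general a proper subset of $B(x)+B_\delta(0)$, so $y\in B_{1+\delta}(x)$ does not by itself produce a point of $K'$ that is simultaneously within $1$ of $x$ and within $\delta$ of $y$; producing such a point is the actual content, and the bare hypotheses alone do not suffice for it. For instance, take $x=0$, $K'$ the segment from $0$ to $\sqrt{1+2\delta}\,e_1$, and $K$ the segment of length $1$ lying on the ray from $0$ through $y=\sqrt{1+2\delta}\,e_1+\delta e_2$ with far endpoint $y$ (so its near endpoint is at distance $\delta$ from $0$, since $|y|=1+\delta$): then $d(K,K')=\delta$, yet every point of $B(x)\cap K'=[0,1]e_1$ is at distance at least $\sqrt{(\sqrt{1+2\delta}-1)^2+\delta^2}>\delta$ from $y$. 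So extra structure from the intended application must enter; for example, when $K\subset K'$ the desired point is immediate (the point of $[x,y]\subset K'$ at distance $1$ from $x$ is within $|y-x|-1\le\delta$ of $y$). In short, the gap you flagged is the heart of the lemma rather than a routine estimate, and the proposal as written does not close it.
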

\begin{proof}
Let $\delta=d(K,K')$. If $\delta=0$, then $K=K'$; and since $\text{diam}(K)\le 1$, $B(x)\cap K'=B(x)\cap K=K$. Therefore, 
equality holds in \eqref{distanceInequalitySallee}. 

\par Now suppose $\delta>0$.  If \eqref{distanceInequalitySallee} fails to hold, 
then $d(K,B(x)\cap K')>\delta$ and one of the inclusions 
$$
K\subset  B(x)\cap K'+B_\delta(0) \quad \text{and}\quad  B(x)\cap K'\subset K+B_\delta(0)
$$ 
is false.  As $\delta=d(K,K')$, 
\be\label{KprimeinK}
K'\subset K+B_\delta(0).
\ee
Therefore, $B(x)\cap K'\subset K+B_\delta(0)$. It follows that there is $y\in K$ with 
\be\label{yBOnePlusDelta}
y\not\in B(x)\cap K'+B_\delta(0)\subset B(x)+B_\delta(0)=B_{1+\delta}(x).
\ee
By \eqref{KprimeinK}, there is $z\in K$ with $|z-x|\le \delta$.  However, as the diameter of $K$ is at most one 
$$
|y-x|\le |y-z|+|z-x|\le 1+\delta. 
$$
This contradicts \eqref{yBOnePlusDelta}. As a result, we conclude \eqref{distanceInequalitySallee}. 
\end{proof}

\par The subsequent lemma allows us to use a procedure to go from a set of diameter one to an extremal set by taking appropriate intersections. 

\begin{lem}\label{nearFrameLemma}
Let $X\subset \R^3$ be a finite set with $\text{diam}(X)=1$. Assume
\be\label{nearFrameCond}
X\supset \textup{vert}(B(X)).
\ee
If $Y\subset X$ is the collection of essential points of $X$, then $Y\supset \textup{vert}(B(Y))$. And if $Y$ has at three points, then $Y= \textup{vert}(B(Y))$ and $Y$ is extremal.
\end{lem}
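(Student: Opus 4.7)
The plan starts by invoking Lemma~\ref{essLem}(ii) to obtain $B(Y)=B(X)$. The inclusion $Y\supset\textup{vert}(B(Y))$ then splits cleanly: a dangling vertex of $B(Y)$ belongs to $Y$ by definition, while a principal vertex $v$ of $B(Y)$ has $\textup{val}(v,Y)\ge 3$, hence $\textup{val}(v,X)\ge 3$ since $Y\subset X$, so $v$ is a principal vertex of $B(X)$ and therefore $v\in X$ by hypothesis; Lemma~\ref{essLem}(iii) then forces $v$ to be essential, placing it in $Y$.

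For the reverse inclusion $Y\subset\textup{vert}(B(Y))$ under $\#Y\ge 3$, the plan is to show that every $y\in Y$ satisfies $\textup{val}(y,Y)\ge 2$; once this is done, $y$ is automatically a dangling vertex (when $\textup{val}(y,Y)=2$) or a principal vertex (when $\textup{val}(y,Y)\ge 3$) of $B(Y)$. Fix $y\in Y$. By Lemma~\ref{essLem}(i) there is a witness $w\in\R^3$ with $|w-y|=1$ and $|w-z|<1$ for $z\in Y\setminus\{y\}$, placing $w$ in the relative interior of the face $F_y:=\partial B(y)\cap B(Y)$. Hence $F_y$ is two-dimensional, and Lemma~\ref{SphereConvSubsetLem} shows it is geodesically convex and contained in a hemisphere of $\partial B(y)$, so $F_y$ is a topological disk with a simple closed boundary $\partial F_y$. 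The key observation is that every vertex of $B(Y)$ on $\partial F_y$ is at distance one from $y$ and belongs to $Y$ (by the same principal/dangling case analysis as above); hence the number of $B(Y)$-vertices on $\partial F_y$ is bounded above by $\textup{val}(y,Y)$.

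The main obstacle is to argue that $\partial F_y$ must carry at least two vertices. If $\partial F_y$ carries zero vertices or a single dangling vertex (visible only as a marked point on an otherwise smooth boundary), then $\partial F_y$ coincides with a single full circle $C_{z_0}=\partial B(y)\cap\partial B(z_0)$ for some $z_0\in Y\setminus\{y\}$; by a symmetric argument the face $F_{z_0}$ is also bounded by this same circle, so gluing the two spherical caps $F_y$ and $F_{z_0}$ along $C_{z_0}$ exhausts the topological sphere $\partial B(Y)$, leaving no room for any third face and contradicting $\#Y\ge 3$. The only remaining option---a single principal vertex on $\partial F_y$---is ruled out topologically, since at such a corner two loop-edges on two distinct circles would have to meet at one point, producing a figure-eight rather than a simple closed curve. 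These contradictions yield $\textup{val}(y,Y)\ge 2$ for every $y\in Y$.

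For the final extremality claim: when $\#Y\ge 4$, Theorem~\ref{ExtendedGHS} upgrades $Y=\textup{vert}(B(Y))$ directly to $Y$ being extremal. When $\#Y=3$, the bound $\textup{val}(y,Y)\ge 2$ on a three-point set forces each pairwise distance to equal one, so $e(Y)=3$ attains the maximum among three-point sets of diameter one, and $Y$ is extremal by inspection.
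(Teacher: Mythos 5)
Your first part, and the overall strategy for the second part (show that each face of $B(Y)$ carries at least two vertices of $B(Y)$, hence $\textup{val}(y,Y)\ge 2$ for every $y\in Y$), coincide with the paper's proof; the difference is that the paper simply cites Proposition 6.1$(i)$ of \cite{MR2593321} for the fact that each face has at least two principal vertices, while you try to prove a substitute from scratch. That substitute has a genuine gap at its crux. In the case where $\partial F_y$ carries no principal vertex you correctly conclude that $\partial F_y$ is the full circle $C_{z_0}=\partial B(y)\cap\partial B(z_0)$, i.e.\ that $F_y$ is the entire cap $\partial B(y)\cap B(z_0)$; but the next step, ``by a symmetric argument the face $F_{z_0}$ is also bounded by this same circle,'' is not a symmetry at all: your hypothesis concerns $\partial F_y$ only, and nothing proved so far prevents $\partial F_{z_0}$ from carrying vertices and edges coming from spheres $\partial B(z)$ with $z\neq y$. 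What is actually needed is a real geometric fact: if a unit ball $B(z)$ contains the whole cap $\partial B(y)\cap B(z_0)$, then it contains the opposite cap $\partial B(z_0)\cap B(y)$ as well, hence (by convexity) the whole lens $B(y)\cap B(z_0)$. Granting this, every $z\in Y\setminus\{y,z_0\}$ would satisfy $B(Y\setminus\{z\})\subset B(y)\cap B(z_0)\subset B(z)$ and so be inessential in $Y$ (note each point of $Y$, being essential in $X$, is essential in $Y$), contradicting $\#Y\ge 3$. This cap-versus-lens statement is true and can be checked by a short computation, but it is precisely the nontrivial content you are skipping --- it is, in effect, the point of the cited Proposition 6.1$(i)$.

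Two further problems. The single-principal-vertex case is not ``ruled out topologically'': if $v$ is the only vertex on $\partial F_y$, the same connectedness argument you use in the other case shows $\partial F_y\setminus\{v\}$ lies on a single circle, so $\partial F_y$ is again the full circle $C_{z_0}$ with $v$ merely a marked point on it; no figure-eight arises, and this case must be folded into the full-cap analysis above rather than dismissed. Finally, your endgame for $\#Y=3$ is inconsistent with what you have already established: $\textup{val}(y,Y)\ge 2$ forces $Y$ to be a unit equilateral triangle, but then $B(Y)$ has two principal vertices (the two points at distance one from all three points of $Y$) which, by your first part, would have to lie in $Y$ --- so the case $\#Y=3$ is impossible. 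That is exactly how the paper disposes of it (concluding $\#Y\ge4$ and then invoking Theorem~\ref{ExtendedGHS}), and it is what the later application requires, since Reuleaux polyhedra need at least four points; declaring a three-point set ``extremal by inspection'' does not rescue the claim $Y=\textup{vert}(B(Y))$ in that case.
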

\begin{proof}
Assume $y\in \textup{vert}(B(X))$ is a principle vertex. As $\textup{vert}(B(X))\subset X$,  $y\in X$ and $\text{val}(y,X)\ge 3$. It follows from Lemma \ref{essLem} $(iii)$ that $y$ is essential. Likewise, Lemma \ref{essLem} $(iii)$ also gives that each dangling vertex of $X$ is essential.  Therefore, $\textup{vert}(B(X))\subset Y$.  As $B(Y)=B(X)$ by Lemma \ref{essLem} part $(ii)$, we have 
\be\label{BYincludedinY}
\textup{vert}(B(Y))=\textup{vert}(B(X))\subset Y.
\ee

\par Suppose $Y$ has at least three points. Let $z\in Y$ and consider $\partial B(z)\cap B(Y)$, which is the face of $B(Y)$ opposite $z$.  By Proposition 6.1$(i)$ of \cite{MR2593321}, $\partial B(z)\cap B(Y)$ has at least two principle vertices $p,q$; this is where we need to assume that $\#Y\ge 3$.   In view of \eqref{BYincludedinY}, $p,q\in Y$. 
Therefore, $z\in \textup{vert}(B(Y))$. Thus, $Y\subset \textup{vert}(B(Y))$. 

\par If $Y=\textup{vert}(B(Y))$ has exactly three points $Y=\{y_1,y_2,y_3\}$, then $y_1,y_2,y_3$ are necessarily dangling vertices. However, by 
Proposition 6.1$(i)$ of \cite{MR2593321}, each face of $Y$ has at least two principle vertices. It would follow that  $Y$ has at least five points.  Therefore, $Y$ actually has at least four points and is thus extremal by Theorem \ref{ExtendedGHS}. 
\end{proof}

\subsection{Second approximation}
We will now show that we can always find a Reuleaux polyhedron which is arbitrarily close to a given constant width $K$. 

\begin{prop}
Suppose $K\subset \R^3$ has constant width and $\epsilon>0$. There is a finite set $Z\subset \R^3$ which has diameter one, has at least four points, is extreme, and satisfies
$$
d(K,B(Z))\le \epsilon.
$$
\end{prop}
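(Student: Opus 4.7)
I would follow the outline sketched at the start of Section~\ref{DensitySect}: first approximate $K$ by a ball polyhedron $B(X_0)$, then inflate $X_0$ to a set whose essential points form an extremal set without spoiling the Hausdorff approximation.

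Corollary~\ref{firstApproxBX} yields a finite $X_0\subset \partial K$ with $\textup{diam}(X_0)=1$ and $d(K,B(X_0))\le \epsilon$. From this I would construct a chain $X_0\subsetneq X_1\subsetneq \cdots$ by the following rule: whenever $\textup{vert}(B(X_k))\not\subset X_k$, pick a missing vertex $v\in \textup{vert}(B(X_k))\setminus X_k$ and set $X_{k+1}:=X_k\cup\{v\}$. Two facts are immediate at each step. First, because $v\in B(X_k)$ we have $|v-x|\le 1$ for every $x\in X_k$, so $\textup{diam}(X_{k+1})\le 1$ and $B(X_{k+1})$ is again a ball polyhedron. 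Second, Sallee's inequality~\eqref{distanceInequalitySallee} applied with $K'=B(X_k)$ and $x=v$ gives
$$
d\bigl(K,B(X_{k+1})\bigr)=d\bigl(K,B(X_k)\cap B(v)\bigr)\le d\bigl(K,B(X_k)\bigr)\le \epsilon,
$$
so the approximation quality is preserved all along the chain.

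Once this iteration terminates at some $X$ with $\textup{vert}(B(X))\subset X$, I would let $Z\subset X$ be the subset of essential points and invoke Lemma~\ref{nearFrameLemma}, which gives $Z=\textup{vert}(B(Z))$, with $Z$ extremal whenever $|Z|\ge 4$ (by Theorem~\ref{ExtendedGHS}). Lemma~\ref{essLem}(ii) then yields $B(Z)=B(X)$, so $d(K,B(Z))\le \epsilon$ and the proposition follows.

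The main obstacle is showing that the enlargement actually terminates in finitely many steps. Adjoining $v$ introduces a new spherical face $\partial B(v)\cap B(X_{k+1})$ which may carry fresh principal vertices outside $X_{k+1}$, so a priori the iteration could continue forever. I would attempt to control this by first taking $X_0$ sufficiently dense in $\partial K$ so that every vertex of $B(X_0)$ lies within a very small distance of $\partial K$, and then exploiting the Euler relation of Theorem~\ref{EulerTheorem} together with the standard consequence $V\le 2(F-2)$ for ball polyhedra in $\R^3$ to bound the combinatorial complexity across iterations; alternatively, a final small perturbation could be used to coalesce clustered centers. Ensuring $|Z|\ge 4$ is a comparatively minor issue: extremal sets with fewer than four points do not exist, so as long as $|X_0|\ge 4$ (which we may arrange) and the iteration only enlarges $X$, Lemma~\ref{nearFrameLemma} forces $|Z|\ge 4$ automatically.
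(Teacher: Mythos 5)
Your outline matches the paper's proof almost exactly (Corollary \ref{firstApproxBX}, adjoin missing vertices one at a time using Sallee's inequality \eqref{distanceInequalitySallee}, pass to essential points, finish with Lemma \ref{nearFrameLemma} and Theorem \ref{ExtendedGHS}), but the one step you flag as "the main obstacle" — termination of the vertex-adding iteration — is precisely the heart of the argument, and you do not supply it. The suggestions you offer (densifying $X_0$, invoking the Euler relation to "bound combinatorial complexity," or a final perturbation) are not worked out and would not obviously succeed: adding a vertex creates a new face whose own principal vertices may again be missing, and nothing in the Euler formula alone stops this from recurring. The paper closes the gap with a simple counting argument based on the V\'azsonyi inequality \eqref{VazsyoniIneq}: since $e(X)\le 2(|X|-1)$ always, the deficit $\ell(X):=2(|X|-1)-e(X)$ is a nonnegative integer; a principal vertex $v\notin X$ satisfies $|v-x|=1$ for at least three $x\in X$, so $e(X\cup\{v\})\ge e(X)+3$ while $2(|X\cup\{v\}|-1)=2(|X|-1)+2$, hence $\ell$ strictly decreases at every step. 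The process therefore stops after at most $\ell(X_0)$ steps, either with $\mathrm{vert}(B(Y))\subset Y$ or with $e(Y)=2|Y|-2$, in which case $Y=\mathrm{vert}(B(Y))$ by Theorem \ref{ExtendedGHS}.

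A secondary flaw: your claim that $|Z|\ge 4$ is automatic because "the iteration only enlarges $X$" is not valid, since $Z$ is the set of \emph{essential} points of the enlarged set and passing to essential points can shrink it drastically; Lemma \ref{nearFrameLemma} only tells you that $Z\supset\mathrm{vert}(B(Z))$ and that $|Z|=3$ is impossible, so the cases $|Z|\in\{1,2\}$ must still be addressed. The paper does this explicitly, adjoining a point of $\partial B(z_1)\cap\partial B(z_2)$ together with a resulting principal vertex when $|Z|=2$ (and first adding a point at distance one when $|Z|=1$), checking via Lemma \ref{distanceInequalitySallee} that the approximation bound survives. With the deficit-counting argument and this small-cardinality repair added, your proof would coincide with the paper's.
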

\begin{proof}
Choose a finite set $X\subset \R^3$ with diameter one for which $d(K,B(X))\le\epsilon$.  Such a ball polyhedron exists by Corollary \ref{firstApproxBX}.  In view of our proof of Lemma \ref{firstapproxLemma} and Lemma \ref{distanceInequalitySallee}, we may assume without any loss of generality that $X$ has at least four points. We will first argue that we can choose an approximating $B(Y)$ so that $Y\supset \text{vert}(B(Y))$.

\par If $X$ does not include $\text{vert}(B(X))$. Then $X$ is not extremal by Theorem \ref{ExtendedGHS}, so $e(X)<2(|X|-1)$. 
Choose a principle vertex $y_1\in \text{vert}(B(X))$ with $y_1\not\in X$. Then $|y_1-x|=1$ for at least three points $x\in X$. It follows that 
$e(X\cup\{y_1\})\ge e(X)+3.$ Therefore, 
\begin{align}
0&\le 2(|X\cup\{y_1\}|-1)-e(X\cup\{y_1\})\\
&=2|X|-e(X\cup\{y_1\})\\
&\le 2|X|-e(X)-3\\
&=\left[2(|X|-1)-e(X)\right]-1.
\end{align}
Lemma \ref{distanceInequalitySallee} gives that $d(K,B(X\cup\{y_1\}))\le\epsilon$. Therefore, $B(X\cup\{y_1\})$ is at least as good of an approximation to $K$ as $B(X)$ is and $X\cup\{y_1\}$ is closer to being extremal than $X$ is.

\par Likewise if $X\cup \{y_1\}$ does not include $\text{vert}(B(X\cup \{y_1\}))$, then we can find a principle vertex $y_2\in  B(X\cup \{y_1\})$ for which $y_2\not\in X\cup \{y_1\}$ and
\begin{align}
0&\le 2(|X\cup\{y_1,y_2\}|-1)-e(X\cup\{y_1,y_2\})\\
&\le \left[2(|X\cup\{y_1\}|-1)-e(X\cup\{y_1\})\right]-1\\
&=\left[2(|X|-1)-e(X)\right]-2.
\end{align}
Moreover, Lemma \ref{distanceInequalitySallee} implies that $d(K,B(X\cup\{y_1,y_2\}))\le\epsilon$.
Since $\ell=2(|X|-1)-e(X)$ is  a fixed natural number, if we continue in this manner, we will find a set $Y=X\cup \{y_1,\dots, y_{j}\}$ for which $j\le \ell$ and either $Y\supset \text{vert}(B(Y))$ or $e(Y)=2|Y|-2$. In the latter, case we have $Y= \text{vert}(B(Y))$ by Theorem \ref{ExtendedGHS}.  Thus, there is finite $Y\subset \R^3$ with diameter one such that $d(K,B(Y))\le\epsilon$ and $Y\supset \text{vert}(B(Y))$. 

\par Let $Z\subset Y$ be the essential points of $Y$. By Lemma \ref{nearFrameLemma} and Lemma \ref{essLem}$(ii)$,  $Z\supset \text{vert}(B(Z))$ and $d(K,B(Z))\le\epsilon$. Moreover, if $Z$ has at least four points, $Z$ is extremal, which would allow us to conclude this proof.  As noted in our proof of Lemma \ref{nearFrameLemma}, $Z$ cannot have exactly three points and satisfy $Z\supset \text{vert}(B(Z))$. Alternatively, if $Z=\{z_1,z_2\}$ has only has two points, we may choose any $z_3\in \partial B(z_1)\cap \partial B(z_2)$ and consider the ball polyhedron $B(\{z_1,z_2,z_3\})$. This ball polyhedron has two principle vertices $z_4,z_5$ which are not included in $\{z_1,z_2,z_3\}$. However, is routine to check that $Z'=Z\cup \{z_3,z_4\}=\{z_1,z_2,z_3,z_4\}$ does satisfy $Z'= \text{vert}(B(Z'))$.  Lemma \ref{distanceInequalitySallee} would also give $d(K,B(Z'))\le\epsilon$. As a result, we can take $Z'$ instead of $Z$ to obtain the desired conclusion. Finally, if $Z=\{z_1\}$ is a singleton, we choose any $z_2\in \partial B(z_1)$ and argue as we just did on $Z\cup \{z_2\}=\{z_1,z_2\}$.  As a result, there is an extremal set $Z\subset \R^3$ having at least four points and diameter one such that $d(K,B(Z))\le\epsilon$. 
\end{proof}
We are finally in position to prove the Density Theorem which asserts that Meissner polyhedra are dense within the space of constant width bodies. 
\begin{proof}[Proof of the Density Theorem]
Let $\epsilon>0$ and choose a extremal set $Z\subset \R^3$ of diameter one having at least four points such that  
$$
d(K,B(Z))\le \epsilon.
$$ 
Such a $Z$ exists by the previous lemma. Suppose that $|Z|=m$ and  
$$
 (e_1,e_1'),\dots, (e_{m-1},e_{m-1}')
 $$ 
 are the dual edge pairs of $B(Z)$. Applying Lemma \ref{distanceInequalitySallee} countably many times gives
$$
d(K,B(Z\cup \tilde e_1\cup\dots\cup \tilde e_{m-1} ))\le d(K,B(Z))\le \epsilon
$$
for any countable dense subsets $\tilde e_j\subset e_j$ for $j=1,\dots, m-1$. As noted in Remark \ref{DenseIsEnough}, $B(\tilde e_j)=B(e_j)$ for $j=1,\dots, m-1$.  As a result, 
\begin{align}
B(Z\cup \tilde e_1\cup\dots\cup \tilde e_{m-1} )&=B(Z)\cap B( \tilde e_1)\cap\dots\cap B(\tilde e_{m-1} )\\
&=B(Z)\cap B(  e_1)\cap\dots\cap B( e_{m-1} )\\
&=B( Z\cup e_1\cup \dots \cup e_{m-1}).
\end{align}
Therefore,  
$$
 d(K,B( Z\cup e_1\cup \dots \cup e_{m-1}))\le \epsilon.
 $$ 
We conclude as $M=B(Z\cup e_1\cup \dots \cup e_{m-1})$ is a Meissner polyhedron. 
\end{proof}
\noindent 
{\bf Acknowledgements}: The author wrote this paper while visiting  
CIMAT. He would especially like to thank Gil Bor and H\'ector Chang-Lara for their hospitality.

\appendix

\section{Volume of Meissner tetrahedra}
In this appendix, we'll compute the volume of the two types of Meissner tetrahedra. A key formula for us will be {\it Blaschke's relation}, which asserts that if $M\subset \R^3$ is a constant width body, then 
\be
V(M)=\frac{1}{2}S(\partial M)-\frac{\pi}{3}
\ee
(Theorem 12.1.4 of \cite{MR3930585}). Here $V(M)$ is the volume of $M$ and $S(\partial M)$ is the perimeter of $M$.  Therefore, it suffices to compute the perimeter of each Meissner tetrahedron.  To this end, we will employ the Gauss--Bonnet formula and adapt the computation of the perimeter of a Reuleaux tetrahedron 
made by Harbourne \cite{harbourne}. 

\subsection{Interior angles}
Suppose $a_1,a_2,a_3, a_4\in \R^3$ with 
$$
 |a_i-a_j|=1
 $$
 for $i\neq j$. Let $T$ be a regular tetrahedron with vertices $a_1,a_2,a_3, a_4$  and $R=B(\{a_1,a_2,a_3,a_4\})$ be the associated Reuleaux tetrahedron.  We will first determine the angle between any two planes spanning the faces of $T$. We recall the {\it dihedral angle} $\theta\in [0,\pi/2]$ between two intersecting planes with outer normals $n_1$ and $n_2$ satisfies
$$
\cos(\theta)=\frac{|n_1\cdot n_2|}{|n_1||n_2|}.
$$

\begin{lem}
The dihedral angle between any of two planes which span the faces of $T$ is $\cos^{-1}\left(1/3\right).$
\end{lem}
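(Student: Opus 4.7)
My plan is to identify a convenient direction for the outward normal to each face of $T$ and then compute the angle via a dot product. Let $c = \tfrac{1}{4}(a_1+a_2+a_3+a_4)$ denote the centroid of $T$. The centroid of the face opposite $a_i$ is $f_i = \tfrac{1}{3}\sum_{j\ne i}a_j$, and a short calculation shows $f_i - a_i = \tfrac{4}{3}(c-a_i)$. In particular, $c - a_i$ is a positive multiple of the outward normal to the face opposite $a_i$, so it suffices to evaluate
\[
\frac{|(c-a_i)\cdot(c-a_j)|}{|c-a_i|\,|c-a_j|}
\]
for any pair $i\ne j$.

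I would first compute the norms. Writing $c - a_i = \tfrac{1}{4}\sum_{k\ne i}(a_k - a_i)$ and expanding, the cross terms are handled by the polarization identity
\[
(a_k-a_i)\cdot(a_\ell-a_i) = \tfrac{1}{2}\bigl(|a_k - a_i|^2 + |a_\ell - a_i|^2 - |a_k - a_\ell|^2\bigr).
\]
Since all pairwise distances among the $a_i$ equal $1$, this quantity equals $1$ when $k=\ell$ and $1/2$ otherwise. Summing three diagonal and six off-diagonal contributions yields $|c-a_i|^2 = 3/8$.

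For the dot product, the cleanest route is to exploit symmetry rather than to expand directly. Since $\sum_k (c-a_k) = 0$, one has $\sum_{j\ne i}(c-a_j) = -(c-a_i)$; dotting with $c-a_i$ gives
\[
\sum_{j\ne i}(c-a_i)\cdot(c-a_j) = -|c-a_i|^2 = -3/8.
\]
The symmetries of the regular tetrahedron show that $(c-a_i)\cdot(c-a_j)$ is independent of the unordered pair $i\ne j$, so each such product equals $-1/8$. Therefore $|\cos\theta| = (1/8)/(3/8) = 1/3$, and the dihedral angle is $\cos^{-1}(1/3)$.

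I do not anticipate any serious obstacle here: the problem reduces entirely to elementary inner-product computations, and the symmetric trick above avoids any coordinate choice. The only minor point to keep in mind is that the definition stated just before the lemma extracts the acute angle via an absolute value, so the minus sign in $(c-a_i)\cdot(c-a_j) = -1/8$ is harmless; had we instead sought the interior dihedral angle of $T$, we would have obtained the supplement $\cos^{-1}(-1/3)$.
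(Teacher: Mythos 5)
Your core argument is correct and is in substance the same computation as the paper's: since the centroid of a regular tetrahedron coincides with its circumcenter, your vectors $c-a_i$ are exactly the position vectors $a_i$ (up to sign) once the circumcenter is translated to the origin as in the paper, and both proofs reduce to $|c-a_i|^2=3/8$ and $(c-a_i)\cdot(c-a_j)=-1/8$, giving $|\cos\theta|=1/3$. Two small points. First, the claim that $c-a_i$ is normal to the face opposite $a_i$ does not follow from the identity $f_i-a_i=\tfrac{4}{3}(c-a_i)$ alone; it needs one further line, e.g.\ that $a_i$ and $f_i$ are each equidistant from any two vertices $a_k,a_\ell$ of that face, whence $(f_i-a_i)\cdot(a_k-a_\ell)=0$ for all $k,\ell\neq i$ (the paper's corresponding step, asserting that $T$ is the intersection of the half-spaces $\{x\cdot a_i\ge -1/8\}$, is stated at a similar level of brevity). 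Second, your closing remark is backwards: the interior dihedral angle of a regular tetrahedron is $\cos^{-1}(1/3)\approx 70.5^{\circ}$ itself, while $\cos^{-1}(-1/3)\approx 109.5^{\circ}$ is the angle between the outward face normals. This slip does not affect the lemma, which uses the absolute-value definition stated just before it, but it is worth correcting, since later in the appendix the value $\cos^{-1}(1/3)$ is used precisely as the interior angle between the two face planes meeting along an edge.
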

\begin{proof}
In view of the equality condition for Jung's inequality, we may assume the vertices of $T$ are on a common sphere of radius $\sqrt{3/8}$. In this case, 
$$
1=|a_i-a_j|^2=|a_i|^2+|a_j|^2-2a_i\cdot a_j=\frac{3}{4}-2a_i\cdot a_j
$$
for $i\neq j$. Therefore,  
$$
a_i\cdot a_j=-\frac{1}{8}
$$
for $i\neq j$.    It is also follows from this observation that $T$ is the intersection of the four half spaces 
\be\label{HalfSpacesT}
\{x\in \R^3: x\cdot a_i\ge -1/8\}
\ee
$(i=1,\dots, 4)$.  As a result, the dihedral angle between any of two these boundary planes is 
$$
\cos^{-1}\left(\frac{|a_i\cdot a_j|}{|a_i||a_j|}\right)=\cos^{-1}\left(\frac{1/8}{3/8}\right)=\cos^{-1}\left(\frac{1}{3}\right).
$$
\end{proof}

\par Another important angle to determine is detailed below.  

\begin{prop}\label{interiorAngleProp}
The interior angle made between any two edges at a vertex of $R$ is equal to $\pi-\cos^{-1}(1/3)$.
\end{prop}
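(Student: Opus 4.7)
The plan is to compute the unit tangent vectors to the three arcs meeting at the vertex $a_1$ of $R$, and then evaluate the angle between any two of them; by the symmetry group of the regular tetrahedron acting transitively on pairs of edges at a vertex, all such angles coincide, so a single calculation suffices. The edge $e_{1j}$ from $a_1$ to $a_j$ lies in the intersection $\partial B(a_k)\cap \partial B(a_l)$, where $\{j,k,l\}=\{2,3,4\}$. Being on two spheres simultaneously, its unit tangent $t_{1j}$ at $a_1$ is orthogonal to both sphere normals $a_1-a_k$ and $a_1-a_l$, and hence parallel to the cross product $(a_1-a_k)\times(a_1-a_l)$. The correct sign is determined by requiring $t_{1j}$ to point from $a_1$ into the arc, equivalently $t_{1j}\cdot(a_j-a_1)>0$, which is natural since the arc is shorter than a semicircle.

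Normalize so that $T$ is centered at the origin; as in the preceding lemma, $|a_i|^2=3/8$ and $a_i\cdot a_j=-1/8$ for $i\neq j$. Setting $v_i:=a_1-a_i$, a direct check gives $|v_i|^2=1$ and $v_i\cdot v_j=1/2$ for distinct $i,j\in\{2,3,4\}$. Applying the identity
$$ (u\times v)\cdot(w\times v)=(u\cdot w)|v|^2-(u\cdot v)(v\cdot w) $$
with $u=v_3$, $v=v_4$, $w=v_2$ yields $(v_3\times v_4)\cdot(v_2\times v_4)=1/4$, while $|v_3\times v_4|^2=|v_2\times v_4|^2=1-1/4=3/4$. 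Thus the unsigned cosine between the two cross-product vectors is $1/3$.

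The main subtlety is orientation. Testing on explicit coordinates, e.g.\ $a_1=(1,1,1)/\sqrt{8}$, $a_2=(1,-1,-1)/\sqrt{8}$, $a_3=(-1,1,-1)/\sqrt{8}$, $a_4=(-1,-1,1)/\sqrt{8}$, one finds $v_3\times v_4=(-1,1,1)/2$ and $v_2\times v_4=(-1,1,-1)/2$, and checks $(v_3\times v_4)\cdot(a_2-a_1)<0$ while $(v_2\times v_4)\cdot(a_3-a_1)>0$. Hence the arc-pointing condition flips the sign of $(v_3\times v_4)$ but not of $(v_2\times v_4)$, so the properly oriented unit tangents satisfy $t_{12}\cdot t_{13}=-1/3$. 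The desired interior angle is therefore $\cos^{-1}(-1/3)=\pi-\cos^{-1}(1/3)$, and by the transitivity of the tetrahedral symmetry group this is the value at every vertex and every edge-pair.

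The place where difficulty lurks is entirely in this sign bookkeeping: the cross-product calculation alone produces only the supplementary (unsigned) value $\cos^{-1}(1/3)$, and one must invoke the orientation of the tangent vectors along the arcs, either via the concrete coordinate check above or equivalently through the sign of the triple product $\det(v_2,v_3,v_4)$, in order to recover the correct interior angle $\pi-\cos^{-1}(1/3)$ rather than its supplement.
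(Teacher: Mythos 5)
Your proof is correct, but it takes a genuinely different route from the paper's. You compute the edge tangents at $a_1$ directly: since the edge $e_{1j}$ lies on $\partial B(a_k)\cap\partial B(a_l)$, its tangent at $a_1$ is parallel to $v_k\times v_l$ with $v_i=a_1-a_i$, and the inner-product computation with $|v_i|=1$, $v_i\cdot v_j=1/2$ gives the unsigned cosine $1/3$; your orientation check then yields $\cos^{-1}(-1/3)=\pi-\cos^{-1}(1/3)$, and that sign bookkeeping is indeed the only delicate point. (You could make it coordinate-free by noting $(v_3\times v_4)\cdot(a_2-a_1)=-\det(v_2,v_3,v_4)$ while $(v_2\times v_4)\cdot(a_3-a_1)=+\det(v_2,v_3,v_4)$, so exactly one of the two cross products gets flipped whatever the orientation, and the product of the two sign corrections is always $-1$.) The paper instead works inside the sphere $\partial B(a_4)$ containing the face: it decomposes the interior angle at $a_1$ into the angle between the two geodesics joining $a_1$ to $a_2$ and to $a_3$ (which equals the dihedral angle $\theta=\cos^{-1}(1/3)$ of the regular tetrahedron, supplied by the preceding lemma) plus two angles of $\pi/2-\theta$ between each circular edge and the corresponding geodesic, all measured in the tangent plane at $a_1$, giving $2(\pi/2-\theta)+\theta=\pi-\theta$. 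Your argument is more elementary and self-contained—it needs neither the dihedral-angle lemma nor the fact that the geodesics lie in the face (Lemma \ref{SphereConvSubsetLem})—whereas the paper's decomposition into geodesic arcs and circular edges is precisely the one reused in Proposition \ref{SurfaceAreaProp} (geodesic triangle plus sliver surfaces), so it pays off in the subsequent Gauss--Bonnet area computations. One small point worth stating explicitly in your version: the interior angle in question is the angle between the two arc tangents directed away from the vertex into the edges, which is what your condition $t_{1j}\cdot(a_j-a_1)>0$ encodes and what the paper uses implicitly.
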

\begin{proof}
As the angle of interest is independent of the specific coordinates used, we may assume 
\be
\begin{cases}
a_1=\left(\sqrt{3}/2,0,1/2\right)\\
a_2=\left(\sqrt{3}/2 \cos(\theta), \sqrt{3}/2 \sin(\theta),1/2\right)\\
a_3=\left(0,0,1\right)\\
a_4=\left(0,0,0\right).
\end{cases}
\ee
Here $\theta =\cos^{-1}(1/3)$. Observe the face of $R$ which includes $a_1,a_2,a_3$ is a subset of the unit sphere $\mathbb{S}^2$. Moreover, the edge of the face which joins vertices $a_1$ and $a_2$ is parametrized by  
$$
\gamma(t)=\left(\sqrt{3}/2 \cos(t), \sqrt{3}/2 \sin(t),1/2\right)
$$
for $t\in [0,\theta]$. Also note $\gamma(0)=a_1$ and $\dot\gamma(0)=(0,\sqrt{3}/2,0)$.

\par Let us also consider the geodesic in $\mathbb{S}^2$ joining $a_1$ and $a_2$. This curve belongs to the face in question (by Lemma \ref{SphereConvSubsetLem}) and may be parametrized by 
$$
\xi(s)= \cos(s)a_1+\sin(s)\frac{a_2-(1/2)a_1}{\sqrt{3}/2}
$$
for $0\le s\le \pi/3$.  Note $\xi(0)=a_1$ and 
$$
\dot\xi(0)=\frac{a_2-(1/2)a_1}{\sqrt{3}/2}.
$$
Since 
$$
\frac{\dot\xi(0)}{|\dot\xi(0)|}\cdot \frac{\dot\gamma(0)}{|\dot\gamma(0)|}= \sin(\theta)=\cos(\pi/2-\theta),
$$
the angle between this edge and geodesic at vertex $a_1$ is $\pi/2-\theta$. 

\par  By symmetry, we find that the angle between the edge joining $a_1$ and $a_3$ and the geodesic joining $a_1$ and $a_3$ is also $\pi/2-\theta$.  
The angle between the edge joining $a_1$ and $a_2$ and the edge joining $a_1$ and $a_3$ is then equal to $
2(\pi/2-\theta)+\theta=\pi-\theta. 
$
See Figure \ref{IntAngleFig}. Here we used that the angle between the geodesic joining $a_1$ and $a_2$ and the geodesic joining $a_1$ and $a_3$ is equal to the dihedral angle $\theta$. Moreover, we can add these angles to get the desired interior angle as the tangent vectors of all of the curves considered at the point $a_1$ belong to the tangent space of $\mathbb{S}^2$ at $a_1$. 
\end{proof}
\begin{figure}[h]
     \centering
     \begin{subfigure}[t]{0.45\textwidth}
         \centering
         \includegraphics[width=\textwidth]{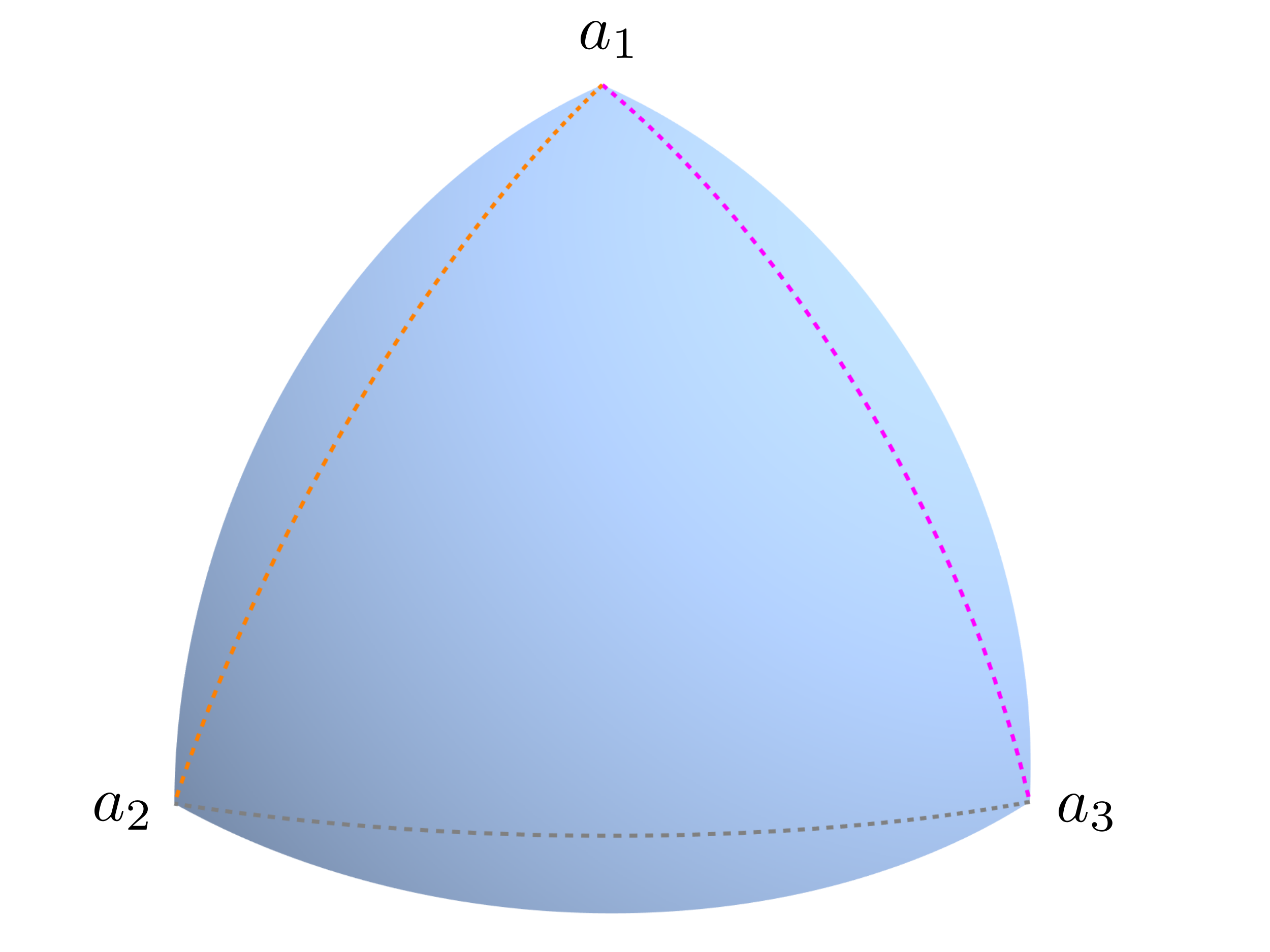}
         \newsubcap{The face $\partial B(a_4)\cap R$ of $R$. The dashed curves are geodesics joining the vertices $a_1,a_2,a_3$. Note that this surface consists of the geodesic triangle with vertices $a_1,a_2,a_3$ and three sliver surfaces. }\label{SliverFig}
     \end{subfigure}
    \hspace{.4in}
     \begin{subfigure}[t]{0.47\textwidth}
         \centering
         \includegraphics[width=\textwidth]{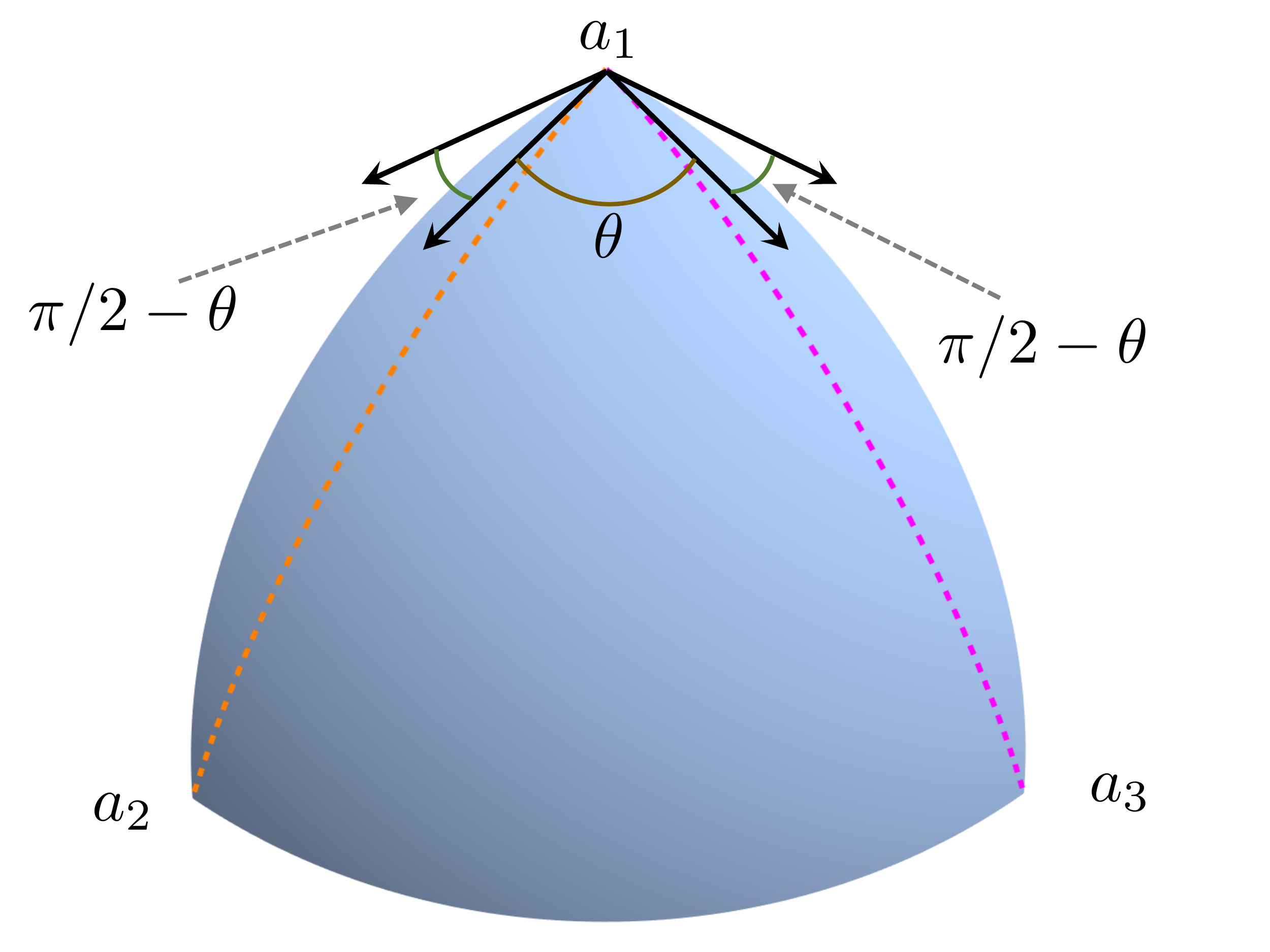}
               \newsubcap{This diagram illustrates the argument made in our proof of Proposition \ref{interiorAngleProp}, which splits the computation of the interior angle at the vertex $a_1$ into three contributions. Here $\theta=\cos^{-1}(1/3)$.}\label{IntAngleFig}
     \end{subfigure}
\end{figure}
\subsection{The Gauss-Bonnet formula}
The Gauss-Bonnet theorem asserts that if $P$ is a compact, two dimensional Riemannian manifold with piecewise smooth boundary made up of curves $C_1,\dots, C_N$ then
\be\label{GBformula}
\int_{P}KdA+\sum^N_{i=1}\left\{\int_{C_i}k_gds+(\pi-\theta_i)\right\}=2\pi \chi(P).
\ee
Here $\pi-\theta_i$ is the angle that the tangent turns from $C_i$ to $C_{i+1}$ for $i=1,\dots, N-1$ and from $C_N$ to $C_1$ when $i=N$; that is, $\theta_i$ is the corresponding interior angle of the curves.  The function $K$ is the Gaussian curvature of $P$. Likewise, $k_g$ denotes the geodesic curvature of $C_i$ in the integral $\int_{C_i}k_gds$. Finally, $\chi(P)$ is the Euler characteristic of $P$.

\par In order to apply this theorem, we will need to know the geodesic curvature of a circle in $\mathbb{S}^2$, which is the intersection of a plane with $\mathbb{S}^2$
\be\label{circleSphere}
\left\{x\in \mathbb{S}^2: x\cdot w=\sqrt{1-r^2}\right\}.
\ee
Here $|w|=1$ and $r\in (0,1]$ is the radius of the circle. If $u,v\in \mathbb{S}^2$ are chosen so that $\{u,v,w\}$ is an orthonormal basis with $u\times v=w$, then 
\be
\gamma(t)=r\left(u \cos (t/r) + v\sin (t/r)\right)+\sqrt{1-r^2}\; w\quad (t\in \R)
\ee
is a unit-speed parametrization of the circle. It turns out that the computation   
$$
\ddot\gamma(t)=-\gamma(t)+\frac{\sqrt{1-r^2}}{r}\gamma(t)\times \dot\gamma(t)
$$
implies the geodesic curvature of the circle of \eqref{circleSphere} is constant and equal to $\sqrt{1-r^2}/r$. 

\par Note that \eqref{circleSphere} is a great circle when $r=1$, which has vanishing geodesic curvature.  We also recall that the Gaussian curvature of $\mathbb{S}^2$ is identically equal to one. It follows that if $P\subset \mathbb{S}^2$ is a geodesic triangle with interior angles $\alpha,\beta,\gamma$, 
$$
S(P)+(\pi-\alpha)+(\pi-\beta)+(\pi-\gamma)=2\pi.
$$
That is, 
\be\label{GeodesicTriangleArea}
S(P)=\alpha+\beta+\gamma-\pi.
\ee
We can now use these observations to compute the surface area of a few regions of interest.
 
\begin{prop}\label{SurfaceAreaProp}
The geodesic triangle in $\partial B(a_4)$ with vertices $a_1,a_2, a_3$ has surface area 
\be\label{GeodesicTriangleAreaTwo}
3\cos^{-1}(1/3)-\pi.
\ee
The surface area of the face $\partial B(a_4)\cap R$ is 
\be\label{NonGeodesicTriangleArea}
2\pi-\frac{9}{2}\cos^{-1}(1/3).
\ee
\end{prop}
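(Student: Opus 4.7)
The plan is to apply the Gauss--Bonnet formula \eqref{GBformula} to each of the two regions, using ingredients already assembled in the appendix. For the geodesic triangle in $\partial B(a_4)$ with vertices $a_1, a_2, a_3$, the special case \eqref{GeodesicTriangleArea} says that its area is the sum of interior angles minus $\pi$. The last step in the proof of Proposition \ref{interiorAngleProp} identifies the angle at $a_1$ between the geodesic to $a_2$ and the geodesic to $a_3$ as the dihedral angle $\theta := \cos^{-1}(1/3)$; the symmetry of the regular tetrahedron gives the same value at $a_2$ and $a_3$. Thus the area equals $3\theta - \pi$, establishing \eqref{GeodesicTriangleAreaTwo}.

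For the face $P := \partial B(a_4) \cap R$, I would again apply \eqref{GBformula} with $K \equiv 1$ and $\chi(P) = 1$. The boundary $\partial P$ consists of the three edges $e_{12}, e_{13}, e_{23}$ of $R$, meeting at the three principal vertices $a_1, a_2, a_3$; by Proposition \ref{interiorAngleProp} the interior angle of $P$ at each vertex equals $\pi - \theta$. For each $\{i,j,k\} = \{1,2,3\}$, the edge $e_{ij}$ lies on the circle $\partial B(a_k) \cap \partial B(a_4) \subset \partial B(a_4)$, which is a small circle of radius $r = \sqrt{3}/2$ in $\partial B(a_4)$; the formula following \eqref{circleSphere} gives constant geodesic curvature $\sqrt{1 - r^2}/r = 1/\sqrt{3}$. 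An elementary chord-length computation in the plane of this circle (using $|a_i - a_j| = 1$ and $r = \sqrt{3}/2$) shows that the arc subtends central angle $\cos^{-1}(1/3) = \theta$, so it has length $(\sqrt{3}/2)\theta$ and contributes $\theta/2$ to $\int_{\partial P} k_g\, ds$. Substituting into \eqref{GBformula},
\begin{align}
S(P) + 3 \cdot \frac{\theta}{2} + 3\bigl(\pi - (\pi - \theta)\bigr) = 2\pi,
\end{align}
which rearranges to $S(P) = 2\pi - \tfrac{9}{2}\cos^{-1}(1/3)$, as claimed.

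The main piece of bookkeeping is fixing the sign of $k_g$ in the Gauss--Bonnet integral, since the formula uses \emph{signed} geodesic curvature with the convention that the interior lies on the left of $\partial P$. I would verify positivity by observing that $P$ is contained in the spherical cap $\partial B(a_4) \cap B(a_i)$ centered at $a_i \in \partial B(a_4)$; this cap has angular radius $\pi/3 < \pi/2$, so each boundary arc of $P$ curves toward, rather than away from, the interior, giving the positive sign used above. With this sign settled and the length, curvature, and interior-angle inputs identified as above, every remaining step is routine.
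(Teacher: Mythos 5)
Your proposal is correct and follows essentially the same route as the paper: the spherical-triangle area formula with interior angles equal to the dihedral angle $\cos^{-1}(1/3)$ for the geodesic triangle, and Gauss--Bonnet for the face using geodesic curvature $1/\sqrt{3}$, arc length $\tfrac{\sqrt{3}}{2}\cos^{-1}(1/3)$, and interior angles $\pi-\cos^{-1}(1/3)$ from Proposition \ref{interiorAngleProp}. Your explicit chord-length computation of the central angle and the sign check for $k_g$ merely spell out steps the paper treats as routine.
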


\begin{proof}
Without any loss of generality we will assume $a_1,a_2,a_3,a_4$ are given as in the proof of Proposition \ref{interiorAngleProp}. As a result both regions of interest are subsets of $\mathbb{S}^2$. First we recall that the dihedral angle of the regular tetrahedron is $\cos^{-1}(1/3)$. This is also the interior angle for the geodesic triangle $a_1,a_2,a_3$ within the unit sphere $\partial B(a_4)$. In view of formula \eqref{GeodesicTriangleArea}, we conclude the surface area of the geodesic triangle is given by \eqref{GeodesicTriangleAreaTwo}. 

\par Now we consider the face $\partial B(a_4)\cap R$.  Straightforward computations show that each boundary edge is a circular arc in $\mathbb{S}^2$ of radius $r=\sqrt{3}/2$ and arclength $\sqrt{3}/2 \cos^{-1}(1/3)$.  Therefore, the geodesic curvature of these arcs is a constant equal to 
$$
\frac{\sqrt{1-r^2}}{r}=\frac{1}{\sqrt{3}}.
$$ 
And in view of Proposition \ref{interiorAngleProp}, the interior angles at the vertices $a_1,a_2,a_3$ are all equal to $\pi-\cos^{-1}(1/3)$.  Therefore, the Gauss-Bonnet formula gives 
$$
S(\partial B(a_4)\cap R)+3\left[\frac{1}{\sqrt{3}}\frac{\sqrt{3}}{2}\cos^{-1}(1/3)+\cos^{-1}(1/3)\right]=2\pi.
$$
We conclude \eqref{NonGeodesicTriangleArea} by solving for $S(\partial B(a_4)\cap R)$.
\end{proof}
\begin{rem}
Let $\Delta(a_1,a_2,a_3)\subset \partial B(a_4)\cap R$ denote the geodesic triangle with vertices $a_1,a_2,a_3$. Then $\left(B(a_4)\cap R\right)\setminus \Delta(a_1,a_2,a_3)$ is the union of three {\it sliver surfaces} which only overlap at the vertices. See Figure \ref{SliverFig}.  For example, the sliver surface which contains $a_1$ and $a_2$ is equal to $H\cap \left(\partial B(a_4)\cap R\right)$, where $H$ is a half-space which does not contain $a_3$ while $a_1,a_2,a_4\in \partial H$. A direct corollary to Proposition \ref{SurfaceAreaProp} is that the area of each sliver surface is equal to
\be\label{SliverSurface}
S(H\cap \left(\partial B(a_4)\cap R\right))=\pi-\frac{5}{2}\cos^{-1}(1/3).
\ee
\end{rem}

\subsection{Perimeter of a spindle}
Recall that the boundary of a Meissner tetrahedra consists of pieces of spindle tori wedged between intersecting planes.   By rotating and translating a spindle if necessary, we can find the surface area of any such intersection with the following assertion. 

\begin{lem}
Let $\phi\in [0,\pi/2]$ and set 
$$
W_\phi=\left\{x\in \R^3: x_2\ge 0,\; -\sin(\phi)x_1+\cos(\phi)x_2\le 0\right\}.
$$
For $a\in [0,1]$, 
\be\label{SpindleAreaFormula}
S\left(W_\phi\cap \partial \textup{Sp}(ae_3,-ae_3)\right)=2\phi \left(-\sqrt{1-a^2}\sin^{-1}(a)+a\right).
\ee
\end{lem}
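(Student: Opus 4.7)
The natural strategy is to parametrize $\partial \textup{Sp}(ae_3,-ae_3)$ as a surface of revolution about the $x_3$-axis, using the generating-curve description already obtained in Step 4 of the proof of \eqref{aSpFormula}. That curve lies in the $x_1x_3$-plane and is given by
\[
\alpha(\theta) := u(\theta) - \sqrt{1-a^2}\, e_1 = \bigl(\cos\theta - \sqrt{1-a^2},\, 0,\, \sin\theta\bigr),
\qquad \theta \in [-\sin^{-1}(a),\sin^{-1}(a)],
\]
where $u(t) = \cos(t) e_1 + \sin(t) e_3$. Note that $\cos\theta \ge \sqrt{1-a^2}$ on this range, so the radial coordinate $\rho(\theta) := \cos\theta - \sqrt{1-a^2}$ is nonnegative and vanishes precisely at the endpoints (giving $\pm a e_3$).

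\textbf{Step 1: interpret the wedge angularly.} Writing $(x_1,x_2) = (r\cos\psi, r\sin\psi)$ with $r\ge 0$, the first inequality $x_2\ge 0$ is $\sin\psi \ge 0$ and the second inequality $-\sin(\phi)x_1 + \cos(\phi)x_2 \le 0$ is $r\sin(\psi-\phi)\le 0$. So on the complement of the $x_3$-axis, $W_\phi$ is precisely the sector $\psi\in[0,\phi]$. Hence $\partial\textup{Sp}(ae_3,-ae_3)\cap W_\phi$ is obtained by revolving $\alpha$ about the $x_3$-axis through the angular range $[0,\phi]$.

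\textbf{Step 2: surface element.} The parametrization
\[
\Phi(\theta,\psi) = \bigl(\rho(\theta)\cos\psi,\ \rho(\theta)\sin\psi,\ \sin\theta\bigr),
\qquad (\theta,\psi)\in [-\sin^{-1}(a),\sin^{-1}(a)]\times[0,\phi],
\]
gives $|\Phi_\theta\times\Phi_\psi| = \rho(\theta)\sqrt{\rho'(\theta)^2 + \cos^2\theta}$. Since $\rho'(\theta)=-\sin\theta$, the square root equals $1$ (the generating curve has unit speed, being an arc of a unit circle). Therefore $dS = \rho(\theta)\,d\theta\,d\psi$.

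\textbf{Step 3: integrate.} The $\psi$ integral contributes $\phi$, so
\[
S\bigl(W_\phi\cap\partial\textup{Sp}(ae_3,-ae_3)\bigr) = \phi\int_{-\sin^{-1}(a)}^{\sin^{-1}(a)}\bigl(\cos\theta-\sqrt{1-a^2}\bigr)\,d\theta.
\]
Since the integrand is even,
\[
= 2\phi\int_{0}^{\sin^{-1}(a)}\bigl(\cos\theta-\sqrt{1-a^2}\bigr)\,d\theta
= 2\phi\bigl(a - \sqrt{1-a^2}\,\sin^{-1}(a)\bigr),
\]
which is exactly \eqref{SpindleAreaFormula}. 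The endpoint cases $a=0$ (degenerate spindle $\{0\}$) and $a=1$ (full unit ball, hemisphere bounded by two half-planes) are easily checked to be consistent, as is $\phi=0$.

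\textbf{Expected difficulty.} There is no real obstacle; the only mildly delicate point is confirming that the generating-curve parametrization from Step 4 of the proof of \eqref{aSpFormula} sweeps out all of $\partial\textup{Sp}(ae_3,-ae_3)$ exactly once under revolution, and that the wedge $W_\phi$ corresponds cleanly to the azimuthal sector $\psi\in[0,\phi]$. Both are straightforward from the defining inequalities, after which the computation is a one-line trigonometric integral.
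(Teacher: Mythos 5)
Your proof is correct and follows essentially the same route as the paper: both parametrize $W_\phi\cap\partial\textup{Sp}(ae_3,-ae_3)$ as the surface of revolution of the circular generating arc swept through the azimuthal range $[0,\phi]$, compute the same surface element $\bigl(\cos t-\sqrt{1-a^2}\bigr)\,dt\,d\theta$, and evaluate the same elementary integral. The only difference is cosmetic — you spell out the identification of $W_\phi$ with the sector $\psi\in[0,\phi]$, which the paper dismisses as "routine to check."
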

\begin{rem}
We will call $W_\phi\cap \partial \textup{Sp}(ae_3,-ae_3)$ a {\it spindle surface with opening angle} $\phi$. 
\end{rem}
\begin{proof}
Recall that $\partial \textup{Sp}(ae_3,-ae_3)$ is the surface of revolution obtained by rotating the curve
$$
\begin{cases}
\left(x_1+\sqrt{1-a^2}\right)^2+x_3^2=1,\\
x_1\ge 0,\\
x_2=0
\end{cases}
$$
about the $x_3$-axis.  It is then routine to check that $W_\phi\cap \partial \textup{Sp}(ae_3,-ae_3)$ can be parametrized via 
$$
X(\theta,t)=\left(\left(-\sqrt{1-a^2}+\cos(t)\right)\cos(\theta),\left(-\sqrt{1-a^2}+\cos(t)\right)\sin(\theta),\sin(t) \right)
$$
for $\theta\in [0,\phi]$ and $t\in [-\sin^{-1}(a),\sin^{-1}(a)]$.  

\par We note that 
$$
\partial_tX(\theta,t)=\left(-\sin(t)\cos(\theta),-\sin(t)\sin(\theta),\cos(t) \right)
$$
and 
$$
\partial_\theta X(\theta,t)=\left(-\left(-\sqrt{1-a^2}+\cos(t)\right)\sin(\theta),\left(-\sqrt{1-a^2}+\cos(t)\right)\cos(\theta),0\right).
$$
As a result, 
\begin{align}
S\left(W_\phi\cap \partial \textup{Sp}(ae_3,-ae_3)\right)&=\int^{\phi}_{0}\int^{\sin^{-1}(a)}_{-\sin^{-1}(a)}|\partial_\theta X(\theta,t)\times \partial_tX(\theta,t)|dtd\theta\\
&=\int^{\phi}_{0}\int^{\sin^{-1}(a)}_{-\sin^{-1}(a)}\left(-\sqrt{1-a^2}+\cos(t)\right)dtd\theta \\ &=\phi \int^{\sin^{-1}(a)}_{-\sin^{-1}(a)}\left(-\sqrt{1-a^2}+\cos(t)\right)dt \\ 
&=2\phi \left(-\sqrt{1-a^2}\sin^{-1}(a)+a\right).
\end{align}
\end{proof}

\subsection{Volume computation}
We are finally ready to compute the volume of the Meissner tetrahedra. 
\begin{prop}
The volume of each Meissner tetrahedron is given by 
$$
\pi\left[\frac{2}{3}-\frac{\sqrt{3}}{4}\cos^{-1}(1/3)\right].
$$
\end{prop}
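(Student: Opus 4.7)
The plan is to apply Blaschke's relation $V(M) = \frac{1}{2}S(\partial M) - \frac{\pi}{3}$ for constant width bodies of width one, reducing the problem to the computation of $S(\partial M)$. By Remark \ref{MeissnerTetraFormula} there are two Meissner tetrahedra built from a given Reuleaux tetrahedron $R$, but both are obtained by smoothing three edges whose wedges have opening angle equal to the tetrahedral dihedral angle $\cos^{-1}(1/3)$, and the ensuing surface-area bookkeeping is combinatorially identical in the two cases. I will therefore carry the computation out once, for $M = B(e_{12}\cup e_{13}\cup e_{14})$, whose smoothed (dual) edges are $e_{34}, e_{24}, e_{23}$, the three edges of the face opposite $a_1$.

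By Proposition \ref{structureofMprop},
\[
\partial M = \Bigl(\partial R \cap \Bigl(\bigcup_{j=1}^3 W_{e_j'}\Bigr)^c\Bigr) \cup \bigcup_{j=1}^3 \bigl(\partial \textup{Sp}(b_j', c_j') \cap W_{e_j'}\bigr).
\]
Provided the three pieces $\partial R \cap W_{e_j'}$ are pairwise disjoint up to boundary, additivity of surface area gives
\[
S(\partial M) = S(\partial R) - \sum_{j=1}^{3} S\bigl(\partial R \cap W_{e_j'}\bigr) + \sum_{j=1}^{3} S\bigl(\partial \textup{Sp}(b_j', c_j') \cap W_{e_j'}\bigr),
\]
and Proposition \ref{SurfaceAreaProp} already supplies $S(\partial R) = 4\bigl(2\pi - \tfrac{9}{2}\cos^{-1}(1/3)\bigr) = 8\pi - 18\cos^{-1}(1/3)$.

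The geometric heart of the argument is the identification of $\partial R \cap W_{e_{ij}}$. For each smoothed edge $e_{ij}$, writing $\{k,l\} = \{1,2,3,4\} \setminus \{i,j\}$, the two bounding planes of $W_{e_{ij}}$ are those through $\{a_k, a_i, a_j\}$ and $\{a_l, a_i, a_j\}$. Since $a_k$ and $a_l$ are the centers of the two spheres defining the two faces of $R$ adjacent to $e_{ij}$, each such plane passes through the center of the corresponding sphere, hence cuts that face along the spherical geodesic from $a_i$ to $a_j$. Reading this off, $\partial R \cap W_{e_{ij}}$ is precisely the union of the two sliver surfaces — one on $\partial B(a_k) \cap R$, one on $\partial B(a_l) \cap R$ — bounded by $e_{ij}$ and the respective geodesic, each of area $\pi - \tfrac{5}{2}\cos^{-1}(1/3)$ by the remark following Proposition \ref{SurfaceAreaProp}. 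Pairwise disjointness of the six slivers follows from the bookkeeping that the common face $\partial B(a_1)\cap R$ absorbs three slivers along its three distinct boundary edges while each of the other three faces receives exactly one. For the spindle contribution, $|b_j' - c_j'| = 1$ puts us in the spindle-area formula with $a = 1/2$, and the opening angle $\phi$ of the wedge equals $\cos^{-1}(1/3)$, yielding each piece area $2\phi\bigl(a - \sqrt{1-a^2}\sin^{-1}(a)\bigr) = \cos^{-1}(1/3)\bigl(1 - \tfrac{\sqrt{3}\pi}{6}\bigr)$.

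Assembling everything,
\[
S(\partial M) = \bigl(8\pi - 18\cos^{-1}(1/3)\bigr) - 6\bigl(\pi - \tfrac{5}{2}\cos^{-1}(1/3)\bigr) + 3\cos^{-1}(1/3)\bigl(1 - \tfrac{\sqrt{3}\pi}{6}\bigr),
\]
and the $\cos^{-1}(1/3)$ coefficients $-18 + 15 + 3$ cancel to leave $S(\partial M) = 2\pi - \tfrac{\sqrt{3}\pi}{2}\cos^{-1}(1/3)$; Blaschke's relation then produces the claimed value $\pi\bigl[\tfrac{2}{3} - \tfrac{\sqrt{3}}{4}\cos^{-1}(1/3)\bigr]$. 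The main technical obstacle is the geometric identification of $\partial R \cap W_{e_{ij}}$ with exactly two sliver surfaces: one must verify that the wedge-bounding planes meet each adjacent face of $R$ precisely along a spherical geodesic from $a_i$ to $a_j$, and that $W_{e_{ij}}$ is thin enough to avoid meeting the two non-adjacent faces of $R$ in positive area. Once these facts are in hand (both are essentially local checks using the formulas for the wedges in Lemma \ref{WedgeLemma}), the remainder is direct substitution into formulas already derived.
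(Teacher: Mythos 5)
Your proposal is correct and takes essentially the same route as the paper: Blaschke's relation combined with the paper's appendix formulas for the face, sliver, and spindle-piece areas (opening angle $\cos^{-1}(1/3)$, $a=1/2$), organized through the surgery description of Proposition \ref{structureofMprop}. The only difference is bookkeeping: you perform an edge-wise inclusion--exclusion ($S(\partial R)$ minus six slivers plus three spindle pieces) for one tetrahedron and appeal to the claim that the other case is combinatorially identical, whereas the paper tallies both $M_1$ and $M_2$ face by face; your per-edge count (two slivers removed, one on each adjacent face, and one spindle piece added per smoothed edge, the six slivers having disjoint interiors in either configuration) does make the two cases arithmetically the same, though it would be worth one extra line to note the disjointness for the vertex-sharing configuration as well.
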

\begin{proof}
Let $M_1$ be a Meissner tetrahedron in which three edges that share a common vertex are smoothed. In order to compute the perimeter of $M_1$, we note that $\partial M_1$ consists of 3 spindle surfaces with opening angle $\cos^{-1}(1/3)$, one face in common with $R$, and 3 subsets of remaining faces of $R$, each with two sliver surfaces removed. Employing formulae \eqref{GeodesicTriangleAreaTwo},  \eqref{NonGeodesicTriangleArea},\eqref{SliverSurface}, and \eqref{SpindleAreaFormula}, we find
\begin{align}
S(\partial M_1)&=3\cdot \cos^{-1}(1/3)\left(1-\frac{\pi}{2\sqrt{3}}\right)+1\cdot \left(2\pi-\frac{9}{2}\cos^{-1}(1/3)\right)\\
&\hspace{1in} + 3\cdot\left(2\pi-\frac{9}{2}\cos^{-1}(1/3)-2\left[\pi-\frac{5}{2}\cos^{-1}(1/3)\right]\right)\\
&=3\cdot \cos^{-1}(1/3)\left(1-\frac{\pi}{2\sqrt{3}}\right)+1\cdot \left(2\pi-\frac{9}{2}\cos^{-1}(1/3)\right)+3\cdot \frac{1}{2}\cos^{-1}(1/3)\\
&=\pi\left[2-\frac{\sqrt{3}}{2}\cos^{-1}(1/3)\right].
\end{align}

\par Next let $M_2$ be a Meissner tetrahedron having three smoothed edges that share a common face. Observe that $\partial M_1$ consists of 3 spindle surfaces with opening angle $\cos^{-1}(1/3)$, one geodesic triangle within $R$, and 3 subsets of remaining faces of $R$, each with one sliver surface removed. Employing the various formulae above leads to 
\begin{align}
S(\partial M_2)&=3\cdot \cos^{-1}(1/3)\left(1-\frac{\pi}{2\sqrt{3}}\right)+1\cdot \left(3\cos^{-1}(1/3)-\pi\right)\\
&\hspace{1in} + 3\cdot\left(2\pi-\frac{9}{2}\cos^{-1}(1/3)-\left[\pi-\frac{5}{2}\cos^{-1}(1/3)\right]\right)\\
&=3\cdot \cos^{-1}(1/3)\left(1-\frac{\pi}{2\sqrt{3}}\right)+1\cdot \left(3\cos^{-1}(1/3)-\pi\right)+3\cdot\left(\pi-2\cos^{-1}(1/3)\right)\\
&=\pi\left[2-\frac{\sqrt{3}}{2}\cos^{-1}(1/3)\right].
\end{align}
In particular, $S(\partial M_1)=S(\partial M_2)$. And according to Blaschke's relation, 
\begin{align}
V(M_i)&=\frac{1}{2}S(\partial M_i)-\frac{\pi}{3}\\
&=\frac{1}{2}\pi\left[2-\frac{\sqrt{3}}{2}\cos^{-1}(1/3)\right]-\frac{\pi}{3}\\
&=\pi\left[\frac{2}{3}-\frac{\sqrt{3}}{4}\cos^{-1}(1/3)\right]
\end{align}
for $i=1,2$. 
\end{proof}
 
\section{Plotting figures}
We will describe how to plot a Reuleaux and Meissner tetrahedron with \texttt{Mathematica}.  The other Reuleaux and Meissner polyhedra in this article can be plotted with a similar method.  In order to be as explicit as possible, we will show screenshots from a \texttt{Mathematica} notebook we used to plot the Reuleaux tetrahedron in Figure \ref{ReuleauxTetra} and the Meissner tetrahedron in Figure \ref{Meiss1}.   We begin by choosing the vertices $\{a_1,a_2,a_3,a_4\}$ for a specific regular tetrahedron $T$ in $\R^3$.  We chose these vertices because they belong to a sphere of radius $\sqrt{3/8}$ centered at the origin, so the half-spaces which determine $T$ are given by \eqref{HalfSpacesT}. 
\begin{figure}[h]
\begin{centering}
\includegraphics[width=.8\textwidth]{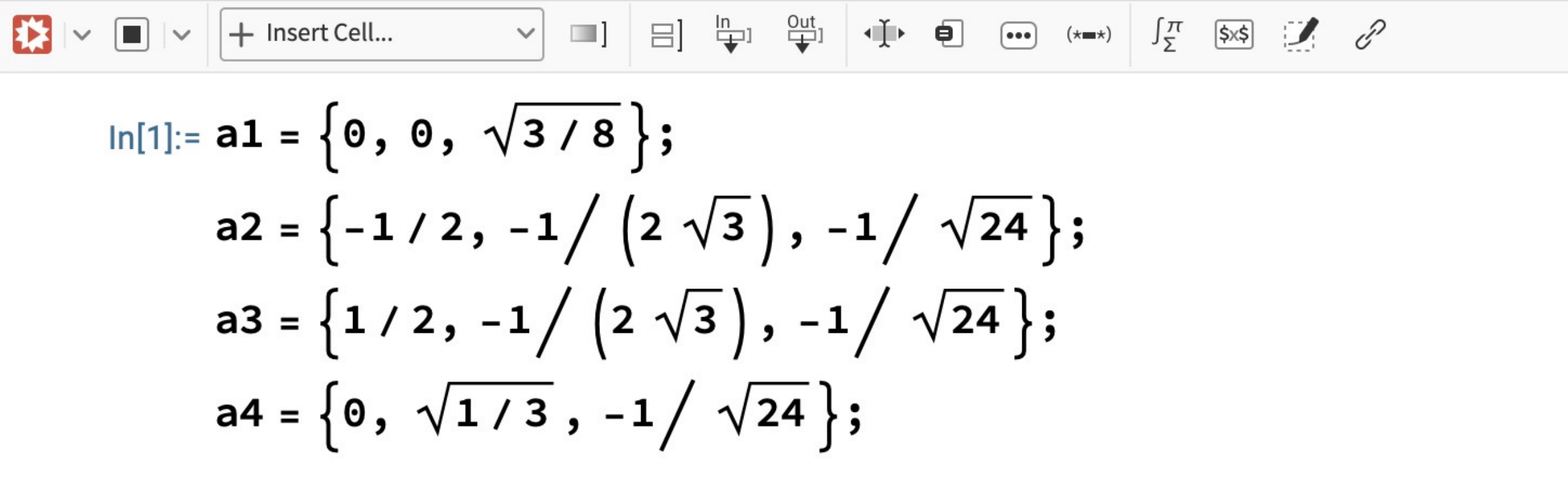}
\end{centering}
\end{figure}

 We recall that $R= B(a_1)\cap  B(a_2)\cap B(a_3)\cap B(a_4)$ is the corresponding Reuleaux tetrahedron, and the faces of $\partial R$ are $\partial B(a_i)\cap R$ for $i=1,\dots, 4$.  We can simplify this description slightly by noting that 
$$
\partial B(a_1)\cap R=\partial B(a_1)\cap  B(a_2)\cap B(a_3)\cap B(a_4),
$$
and likewise for the other faces. This is easy to program into \texttt{Mathematica} using the  built-in \texttt{Sphere}, \texttt{Ball}, \texttt{RegionIntersection},
 \texttt{RegionUnion}, and \texttt{DiscretizeRegion} functions.  The accuracy of our plot of $R$ in the screenshot below is governed by the \texttt{MaxCellMeasure} tolerance. We used $10^{-7}$.  A smaller tolerance would require more computational time but would yield a more accurate plot. 
\\
\begin{figure}[h]
\begin{centering}
\includegraphics[width=.9\textwidth]{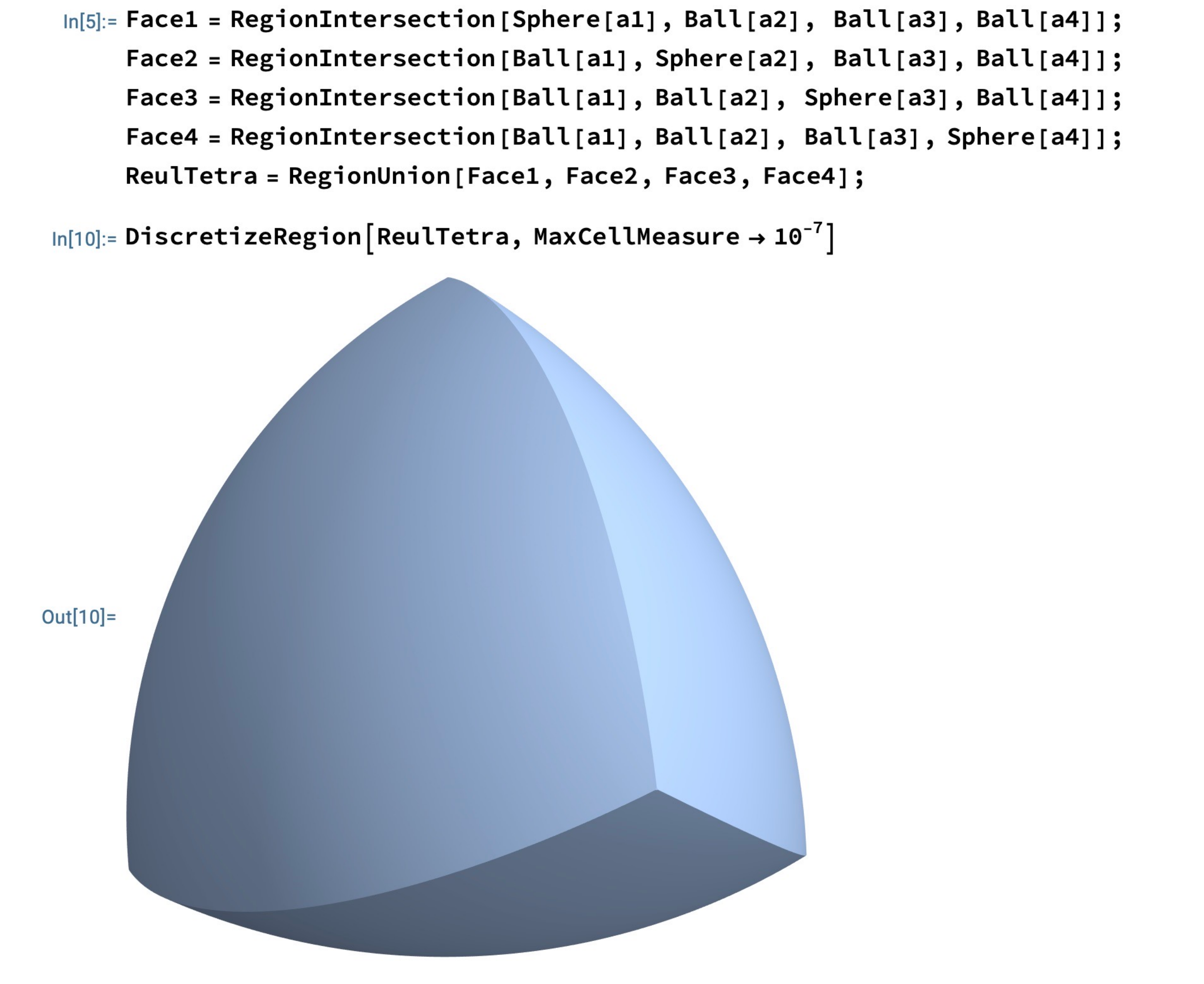}
\end{centering}
\end{figure}

\par Next, we will plot the Meissner polyhedra $M$ whose boundary is obtained by performing surgery on the $\partial R$ near the three edges which have $a_1$ as an endpoint. To this end, we will cut out the relevant portions of $\partial R$.  For example, the portion of $\partial R$ near the edge that joins $a_1$ and $a_4$ which will be cut out lies in the wedge determined by the half-spaces $a_2\cdot x\le -1/8$ and $a_3\cdot x\le -1/8$.  Similar reasoning applies to the other two portions of $\partial R$ which need to be cut out. This can be accomplished with \texttt{Mathematica} using the built-in \texttt{HalfSpace} function as follows. 
\\
\begin{figure}[h]
\begin{centering}
\includegraphics[width=.9\textwidth]{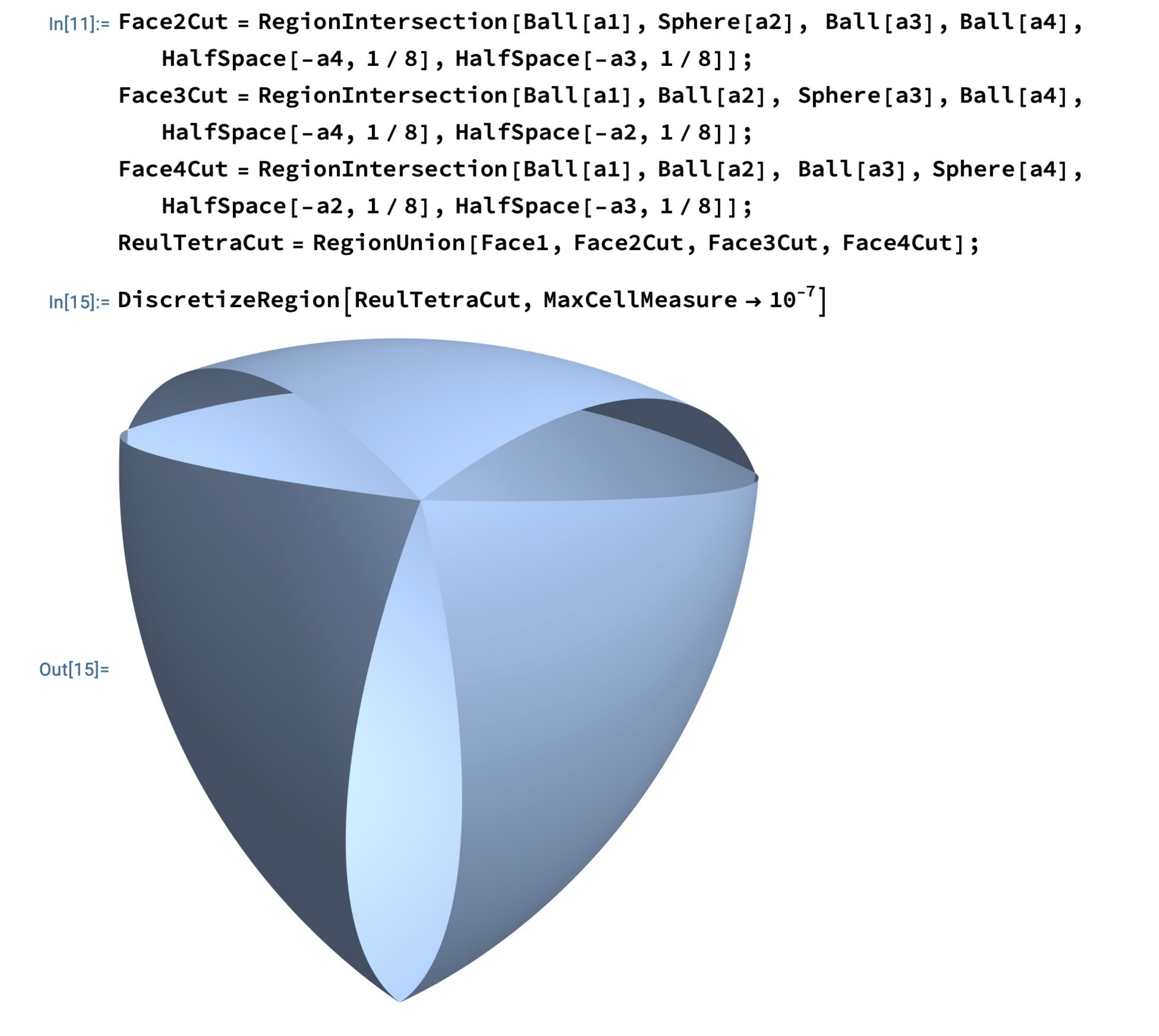}
\end{centering}
\end{figure}

\par Finally, we define a function \texttt{Spindle}, which implicitly specifies the boundary of a spindle as detailed in Corollary \ref{GenSpFormulaCor}.  
This function employs the built-in  \texttt{Norm} and \texttt{Projection} functions. Then we can render the boundary of $M$ by plotting union of the spindles between $a_1$ and $a_j$
for $j=2,3,4$ along with the cut-out Reuleaux tetrahedron displayed above.  We note that the resulting shape contains the entire spindles between $a_1$ and $a_j$
for $j=2,3,4$, while we only need portions of the spindles to fill out the boundary of $M$. It is possible to plot these spindle portions using the built-in \texttt{HalfSpace} function. However, we found this to take considerably more computational time, so we elected to plot the entire spindles. Nevertheless, the two boundaries which are visible by either plotting the entire spindles or only portions of the spindles are the same. 
\\\\
\begin{figure}[h]
\begin{centering}
\includegraphics[width=1\textwidth]{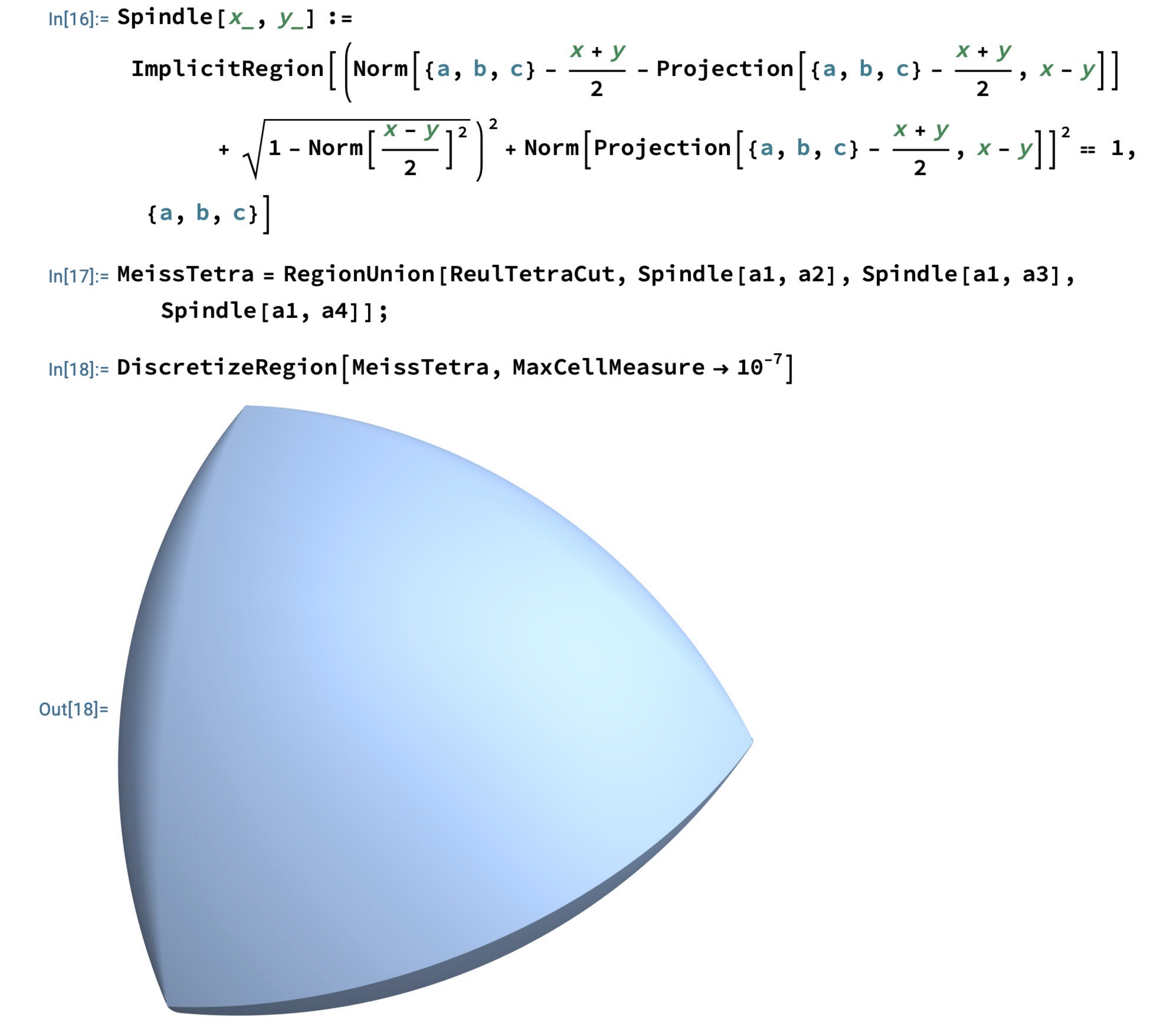}
\end{centering}
\end{figure}


\bibliography{MeissBib}{}

\bibliographystyle{plain}

\typeout{get arXiv to do 4 passes: Label(s) may have changed. Rerun}

\end{document}